\tikzset{degil/.style={
            decoration={markings,
            mark= at position 0.5 with {
                  \node[transform shape,scale=0.5] (tempnode) {$|$};
                  }
              },
              postaction={decorate}
}
}
\theoremstyle{plain}
\newtheorem{thm}{Theorem}[section]
\newtheorem{lem}[thm]{Lemma}
\newtheorem{prop}[thm]{Proposition}
\newtheorem{cor}[thm]{Corollary}
\theoremstyle{definition}
\newtheorem{defn}[thm]{Definition}
\newtheorem{rmk}[thm]{Remark}
\newtheorem{question}[thm]{Question}
\newcommand{\concat}{
  \mathord{
    \mathchoice
    {\raisebox{1ex}{\scalebox{.7}{$\frown$}}}
    {\raisebox{1ex}{\scalebox{.7}{$\frown$}}}
    {\raisebox{.7ex}{\scalebox{.5}{$\frown$}}}
    {\raisebox{.7ex}{\scalebox{.5}{$\frown$}}}
  }
}
\newcommand{\RCA}{\mathsf{RCA}}
\newcommand{\ACA}{\mathsf{ACA}}
\newcommand{\ATR}{\mathsf{ATR}}
\newcommand{\CA}{\mathsf{CA}}
\newcommand{\AC}{\mathsf{AC}}
\newcommand{\HYP}{\mathrm{HYP}}
\newcommand{\IRT}[1]{\mathsf{IRT}_{\mathrm{#1}}}
\newcommand{\WIRT}[1]{\mathsf{WIRT}_{\mathrm{#1}}}
\newcommand{\SEP}{\mathsf{SEP}}
\newcommand{\INDEC}{\mathsf{INDEC}}
\newcommand{\ABW}{\mathsf{ABW}}
\newcommand{\seq}[1]{\langle #1 \rangle}
\numberwithin{equation}{section}
\begin{document}

\title{Halin's Infinite Ray Theorems: Complexity and Reverse Mathematics: Version E\thanks{All
the authors were partially supported by NSF Grant DMS-1161175.}}
\author{James S. Barnes\\Department of Mathematics\\Yale University\\New Haven CT 06520
\and Jun Le Goh\\Department of Mathematics\\National University of Singapore\\Singapore 119076
\and Richard A. Shore\\Department of Mathematics\\Cornell University\\Ithaca NY 14853}
\date{\today}
\maketitle

\begin{abstract}
Halin [1965] proved that if a graph has $n$ many pairwise disjoint rays for
each $n$ then it has infinitely many pairwise disjoint rays. We analyze the
complexity of this and other similar results in terms of computable and proof
theoretic complexity. The statement of Halin's theorem and the construction
proving it seem very much like standard versions of compactness arguments such
as K\"{o}nig's Lemma. Those results, while not computable, are relatively
simple. They only use arithmetic procedures or, equivalently, finitely many
iterations of the Turing jump. We show that several Halin type theorems are
much more complicated. They are among the theorems of hyperarithmetic
analysis. Such theorems imply the ability to iterate the Turing jump along any
computable well ordering. Several important logical principles in this class
have been extensively studied beginning with work of Kreisel, H. Friedman,
Steel and others in the 1960s and 1970s. Until now, only one purely
mathematical example was known. Our work provides many more and so answers
Question 30 of Montalb\'{a}n's Open Questions in Reverse Mathematics [2011].
Some of these theorems including ones in Halin [1965] are also shown to have
unusual proof theoretic strength as well.

\end{abstract}


Mathematics Subject Classification 2020: Primary 05C63, 03D55, 03B30;
Secondary 03D80, 03F35, 05C38, 05C69, 05C70

\section{Introduction}

\label{section:intro}

In this paper we analyze the complexity of several results in infinite graph
theory. These theorems are said to be ones of Halin type or, more generally,
of ubiquity theory. The classical example is a theorem of Halin \cite{halin65}%
: If a countable graph $G$ contains, for each $n$, a sequence $\left\langle
R_{0},\ldots, R_{n-1}\right\rangle $ of disjoint rays (a ray is a sequence
$\left\langle x_{i}\mid i\in N\right\rangle $ of distinct vertices such that
there is an edge between each $x_{i}$ and $x_{i+1}$) then it contains an
infinite such sequence of rays. (Note: As will be described in Definition
\ref{rays}, when we talk about disjoint rays we always mean pairwise
disjoint.) Halin actually deals with arbitrary graphs and formulates the
result differently. The uncountable cases, however, are essentially just
counting arguments. We deal only with countable structures but discuss his
formulation in \S \ref{section:maximality}. This standard formulation of his
theorem seems like a typical compactness theorem going from arbitrarily large
finite collections of objects to an infinite collection. The archetypical
example here is K\"{o}nig's Lemma: If a finitely branching tree has paths of
length $n$ for every $n$ then it has a branch, i.e.\ an infinite path. In
outline, a modern proof of Halin's theorem for countable graphs (due to
Andreae, see \cite[Theorem 8.2.5(i)]{diestel_book}) seems much like
that of K\"{o}nig's Lemma (and many others in infinite graph theory). The
construction of the desired sequence of rays proceeds by a recursion through
the natural numbers in which each step is a simple procedure. While the
procedure is much more delicate than for K\"{o}nig's Lemma, it is basically of
the same complexity. It uses Menger's theorem for finite graphs at each step
but this represents a computable procedure (for finite graphs). The other
parts of the step depend on the same type of information as in K\"{o}nig's
Lemma. They ask, for example, if various sets (computable in the given graph)
are nonempty or infinite. Nonetheless, we prove that the complexity of this
construction and theorem are much higher than that for K\"{o}nig's Lemma or
other applications of compactness. The concepts from graph theory,
computability theory and proof theory/reverse mathematics that we need for our
analysis are discussed in \S \ref{basic}. Basic references for terminology,
background and standard results not explicitly stated or otherwise attributed
are Diestel \cite{diestel_book} for graph theory; Rogers \cite{rogers} and
Sacks \cite{sacks} for computability theory; and for reverse mathematics
Simpson \cite{sim_book} with an approach which is primarily in terms of formal
systems and Hirschfeldt \cite{hirschfeldt} with one primarily emphasizing computability.

We follow two well established procedures for measuring the complexity of
constructions and theorems. The first is basically computability theoretic. It
has its formal beginnings in the 1950s but has much earlier roots in
constructive or computable mathematics reaching back to antiquity. (See Ershov
et al.\ \cite{handbook} for history and surveys of the approach in several
areas of combinatorics, algebra and analysis.) The measuring rod here is
relative computability. We say a set $A$ of natural numbers is (Turing)
computable from a set $B$, $A\leq_{T}B$, if there is an algorithm (say on a
Turing machine or any other reasonable model of general computation) that,
when given access to all membership facts about $B$ (an oracle for $B$)
computes membership in $A$. The standard hierarchies of complexity here are
based on iterations of the Turing jump. This operator takes $B$ to $B^{\prime
}$, the halting problem relativized to $B$, i.e.\ the set of programs with
oracle for $B$, $\Phi_{e}^{B}$, such that $\Phi_{e}^{B}$ halts on input $e$.
For example, if the tree of K\"{o}nig's Lemma is computable in $B$ then there
is a branch computable in the double jump $B^{\prime\prime}$ of $B$.

The second approach is proof theoretic. It measures the complexity of a
theorem by the logical strength of the axioms needed to prove it. This
approach also has a long history but the formal subject, now called reverse
mathematics, starts with H. Friedman's work in the 1970s (e.g.\ \cite{friedman_LC,friedman_icm}). One compares axiomatic systems $S$ and $T$ by saying that $T$ is
stronger than $S$, $T\vdash S$ ($T$ proves $S$) if one can prove every
sentence $\Theta\in S$ from the axioms of $T$. Of course, we know what it
means for $\Theta$ to be provable in $S$. The goal here is to characterize to
the extent possible the axioms needed to prove a given mathematical theorem
$\Theta$. To this end, one begins with a weak base theory. Then one wants to
find a system $S$ such that not only does $S\vdash\Theta$ but also $\Theta$
(with the weak base theory) proves all the axioms of $S$. Hence the name
reverse mathematics as we seek to prove the \textquotedblleft
axioms\textquotedblright\ of $S$ from the theorem $\Theta$. Typically, the
systems here are formalized in arithmetic with quantification over sets as
well as numbers. The standard base theory ($\mathsf{RCA}_{0}$) corresponds to
the axioms needed to do computable constructions. Stronger systems are then
usually generated by adding comprehension axioms which assert the existence of
specific families of sets. For example, a very important system is
$\mathsf{ACA}_{0}$. It is equivalent in the sense of reverse mathematics just
described to K\"{o}nig's Lemma. Formally, it asserts that every subset of the
natural numbers defined by a formula that quantifies only over numbers (and
not sets) exists. This is also equivalent to asserting that for every set $B$,
the set $B^{\prime}$ exists.

The early decades of reverse mathematics were marked by a large variety of
results characterizing a wide array of theorems and constructions as being one
of five specific levels of complexity including $\mathsf{RCA}_{0}$ and
$\mathsf{ACA}_{0}$. Each of these systems (Simpson's \textquotedblleft big
five\textquotedblright) have corresponding specific recursion theoretic
construction principles. In more recent decades, there has been a
proliferation of results placing theorems and constructions outside the big
five. Sometimes these are inserted linearly and sometimes with
incomparabilities. They are now collectively often called the
\textquotedblleft zoo\textquotedblright\ of reverse mathematics. (See
\href{https://rmzoo.math.uconn.edu/diagrams/}{https://rmzoo.math.uconn.edu/diagrams/}
for pictures.)

Theorems and constructions in combinatorics in general, and graph theory in
particular, have been a rich source of such denizens of this zoo. Almost all
of them have fallen below $\mathsf{ACA}_{0}$ (K\"{o}nig's Lemma) and so have
the objects they seek constructible computably in finitely many iterations of
the Turing jump. Ramsey theory, in particular, has provided a very large class
of constructions and theorems of distinct complexity. One example of the
infinite version of a classical theorem of finite graph theory that is
computationally and reverse mathematically strictly stronger than
$\mathsf{ACA}_{0}$ is K\"{o}nig's Duality Theorem ($\mathsf{KDT}$) for
countable graphs. (Every bipartite graph has a matching and a cover consisting
of one vertex from each edge of the matching.) The proofs of this theorem for
infinite graphs (Podewski and Steffens \cite{ps76} for countable and Aharoni
\cite{a84} for arbitrary ones) are not just technically difficult but
explicitly used both transfinite recursions and well orderings of all subsets
of the given graph. These techniques lie far beyond $\mathsf{ACA}_{0}$.
Aharoni, Magidor and Shore \cite{ams92} proved that this theorem is of great
computational strength in that there are computable graphs for which the
required matching and cover compute all the iterations of the Turing jump
through all computable well-orderings. They also showed that it was strong
reverse mathematically as it implied $\mathsf{ATR}_{0}$, the standard system
above $\mathsf{ACA}_{0}$ used to deal with such transfinite recursions. Some
of the lemmas used in each of the then known proofs were shown to be
equivalent to the next and final of the big five systems, $\Pi_{1}^{1}%
$-$\mathsf{CA}_{0}$ and of corresponding computational strength. Simpson
\cite{sim94} later provided a new proof of the theorem using logical methods
that avoided these lemmas and showed that the theorem itself is equivalent to
$\mathsf{ATR}_{0}$ and so strictly weaker than the lemmas both computationally
and in terms of reverse mathematics.

The situation for the theorems of Halin type that we study here is quite
different. The standard proofs do not seem to use such strong methods.
Nonetheless, as we mentioned above, the theorems are much stronger than
$\mathsf{ACA}_{0}$ with some versions not even provable in $\mathsf{ATR}_{0}$.
We prove that these theorems occupy a few houses in the area of the reverse
mathematics zoo devoted to what are called theorems (or theories) of
hyperarithmetic analysis, THAs (Definition \ref{thadef}). Computationally, for
each computable well ordering $\alpha$, there is a computable instance of any
THA which has all of its required objects Turing above $0^{(\alpha)}$, the
$\alpha$th iteration of the Turing jump. On the other hand, they are
computationally and proof theoretically much weaker than $\mathsf{ATR}_{0}$
and so $\mathsf{KDT}$. The point here is that there is a single computable
graph such that the matching and cover required by $\mathsf{KDT}$ lies above
$0^{(\alpha)}$ for all the computable well-orderings $\alpha$, while for each
computable instance of a THA there is a computable well-ordering $\alpha$ such
that $0^{(\alpha)}$ computes the desired object. In our cases, the instances
are graphs with arbitrarily many disjoint rays and the desired object is an
infinite sequence of disjoint rays. (The general usage of terms
like instances and solutions of a theorem or principle is described at the end of \S 3.)	

Beginning with work of Kreisel \cite{kreisel}, H. Friedman
\cite{friedman_thesis}, Steel \cite{steel78} and others in the 1960s and 1970s
and continuing into the last decade (by Montalb\'{a}n \cite{montalban_jullien,
montalban_pi11sep}, Neeman \cite{neeman08,neeman11} and
others), several axiomatic systems and logical theorems were found to be THAs
and proven to lie in a number of distinct classes in terms of proof theoretic
complexity. Until now, however, there has been only one mathematical but not
logical example, i.e.\ one not mentioning classes of first order formulas or
their syntactic complexity. This was a result (INDEC) about indecomposability
of linear orderings in Jullien's thesis \cite{jullien_thesis} (see Rosenstein
\cite[Lemma 10.3]{rosenstein}). It was shown to be a THA by Montalb\'{a}n
\cite{montalban_jullien}.

The natural quest then became to find out if there are any other THAs in the
standard mathematical literature. The issue was raised explicitly in
Montalb\'{a}n's ``Open Questions in Reverse Mathematics'' \cite[Q30]%
{montalban_open}. As our answer, we provide many examples. Most of them are
provable in a well known system above $\mathsf{ACA}_{0}$ gotten by adding on a
weak form of the axiom of choice ($\Sigma^{1}_{1}\text{-}\mathsf{AC}_{0}$).

Several of the basic Halin type theorems (the $\mathsf{IRT}_{\mathrm{XYZ}}$
defined after Definition \ref{irt}) have versions (the $\mathsf{IRT}%
_{\mathrm{XYZ}}^{\ast}$ of Definition \ref{defn:IRTmc}) like those appearing
in the original papers that show that there are always families of disjoint
rays of maximal cardinality which are of the same computational strength as
the basic versions (Proposition \ref{prop:IRT_IRTmc_rshp} and Corollary
\ref{cor:variants_IRTmc_hyp_analysis}). On the other hand, the $\mathsf{IRT}_{\mathrm{XYZ}}^{\ast}$ are strictly stronger proof theoretically
than the $\mathsf{IRT}_{\mathrm{XYZ}}$ because they imply more induction than
is available in $\Sigma_{1}^{1}\text{-}\mathsf{AC}_{0}$ (Theorem
\ref{thm:IRTmc_ACA0_ast} and Corollary
\ref{cor:IRTmc_not_provable_in_Sigma11-AC}). Two of the variations we consider
are as yet open problems of graph theory (\cite{bcp15} and Bowler, personal
communication). We show that if we restrict the class of graphs to directed
forests the principles are not only provable but reverse mathematically
equivalent to $\Sigma_{1}^{1}\text{-}\mathsf{AC}_{0}+\mathsf{I}\Sigma_{1}^{1}$. Note
that as $\mathsf{ATR}_{0}\nvdash \mathsf{I}\Sigma_{1}^{1}$ \cite[IX.4.7]%
{sim_book}, these theorems are not provable even in $\mathsf{ATR}_{0}$ or
from $\mathsf{KDT}$ (Corollary \ref{cor:IRTmc_DVD_strictly_implies_SigmaAC}).
We do not know of other mathematical but nonlogical theorems of this strength.
Other versions that require maximal sets of rays (Definition \ref{defn:MIRT})
are much stronger and, in fact, equivalent to $\Pi_{1}^{1}$-$\mathsf{CA}_{0}$
(Theorem \ref{thm:MIRT_equiv_Pi11-CA}).

\section{Outline of Paper}

\label{section:outline}

Section \ref{basic} discusses basic concepts and background information. The
first subsection (\ref{graphth}) provides what we need from graph theory.
Almost all the definitions are standard. At times we give slight variations
that are equivalent to the standard ones but make dealing with the
computability and proof theoretic analysis easier. We also state the theorems
of Halin and some variants that are the main targets of our analysis.

The second subsection (\ref{comph}), assumes an intuitive view of
computability of functions $f: N \to N$ such as having an
algorithm given by a program in any standard computer language (possibly with
access to an \textquotedblleft oracle\textquotedblright\ providing information
about a given set or function). It then gives the standard notions and theorems
that can be found in basic texts on computability theory needed to follow our
analysis of the computational complexity of the graph theoretic theorems we
study. In particular, it notes the Turing jump operator and its iterations
along countable well orderings. These are our primary computational measuring
rods. The final subsection (\ref{axiomh}) provides the syntax and semantics
for the formal systems of arithmetic that are used to measure proof theoretic
complexity. It also describes the standard basic axiomatic systems and their
connections to the computational measures of the previous subsection. It
includes the formal definition of the class of theorems which includes most of
our graph theoretic examples, the THAs, Theorems (or Theories) of
Hyperarithmetic Analysis. These are defined in terms of the transfinite
iterations of the Turing jump and the hyperarithmetic sets of the previous
subsection. In addition it defines $\Sigma_{1}^{1}\text{-}\mathsf{AC}_{0}$ a
weak version of the axiom of choice that is an early well known example of
such theories and plays a crucial role in our analysis.

Section \ref{section:IRT_hyp_analysis} provides the proof that Halin's
original theorem $\mathsf{IRT}_{}$ (Definition \ref{irt}) is computationally
very complicated. For example, given any iteration $0^{(\alpha)}$ of the
Turing jump, there is a computable graph satisfying the hypotheses of
$\mathsf{IRT}_{}$ such that any instance of its conclusion computes
$0^{(\alpha)}$. Indeed, $\mathsf{IRT}_{}$ is a THA. At times, theorems or
lemmas are stated in terms of the formal systems of \S \ref{axiomh}, but the
proofs rely only on the computational notions of \S \ref{comph}.

Section \ref{section:variants_IRT} studies several variations $\mathsf{IRT}%
_{\mathrm{XYZ}}$ of Halin's $\mathsf{IRT}_{}$ where we consider directed as
well as undirected graphs, edge rather than vertex disjointness for the rays
and double as well as single rays. (See Definitions \ref{graphs} and
\ref{rays} and the discussion after Definition \ref{irt}.) We provide
reductions over $\mathsf{RCA}_{0}$ between many of the pairs of the eight
possible variants. The proofs of these reductions proceed purely
combinatorially by providing one computational process that takes an instance
of some $\mathsf{IRT}_{\mathrm{XYZ}}$, i.e.\ a graph satisfying its hypotheses,
and produces a graph satisfying the hypotheses of another $\mathsf{IRT}%
_{\mathrm{X^{\prime}Y^{\prime}Z^{\prime}}}$ and another computable process
that takes any solution to the $\mathsf{IRT}_{\mathrm{X^{\prime}Y^{\prime
}Z^{\prime}}}$ instance, i.e. any sequence of rays satisfying the conclusion
of $\mathsf{IRT}_{\mathrm{X^{\prime}Y^{\prime}Z^{\prime}}}$, and produces a
solution to the original instance of $\mathsf{IRT}_{\mathrm{XYZ}}$. (See
Propositions \ref{prop:IRT_DYZ_implies_IRT_UYZ},
\ref{prop:IRT_DEZ_implies_IRT_DVZ} and \ref{prop:IRT_DYD_implies_IRT_DYS} and
the associated Lemmas. {An additional reduction using a stronger base theory
is given in the next section (Theorem \ref{thm:UVDmc_implies_UVS}).) }

We then show that five of the eight possible variants of $\IRT{}$ are THAs (Theorem
\ref{thm:variants_IRT_hyp_analysis}). As mentioned in \S \ref{section:intro}, of the remaining
three, two are still open problems in graph theory. We do, however, have an
analysis of their restrictions to special classes of graphs in Theorem
\ref{thm:SigmaAC_implies_DED_directed_forests} and
\S \ref{section:max_cardinality}. The last of the variations, $\mathsf{IRT}%
_{\mathrm{UED}}$, has been proven more recently by Bowler, Carmesin, Pott
\cite{bcp15} using more sophisticated methods than the other results. We have
some lower bounds (Theorem
\ref{thm:IRT_variants_ACA_0_and_omega_model_IRT_variants_hyp_closed}) but we
have yet to fully analyze the complexity of their construction.

In the next section (\S \ref{section:maximality}) we study some variations of
$\mathsf{IRT}_{{}}$ that ask for different types of maximality for the
solutions. The first sort actually follow the original formulation of
$\mathsf{IRT}_{{}}$ in Halin \cite{halin65}: In any graph there is a set of
disjoint rays of maximum cardinality. For uncountable graphs this amounts to a
basic counting argument on uncountable cardinals as all rays are countable.
When restricted to countable graphs these variations, $\mathsf{IRT}%
_{\mathrm{XYZ}}^{\ast}$, are easily seen to be equivalent to our more modern
formulation by induction. Technically, the induction used is for $\Sigma
_{1}^{1}$ formulas ($\mathsf{I}\Sigma_{1}^{1}$) which is not available in
$\mathsf{RCA}_{0}$. More specifically we show (Proposition
\ref{prop:IRT_IRTmc_rshp}) that $\mathsf{IRT}_{\mathrm{XYZ}}+\mathsf{I}\Sigma_{1}^{1}$
and $\mathsf{IRT}_{\mathrm{XYZ}}^{\ast}+\mathsf{I}\Sigma_{1}^{1}$ are equivalent (over
$\mathsf{RCA}_{0}$). As the definition of THAs only depends on standard models
where full induction holds, if $\mathsf{IRT}_{\mathrm{XYZ}}$ is a THA then so
is $\mathsf{IRT}_{\mathrm{XYZ}}^{\ast}$.

We then prove that these maximal cardinality variants $\mathsf{IRT}^{\ast
}_{\mathrm{XYZ}}$ are strictly stronger proof theoretically than the basic
$\mathsf{IRT}_{\mathrm{XYZ}}$ (when they are known to be provable in
$\Sigma^{1}_{1}\text{-}\mathsf{AC}_{0}$). This is done by showing (Theorem
\ref{thm:IRTmc_ACA0_ast} and the Remark that follows it) that the relevant
$\mathsf{IRT}^{\ast}_{\mathrm{XYZ}}$ all imply weaker versions of $\mathsf{I}\Sigma_{1}^{1}$ that are analogous to the restrictions of $\Sigma^{1}_{1}%
\text{-}\mathsf{AC}_{0}$ embodied in weak (or unique)-$\Sigma^{1}_{1}%
\text{-}\mathsf{AC}_{0}$ and finite-$\Sigma^{1}_{1}\text{-}\mathsf{AC}_{0}$
(Definitions \ref{defn:unique_choice} and \ref{defn:finite_choice}). In all
the cases, it is enough induction to prove (with the apparatus of the basic
$\mathsf{IRT}_{\mathrm{XYZ}}$) the consistency of $\Sigma^{1}_{1}%
\text{-}\mathsf{AC}_{0}$ and so by G\"{o}del's second incompleteness theorem
they cannot be proved in $\Sigma^{1}_{1}\text{-}\mathsf{AC}_{0}$ (Corollary
\ref{cor:IRTmc_not_provable_in_Sigma11-AC}).

As for proving full $\Sigma_{1}^{1}$ induction from an $\mathsf{IRT}%
_{\mathrm{XYZ}}^{\ast}$ we are in much the same situation mentioned above for
$\Sigma_{1}^{1}\text{-}\mathsf{AC}_{0}$ and $\mathsf{IRT}_{\mathrm{XYZ}}$. In
particular, $\mathsf{IRT}_{\mathrm{DVD}}^{\ast}$ and $\mathsf{IRT}%
_{\mathrm{DED}}^{\ast}$ for directed forests each proves $\mathsf{I}\Sigma_{1}^{1}$ as
well as $\Sigma_{1}^{1}\text{-}\mathsf{AC}_{0}$ (Theorems
\ref{thm:DVD-IRTmc_implies_ISigma11} and
\ref{thm:DED_DVD_directed_forests_equivalence}) and so are equivalent to
$\mathsf{I}\Sigma_{1}^{1}+\Sigma_{1}^{1}\text{-}\mathsf{AC}_{0}$. As before, this shows
that they are strictly stronger than $\Sigma_{1}^{1}\text{-}\mathsf{AC}_{0}$
(Corollary \ref{cor:IRTmc_DVD_strictly_implies_SigmaAC}). Indeed, as mentioned
at the end of \S \ref{section:intro}, they are not even provable in
$\mathsf{ATR}_{0}$. We do not know of any other mathematical theorem with this
level of reverse mathematical strength.

The second variation of maximality, $\mathsf{MIRT}_{\mathrm{XYZ}}$, studied in
\S \ref{section:maximal} is also mentioned in the original Halin paper
\cite{halin65}. It asks for a set of disjoint rays which is maximal in the
sense of set containment. Of course, this follows immediately from Zorn's
Lemma for all graphs. For countable graphs we provide a reverse mathematical
analysis, showing that each of the $\mathsf{MIRT}_{\mathrm{XYZ}}$ is
equivalent to $\Pi^{1}_{1}$-$\mathsf{CA}_{0}$ (Theorem
\ref{thm:MIRT_equiv_Pi11-CA}).

In \S \ref{section:IRT_vs_other_theories}, we discuss the reverse mathematical
relationships between the THAs associated with variations of Halin's theorem
and previously studied THAs as well as one new logical THA (finite-$\Sigma
_{1}^{1}\text{-}\mathsf{AC}_{0}$ of Definition \ref{defn:unique_choice}).
Basically, all the $\mathsf{IRT}_{\mathrm{XYZ}}^{\ast}$ (and so $\mathsf{IRT}%
_{\mathrm{XYZ}}+\mathsf{I}\Sigma_{1}^{1}$) imply H.\ Friedman's $\mathsf{ABW}_{0}$
(Definition \ref{defn:ABW}) by Theorem \ref{thm:IRTmc_implies_ABW} and
finite-$\Sigma_{1}^{1}\text{-}\mathsf{AC}_{0}$ (Theorem
\ref{thm:XYZmc_implies_finite_choice}). On the other hand, none of them are
implied by it (Theorem \ref{thm:ABW_not_imply_IRT}) or by $\Delta_{1}^{1}%
$-$\mathsf{CA}_{0}$ (Definition \ref{defn:Delta11-CA} and Theorem
\ref{thm:Delta11-CA_not_imply_IRT}). $\mathsf{ABW}_{0}+\mathsf{I}\Sigma_{1}^{1}$ does,
however, imply finite-$\Sigma_{1}^{1}\text{-}\mathsf{AC}_{0}$ which is not
implied by weak (unique)-$\Sigma_{1}^{1}\text{-}\mathsf{AC}_{0}$ (Goh
\cite{goh_finite_choice}). Figure \ref{fig:hyp_analysis_zoo_IRT} summarizes
many of the known relations with references.

In the penultimate section (\S \ref{section:WIRT}) we study the only use of
$\Sigma^{1}_{1}\text{-}\mathsf{AC}_{0}$ in each of our proofs of
$\mathsf{IRT}_{\mathrm{XYZ}}$. It consists of $\mathsf{SCR}_{\mathrm{XYZ}}$
which says we can go from the hypothesis that there are arbitrarily many
disjoint rays to a sequence $\seq{X_{k}}_{k}$ in which each $X_k$ is a sequence of $k$
many disjoint rays. We analyze the strength of the $\mathsf{SCR}%
_{\mathrm{XYZ}}$ and the weakenings $\mathsf{WIRT}_{\mathrm{XYZ}}$ of
$\mathsf{IRT}_{\mathrm{XYZ}}$ which each take the existence of such a sequence
$\seq{X_{k}}_{k}$ as its hypothesis in place of there being arbitrarily many
disjoint rays. For example, for all the $\mathsf{IRT}_{\mathrm{XYZ}}$ which
are consequences of $\Sigma^{1}_{1}\text{-}\mathsf{AC}_{0}$ and so are THAs,
$\mathsf{IRT}_{\mathrm{XYZ}}$ is equivalent to $\mathsf{SCR}_{\mathrm{XYZ}}$
over $\mathsf{RCA}_{0}$ (Corollary \ref{cor:SCR_IRT_equiv}) and so all of them
are also THAs. For the same choices of $XYZ$, $\mathsf{ACA}_{0}$ proves
$\mathsf{WIRT}_{\mathrm{XYZ}}$ over $\mathsf{RCA}_{0}$. While a natural
strengthening of $\mathsf{WIRT}_{\mathrm{XYZ}}$ does imply $\mathsf{ACA}_{0}$
and indeed is equivalent to it (Theorem \ref{thm:nonuniform_WIRT}), we do not
know if $\mathsf{WIRT}_{\mathrm{XYZ}}$ itself implies $\mathsf{ACA}_{0}$. All
we can prove is that it is not a consequence of $\mathsf{RCA}_{0}$ (Theorem
\ref{thm:WIRT_RCA0}).

In the last section (\S \ref{section:open}), we mention some open problems.

\section{Basic Notions and Background\label{basic}}

We begin with basic notions and terminology from graph theory. At times we use
formalizations that are clearly equivalent to more standard ones but are
easier to work with computationally or proof-theoretically. The following two
subsections supply background and basic information about the standard
computational and logical/proof theoretic notions that we use here to measure
the complexity of the graph theorems and constructions that we analyze in the
rest of this paper. Note that we denote the set of natural numbers by $N$ when we may be thinking of them in a model of arithmetic as in \S \ref{axiomh} and by $\mathbb{N}$ when we emphasize that we specifically want the standard natural numbers.

\subsection{Graph Theoretic Notions\label{graphth}}

\begin{defn}
\label{graphs}A \emph{graph} $H$ is a pair $\left\langle V,E\right\rangle $
consisting of a set $V$ (of \emph{vertices}) and a set $E$ of unordered pairs
$\{u,v\}$ with $u\neq v$ from $V$ (called \emph{edges}). These structures are
also called \emph{undirected graphs (or here U-graphs)}. A structure $H$ of
the form $\left\langle V,E\right\rangle $ as above is a \emph{directed graph}
\emph{(or here D-graph) }if $E$ consists of ordered pairs $\left\langle
u,v\right\rangle $ of vertices with $u\neq v$. To handle both cases
simultaneously, we often use $X$ to stand for undirected (U) or directed (D).
We then use $(u, v)$ to stand for the appropriate kind of edge, i.e.\ $\{u,
v\}$ or $\langle u, v \rangle$.

An \emph{\ }$X$-\emph{subgraph }of the $X$-graph $H$ is an $X$-graph
$H^{\prime}=\left\langle V^{\prime},E^{\prime}\right\rangle $ such that
$V^{\prime}\subseteq V$ and $E^{\prime}\subseteq E$. It is an \emph{induced
}$X$\emph{-subgraph }if $E^{\prime}=\{(u,v)\mid u,v\in V^{\prime} \land (u,v)\in
E\}$.
\end{defn}

\begin{defn}
\label{rays}An\emph{\ }$X$-\emph{ray in H} is a pair consisting of an
$X$-subgraph $H^{\prime}=\left\langle V^{\prime},E^{\prime}\right\rangle $ of
$H$ and an isomorphism $f$ from $N$ with edges $(n,n+1)$ for $n\in N$ to
$H^{\prime}$. (Note that this implies that the range of $f$ is the set
$V^{\prime}$.) We say that the \emph{ray begins at} $f(0)$. We also describe
this situation by saying that $H$ contains the $X$-ray $\left\langle
H^{\prime},f\right\rangle $. We sometimes abuse notation by saying that the
sequence $\left\langle f(n)\right\rangle $ of vertices is an $X$-ray in $H$. A \emph{tail} of an $X$-ray is a final segment of said $X$-ray. Similarly we consider \emph{double X-rays }where the isomorphism $f$ is from
the integers $\mathcal{Z}=\{-n,n\mid n\in N\}$ with edges $(z,z+1)$ for
$z\in\mathcal{Z}$. A \emph{tail} of a double $X$-ray is a final segment of said $X$-ray, or an initial segment of said $X$-ray considered in reverse order.

We use $Z$-ray to stand for either a (single) ray ($Z={}$S)
or double ray ($Z={}$D) and so we have, in general, $Z$-$X$-rays or just
$Z$-rays if the type of graph (U or D) is already established. For
brevity, when we describe rays we will often only list their vertices in order
instead of defining $H^{\prime}$ and $f$ explicitly. However the reader should
be aware that we always have $H^{\prime}$ and $f$ in the background.

$H$ \emph{contains }$k$\emph{\ many }$Z$-$X$-\emph{rays} for $k\in N$ if there
is a sequence $\left\langle H_{i},f_{i}\right\rangle _{i<k}$ such that each
$\left\langle H_{i},f_{i}\right\rangle $ is a $Z$-$X$-ray in $H$ (with
$H_{i}=\left\langle V_{i},E_{i}\right\rangle $). $H$ \emph{contains }$k$\emph{\ many disjoint (or vertex-disjoint) }$Z$%
\emph{-}$X$\emph{-rays }if the $V_{i}$ are pairwise disjoint. $H$
\emph{contains }$k$\emph{\ many edge-disjoint }$Z$\emph{-}$X$\emph{-rays }if
the $E_{i}$ are pairwise disjoint. We often use $Y$ to stand for either vertex
(V) or edge (E) as in the following definitions.

An $X$-graph $H$ \emph{contains arbitrarily many Y-disjoint Z-X-rays }if it
contains $k$ many such rays for every $k\in N$.

An $X$-graph $H$ \emph{contains infinitely many Y-disjoint Z-X-rays }if there
is an $X$-subgraph $H^{\prime}=\left\langle V^{\prime},E^{\prime}\right\rangle
$ of $H$ and a sequence $\left\langle H_{i},f_{i}\right\rangle _{i\in N}$ such
that each $\left\langle H_{i},f_{i}\right\rangle $ is a $Z$-$X$-ray in $H$
(with $H_{i}=\left\langle V_{i},E_{i}\right\rangle $) such that the $V_{i}$ or
$E_{i}$, respectively for $Y=V,E$, are pairwise disjoint and $V^{\prime}= \bigcup V_{i}$ and $E^{\prime}= \bigcup E_{i}$.
\end{defn}

\begin{defn}
\label{paths}An $X$\emph{-path} $P$ in an $X$-graph $H$ is defined similarly
to single rays except that the domain of $f$ is a proper initial segment of
$N$ instead of $N$ itself. Thus they are finite sequences of distinct vertices
with edges between successive vertices in the sequence. If $P=\left\langle
x_{0},\ldots,x_{n}\right\rangle $ is a path, we say it is a \emph{path of
length }$n$ between $x_{0}$ and $x_{n}$. Our \emph{notation for truncating and
combining paths} $P=\left\langle x_{0},\ldots,x_{n}\right\rangle $,
$Q=\left\langle y_{0},\ldots,y_{m}\right\rangle $ and $R=\left\langle
z_{0},\ldots,z_{l}\right\rangle $ is as follows: $x_{i}P=\left\langle
x_{i},\ldots,x_{n}\right\rangle $, $Px_{i}=\left\langle x_{0},\ldots
,x_{i}\right\rangle $, and we use concatenation in the natural way, e.g., if
the union of $Px$, $xQy$ and $yR$ is a path, we denote it by $PxQyR$. We treat
rays as we do paths in this notation, as long as it makes sense, writing, for
example, $x_{i}R$ for the ray which is gotten by starting $R$ at an element
$x_{i}$ of $R$; $Rx_{i}$ is the path which is the initial segment of $R$
ending in $x_{i}$ and we use concatenation as for paths as well.
\end{defn}

The starting point of the work in this paper is a theorem of Halin
\cite{halin65} that we call the infinite ray theorem as expressed in \cite[Theorem 8.2.5(i)]{diestel_book}.

\begin{defn}
[Halin's Theorem]\label{irt}\emph{$\mathsf{IRT}_{{}}$, the infinite ray
theorem, }is the principle that every graph $H$ which contains arbitrarily
many disjoint rays contains infinitely many disjoint rays.
\end{defn}

The versions of Halin's theorem which we consider in this paper allow for $H$
to be an undirected or a directed graph and for the disjointness requirement
to be vertex or edge. We also allow the rays to be single or double. The
corresponding versions of Halin's Theorem are labeled as $\mathsf{IRT}%
_{\mathrm{XYZ}}$ for appropriate values of $X,$ $Y$ and $Z$ to indicate
whether the graphs are undirected or directed ($X={}$U or D); whether the
disjointness refers to the vertices or edges ($Y={}$V or E) and whether the
rays are single or double ($Z={}$S or D), respectively, in the obvious way. We
often state a theorem for several or all $XYZ$ and then in the proof use
\textquotedblleft graph\textquotedblright, \textquotedblleft
edge\textquotedblright\ and \textquotedblleft disjoint\textquotedblright%
\ unmodified with the intention that the proof can be read for any of the
cases. This is convenient for minimizing repetition in some of our arguments.

We will also consider restrictions of these theorems to specific families of
graphs. We need a few more notions to define them.

\begin{defn}
\label{treedef}A \emph{tree }is a graph $T$ with a designated element $r$
called its \emph{root }such that for each vertex $v\neq r$ there is a unique
path from $r$ to $v$. A \emph{branch} on (or in) $T$ is a ray that begins at its root.
We denote the set of its branches by $[T]$ and say that $T$ is
\emph{well-founded} if $[T]=\emptyset$ and otherwise it is \emph{ill-founded}.
A \emph{forest} is an \emph{effective disjoint union }of trees, or more
formally, a graph with a designated set $R$ (of vertices called roots) such
that for each vertex $v$ there is a unique $r\in R$ such that there is a path
from $r$ to $v$ and, moreover, there is only one such path. In general, the
\emph{effectiveness} we assume when we take \emph{effective disjoint unions of
graphs} means that we can effectively (i.e.\ computably) uniquely identify each
vertex in the union with the original vertex (and the graph to which it
belongs) which it represents in the disjoint union.

A \emph{directed tree }is a directed graph $T=\left\langle V,E\right\rangle $
such that its \emph{underlying graph} $\hat{T}=\langle V,\hat{E}\rangle$ where
$\hat{E}=\{\{u,v\}\mid\langle u,v\rangle\in E\,\vee\,\langle v,u\rangle\in
E\}$ is a tree. A \emph{directed forest} is a directed graph whose underlying
graph is a forest.
\end{defn}

\begin{defn}
\label{locfin}An $X$-graph $H$ is \emph{locally finite }if, for each $u\in V$,
the set $\{v\in E\mid(u,v) \in E \lor (v,u)\in E\}$ of \emph{neighbors of }$u$ is finite. A locally finite $X$-tree is also called \emph{finitely
branching}. (Note this does not mean there are finitely many branches in the tree.)
\end{defn}

Of course, there are many well known equivalent definitions of trees and
associated notions. We have given one possible set of graph-theoretic
definitions. In the case of undirected graphs our definition is equivalent to
all the standard ones. Readers are welcome to think in terms of their favorite
definition. Note, however, we are restricting ourselves to what would (in set
theory) be called countable trees with all nodes of finite rank. Thus, we
typically think of trees as \emph{subtrees of} $N^{<N}$, i.e.\ the sets of
finite strings of numbers (as vertices) with an edge between $\sigma$ and
$\tau$ if and only if they differ by one being an extension of the other by
one element, e.g. $\sigma\concat k=\tau$.

It does not seem as if there is a single standard definition for directed
graphs being directed trees. We have picked one that seems to be at least
fairly common and works for the only situations for which we consider them in
Theorems \ref{thm:SigmaAC_implies_DED_directed_forests},
\ref{thm:DVD-IRTmc_implies_ISigma11}, and
\ref{thm:DED_DVD_directed_forests_equivalence} and Corollary
\ref{cor:IRTmc_DVD_strictly_implies_SigmaAC}.

\subsection{Computability Hierarchies\label{comph}}

While we may cite results about uncountable graphs, all sets and structures
actually studied in this paper will be countable. Thus for purposes of
defining their complexity, we can think of all of them as being subsets of, or
relations or functions on, $\mathbb{N}$.

We do not give a formal stand alone definition of computability for sets or
functions but assume an at least intuitive grasp of some model of computation
such as by a Turing or Register machine that has unbounded memory and is
allowed to run for unboundedly many steps. (We do provide in \S \ref{axiomh} a
definition via definability in arithmetic that is equivalent to the formal
versions of machine model definitions.) Thus we say a function $f:\mathbb{N}%
\rightarrow\mathbb{N}$ is computable if there is a program for one of these
machines that computes $f(n)$ as output when given input $n$. A set $X$ is
computable if its characteristic function $\mathbb{N}\rightarrow\{0,1\}$ is
computable. Note that as the alphabets or our languages are finite, there are
only countably many programs and as our formation rules are effective, we have
a computable list of the programs and hence one, $\Phi_{e}$, of the partial
functions they compute. (They are only partial as, of course, some programs
fail to halt on some inputs.)

Fundamental to measuring the relative computational complexity of sets or
functions is the notion of machines with oracles and Turing reduction. Given a
set $X$ or function $f$ we consider machines augmented by the ability to
produce $X(n)$ or $f(n)$ if it has already produced $n$. We say that such a
machine is one with an oracle for $X$ or $f$. We then say that $X$ is
\emph{computable from (or Turing reducible to)} $Y$ if there is a machine with
oracle $Y$ which computes $X$ via some \emph{reduction} $\Phi_{e}^{X}$. We
write this as $X\leq_{T}Y$. We say $X$ is of the same (Turing) degree as $Y$,
$X\equiv_{T}Y$, if $X\leq_{T}Y$ and $Y\leq_{T}X$. We use all the same
terminology and notations for functions.

The first level beyond the computable in our basic hierarchy of computable
complexity is given by the halting problem $H=\{e|\Phi_{e}(e)$
\emph{converges}$\}$ that is $H$ is set of $e$ such that the computation of
the $e$th machine $\Phi_{e}$ on input $e$ eventually halts. We then define an
operator on sets $X\longmapsto X^{\prime}=\{e|\Phi_{e}^{X}(e)$ converges$\}$
that is $X^{\prime}$ is the set of $e$ such that the computation of the $e$th
machine with oracle $X$, $\Phi_{e}^{X}$ on input $e$ eventually halts. (It is
easy to see that $H\equiv_{T}\emptyset^{\prime}$.) The crucial fact here is
the undecidability of the halting problem (for every oracle), i.e. for every
$X$, $X^{\prime}$ is strictly above $X$ in terms of Turing computability. The
other basic fact that we need about $\emptyset^{\prime}$ is that it is
\emph{computably enumerable, }i.e. there is a computable function $f$ whose
range is $\emptyset^{\prime}$. If $f(s)=x$ we say that $x$ is \emph{enumerated
in}, or \emph{enters}, $0^{\prime}$ at (stage) $s$. If we view $H$ as defined
by using the empty oracle $\emptyset$, the procedure that takes us from the
halting problem to the Turing jump by replacing $\emptyset$ as oracle by $X$
is an instance of a general procedure called \emph{relativization}. It takes
any computable function or proof about computable functions or degrees (i.e.
ones with oracle $\emptyset$) to the same function, or proof about functions,
computable in $X$ (or degrees above that of $X$). Almost always this procedure
trivially transforms correct proofs with oracle $\emptyset$ to ones with
arbitrary oracle $X$. Typically, this transformation keeps the same programs
doing the required work with any oracle. For example, $X^{\prime}$ is
\emph{computably enumerable in} $X$ (or relative to $X$), i.e. there is a
function $\Phi_{e}^{X}$ whose range is $X^{\prime}$ and this can be taken to
be the same $e$ such that $\Phi_{e}^{\emptyset}$ enumerates $\emptyset
^{\prime}$. We also use $X_{s}^{\prime}$ to denote \emph{the set of numbers
enumerated in (or that have entered) }$X^{\prime}$\emph{ by stage }$s$. This
phenomena of the procedure or result not depending on the particular oracle or
depending in a fixed computable way on some other parameters is described as
its being \emph{uniform }in the oracle or other parameters. We describe an
important example of uniformity in Remark \ref{jumpstrees}.

We can now generate a hierarchy of computational complexity by iterating the
jump operator beginning with any set $X$: $X^{(0)}=X$; $X^{(n+1)}%
=(X^{n})^{\prime}$. While the finite iterations of the jump capture most
construction techniques and theorems in graph theory (and most other areas of
classical countable/separable mathematics), we will be interested in ones that
go beyond such techniques and proofs. The basic idea is that we continue the
hierarchy by iteration into the transfinite while still tying the iteration to
computable procedures.

\begin{defn}
\label{ord}We represent well-orderings or \emph{ordinals} $\alpha$ as
well-ordered relations on $\mathbb{N}$. Typically such \emph{ordinal
notations} are endowed with various additional structure such as identifying
$0$, successor and limit ordinals and specifying cofinal $\omega$-sequences
for the limit ordinals. If we have a representation of $\alpha$ then
restricting the well-ordering to numbers in its domain provides
representations of each ordinal $\beta<\alpha$. We generally simply work with
ordinals and omit concerns about translating standard relations and procedures
to the representation. An ordinal is recursive (in a set $X$) if it has a
recursive (in $X$) representation. For a set $X$ and ordinal (notation)
$\alpha$ computable from $X$, we define the transfinite iterations $X^{(\beta)}$
of the Turing jump of $X$ by transfinite induction on $\beta\leq\alpha$:
$X^{(0)}=X$; $X^{(\beta+1)}=(X^{\beta})^{\prime}$ and for a limit ordinal
$\lambda$, $X^{(\lambda)}=\oplus\{X^{(\beta)}|\beta<\lambda\}=\cup
\{\beta\times X^{(\beta)}|\beta<\lambda\}$ (or as the effective disjoint sum
over the $X^{(\beta)}$ in the specified cofinal sequence in $\lambda$).
\end{defn}

\begin{defn}
\label{hyp}$\HYP(X)$, the collection of all sets \emph{hyperarithmetic in
}$X$ consists of those sets recursive in some $X^{(\alpha)}$ for $\alpha$ an
ordinal recursive in $X$. We say that $Y$ is\emph{ hyperarithmetic in} $X$ or
\emph{hyperarithmetically reducible to }$X$ , $Y\leq_{h}X$ if $Y\in \HYP(X)$.
\end{defn}

These sets too, will be characterized by a definability class in arithmetic in
\S \ref{axiomh}. For now we just note that they clearly go far beyond the
sets computable from the finite iterations of the jump.

The computational strength of our graph theoretic theorems such as $\IRT{}$ is
measured by this hierarchy as we will show that, for every set $X$ and every
set $Y$ hyperarithmetic in $X$, there is a graph $G$ computable from $X$ which
satisfies the hypotheses of $\IRT{}$ but for which any collection of rays
satisfying its conclusion computes $Y$. On the other hand, placing an upper bound on the strength
of $\IRT{}$ requires analyzing its proof and the principles used in it. The
relevant one is a form of the axiom of choice. We define it in the next
subsection along with a general class of such principles, the
theorems/theories of hyperarithmetic analysis which are, computationally, the
primary objects of our analysis in this paper.

We note one important well known basic fact relating the jumps of $X$ to trees
computable from $X$. We will need it for our proofs that $\IRT{}$ and its variants
are computationally complex enough to compute all the sets hyperarithmetic in
any given set $X$ (as the instances of the graphs range over graphs computable
from $X$).

\begin{rmk}
\label{jumpstrees}For any set $X$ and any ordinal $\alpha$ computable from
$X$, there is a sequence $\left\langle T_{\beta}|\beta<\alpha\right\rangle $
computable from $X$ of trees (necessarily) computable from $X$ such that each
tree has exactly one branch $P_{\beta}$ and $P_{\beta}$
is of the same complexity as $X^{(\beta)}$, i.e. $P_{\beta}\equiv_{T}%
X^{(\beta)}$. The procedure for computing this sequence is uniform in $X$ and
the index for the program computing the well ordering $\alpha$ from $X$, i.e.
there is one computable function that when given an oracle for $X$, an
\emph{index for} $\alpha$ (i.e. the $i$ such that $\Phi_{i}^{X}$ is the well
ordering $\alpha$) and a $\beta$ in the ordering, computes the whole sequence
$\left\langle T_{\beta}|\beta<\alpha\right\rangle $ and the indices for the
reductions between $P_{\beta}$ and $X^{(\beta)}$. (See, e.g. \cite[Theorem
2.3]{shore93}). We may also easily assure that the $T_{\beta}$ are effectively
disjoint so that their union is a forest.
\end{rmk}

Some versions of the variations on $\IRT{}$ (see \S \ref{section:maximal}) that call for types of
maximality for the infinite set of disjoint rays are stronger both
computationally and proof theoretically than the $\IRT{XYZ}$ described above.
Their computational strength is captured by a kind of jump operator that goes
beyond all the hyperarithmetic ones. It captures the ability to tell if a
computable ordering is a well-ordering.

\begin{defn}
\label{hypjump}The \emph{hyperjump of} $X$, $\mathcal{O}^{X}$, is the set
$\{e|\Phi_{e}^{X}$ is (the characteristic function of) a subtree of
$\mathbb{N}^{<\mathbb{N}}$ which is well-founded$\}$.
\end{defn}

This operator also corresponds to a syntactically defined level of
comprehension as we note in \S \ref{axiomh}.

\subsection{Logical and Axiomatic Hierarchies\label{axiomh}}

The basic notions from logic that we need here are those of languages,
structures and axiomatic systems and proofs. As we will deal only with
countable sets and structures, we can assume that we are dealing just with the
natural numbers with a way to define and use sets and functions on them. Thus,
at the beginning, we have in mind the natural numbers $\mathbb{N}$ along with
the usual apparatus of the\emph{ language of (first order) arithmetic}, say
$+,\times,<,0$ and $1$ along with the syntax of standard first order logic
(the Boolean connectives $\vee,\wedge$ and $\lnot$; the variables such as $x$
and $y$ ranging over the numbers with the usual quantifiers $\forall x$ and
$\exists y$ as well as the standard equality relation $=$). A \emph{structure}
for this language is a set $N$ along with elements for $0$ and $1$, binary
functions for $+$ and $\times$ and a binary relation for $<$. We also need a
way of talking about subsets of (or functions on) the numbers. We follow the
standard practice in reverse mathematics of using sets and defining functions
in terms of their graphs. So we expand our language by adding on new classes
of (second order) variables such as $X$ and $Y$ and the associated quantifiers
$\forall X$ and $\exists Y$ along with a new relation symbol $\in$ between
numbers and sets.

A structure for this language is one of the form $\mathcal{N}=\left\langle
N,S,+,\times,<,0,1,\in\right\rangle $ where its restriction $\left\langle
N,+,\times,<,0,1\right\rangle $ is a structure for first order arithmetic and
$S\subseteq2^{N}$ is a specified nonempty collection of subsets of $N$
disjoint from $N$, the set of \textquotedblleft numbers\textquotedblright\ of
$\mathcal{N}$, over which the second order quantifiers and variables of our
language range. It is called the \emph{second order part of }$\mathcal{N}$.
The usual membership symbol $\in$ always denotes the standard membership
relation between elements of $N$ and subsets of $N$ that are in $S$ and the
language only allows atomic formulas using $\in$ which are of the form $t\in
X$ for $t$ a term of the first order language and $X$ is a second order
variable. So a sentence $\Theta$\emph{ is true in }$N$, $\mathcal{N}%
\vDash\Theta$, if first order quantification is interpreted as ranging over
$N$, second order quantification ranges over $S$ and the relations and
functions of the language are as described. This specifies the semantics for
second order arithmetic. Note that, following \cite{sim_book}, we do
not take equality for sets to be a primitive relation on this structure. The
notation for it is viewed as being defined by $A=B\leftrightarrow\forall
n(n\in A\leftrightarrow n\in B)$.

Proof theoretic notions deal with all possible structures for the language and
axiom systems to specify what we need in any particular argument. For most of
our purposes and all of the computational ones, one can restrict attention to
\emph{standard models of arithmetic}, i.e.\ ones $\mathcal{N}$ with
$N=\mathbb{N}$ and some $S\subseteq2^{\mathbb{N}}$ with the usual
interpretations of the functions and relations. We generally abbreviate these
structures as $\left\langle \mathbb{N},S\right\rangle$ with $S\subseteq
2^{\mathbb{N}}$, or simply $S$, as all the functions and relations are then fixed.

We view the syntax as one for a two sorted first order logic. So the (first
order) variables $x,y,\ldots$ range over the first sort ($N$) and the second
order ones $X,Y,\ldots$ over the second sort ($S$). We assume any standard
proof theoretic system with the caveat that $=$ is interpreted as true
equality with the equality axioms included only for $N$. For $S$ it is a
relation defined as above. This generates the provability notion $\vdash$ used
above to define our notion of logical strength and equivalences of theories
(sets of sentences often called axioms) as above. We now define the standard
weak base theory $\mathsf{RCA}_{0}$ used to define the logical strength of
mathematical theorems as described above. We then define a few other common
systems that will be used later. The formal details can be found in \cite{sim_book}.

Each axiomatic subsystem of second order arithmetic that we consider contains
the standard basic axioms for $+$, $\times$, and $<$ (which say that $N$ is a
discrete ordered semiring) and an Induction Axiom:

\medskip%
\begin{tabular}
[c]{rl}%
($\mathsf{I}_{0}$) & $(\forall X)((0\in X \land \forall n\,(n\in X\rightarrow
n+1\in X))\rightarrow\forall n\,(n\in X))$.
\end{tabular}
\medskip

Typically axiom systems for second order arithmetic are defined by adding
various types of set existence axioms although at times additional induction
axioms are used as well. In order to define them we need to specify various
standard syntactic classes of formulas determined by quantifier complexity. As
usual, we add to our language bounded quantifiers $\forall x<t$ and $\exists
x<t$ for first order (i.e.\ arithmetic) terms $t$ defined in the standard way.
We typically denote formulas by capital Greek letters except that the indexed
$\Phi_{e}$ and $\Phi_{e}^{X}$ refer, as above, to our fixed enumeration of the
Turing machines and associated partial functions.

\begin{defn}
The $\Sigma_{0}^{0}$ and $\Pi_{0}^{0}$ formulas of second order arithmetic are
just the ones with only bounded quantifiers but we allow parameters for
elements of either $N$ or $S(\mathcal{N})$ when working with a structure
$\mathcal{N}$. Proceeding inductively, a formula $\Phi$ is $\Sigma_{n+1}^{0}$
($\Pi_{n+1}^{0}$) if it is of the form $\exists x\Psi$ ($\forall x\Psi$) where
$\Psi$ is $\Pi_{n}^{0}$ ($\Sigma_{n}^{0}$). We assume some computable coding
of all these formulas (viewed as strings of symbols from our language) by
natural numbers. We say $\Phi$ is \emph{arithmetic }if it is $\Sigma_{n}^{0}$
or $\Pi_{n}^{0}$ for some $n\in\mathbb{N}$. It is $\Sigma_{1}^{1}$ ($\Pi
_{1}^{1}$) if it is of the form $\exists X\Psi$ ($\forall X\Psi$) where $\Psi$
is arithmetic. (One can continue to define $\Sigma_{n}^{1}$ and $\Pi_{n}^{1}$
in the natural way but we will not need to consider such formulas here.) We
say a set $X$ is in one of these classes $\Gamma$ relative to $A$ (i.e.\ with
$A$ as a parameter) if there is a formula $\Psi(n,A)\in\Gamma$ such that $n\in
X\Leftrightarrow\Psi(n,A)$. If $X$ is both $\Sigma_{n}^{i}$ in $A$ and
$\Pi_{n}^{i}$ in $A$ it is called $\Delta_{n}^{i}$ in $A$.
\end{defn}

We mention a few additional standard connections between the syntactic
complexity of the definition of a set $X$ and $X$'s properties in terms of
computability and graph theoretic notions. They can all be found in \cite{rogers}.

\begin{prop}
\label{jumpsformulas}The sets $A^{(n)}$ are $\Sigma_{n}^{0}$ in $A$. A set $X$
is computable in $A$ if and only if it is $\Delta_{1}^{0}$ in $A$. More
generally, it is computable in $A^{(n)}$ if and only if it is $\Delta
_{n+1}^{0}$ in $A$. It is hyperarithmetic in $A$ if and only if it is
$\Delta_{1}^{1}$ in $A$. There is a computable function $f(e,n)$ such that if
$X$ is $\Sigma_{1}^{1}$ in $A$ via the $\Sigma_{1}^{1}$ formula with code $e$
then for every $n$, $\Phi_{f(e,n)}^{A}$ is (the characteristic function of) a
tree $T$ such that $n\in X\Leftrightarrow T$ has a branch.
\end{prop}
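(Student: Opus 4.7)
The plan is to handle the five assertions in sequence, invoking standard recursion-theoretic normal form techniques; since every claim is classical, the proof is mostly a matter of assembling the right normal forms and relativizing.

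For the first assertion, I would prove by induction on $n$ that $A^{(n)}$ is $\Sigma_{n}^{0}$ in $A$. The base case is immediate from the definition of the Turing jump: $e\in A'$ iff $\exists s\,(\Phi_{e,s}^{A}(e){\downarrow})$, where the bounded predicate ``$\Phi_{e,s}^{A}(e){\downarrow}$'' is primitive recursive in $A$ (the $s$-step halting relation). The inductive step follows by relativizing this equivalence, using that a $\Sigma_{1}^{0}(A^{(n)})$ predicate, when $A^{(n)}$ itself is $\Sigma_{n}^{0}(A)$, collapses by standard quantifier manipulations to a $\Sigma_{n+1}^{0}(A)$ predicate.

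For the second assertion I would invoke Kleene's normal form theorem: $x\in X$ iff $\Phi_{e}^{A}(x){\downarrow}=1$ iff $\exists s\,(\Phi_{e,s}^{A}(x){\downarrow}=1)$, giving a $\Sigma_{1}^{0}(A)$ definition; totality of $\Phi_{e}^{A}$ on $\mathbb{N}$ also makes the complement $\Sigma_{1}^{0}(A)$, so $X$ is $\Delta_{1}^{0}(A)$. Conversely, a $\Delta_{1}^{0}(A)$ set is computed by searching in parallel for a $\Sigma_{1}^{0}$ witness of membership and a $\Sigma_{1}^{0}$ witness of non-membership, one of which is guaranteed to appear. The third assertion is then just the relativization of the second to the oracle $A^{(n)}$, combined with the first: $\Delta_{1}^{0}(A^{(n)})$ collapses to $\Delta_{n+1}^{0}(A)$ because $A^{(n)}$ is $\Sigma_{n}^{0}(A)$ (and its complement is $\Pi_{n}^{0}(A)$), so first order quantifiers over it can be absorbed into a single block of the appropriate type.

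The fourth assertion is Kleene's characterization of $\HYP(A)$ as the $\Delta_{1}^{1}(A)$ sets. The ``only if'' direction I would prove by transfinite induction along the recursive well-ordering $\alpha$ used to witness hyperarithmeticity: at each $\beta\leq\alpha$ the set $A^{(\beta)}$ has a $\Sigma_{1}^{1}(A)$ definition (``there exists a sequence $\langle S_{\gamma}\rangle_{\gamma\leq\beta}$ coherently satisfying the jump recursion'') and, by symmetry, a $\Pi_{1}^{1}(A)$ one, hence is $\Delta_{1}^{1}(A)$; any set computable from some $A^{(\beta)}$ inherits this. The converse (every $\Delta_{1}^{1}(A)$ set is hyperarithmetic) is the harder Kleene--Souslin direction, which I would cite from Rogers rather than reprove; the key ingredient is that $\Sigma_{1}^{1}$ well-founded trees have recursive ordinal bounded by the ordinals recursive in the parameters.

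The fifth assertion is the tree normal form for $\Sigma_{1}^{1}$ formulas, which is where the main bookkeeping lies. Starting from a $\Sigma_{1}^{1}(A)$ formula $\Psi(n)\equiv \exists X\,\Theta(n,X,A)$ with $\Theta$ arithmetic, I would first put $\Theta$ into $\Pi_{1}^{0}$ matrix form by Skolemizing the existential number quantifiers into $X$ (enlarging $X$ to code Skolem witnesses); this yields an equivalent formula $\exists X\,\forall k\,R(n,X{\upharpoonright}k,A{\upharpoonright}k,k)$ with $R$ primitive recursive. The tree $T_{n}$ of all finite strings $\sigma$ such that $\forall k\leq|\sigma|\,R(n,\sigma{\upharpoonright}k,A{\upharpoonright}k,k)$ holds is then computable uniformly from $A$, $n$, and the code $e$ of $\Psi$, and has an infinite branch iff $n\in X$. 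The function $f(e,n)$ returns the index for this construction; the main (routine) obstacle is carefully verifying uniformity of the Skolemization and the tree construction across all $\Sigma_{1}^{1}$ codes $e$.
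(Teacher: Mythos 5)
The paper does not give a proof of Proposition~\ref{jumpsformulas} at all: the sentence immediately preceding it says ``They can all be found in \cite{rogers},'' and that citation stands in for a proof. So there is no paper argument to compare against; your sketch is the standard textbook route (essentially what Rogers gives), and it is correct in all essentials.

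One small point worth tightening in part 3. Your sketch explicitly handles the direction ``computable in $A^{(n)}$ implies $\Delta_{n+1}^{0}(A)$'' (substitute the $\Sigma_n^0$ and $\Pi_n^0$ descriptions of $A^{(n)}$ into a $\Sigma_1^0(A^{(n)})$ formula and collapse). But the converse direction, ``$\Delta_{n+1}^{0}(A)$ implies computable in $A^{(n)}$,'' is not just a substitution: it requires that $A^{(n)}$ is not merely $\Sigma_n^0(A)$ but $\Sigma_n^0(A)$\emph{-complete} (under many-one reducibility), so that a $\Pi_n^0(A)$ matrix inside a $\Sigma_{n+1}^0(A)$ formula can be decided by an oracle query to $A^{(n)}$, yielding $\Sigma_1^0(A^{(n)})$. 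Your part 1 only establishes membership of $A^{(n)}$ in $\Sigma_n^0(A)$, so you should either strengthen part 1 to completeness (which the same induction gives almost for free) or note the completeness separately. This is the only substantive gap; the Skolemization in part 5 and the transfinite induction in part 4 are sketched at the right level of detail, and citing the Kleene--Souslin direction rather than reproving it is exactly the right call.
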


The first system for analyzing the proof theoretic strength of theorems and
theories in reverse mathematics is just strong enough to prove the existence
of the computable sets and so supplies us with all the usual computable
functions such as pairing $\left\langle n,m\right\rangle $ or more generally
those coding finite sequences as numbers. In particular, it provides the
predicates defining the (codes $e$ of) the partial computable functions
$\Phi_{e}^{X}$ and the relations saying the computation $\Phi_{e}^{X}(n)$
halts in $s$ many steps with output $y$. Thus we have the basic tools to
define and discuss Turing reducibility and the Turing jump. It is our weak
base theory and is assumed to be included in every system we consider.

($\mathsf{RCA}_{0}$) Recursive Comprehension Axioms: In addition to the ones
mentioned above, its axioms include the schemes of recursive (generally called
$\Delta_{1}^{0}$) comprehension and $\Sigma_{1}^{0}$ induction:

\medskip%
\begin{tabular}
[c]{rl}%
($\Delta_{1}^{0}$-$\mathsf{CA}$) & $\forall n\,(\Phi(n)\leftrightarrow
\Psi(n))\rightarrow\exists X\,\forall n\,(n\in X\leftrightarrow\Phi(n))$ for
all\\
& $\Sigma_{1}^{0}$ formulas $\Phi$ and $\Pi_{1}^{0}$ formulas $\Psi$ in which
$X$ is not free.\\
($\mathsf{I}\Sigma_{1}^{0}$) & $(\Phi(0) \land \forall n\,(\Phi(n)\rightarrow\Phi
(n+1)))\rightarrow\forall n\,\Phi(n)$ for all $\Sigma_{1}^{0}$ formulas $\Phi$.
\end{tabular}
\medskip

Note that these formulas may have free set or number variables. As usual, the
existence assertion $\exists X....$ of the axiom is taken to mean that for
each instantiation of the free variables (by numbers or sets, as appropriate,
called \emph{parameters}) there is an $X$ as described. We take this for
granted as well as the restriction that the $X$ is not free in the rest of the
formula in all of the set existence axioms of any of our systems.

$\RCA_0$ suffices
to define and manipulate the basic notions of computability listed at the
beginning of Section \ref{comph}. The standard
models of $\RCA_0$ are just those whose second order part is closed under
Turing reduction and disjoint union ($X\oplus Y=\{\left\langle
0,x\right\rangle \mid x\in X\}\cup\{\left\langle 1,y\right\rangle \mid y\in
Y\}$). As suggested above what are now often called the computable in $A$ sets
which are, as mentioned above, the $\Delta_{1}^{0}$ in $A$ sets, were
originally called the sets recursive in $A$. Hence the terminology in
{{\textsf{RCA}}$_{0}$. }

Any axiom system we consider from now on will be assumed to include $\RCA_0$. If we have some axiom scheme or principle $\mathsf{ABC}$ we typically denote the system formed by adding it to
$\RCA_0$ by $\mathsf{ABC}_{0}$. We next move up to the
arithmetic comprehension axiom and its system. \medskip

($\mathsf{ACA}$) $\exists X\,\forall n\,(n\in X\leftrightarrow\Phi(n))$
for every arithmetic formula $\Phi$. \medskip

As mentioned above the $X^{(n)}$ are defined by a $\Sigma_{n}^{0}$ formula
with $X$ as a parameter. So one can show that this system is equivalent (over
{{\textsf{RCA}}$_{0}$)} to the totality of the Turing jump operator, i.e.\ for
every $X$, $X^{\prime}$ exists. Its standard models are those of
{\textsf{RCA}}$_{0}$ whose second order part is also closed under Turing
jump. It is also equivalent (in the sense of reverse mathematics) to
K\"{o}nig's Lemma, which asserts that every finitely branching tree with paths of arbitrarily
long length has a branch.

In general, we say one system of axioms $S$ is \emph{logically or reverse
mathematically reducible} to another $T$ over one $R$ if $R\cup T\vdash\psi$
for every sentence $\psi\in S$. Note that $S$ and/or $T$ may be a single
sentence or theorem. We say that $S$ and $T$ are \emph{equivalent} over $R$ if each
is reducible to the other. If no system $R$ is specified we assume that
{{\textsf{RCA}}$_{0}$ is intended.}

As we will not deal with it, we have omitted the formal definition of the
usual system \textsf{WKL}{$_{0}$ which falls strictly between {\textsf{RCA}%
}$_{0}$ and {\textsf{ACA}}$_{0}$. It is characterized by the restriction of
K\"{o}nig's Lemma to trees that are subsets of }$2^{<N}$, the tree of finite
binary strings under extension.

The next system of the five basic ones after {{\textsf{ACA}}$_{0}$ }is
{{\textsf{ATR}}}$_{0}$. Its defining axiom says that arithmetic
comprehension can be iterated along any countable well-order and so implies
the existence of the sets hyperarithmetic in $X$ for each $X$ but is
computationally stronger than this assumption. As usual the formal definition
can be found in \cite{sim_book}.

Instead, we formally describe the computationally defined class of
theorems/theories that are the main focus of this paper and include several
variations of $\mathsf{IRT}_{{}}$. The definition is semantic, not axiomatic
and involves only standard models. (Indeed by Van Wesep \cite[2.2.2]{vw_thesis},
there can be no axiomatic characterization of this class in second order arithmetic.)

\begin{defn}
\label{thadef}A sentence (theory) $T$ is a \emph{theorem (theory) of
hyperarithmetic analysis (THA)} if

\begin{enumerate}
\item For every $X\subseteq\mathbb{N}$, $\seq{\mathbb{N},\mathrm{HYP}(X)} \vDash T$ and

\item For every $S\subseteq2^{\mathbb{N}}$, if $\seq{\mathbb{N},S} \vDash T$ and
$X\in S$ then $\mathrm{HYP}(X)\subseteq S$.
\end{enumerate}
\end{defn}

It is worth pointing out some of the relations between THAs and $\ATR_0$. THAs
are defined by only using standard models and iterations of the jump over true
well orderings. $\ATR_0$ talks about all models of $\RCA_0$ and asserts the
existence of iterates of the jump over all orderings that appear well-founded
in the model. Thus for standard models it implies the second clause of the
definition of THAs (Definition \ref{thadef}). On the other hand, there is a
recursive linear order with no hyperarithmetic infinite descending sequence
and so it seems well-founded in $\mathrm{HYP}$ but it has a well-founded part
longer than every recursive ordinal. Thus iterating the jump along this
ordering would yield a set strictly Turing above every hyperarithmetic set. In
particular, $\mathrm{HYP}$ is not a model of $\ATR_0$ which therefore is not
a THA. (See e.g.\ \cite[V.2.6]{sim_book}.)

The last of the standard axiomatic systems, $\Pi_{1}^{1}$-{\textsf{CA}}$_{0}$,
is characterized by the comprehension axiom for $\Pi_{1}^{1}$ formulas:

\medskip($\Pi_{1}^{1}$-{\textsf{CA}}) $\exists X\,\forall k\,(k\in
X\leftrightarrow\Phi(k))$ for every $\Pi_{1}^{1}$ formula $\Phi(k)$.

\begin{rmk}
\label{pi11O}The hyperjump, $T^{X}$, is clearly a $\Pi_{1}^{1}$ set with
parameter $X$. In fact, every $\Pi_{1}^{1}$ set with parameter $X$ is
reducible to $T^{X}$. Indeed, there is a computable function $f(e,n)$ such
that for every index $e$ for a $\Pi_{1}^{1}$ formula $\Psi(n)$ with parameter
$X$ and every $n$, $\Psi(n)\Leftrightarrow f(e,n)\in T^{X}$ \cite[Corollary 16.XX(b)]{rogers}. Thus $\Pi_{1}^{1}$-$\mathsf{CA}_{0}$
corresponds to closure under the hyperjump. We will see it appear as
equivalent to a version of $\mathsf{IRT}_{{}}$ where we ask for a maximal set
of disjoint rays in Theorem \ref{thm:MIRT_equiv_Pi11-CA}.
\end{rmk}

For this paper, the most important other existence axiom is a restricted form
of the axiom of choice.

\medskip($\Sigma_{1}^{1}$-\textsf{AC}) $\forall n\exists X\Phi(n,X)\rightarrow
\exists X\forall n\Phi(n,X^{[n]})$ where $\Phi$ is arithmetic and
$X^{[n]}=\{m\mid\langle n,m\rangle\in X\}$ is the $nth$ \emph{column of} $X$.
\medskip

A more common but clearly equivalent version of this axiom allows $\Phi$ to be
$\Sigma_{1}^{1}$. A variant commonly called weak-$\Sigma_{1}^{1}$-\textsf{AC} (introduced in Definition \ref{defn:unique_choice} as unique-$\Sigma_{1}^{1}%
$-\textsf{AC}) requires the corresponding $\Phi$ to be
arithmetic. To make these and other choice axioms uniform we have adopted the
format required elsewhere and equivalent here to be used for all the
variations. The system $\Sigma_{1}^{1}$-\textsf{AC}$_{0}$ {is well known to be a THA
(essentially in Kreisel \cite{kreisel}). Thus it is strictly stronger than
$\ACA_0$. On the other hand, it is strictly weaker than $\ATR_0$. (It is known that $\ATR_0 \vdash \Sigma_{1}^{1}%
$-\textsf{AC}$_{0}$ \cite[V.8.3]{sim_book} but the converse fails as $\Sigma_{1}^{1}$-\textsf{AC} is true in HYP while $\ATR_0$, as we have pointed out, is not.) This choice axiom plays a crucial role in our analysis because we provide
the upper bound on the strength of most of our theorems by showing that they
follow from $\Sigma_{1}^{1}$-\textsf{AC}$_{0}$. This provides the
computational upper bound for being a THA as any consequence of a THA must also satisfy Definition \ref{thadef}(1). Thus the bulk of our proofs for the computational
complexity of the theorems we study consist of showing that they imply
Definition \ref{thadef}(2), i.e.\ closure under \textquotedblleft
hyperarithmetic in\textquotedblright.

Over the past fifty years, several other logical axioms have been shown to be
THA. We will discuss some of them in \S \ref{section:IRT_vs_other_theories}.
However, as we discussed in \S \ref{section:intro}, only one somewhat obscure
purely mathematical theorem was previously known to be a THA. We provide
several more in this paper (Theorem \ref{thm:variants_IRT_hyp_analysis},
Corollary \ref{cor:variants_IRTmc_hyp_analysis} and Theorem
\ref{thm:DED_DVD_directed_forests_equivalence}). We also introduce a new
logical axiom, finite-$\Sigma_{1}^{1}\text{-}\mathsf{AC}$ (Definition
\ref{defn:finite_choice}) which is a THA as well.

For those interested in the proof theory and so nonstandard models, we also at
times explicitly consider the induction axiom at the same $\Sigma_{1}^{1}$ level.

\medskip($\mathsf{I}\Sigma_{1}^{1}$) $(\Phi(0) \land \forall n(\Phi(n)\rightarrow
\Phi(n+1)))\rightarrow\forall n\Phi(n)$ for every $\Sigma_{1}^{1}$ formula
$\Phi$. \medskip

This axiom does not imply the existence of any infinite sets and is, of
course, true in every standard model. Thus the readers interested only in the
computational complexity of the Halin type theorems can safely ignore these considerations.

It is in the nature of reverse mathematics that sentences and sets of
sentences of second order arithmetic are often viewed in several different ways. In different contexts they may be seen as mathematical or logical principles, axioms, axiom schemes, theories, theorems or the like. We point out what may be a less familiar terminology that is currently popular. What might be seen as a typical axiom or theorem asserting that for every $X$ of some sort there is a $Y$ with some relation to $X$, i.e.\ a sentence of the form $\forall X(\Phi(X) \rightarrow \exists Y\Psi(X,Y))$ may be called a principle. With this terminology come the notions of an instance of the principle, i.e.\ an $X$ satisfying $\Phi$ and a solution for $X$, i.e.\ a $Y$ such that $\Psi(X,Y)$ holds.

\section{$\mathsf{IRT}_{}$ and Hyperarithmetic Analysis}

\label{section:IRT_hyp_analysis}

We devote this section to the proof of

\begin{thm}
\label{thm:IRT_hyp_analysis} $\mathsf{IRT}_{{}}$ is a theorem of
hyperarithmetic analysis.
\end{thm}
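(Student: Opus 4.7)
The plan is to establish the two defining conditions of Definition \ref{thadef} separately: first, the \emph{upper bound} that $\langle \N, \HYP(X) \rangle \models \IRT{}$ for every $X$; and second, the \emph{lower bound} that any $S$ with $\langle \N, S \rangle \models \IRT{}$ and $X \in S$ must contain all of $\HYP(X)$.

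For the upper bound, I would show $\SigmaAC \vdash \IRT{}$ over $\RCA_0$ by implementing Andreae's recursive construction of disjoint rays (cf.\ \cite[Theorem 8.2.5(i)]{diestel_book}): at each stage one needs to pick a ray so that arbitrarily many disjoint rays still exist avoiding everything built so far (together with certain safety conditions), and $\SigmaAC$ supplies the uniform choice of such witnesses across all stages. Since $\SigmaAC$ is itself a THA (Kreisel \cite{kreisel}), we automatically have $\langle \N, \HYP(X) \rangle \models \SigmaAC \vdash \IRT{}$ for every $X$, giving clause (1) of Definition \ref{thadef}.

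For the lower bound, fix $S$ with $\langle \N, S \rangle \models \IRT{}$, a set $X \in S$, and a target $Y \in \HYP(X)$. Choose a limit ordinal $\alpha$ computable from $X$ with $Y \leq_T X^{(\alpha)}$, together with an $X$-computable cofinal $\omega$-sequence $\beta_0 < \beta_1 < \cdots$ in $\alpha$. By Remark \ref{jumpstrees} there is an $X$-computable sequence $\seq{T_{\beta_n}}_{n \in \N}$ of trees, each with a unique branch $P_{\beta_n} \equiv_T X^{(\beta_n)}$. Let $G$ be their effective disjoint union, viewed as an undirected graph. Since $S$ is closed under Turing reducibility we have $G \in S$; moreover, for each $k$ the first $k$ unique branches furnish $k$ vertex-disjoint rays, so $G$ satisfies the hypothesis of $\IRT{}$. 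Applying $\IRT{}$ in $\langle \N, S \rangle$ gives an infinite sequence $\seq{R_i}_{i \in \N} \in S$ of disjoint rays in $G$. Reading off a tail of each $R_i$ recovers the unique branch of the tree that contains $R_i$, and these branches together compute $X^{(\alpha)}$, hence $Y$; so $Y \in S$.

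The principal obstacle is the cofinality argument in the lower bound: one must check that any infinite disjoint family $\seq{R_i}$ really hits cofinally many of the trees $T_{\beta_n}$. This rests on the uniqueness-of-branch property supplied by Remark \ref{jumpstrees}: every infinite simple path in such a tree must eventually merge with the unique branch (because every subtree hanging off that branch is well-founded), so any two disjoint rays would otherwise share a common tail and fail to be disjoint. Hence the $R_i$ must lie in pairwise distinct trees, giving an infinite set of indices $n \in \N$ and therefore a cofinal subset of $\omega$; the corresponding $\beta_n$ are then cofinal in $\alpha$, and the standard limit-stage identification $X^{(\alpha)} = \bigoplus_{n} X^{(\beta_n)}$ (relative to the fixed cofinal sequence used to define $X^{(\alpha)}$ in Definition \ref{ord}) lets the solution compute $X^{(\alpha)}$, completing clause (2).
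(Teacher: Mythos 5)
Your upper-bound half is essentially the paper's route: prove $\SigmaAC \vdash \IRT{}$ and invoke the fact that $\langle \mathbb{N},\HYP(X)\rangle \models \SigmaAC$. (The paper is careful about one formalization point your sketch glosses over: choice is used exactly once, to collect for each $n$ a family of $n$ disjoint rays into a single sequence $S$, and the remaining recursion must be shown to run at a fixed complexity level --- each constructed ray is a finite path followed by a tail of some $S_j$, so the whole construction is computable in $(G\oplus S)'$ --- since $\ACA_0$ does not support recursions whose steps increase complexity. This is a repairable omission, not the main problem.)

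The lower bound, however, has a genuine gap. You form $G=\bigsqcup_n T_{\beta_n}$ for a single limit ordinal $\alpha$ with $Y\leq_T X^{(\alpha)}$ and assert that ``the first $k$ unique branches furnish $k$ vertex-disjoint rays, so $G$ satisfies the hypothesis of $\IRT{}$.'' But the hypothesis must hold \emph{in the model} $\langle\mathbb{N},S\rangle$: the statement ``$G$ contains $k$ many disjoint rays'' is a second-order existential whose witness sequence must be an element of $S$. Since each $T_{\beta_n}$ has a unique branch, any ray in $T_{\beta_n}$ shares a tail with $P_{\beta_n}\equiv_T X^{(\beta_n)}$, so any witness in $S$ to ``$k$ disjoint rays in $G$'' already computes $X^{(\beta_n)}$ for some $n\geq k-1$ --- which is precisely the kind of set whose membership in $S$ you are trying to prove. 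As written the verification of the hypothesis is circular, and without it $\IRT{}$ holds vacuously for $G$ in the model and yields nothing. The paper avoids this by an external transfinite induction: first it shows closure under the Turing jump using trees whose unique branches are \emph{computable} (coding when $\emptyset'\upharpoonright n$ has settled), so the witnesses are automatically in $S$; then at a limit $\lambda$ it assumes inductively that $S$ already contains $X^{(\beta)}$ for all $\beta<\lambda$, which puts the finite sequences of branches $P_{\alpha_m}$, $m\leq n$, into $S$ and thereby legitimizes applying $\IRT{}$ inside the model. Your argument needs this induction (or some substitute argument placing the witnesses in $S$); picking one limit ordinal at the top and quoting Remark \ref{jumpstrees} does not suffice.
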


In this section we consider only vertex-disjoint single rays in undirected
graphs, as in the statement of $\mathsf{IRT}$. The proof of Theorem
\ref{thm:IRT_hyp_analysis} will be split into two parts. The first part
verifies that $\mathsf{IRT}_{{}}$ satisfies the second clause of Definition
\ref{thadef}.

\begin{thm}
\label{thm:omega_model_IRT_hyp_closed} Every standard model of $\mathsf{RCA}%
_{0} + \mathsf{IRT}_{}$ is closed under hyperarithmetic reduction.
\end{thm}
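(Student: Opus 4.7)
The plan is to show, for every $X \in S$ and every $X$-recursive ordinal $\alpha$, that $X^{(\alpha)} \in S$. Since $S$ is closed under $\leq_T$ (because $S \models \RCA_0$) and $\HYP(X)$ is the union over $X$-recursive $\alpha$ of $\{Y : Y \leq_T X^{(\alpha)}\}$, this will give the desired closure $\HYP(X) \subseteq S$. I proceed by external transfinite induction on $\alpha$, the central construction being a forest of trees supplied by Remark \ref{jumpstrees} used in the limit case.

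For a limit ordinal $\alpha$ recursive in $X$, pick an $X$-computable strictly increasing sequence $(\beta_n)_n$ cofinal in $\alpha$ and set $F = \bigsqcup_n T_{\beta_n}$, where each $T_{\beta_n}$ is the $X$-computable tree from Remark \ref{jumpstrees} with unique branch $P_{\beta_n} \equiv_T X^{(\beta_n)}$, with the trees taken effectively disjoint. Then $F \in S$. The inductive hypothesis gives $X^{(\beta_{k-1})} \in S$ for each $k$ (since $\beta_{k-1} < \alpha$), and the tuple $\langle P_{\beta_0},\ldots,P_{\beta_{k-1}}\rangle$ coding $k$ pairwise disjoint rays in $F$ has Turing degree that of $X^{(\beta_{k-1})}$ and so lies in $S$; hence the hypothesis of $\IRT{}$ holds in $S$ for $F$. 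Applying $\IRT{}$ within $S$ yields an infinite sequence $\langle R_k\rangle \in S$ of pairwise disjoint rays. Since each $T_{\beta_n}$ has a unique infinite branch, two disjoint rays cannot share a tree (any ray in such a tree is eventually a tail of the unique branch); hence each $R_k$ lies in a unique $T_{\beta_{n_k}}$ with $k \mapsto n_k$ injective, so $\{n_k\}$ is unbounded in $\mathbb{N}$. From $R_k$ and $X$ one uniformly extracts a tail of $P_{\beta_{n_k}}$ and computes $X^{(\beta_{n_k})}$ via the reductions in Remark \ref{jumpstrees}; given $m$, searching for the first $k$ with $n_k \geq m$ and reducing to $X^{(\beta_m)}$ (jumps are Turing-monotone) is uniform in $m$. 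Therefore $\bigoplus_m X^{(\beta_m)} \leq_T \langle R_k\rangle \oplus X$, placing $X^{(\alpha)}$ in $S$.

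The main technical obstacle is the successor case $\alpha = \gamma + 1$: a direct forest argument cannot bootstrap because its $k \geq 2$ witnesses already require $X^{(\gamma+1)}$, the very set being sought. I expect this to be resolved by first establishing $\IRT{} \vdash \ACA_0$ over $\RCA_0$ through a separate graph construction in which the hypothesis of $\IRT{}$ is witnessed by (non-uniformly chosen) $X$-computable finite families of pairwise disjoint rays while every infinite disjoint sequence must decode the halting set $X'$. Jump closure of $S$ then promotes $X^{(\gamma)} \in S$ to $X^{(\gamma+1)} \in S$, supplying the successor step of the transfinite induction and completing the proof.
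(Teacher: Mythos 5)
Your proposal is correct and follows essentially the same route as the paper's proof: the limit case via the forest $\bigsqcup_n T_{\beta_n}$ of effectively disjoint trees from Remark \ref{jumpstrees}, each with a unique branch $P_{\beta_n}\equiv_T X^{(\beta_n)}$, so that witnesses to the hypothesis of $\IRT{}$ exist in $S$ by the induction hypothesis while any solution recovers branches in cofinally many trees and hence computes $\bigoplus_m X^{(\beta_m)}\equiv_T X^{(\alpha)}$; and the successor case reduced to showing $\IRT{}\vdash\ACA_0$ so that $S$ is jump-closed. You correctly identify the needed structure for the $\ACA_0$ step (a forest whose trees each have a unique $X^{(\gamma)}$-computable branch, non-uniformly, with any infinite disjoint family decoding $X^{(\gamma+1)}$) but leave that construction as an expectation rather than supplying it; the paper realizes it with the trees $T_n$ of strings $s^{\smallfrown}0^t$ keyed to enumeration stages of $\emptyset'\upharpoonright n$, and the rest of your argument needs no change.
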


\begin{proof}
Fix a standard model $\mathcal{M}$ of $\mathsf{RCA}_{0}+\mathsf{IRT}$. First,
we show that $\mathcal{M}$ contains $\emptyset^{\prime}$. By relativizing the
proof, it follows that $\mathcal{M}$ is closed under Turing jump.

For each $n$, consider the tree $T_{n}\subseteq N^{<N}$ consisting of all
strings of the form $s^{\smallfrown}0^{t}$ such that some number below $n$ is
enumerated into $\emptyset^{\prime}$ at stage $s$, and either $t\leq s$ or
$\emptyset_{s}^{\prime}\upharpoonright n=\emptyset_{t}^{\prime}\upharpoonright
n$. Observe that $T_{n}$ has a unique computable branch $\{s^{\smallfrown
}0^{t}\mid t\in N\mathbb{\}}$, where $s$ is the smallest number such that
$\emptyset^{\prime}\upharpoonright n=\emptyset_{s}^{\prime}\upharpoonright n$.

Consider the disjoint union $\bigsqcup_{n} T_{n}$. Observe that $\bigsqcup_{n}
T_{n}$ satisfies the premise of $\mathsf{IRT}_{}$ (in $\mathcal{M}$), because
each $T_{n}$ has a computable branch. Apply $\mathsf{IRT}_{}$ to
$\bigsqcup_{n} T_{n}$ to obtain a sequence $\seq{R_{i}}_{i}$ of disjoint rays in
$\bigsqcup_{n} T_{n}$. Each $R_{i}$ is contained in some $T_{n}$. We can,
uniformly in $i$, extend or truncate $R_{i}$ to the unique branch $P_{n}$ of
$T_{n}$. Hence $\seq{R_{i}}_{i}$ computes a sequence of infinitely many distinct
branches $P_{n}$, which in turn computes longer and longer initial segments of
$\emptyset^{\prime}$. This proves that $\mathcal{M}$ contains $\emptyset
^{\prime}$.

Next we show that, for each computable limit ordinal $\lambda$, if $\mathcal{M}$ contains $\emptyset^{(\alpha)}$ for every $\alpha<\lambda$ then $\mathcal{M}$ contains
$\emptyset^{(\lambda)}$. (Again the desired result follows by relativization.)
By Proposition \ref{jumpstrees}, there is a computable sequence $\seq{T_{\beta}}_{\beta<\lambda}$ of trees such that each tree has exactly one branch
$P_{\beta}\equiv_{T}\emptyset^{(\beta)}$ with these reductions computed
uniformly. Fix an increasing computable sequence $\seq{\alpha_{n}}_{n}$ which is
cofinal in $\lambda$ and consider the disjoint union $\bigsqcup_{n}%
T_{\alpha_{n}}$. Observe that $\bigsqcup_{n}T_{\alpha_{n}}$ satisfies the
premise of $\mathsf{IRT}_{{}}$ (in $\mathcal{M}$): for each $n$,
$\emptyset^{(\alpha_{n})}$ computes the branches $P_{\alpha_{m}} $ for $m\leq
n$. Apply $\mathsf{IRT}_{{}}$ to $\bigsqcup_{n}T_{\alpha_{n}}$ to obtain a
sequence $\seq{R_{i}}_{i}$ of disjoint rays in $\bigsqcup_{n}T_{\alpha_{n}}$. As
before, $\seq{R_{i}}_{i}$ computes a sequence of infinitely many distinct branches
$P_{\alpha_{n}}$, and hence a sequence of infinitely many distinct
$\emptyset^{(\alpha_{n})}$. Each $\emptyset^{(\alpha_{n})}$ uniformly computes
$\emptyset^{(\alpha_{m})}$ for $m\leq n$, so we conclude that $\seq{R_{i}}_{i}$
computes $\bigoplus_{m}\emptyset^{(\alpha_{m})} $ as desired.
\end{proof}

It follows that $\mathsf{IRT}$ is not provable in $\mathsf{ACA}_{0}$, despite
the apparent similarity between $\mathsf{IRT}$ and a compactness result. (Indeed, $\IRT{}$ is not even provable in $\Delta_{1}^{1}$-$\CA_0$ (Theorem \ref{thm:Delta11-CA_not_imply_IRT}).)

Next, we present essentially the proof of $\mathsf{IRT}_{{}}$ attributed to
Andreae (see \cite[Theorem 8.2.5 and bottom of pg.\ 275]%
{diestel_book}) and then analyze it with an eye to the axioms which can be
used to formalize it. We then use this analysis to complete the proof of
Theorem \ref{thm:IRT_hyp_analysis}.

The key combinatorial lemma implicit in Andreae's proof is:

\begin{lem}
\label{lem:more_disjoint_rays} Given disjoint rays $\left\langle
R_{i}\right\rangle _{i<n}$ and disjoint rays $\left\langle S_{j}\right\rangle
_{j<n^{2}+1}$ there are $n+1$ disjoint rays $R_{0}^{\prime},\dots
,R_{n}^{\prime}$ such that for each $i<n$, $R_{i}$ and $R_{i}^{\prime}$ start
at the same vertex.
\end{lem}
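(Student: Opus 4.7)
My plan is to prove the lemma by a dichotomy followed by a pigeonhole-driven rerouting. Write $r_i$ for the starting vertex of $R_i$.

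First, I would ask whether some $S_{j^*}$ has a tail $T$ disjoint from $\bigcup_{i<n} R_i$. If so, I simply set $R'_i := R_i$ for $i<n$ and let $R'_n := T$ (reindexed as a ray from its first vertex); pairwise disjointness is automatic and the starting-vertex condition is trivial.

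Otherwise, every $S_j$ meets $\bigcup_{i<n}R_i$ in infinitely many vertices, so for each $j$ there is some $i(j)<n$ with $|S_j \cap R_{i(j)}|=\infty$. Since $n^2+1>n\cdot n$, the pigeonhole principle yields an index $i^*<n$ together with $n+1$ distinct indices $j_0,\ldots,j_n$ such that $|S_{j_k}\cap R_{i^*}|=\infty$ for every $k\le n$. The plan is to keep $R'_i := R_i$ for $i\neq i^*$, and to build $R'_{i^*}$ and $R'_n$ as concatenations of a piece of $R_{i^*}$ with tails of two suitably chosen $S_{j_k}$: pick intersection points $w_0, w_1 \in R_{i^*}$ with $w_k \in S_{j_k}$ and $w_0$ preceding $w_1$ along $R_{i^*}$, and set
\[
R'_{i^*} := r_{i^*}\,R_{i^*}\,w_0 \concat w_0\,S_{j_0}, \qquad R'_n := w_1\,S_{j_1}.
\]

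The main obstacle is guaranteeing pairwise disjointness of the constructed rays in the second case, since the tail $w_k S_{j_k}$ can re-enter the initial segment of $R_{i^*}$ or hit some $R_i$ with $i\neq i^*$. To rule out re-entries into $r_{i^*}R_{i^*}w_k$, I would choose each $w_k$ so that its position on $S_{j_k}$ is maximal among those intersection points of $R_{i^*}\cap S_{j_k}$ whose $R_{i^*}$-position is at most that of $w_k$; such a choice always exists within the infinite set $R_{i^*}\cap S_{j_k}$ by a monotone-subsequence (Ramsey) argument, and forces $w_k S_{j_k}$ to avoid $r_{i^*}R_{i^*}w_k$. Disjointness from $\bigcup_{i\neq i^*}R_i$ is where the redundancy of having $n+1$ candidates $S_{j_k}$ (rather than only the two we will use) is spent: by iterating the initial dichotomy among the $S_{j_k}$ against the smaller union $\bigcup_{i\neq i^*}R_i$, or by a secondary pigeonhole, one can pass to two of the $S_{j_k}$ whose tails also miss this union. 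This is precisely why the hypothesis is $n^2+1$ rather than a smaller constant, and it is the calibration of these two nested pigeonholes that I expect to be the most delicate part of the proof.
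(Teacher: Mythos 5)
Your case (1) (some $S_{j^*}$ has a tail disjoint from $\bigcup_{i<n}R_i$) is fine, but the construction in case (2) has a genuine gap that no amount of pigeonhole bookkeeping will close. You propose to keep $R'_i:=R_i$ for every $i\neq i^*$ and to obtain disjointness from $\bigcup_{i\neq i^*}R_i$ by choosing, among the $n+1$ candidates $S_{j_0},\dots,S_{j_n}$, two whose tails miss that union. But consider the situation where \emph{every} $S_j$ meets \emph{every} $R_i$ infinitely often (already possible for $n=2$, e.g.\ in a grid-like graph). Then no tail of any $S_j$ is disjoint from $\bigcup_{i\neq i^*}R_i$, so neither $w_0S_{j_0}$ nor $w_1S_{j_1}$ can avoid the rays you are keeping, and the argument collapses. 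The secondary pigeonhole you invoke has nothing to choose from: the redundancy of having $n+1$ candidate $S_{j_k}$ does not buy you a tail that misses the fixed $R_i$'s, because in this configuration such a tail simply does not exist. The underlying problem is structural: the lemma may require rerouting \emph{several} of the $R_i$ simultaneously, whereas your construction reroutes only one.

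This is exactly the point where the paper's proof brings in Menger's theorem. After an initial greedy phase that fixes $R'_i:=R_i$ for those $i$ whose $R_i$ intersects at most $n$ of the surviving $S_q$'s, one is left with a set $I$ of indices (possibly all of $\{0,\dots,n-1\}$) each of whose $R_i$ meets many surviving $S_q$'s. For each $i\in I$ one truncates $R_i$ at the first vertex $z_i$ by which it has met $|I|$-many surviving rays; a counting argument using $|\mathcal{S}|\geq |I|^2+1$ produces a surviving ray disjoint from $\bigcup_{i\in I}R_iz_i$ to serve as $R'_n$; and then Menger's theorem, applied to the finite graph built from the initial segments $R_iz_i$ and initial segments of the remaining $S_q$'s, yields $|I|$ \emph{disjoint} paths from the starting vertices $x_i$ to the tails of distinct $S_q$'s. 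These disjoint connecting paths are what let all the $R_i$ with $i\in I$ be rerouted at once without colliding with each other, with $R'_n$, or with the unchanged $R_i$. Your proposal has no mechanism playing this role; replacing it by iterated pigeonhole cannot work, since (as the example above shows) the desired disjoint tails may not exist at all without simultaneously changing all the $R_i$ past their finite initial segments. Incidentally this also means your reading of the $n^2+1$ bound is off: it is not there to let you pick two candidates out of $n+1$, but to guarantee a surviving $S_q$ avoiding $\bigcup_{i\in I}R_iz_i$ after the greedy discards, and to feed the Menger separation bound.
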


Before proving Lemma \ref{lem:more_disjoint_rays}, let us use it to prove
$\mathsf{IRT}$.

\begin{proof}
[Proof of $\mathsf{IRT}_{{}}$ assuming Lemma \ref{lem:more_disjoint_rays}%
]Given a graph which has arbitrarily many disjoint rays, we build by recursion
on $n\geq1$ sequences $\left\langle R_{i}^{n}\right\rangle _{i<n}$ of disjoint
rays with initial segments $P_{i}^{n}$ of length $n$ such that $P_{i}^{n+1}$
is $P_{i}^{n}$ followed by one more vertex for $i<n$. The required infinite
sequence of disjoint rays will then be given by $R_{i}=\bigcup\{P_{i}^{n}\mid n>0\}$.

Suppose that we have $\left\langle R_{i}^{n}\right\rangle _{i<n}$ and
$\left\langle P_{i}^{n}\right\rangle _{i<n}$. By assumption, let $S_{0}%
,\dots,S_{2n^{2}}$ be a sequence of disjoint rays. Discard all rays $S_{j}$
which contain a vertex of some $P_{i}^{n}$. There are at most $n^{2}$ many of
them, so by discarding and renumbering if necessary we are left with
$S_{0},\dots,S_{n^{2}}$.

For each $i<n$, let $x_{i}$ denote the first vertex on $R_{i}^{n}$ after
$P_{i}^{n}$. Apply Lemma \ref{lem:more_disjoint_rays} to $\left\langle
x_{i}R_{i}^{n}\right\rangle _{i<n}$ and $S_{0},\dots,S_{n^{2}}$. We obtain
$n+1$ disjoint rays $R_{0}^{\prime},\dots,R_{n}^{\prime}$ such that for each
$i<n$, $R_{i}^{\prime}$ begins at vertex $x_{i}$. Now let $R_{i}^{n+1}%
=P_{i}^{n}x_{i}R_{i}^{\prime}$ for $i<n$ and $R_{n}^{n+1}=R_{n}^{\prime}$.
These are disjoint by construction. This completes the inductive step of the
construction of the $R_{0}^{n},\dots,R_{n-1}^{n}$ and so provides the required
witnesses for $\mathsf{IRT}$.
\end{proof}

It remains to prove Lemma \ref{lem:more_disjoint_rays}. The key ingredient is
Menger's theorem for finite graphs. If $A$ and $B$ are disjoint sets of
vertices in a graph, we say that $P$ is an \emph{$A$-$B$ path} if $P$ starts
with some vertex in $A$ and ends with some vertex in $B$. A set of vertices
$S$ \emph{separates $A$ and $B$} if any $A$-$B$ path contains at least one
vertex in $S$.

\begin{thm}
[{Menger, see \cite[Theorem 3.3.1]{diestel_book}}]\label{thm:menger_finite}
Let $G$ be a finite graph. If $A$ and $B$ are disjoint sets of vertices in
$G$, then the minimum size of a set of vertices which separate $A$ and $B$ is
equal to the maximum size of a set of disjoint $A$-$B$ paths.
\end{thm}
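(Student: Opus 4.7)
The plan is to prove the two inequalities separately. The easy direction, that the maximum number $p$ of disjoint $A$-$B$ paths is at most the minimum size $k$ of an $A$-$B$ separator, is immediate: any separator must contain a vertex of each path in a disjoint family, and these vertices are distinct, so $p \leq k$. The bulk of the work is the reverse inequality $p \geq k$, which I would establish by induction on $|E(G)|$. The base case $E(G) = \emptyset$ is trivial: since $A \cap B = \emptyset$, there are no $A$-$B$ paths and no vertex is needed to separate, so $p = k = 0$.

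For the inductive step, fix any edge $e = xy$ and let $k'$ denote the minimum $A$-$B$ separator size in $G - e$. Observe that $k' \geq k - 1$, because any $A$-$B$ separator $S$ of $G - e$ becomes an $A$-$B$ separator $S \cup \{x\}$ of $G$ (blocking the one new route through the restored edge $e$). If $k' \geq k$, then the inductive hypothesis applied to $G - e$ (which has strictly fewer edges) produces $k$ disjoint $A$-$B$ paths, and these remain valid in $G$. So the interesting case is $k' = k - 1$.

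In that case, pick a minimum separator $S$ in $G - e$ of size $k - 1$; then both $S \cup \{x\}$ and $S \cup \{y\}$ are minimum $A$-$B$ separators in $G$ of size $k$. Form the \emph{$A$-side} $G_A$ as the subgraph induced by $S \cup \{x\}$ together with every vertex reachable from $A$ in $(G - e) - S$, and the \emph{$B$-side} $G_B$ symmetrically using $y$ and $B$. Since $S$ separates $A$ from $B$ in $G - e$, the vertex $y$ lies on the $B$-side and $x$ on the $A$-side, so $y \notin V(G_A)$ and $x \notin V(G_B)$; in particular each of $G_A$ and $G_B$ omits at least one vertex of $G$ (and the edge $e$), so $|E(G_A)|, |E(G_B)| < |E(G)|$. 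I would then verify that $S \cup \{x\}$ is a minimum $A$-to-$(S \cup \{x\})$ separator in $G_A$ (a strictly smaller one, combined with $S \cup \{y\}$'s role on the $B$-side, would give an $A$-$B$ separator of size $< k$ in $G$, a contradiction), and likewise for the $B$-side. Applying the induction hypothesis to each side yields $k$ disjoint $A$-$(S \cup \{x\})$ paths in $G_A$ and $k$ disjoint $(S \cup \{y\})$-$B$ paths in $G_B$. Finally I would concatenate these pieces along $S$ (one $G_A$-path and one $G_B$-path meet at each $s \in S$) and along the edge $e$ (joining the $G_A$-path ending at $x$ to the $G_B$-path starting at $y$), producing the required $k$ disjoint $A$-$B$ paths in $G$.

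The main obstacle is the minimality argument for $S \cup \{x\}$ inside $G_A$: without it the induction on $G_A$ would only produce fewer than $k$ paths and the concatenation would fail. The remaining verification — that the concatenated objects are simple paths rather than walks — is routine, following from the pairwise disjointness of the $G_A$-paths and of the $G_B$-paths together with the fact that a $G_A$-path and a $G_B$-path can share vertices only in the common boundary $S \cup \{x, y\}$.
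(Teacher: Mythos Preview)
The paper does not supply its own proof of Menger's theorem; it simply cites Diestel \cite[Theorem 3.3.1]{diestel_book} and later remarks (in the proof of Theorem~\ref{thm:Sigma11-AC_implies_IRT}(ii)) that Diestel's \emph{first} proof goes through in $\mathsf{RCA}_0$. Your argument is exactly that first proof --- induction on the number of edges, deleting an edge $e=xy$, and in the critical case $k'=k-1$ splitting the graph along a $(k-1)$-separator $S$ of $G-e$ into an $A$-side and a $B$-side --- so your approach matches what the paper invokes. One small point worth tightening: the claim that ``$x$ lies on the $A$-side and $y$ on the $B$-side'' deserves a word of justification (since $S$ fails to separate in $G$, some $A$--$B$ path through $e$ avoids $S$, and after relabelling $x,y$ if necessary its $A$-segment reaches $x$ in $(G-e)-S$ while its $B$-segment reaches $y$); once that is said, the minimality check and the concatenation go through as you describe.
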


We now present the proof of Lemma \ref{lem:more_disjoint_rays}.

\begin{proof}
[Proof of Lemma \ref{lem:more_disjoint_rays}]Suppose we are given $n$ disjoint
rays $R_{0},\dots,R_{n-1}$ and $n^{2}+1$ disjoint rays $S_{0},\dots,S_{n^{2}}%
$. First, define the set
\[
\{\langle i,q\rangle \mid R_{i}\text{ intersects }S_{q}\}.
\]

Then we perform the following recursive procedure. At each step, check if
there is some $i < n$ such that $R^{\prime}_{i}$ has not been defined and
$R_{i} $ intersects at most $n$ many rays $S_{q}$ which have not been
discarded. If there is no such $i$, we end the procedure. Otherwise, find the
least such $i $ and do the following:

\begin{enumerate}
\item discard all rays $S_{q}$ which intersect $R_{i}$;

\item define $R^{\prime}_{i} = R_{i}$.
\end{enumerate}

After the procedure is complete, let $I$ be the set of $i<n$ for which
$R_{i}^{\prime}$ has not been defined. Let $\mathcal{S}$ be the set of rays
$S_{q}$ which have not been discarded. Let $m=|I|$. We observe that
$|\mathcal{S}|\geq m^{2}+1$, because
\[
(n^{2}+1)-(n-m)n=mn+1\geq m^{2}+1.
\]

Next, for each $i\in I$, let $z_{i}$ be the first vertex on $R_{i}$ such that
$R_{i}z_{i}$ meets exactly $m$ many rays in $\mathcal{S}$. (Each $z_{i}$
exists by construction of $I$.)

Observe that the finite set $\bigcup_{i\in I}R_{i}z_{i}$ meets at most $m^{2}
$ many rays in $\mathcal{S}$. Since $|\mathcal{S}|\geq m^{2}+1$, we may pick
some ray in $\mathcal{S}$ which does not meet $\bigcup_{i\in I}R_{i}z_{i}$. We
define $R_{n}^{\prime}$ to be said ray. Then, discard all rays in
$\mathcal{S}$ which do not meet $\bigcup_{i\in I}R_{i}z_{i}$.

Finally, we use Menger's theorem to define $R_{i}^{\prime}$ for each $i\in I
$. For each $i\in I$, let $x_{i}$ denote the first vertex of $R_{i}$. For each
$q$ such that $S_{q}$ remains in $\mathcal{S}$, let $y_{q}$ be the first
vertex on $S_{q}$ such that $y_{q}S_{q}$ and $\bigcup_{i\in I}R_{i}z_{i}$ are
disjoint. Then consider the following finite sets of vertices:
\begin{align*}
X  &  =\{x_{i} \mid i\in I\}\\
Y  &  =\{y_{q} \mid S_{q}\in\mathcal{S}\}\\
H  &  =\bigcup_{i\in I}R_{i}z_{i}\cup\bigcup_{S_{q}\in\mathcal{S}}S_{q}y_{q}.
\end{align*}

We want to apply Menger's theorem to $X,Y \subseteq H$. Towards that end, we
claim that $X$ cannot be separated from $Y$ in $H$ by fewer than $m$ vertices.

Suppose that $A\subseteq H$ and $|A|<m$. Since $|I|=m$ and $\{R_{i} \mid i\in I\}$
is disjoint, there is some $i\in I$ such that $R_{i}$ does not meet $A$. Next,
since $R_{i}z_{i}$ meets $m$ many disjoint rays in $\mathcal{S}$, there is
some $q$ such that $S_{q}\in\mathcal{S}$ and $R_{i}z_{i}$ meets $S_{q}$, but
$S_{q}$ does not meet $A$. Let $z$ be any vertex in both $R_{i}z_{i}$ and
$S_{q}$. Then $R_{i}zS_{q}y_{q}$ is a path in $H$ from $x_{i} $ to $y_{q}$
which does not meet $A$. This proves our claim.

By Menger's theorem, there are $m$ many disjoint $X$-$Y$ paths in $H$. Then,
for each $i\in I$, define $R_{i}^{\prime}$ by starting from $x_{i}$, then
following the $X$-$Y$ path given by Menger's theorem to some $y_{q}$, and
finally following $S_{q}$.

We have constructed a collection $R^{\prime}_{0},\dots,R^{\prime}_{n}$ of
rays. It is straightforward to check that they are disjoint, and that for each
$i < n$, $R_{i}$ and $R^{\prime}_{i}$ start at the same vertex.
\end{proof}

We now analyze these proofs from a reverse mathematical perspective to show
that $\mathsf{IRT}_{{}}$ follows from the THA $\Sigma_{1}^{1}$-$\mathsf{AC}%
_{0}$ and so also satisfies the first clause of Definition \ref{thadef}.

\begin{thm}
\label{thm:Sigma11-AC_implies_IRT} i) $\mathsf{IRT}_{{}}$ and $\Sigma_{1}^{1}%
$-$\mathsf{AC}_{0}$ each implies $\mathsf{ACA}_{0}$.
\end{thm}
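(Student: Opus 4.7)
The statement (i) asserts two separate implications over $\RCA_0$: that $\IRT{}$ implies $\ACA_0$, and that $\Sigma_1^1\text{-}\AC_0$ implies $\ACA_0$. Since $\ACA_0$ is equivalent over $\RCA_0$ to the assertion that the Turing jump $A'$ exists for every set $A$, it suffices in each case to derive the existence of arbitrary jumps.

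For the first implication, the plan is to formalize within $\RCA_0+\IRT{}$ the construction already given in the proof of Theorem \ref{thm:omega_model_IRT_hyp_closed}, rather than appealing only to standard models. Given a set $A$, relativize the tree construction: the sequence $\langle T_n^A\rangle_n$ is uniformly $A$-computable, and its disjoint union is a single $A$-computable graph $G_A$ definable by $\Delta_1^0$-comprehension relative to $A$. To verify the premise of $\IRT{}$ for $G_A$, one must show in $\RCA_0$ that for each $k$ there exist $k$ disjoint rays in $G_A$, furnished by the unique branches of $T_0^A,\dots,T_{k-1}^A$. Each such branch is specified by a single natural number recoverable from a finite initial segment of $A'$; the existence of such initial segments as numerical codes follows from $\Sigma_1^0$-induction. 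Applying $\IRT{}$ to $G_A$ then yields an infinite sequence of disjoint rays, and by extending or truncating each ray to the unique branch of the tree containing it, one obtains a computable sequence of infinitely many branches $P_n$, hence longer and longer initial segments of $A'$, and so $A'$ itself. Since $A$ was arbitrary, this gives $\ACA_0$.

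For the second implication, given an arithmetic formula $\phi(n)$ with arbitrary set and number parameters, consider the arithmetic matrix
$$\Psi(n,X)\equiv (\phi(n)\land 0\in X)\lor (\neg\phi(n)\land 0\notin X).$$
Classical logic yields $\forall n\,\exists X\,\Psi(n,X)$, with $X=\{0\}$ or $X=\emptyset$ serving as a witness at each $n$. Since every arithmetic formula is in particular $\Sigma_1^1$, $\Sigma_1^1\text{-}\AC_0$ provides a set $Y$ with $\Psi(n,Y^{[n]})$ for every $n$. Then $\Delta_1^0$-comprehension yields $\{n:0\in Y^{[n]}\}=\{n:\phi(n)\}$, establishing arithmetic comprehension.

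The main technical point lies in the first implication: one must check that the tree construction of Theorem \ref{thm:omega_model_IRT_hyp_closed}, presented there for standard models, can be carried out formally in $\RCA_0+\IRT{}$, and in particular that the premise of $\IRT{}$ for $G_A$ is provable using only $\Sigma_1^0$-induction together with $\Delta_1^0$-comprehension relative to $A$. The second implication is a standard immediate consequence of the formulation of $\Sigma_1^1\text{-}\AC_0$.
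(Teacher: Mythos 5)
Your proof of the first implication ($\mathsf{IRT}\to\mathsf{ACA}_0$) is essentially the paper's own argument: the paper likewise observes that the proof of the first part of Theorem \ref{thm:omega_model_IRT_hyp_closed} formalizes in $\mathsf{RCA}_0$, with the one point to check being that for each $n$, $\mathsf{RCA}_0$ proves the existence of a stage $s$ with $\emptyset'\upharpoonright n = \emptyset'_s\upharpoonright n$ (via bounded $\Sigma^0_1$-comprehension, \cite[II.3.9]{sim_book}), which is precisely your $\Sigma^0_1$-induction observation needed to verify the premise of $\mathsf{IRT}$ on $G_A$. For the second implication, however, you take a genuinely different route. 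The paper reuses the same tree construction, noting that $\Sigma^1_1\text{-}\mathsf{AC}_0$ directly supplies the sequence of branches $\langle P_n\rangle_n$ in the $T_n$'s and hence yields $\emptyset'$ — a unified argument in which both implications fall out of a single graph-theoretic gadget. You instead give the standard, self-contained derivation: encode $\phi(n)$ via the arithmetic matrix $\Psi(n,X)$, trivially satisfy $\forall n\,\exists X\,\Psi(n,X)$ by classical case analysis, apply $\Sigma^1_1\text{-}\mathsf{AC}_0$ (the arithmetic case of which is exactly what the paper's formulation allows), and read off $\{n : \phi(n)\}$ by $\Delta^0_1$-comprehension from the choice set $Y$. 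Both proofs are correct; yours is arguably more elementary and independent of the tree machinery, while the paper's version is economical in-context since the trees are already on the table and are used again immediately afterward. One small remark: the paper's choice to route the second implication through the trees also foreshadows the $\mathsf{SCR}/\mathsf{WIRT}$ decomposition in \S\ref{section:WIRT}, where isolating exactly the choice step that produces the sequence of branches becomes the point; your direct proof would not serve that later purpose as naturally, but it is perfectly adequate here.
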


ii) $\Sigma_{1}^{1}$-$\mathsf{AC}_{0}$ implies $\mathsf{IRT}$. Hence for
every $Y\subseteq\mathbb{N}$, $\mathrm{HYP}(Y)$ satisfies $\mathsf{IRT}$.

\begin{proof}
i) The proof of the first part of Theorem \ref{thm:omega_model_IRT_hyp_closed}
essentially shows that $\mathsf{IRT}_{{}}$ implies $\mathsf{ACA}_{0}$. One point to note is that for each $n$, $\mathsf{RCA}_{0}$ proves
there is some $s$ such that $\emptyset^{\prime}\upharpoonright n=\emptyset_{s}^{\prime}\upharpoonright n$ (using e.g.\ \cite[II.3.9]{sim_book}). Now
note that the same argument also proves the fact (essentially in
{\cite{kreisel}) that} $\Sigma_{1}^{1}$-$\mathsf{AC}_{0}$ implies
$\mathsf{ACA}_{0}$ as it directly supplies the sequence of branches $P_{n}$ in
$T_{n}$.

ii) The proof of $\mathsf{IRT}_{{}}$ presented above is easily seen to be one
in $\mathsf{ACA}_{0}$, except for two points. First, each step $n$ of the
induction assumed we had available a sequence $S_{0},\dots,S_{n^{2}}$ of
disjoint rays in our graph. However, all we know is that for each $n$, there
is some collection of disjoint rays of size $n$. To access this information
for each $n$ in a recursive construction, we require that there is a single
sequence $R_{n}$ such that the $n$th entry of the sequence is a
collection of disjoint rays of size $n$. Such a sequence can be obtained using
the axiom of choice. In this case, since the predicate \textquotedblleft there
exists $n$ many disjoint rays\textquotedblright\ is $\Sigma_{1}^{1}$, such a
sequence can be obtained using $\Sigma_{1}^{1}$-$\mathsf{AC}_{0}$. Therefore
we may assume we have a sequence $S=\seq{\seq{S_{j}^{n}} _{j<n}}_{n>0}$ with each
$\seq{S_{j}^{n}}_{j<n}$ a sequence of disjoint rays and
the graph $G$ consisting of all the vertices and edges occurring in any
$S_{j}^{n}$. We begin our construction with $R_{0}^{1}=S_{0}^{1}$.

The second point is the iterated application of Lemma
\ref{lem:more_disjoint_rays}. The proof of that Lemma can be done
in $\mathsf{ACA}_{0}$. (In particular, one can check that Menger's theorem is provable in $\mathsf{RCA}_{0}$ by following the first proof for it given in {\cite[Theorem 3.3.1]{diestel_book}}.) However, here we need a bit more information as
generally we cannot, in $\mathsf{ACA}_{0}$, carry out recursive constructions
that increase complexity (even by some fixed number of jumps) at each step. We
can only carry out recursions where each step is done computably in some set
of the model. So it suffices to show that the constructions for all the
instances of Lemma \ref{lem:more_disjoint_rays} needed in its iterations in
the proof can be done computably in $(G\oplus S)^{\prime}$.

The crucial observation is that, by an induction starting with $R_{0}%
^{1}=S_{0}^{1}$, we may take each $R_{i}^{n}$ to be of the form $PQ$ where $P$
is a finite path in $G$ and $Q$ is a tail of one of the $S_{j}$. To see this
simply note that each ray $R_{k}^{\prime}\neq R_{k}$ constructed in the Lemma
starts with an initial segment of some $R_{i}$; it continues with a finite path
in $G$ and ends with a tail of some $S_{j}^{2n^{2}+1}$. Thus computably in
$(G\oplus S)^{\prime}$ we can at every stage find one of these descriptions
that provides the disjoint rays needed for the next stage of the recursion.
Thus the sequences $\left\langle \left\langle R_{i}^{n}\right\rangle
_{i<n}\right\rangle _{n>0}$ and $\left\langle \left\langle P_{i}%
^{n}\right\rangle _{i<n}\right\rangle _{n>0}$ are computable from $(G\oplus
S)^{\prime}$. The sequence $\left\langle
\bigcup\{P_{i}^{n}\mid n>0\}\right\rangle _{i\in N}$ is then also computable from
$(G\oplus S)^{\prime}$ and computing the formal presentation of infinitely
many disjoint rays then takes only one more jump. This completes the analysis
of our proof of $\mathsf{IRT}$ in $\Sigma_{1}^{1}$-$\mathsf{AC}_{0}$.
\end{proof}


Theorem \ref{thm:IRT_hyp_analysis} follows from Theorems
\ref{thm:omega_model_IRT_hyp_closed} and \ref{thm:Sigma11-AC_implies_IRT}.

We will establish some implications and nonimplications between $\mathsf{IRT}%
_{{}}$ and other THAs in
\S \ref{section:IRT_vs_other_theories}.

\section{Variants of $\mathsf{IRT}_{}$ and Hyperarithmetic Analysis}

\label{section:variants_IRT}

In this section, we show that at least five of the eight principles
$\mathsf{IRT}_{\mathrm{XYZ}}$ are THAs:

\begin{thm}
\label{thm:variants_IRT_hyp_analysis} All single-ray variants of
$\mathsf{IRT}_{}$ (i.e., $\mathsf{IRT}_{\mathrm{XYS}}$) and $\mathsf{IRT}%
_{\mathrm{UVD}}$ are theorems of hyperarithmetic analysis.
\end{thm}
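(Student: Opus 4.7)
The plan is to establish both clauses of Definition \ref{thadef} for each of the five variants by combining the reductions already proved in this section with light modifications of the arguments in Section \ref{section:IRT_hyp_analysis}.

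For clause (2) (closure under hyperarithmetic reducibility), Propositions \ref{prop:IRT_DYZ_implies_IRT_UYZ} and \ref{prop:IRT_DEZ_implies_IRT_DVZ} give the chain $\IRT{DES} \to \IRT{DVS} \to \IRT{}$ over $\RCA_0$, so any standard model of $\RCA_0$ satisfying $\IRT{DVS}$ or $\IRT{DES}$ also satisfies $\IRT{}$ and is therefore closed under hyperarithmetic reducibility by Theorem \ref{thm:omega_model_IRT_hyp_closed}. For $\IRT{UES}$, the construction in that theorem transfers essentially verbatim: the graph $\bigsqcup_n T_n$ is a disjoint union of trees each having a unique branch, so any two rays contained in the same $T_n$ share a tail and in particular are not edge-disjoint; the same enumeration argument then yields $\emptyset'$ and, by relativization and the limit-stage argument, every $\HYP(X)$. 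For $\IRT{UVD}$ I would replace each $T_n$ by the tree $T_n^*$ obtained by gluing two copies of $T_n$ at their roots. Each copy contributes a unique branch, so $T_n^*$ has a unique double ray, namely the one through the root built from the two branches, and $\bigsqcup_n T_n^*$ then satisfies the hypothesis of $\IRT{UVD}$; a solution selects infinitely many of the $T_n^*$ and either half of each selected double ray recovers the branch $P_n$ of the corresponding $T_n$.

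For clause (1) (truth in $\HYP(X)$ for every $X$), I would show each of the five variants is provable in $\SigmaAC$ by adapting the argument of Theorem \ref{thm:Sigma11-AC_implies_IRT}(ii). As there, $\SigmaAC$ delivers a sequence $\seq{S^n}_n$ in which $S^n$ lists $n$ disjoint rays of the appropriate type; the Andreae-style recursion is then run relative to this sequence, and at each stage one applies an analogue of Lemma \ref{lem:more_disjoint_rays}. For the edge-disjoint variants one substitutes the edge version of Menger's theorem, for the directed variants its directed analogue, and for $\IRT{UVD}$ one extends the finite approximations to each target double ray in both directions at each stage. As in the original argument, each stage is computable in $(G \oplus S)'$, so the full construction stays within $\ACA_0$ relative to the chosen sequence and hence is carried out in $\SigmaAC$.

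The main obstacle is formulating and verifying the correct analogue of Lemma \ref{lem:more_disjoint_rays} in each setting, especially for $\IRT{UVD}$: the finite approximations to the target double rays must be extended on both sides while keeping the two-sided extensions mutually disjoint, and the supply of auxiliary rays must be large enough to absorb discards on either end. The rerouting steps in the edge-disjoint and directed variants are also more delicate, requiring respectively the edge-Menger and directed-Menger theorems along with care that the paths produced respect the designated starting vertices and edge orientations. Once these combinatorial lemmas are in place, the remainder of the proof follows the template of Section \ref{section:IRT_hyp_analysis} with only notational changes.
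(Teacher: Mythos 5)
Your plan for clause (2) matches the paper's. The paper (Theorem \ref{thm:IRT_variants_ACA_0_and_omega_model_IRT_variants_hyp_closed}) observes, as you do, that the forests of Theorem \ref{thm:omega_model_IRT_hyp_closed} already have the property that each constituent tree has a unique branch and hence no two edge-disjoint rays, so the argument covers $\IRT{UES}$ unchanged; for the double-ray variants, the paper modifies each tree by appending a disjoint computable branch of new vertices, where you instead glue two copies of each $T_n$ at the root. Both modifications give a tree with a unique double ray from which the original branch is uniformly recoverable, so your variant is sound, though the paper's leaves half of each double ray trivially computable. And the reductions $\IRT{DES}\to\IRT{DVS}\to\IRT{UVS}$ dispose of the directed single-ray cases exactly as you say.

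For clause (1) your route diverges substantially, and this is where the gap lies. You propose to run a direct Andreae-style recursion for all five variants, using edge-Menger for the edge-disjoint cases and directed-Menger for the directed ones, and you correctly flag that the work is concentrated in formulating and proving an analogue of Lemma \ref{lem:more_disjoint_rays} in each setting. But for the edge-disjoint variants you never state that analogue, and it is genuinely delicate: in an edge-disjoint collection the rays may freely share vertices, so the steps in Lemma \ref{lem:more_disjoint_rays} that hinge on vertex-disjointness (the count of discarded $S_j$, the definition of the cut-off vertices $z_i$, the choice of $y_q$, and the verification that the Menger paths start at $m$ \emph{distinct} vertices $x_i$) do not transfer by merely swapping in edge-Menger. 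The paper avoids this entirely: for $\IRT{XES}$ it first uses $\Sigma^1_1$-$\mathsf{AC}_0$ to obtain a locally finite subgraph with the same property (Lemma \ref{lem:locally_finite_subgraph}), passes to the line graph (Lemma \ref{lem:E_ray_in_G_to_V_ray_in_L(G)}), applies the already-proved \emph{vertex}-disjoint Andreae argument there, and pulls the rays back via Lemma \ref{lem:locally_finite_V_ray_in_L(G)_to_E_ray_in_G}, which crucially needs local finiteness (witness the star-graph counterexample noted in the paper). This reduction lets the paper reuse Lemma \ref{lem:more_disjoint_rays} unchanged, and then get $\IRT{DVS}$ and $\IRT{UVS}$ for free from $\IRT{DES}$ via Propositions \ref{prop:IRT_DYZ_implies_IRT_UYZ} and \ref{prop:IRT_DEZ_implies_IRT_DVZ}. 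Your double-ray plan for $\IRT{UVD}$ does match Theorem \ref{thm:Sigma11-AC_implies_UVD} (two-sided extensions, inflated supply of auxiliary rays), but for the edge-disjoint cases your proposal, as written, leaves the central combinatorial lemma unproved and is not yet a complete argument; you should either supply and verify that lemma in the edge-disjoint setting or adopt the paper's line-graph-plus-locally-finite reduction.
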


$\mathsf{IRT}_{\mathrm{UVS}}$ and $\mathsf{IRT}_{\mathrm{UVD}}$ were proved by
Halin \cite{halin65, halin70}. $\mathsf{IRT}_{\mathrm{UES}}$ is an exercise in
\cite[8.2.5(ii)]{diestel_book}. $\mathsf{IRT}_{\mathrm{DVS}}$ and
$\mathsf{IRT}_{\mathrm{DES}}$ may be folklore.

Of the other three variants, $\mathsf{IRT}_{\mathrm{DED}}$ and $\mathsf{IRT}%
_{\mathrm{DVD}}$ are open problems of graph theory (\cite{bcp15} and Bowler,
personal communication). We do, however, have interesting results about these
principles when restricted to directed forests (Theorem
\ref{thm:DED_DVD_directed_forests_equivalence}, Corollary
\ref{cor:IRTmc_DVD_strictly_implies_SigmaAC}). The other one, $\mathsf{IRT}%
_{\mathrm{UED}}$, was proved by Bowler, Carmesin, Pott \cite{bcp15} using
structural results about ends. We hope to analyze its strength in future work.

The proof of Theorem \ref{thm:variants_IRT_hyp_analysis} consists of several
variations of the proof of Theorem \ref{thm:IRT_hyp_analysis}. One of which
($\mathsf{IRT}_{\mathrm{DES}}$) requires some additional ideas.

In order to minimize repetition, we establish some implications between some
variants of $\mathsf{IRT}_{{}}$ over $\mathsf{RCA}_{0}$. The proofs of each of
these reductions follow the same basic plan. To deduce $\mathsf{IRT}%
_{\mathrm{XYZ}}$ from $\mathsf{IRT}_{\mathrm{X^{\prime}Y^{\prime}Z^{\prime}}}$
we provide computable maps $g$, $h$ and $k$ which, provably in $\mathsf{RCA}%
_{0}$, take $X$-graphs $G$ to $X^{\prime}$-graphs $G^{\prime}$, $Y$-disjoint
$Z$-rays or sets of $Y$-disjoint $Z$-rays in $G$ to $Y^{\prime}$-disjoint
$Z^{\prime}$-rays or sets of $Y^{\prime}$-disjoint $Z^{\prime}$-rays in
$G^{\prime}$, and $Y^{\prime}$-disjoint $Z^{\prime}$-rays or sets of
$Y^{\prime}$-disjoint $Z^{\prime}$-rays in $G^{\prime}$ to $Y$-disjoint $Z$-rays or sets of $Y$-disjoint $Z$-rays in $G$, respectively. These functions are
designed to take witnesses of the hypothesis of $\mathsf{IRT}_{\mathrm{XYZ}}$
in $G$ to witnesses of the hypothesis of $\mathsf{IRT}_{\mathrm{X^{\prime
}Y^{\prime}Z^{\prime}}}$ in $G^{\prime}$ and witnesses to the conclusion of
$\mathsf{IRT}_{\mathrm{X^{\prime}Y^{\prime}Z^{\prime}}}$ in $G^{\prime}$ to
witnesses to the conclusion of $\mathsf{IRT}_{\mathrm{XYZ}}$ in $G$. Clearly
it suffices to provide such computable maps to establish the desired reduction
in $\mathsf{RCA}_{0}$. (Those familiar with Weihrauch reducibility will recognize that these arguments establish Weihrauch reductions between certain problems corresponding to variants of $\IRT{}$.)

Unless otherwise noted all definitions and proofs in this section are in
$\mathsf{RCA}_{0}$.

\begin{lem}
\label{lem:DYZ_implies_UYZ} Given an undirected graph $G$, we can uniformly
compute a directed graph $G^{\prime}$ and mappings between Z-rays in $G$ and
Z-rays in $G^{\prime}$ which preserve Y-disjointness.
\end{lem}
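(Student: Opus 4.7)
The plan is to replace each undirected edge of $G$ by a small directed gadget whose structure forces any traversal, in either direction, to use a distinguished central edge. Concretely, for every $\{u,v\}\in E(G)$ I would introduce two fresh vertices $m_{uv}^{1}, m_{uv}^{2}$ and put into $G'$ the five directed edges
\[
\langle u,m_{uv}^{1}\rangle,\ \langle v,m_{uv}^{1}\rangle,\ \langle m_{uv}^{1},m_{uv}^{2}\rangle,\ \langle m_{uv}^{2},u\rangle,\ \langle m_{uv}^{2},v\rangle.
\]
Because $m_{uv}^{1}$ has out-degree one and $m_{uv}^{2}$ has in-degree one, the only way for a directed ray to cross the gadget is $u\to m_{uv}^{1}\to m_{uv}^{2}\to v$ or $v\to m_{uv}^{1}\to m_{uv}^{2}\to u$, and in each case the central edge $\langle m_{uv}^{1},m_{uv}^{2}\rangle$ is used. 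Each $m_{uv}^{i}$ can be given a canonical name computable from $\{u,v\}$, so $G'$ is uniformly computable from $G$.

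The two mappings are then immediate. In the forward direction, a $Z$-ray $\langle v_{i}\rangle$ in $G$ is sent to the $Z$-ray in $G'$ obtained by expanding each consecutive step $v_{i}v_{i+1}$ into $v_{i},\ m_{v_{i}v_{i+1}}^{1},\ m_{v_{i}v_{i+1}}^{2},\ v_{i+1}$. In the backward direction, the forced structure of the gadget tells us that after any original vertex on a ray $R'$ in $G'$ the next three vertices must be $m_{uv}^{1}, m_{uv}^{2}, w$ with $w\in\{u,v\}$; so one extracts a $Z$-ray in $G$ simply by reading off the subsequence of $R'$ lying in $V(G)$ (discarding at most two leading gadget vertices if $R'$ happens to start inside a gadget). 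Both maps are plainly computable, so the construction is carried out in $\mathsf{RCA}_{0}$, and the same description handles $Z=S$ and $Z=D$ uniformly (a double ray is expanded or contracted in both directions).

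It then remains to verify preservation of $Y$-disjointness for $Y=V$ and $Y=E$. The case $Y=V$ is essentially immediate: V-disjoint rays in $G$ share no vertex, so their images use disjoint gadgets and are V-disjoint in $G'$; conversely, $V(G)\subseteq V(G')$, so V-disjoint rays in $G'$ restrict to V-disjoint rays in $G$. For $Y=E$, the forward direction is also easy: E-disjoint rays in $G$ use disjoint undirected edges, hence disjoint gadgets, hence disjoint directed edges. The main obstacle, and the only place where the specific design of the gadget is doing real work, is the backward direction for $Y=E$. This is precisely where a naive doubling $\{u,v\}\mapsto\{\langle u,v\rangle,\langle v,u\rangle\}$ would fail, since two undirected rays sharing $\{u,v\}$ but traversing it in opposite directions would map to two E-disjoint directed rays. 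The central edge $\langle m_{uv}^{1},m_{uv}^{2}\rangle$ defeats this: any directed ray through the $uv$-gadget must use it, so two E-disjoint rays in $G'$ cannot both traverse the same gadget, and the induced rays in $G$ therefore use disjoint undirected edges.
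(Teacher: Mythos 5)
Your construction is identical to the paper's: the paper uses two new vertices $x(u,v)$ and $y(u,v)$ per edge with exactly the same five directed edges, and the forward and backward ray maps are the same subsequence/expansion. The proposal is correct and matches the paper's proof; if anything, you spell out the $Y$-disjointness verification (which the paper leaves as "straightforward") and correctly identify the central edge $\langle x,y\rangle$ as what defeats the naive doubling in the edge-disjoint case.
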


\begin{proof}
We define a computable map $g$ from undirected graphs $G$ to directed graphs
$G^{\prime}$ as follows. The set of vertices of $G^{\prime}$ consists of the
vertices of $G$, together with two new vertices $x = x(u,v)$ and $y = y(u,v)$
for each edge $\{u, v\}$ in $G$. The set of edges of $G^{\prime}$ consists of
five edges $\langle u, x \rangle$, $\langle v, x \rangle$, $\langle x, y
\rangle$, $\langle y, u \rangle$, $\langle y, v \rangle$ for each edge $\{u,
v\}$ in $G$.

Next we define a computable map $h_{\mathrm{S}}$: given a ray $u_{0}%
,u_{1},\dots$ in $G$, $h_{\mathrm{S}}$ maps it to the ray $u_{0},x(u_{0}%
,u_{1}),y(u_{0},u_{1}),u_{1},\dots$ in $G^{\prime}$. Conversely, we define a
computable map $k_{\mathrm{S}}$ from rays $R^{\prime}$ in $G^{\prime}$ into
rays $R$ in $G$ as follows. Observe that exactly one of the first three
vertices in $R^{\prime} $ is a vertex in $G$, because the only outgoing edges
from a vertex $y(u,v) $ lead to $u$ or to $v$, and the only outgoing edge from
a vertex $x(u,v)$ leads to $y(u,v)$. We take this vertex (say $u_{0}$) to be
the first vertex of $R$. Every outgoing edge from $u_{0}$ leads to some
$x(u_{0},v)$. Combining the above observations, we deduce that the tail
$u_{0}R^{\prime}$ has the form $u_{0},x(u_{0},u_{1}),y(u_{0},u_{1}%
),u_{1},\dots$. Then $k_{\mathrm{S}}$ maps $R^{\prime} $ to the ray $R =
u_{0},u_{1},\dots$ in $G$.

Similarly, given a double ray $\dots,u_{-1},u_{0},u_{1},\dots$ in $G$,
$h_{\mathrm{D}}$ maps it to the double ray $\dots,u_{-1},x(u_{-1}%
,u_{0}),y(u_{-1},u_{0}),u_{0},x(u_{0},u_{1}),y(u_{0},u_{1}),u_{1},\dots$ in
$G^{\prime}$. We can show that every double ray in $G^{\prime}$ has this form
by considering the incoming edges to each vertex in $G^{\prime}$. Therefore we
can define a computable map from double rays in $G^{\prime}$ to double rays in
$G$ by $k_{\mathrm{D}} = h_{\mathrm{D}}^{-1}$.

It is straightforward to check that $h_{\mathrm{S}}$, $k_{\mathrm{S}}$,
$h_{\mathrm{D}}$ and $k_{\mathrm{D}}$ preserve $Y$-disjointness.
\end{proof}

Therefore we have

\begin{prop}
\label{prop:IRT_DYZ_implies_IRT_UYZ} The directed variants of $\mathsf{IRT}%
_{}$ imply their corresponding undirected variants, i.e., $\mathsf{IRT}%
_{\mathrm{DYZ}}$ implies $\mathsf{IRT}_{\mathrm{UYZ}}$ for each value of $Y$
and $Z$.
\end{prop}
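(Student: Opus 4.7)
The plan is to use Lemma \ref{lem:DYZ_implies_UYZ} essentially as a black box, following the general reduction pattern described immediately before it: given the computable maps $g$, $h$, and $k$, the proposition reduces to a routine verification that they carry hypotheses to hypotheses and conclusions to conclusions.

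Concretely, fix $Y$ and $Z$ and let $G$ be any undirected graph satisfying the hypothesis of $\mathsf{IRT}_{\mathrm{UYZ}}$, i.e.\ containing, for every $n$, a sequence of $n$ many $Y$-disjoint $Z$-rays. First I form the directed graph $G' = g(G)$ supplied by Lemma \ref{lem:DYZ_implies_UYZ}. To verify that $G'$ satisfies the hypothesis of $\mathsf{IRT}_{\mathrm{DYZ}}$, fix $n$: by the hypothesis on $G$ there is a sequence of $n$ many $Y$-disjoint $Z$-rays in $G$, and applying $h_Z$ componentwise yields $n$ many $Z$-rays in $G'$ that remain $Y$-disjoint because $h_Z$ preserves $Y$-disjointness. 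Applying $\mathsf{IRT}_{\mathrm{DYZ}}$ to $G'$ therefore yields an infinite sequence $\seq{R'_i}_{i \in N}$ of $Y$-disjoint $Z$-rays in $G'$. Finally, applying $k_Z$ componentwise gives a sequence $\seq{k_Z(R'_i)}_{i \in N}$ of $Z$-rays in $G$, and by the preservation property these are pairwise $Y$-disjoint, giving the conclusion of $\mathsf{IRT}_{\mathrm{UYZ}}$ for $G$.

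There is essentially no obstacle here beyond checking that each step can be performed in $\mathsf{RCA}_0$. The maps $g$, $h_Z$, $k_Z$ are computable by Lemma \ref{lem:DYZ_implies_UYZ}; componentwise application of a computable map to a coded sequence of rays is available via $\Delta_1^0$ comprehension; and the passage from \textquotedblleft for every $n$ there are $n$ many $Y$-disjoint $Z$-rays in $G$\textquotedblright\ to the corresponding statement for $G'$ is uniform in $n$, so no additional induction or choice is required. The combinatorial content of the reduction is entirely absorbed into Lemma \ref{lem:DYZ_implies_UYZ}, and this proposition is its packaging as an implication provable over $\mathsf{RCA}_0$.
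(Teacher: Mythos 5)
Your proof is correct and matches the paper's approach: the paper presents Lemma \ref{lem:DYZ_implies_UYZ} and then states the proposition as an immediate consequence via the general reduction template (maps $g$, $h$, $k$) described just before the lemma, exactly as you have unpacked it.
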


\begin{lem}
\label{lem:DEZ_implies_DVZ} Given a directed graph $G$, we can uniformly
compute a directed graph $G^{\prime}$ and mappings between $Z$-rays in $G$ and
$Z$-rays in $G^{\prime}$ which satisfy the following properties: if two
$Z$-rays in $G$ are vertex-disjoint, then the corresponding $Z$-rays in
$G^{\prime}$ are edge-disjoint, and if two $Z$-rays in $G^{\prime}$ are
edge-disjoint, then the corresponding $Z$-rays in $G$ are vertex-disjoint.
\end{lem}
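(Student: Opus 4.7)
The plan is to use a standard vertex-splitting gadget. Given a directed graph $G=\langle V,E\rangle$, define $G^{\prime}$ to have vertex set $\{v^{-},v^{+}:v\in V\}$ and edge set
\[
\{\langle v^{-},v^{+}\rangle:v\in V\}\cup\{\langle u^{+},v^{-}\rangle:\langle u,v\rangle\in E\}.
\]
This $G^{\prime}$ is uniformly computable from $G$ and definable in $\mathsf{RCA}_{0}$. The key structural observation is that in $G^{\prime}$ the vertex $v^{-}$ has a unique outgoing edge (to $v^{+}$) and $v^{+}$ has a unique incoming edge (from $v^{-}$). Hence in any directed ray $\rho$ in $G^{\prime}$, the appearance of $v^{-}$ forces $v^{+}$ as its immediate successor, and the appearance of $v^{+}$ forces $v^{-}$ as its immediate predecessor unless $v^{+}$ is the initial vertex of $\rho$.

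First I would define $h_{\mathrm{S}}$ by sending a ray $v_{0},v_{1},\ldots$ in $G$ to the ray $v_{0}^{-},v_{0}^{+},v_{1}^{-},v_{1}^{+},\ldots$ in $G^{\prime}$, and $h_{\mathrm{D}}$ by the analogous doubly-infinite projection. If two rays in $G$ are vertex-disjoint then their $h$-images even use disjoint vertex sets in $G^{\prime}$, and so are certainly edge-disjoint; this is the easy direction.

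For the reverse map $k_{\mathrm{S}}$, given a ray $\rho$ in $G^{\prime}$ the alternation observation forces $\rho$ to have one of two normal forms: either $\rho=v_{0}^{-},v_{0}^{+},v_{1}^{-},v_{1}^{+},\ldots$ or $\rho=v_{0}^{+},v_{1}^{-},v_{1}^{+},\ldots$ (determined computably by the first vertex). I would set $k_{\mathrm{S}}(\rho)$ to be the $G$-ray obtained by listing $v$ once for each occurrence of $v^{-}$ in $\rho$, so in the second case the leading $v_{0}$ is dropped. For $k_{\mathrm{D}}$ no such anomaly arises: every double ray in $G^{\prime}$ has the clean alternating form $\ldots,v_{-1}^{-},v_{-1}^{+},v_{0}^{-},v_{0}^{+},\ldots$, which projects to $\ldots,v_{-1},v_{0},v_{1},\ldots$ in $G$.

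The main content of the proof is checking that $k$ carries edge-disjointness to vertex-disjointness. By construction $v\in k(\rho)$ iff $v^{-}\in\rho$. So if $k(\rho_{1})$ and $k(\rho_{2})$ were to share a vertex $v$, both $\rho_{1}$ and $\rho_{2}$ would contain $v^{-}$, and each would therefore use the unique outgoing edge $\langle v^{-},v^{+}\rangle$, contradicting edge-disjointness. The one place to be careful is the double-ray case: one must verify that every directed double ray in $G^{\prime}$ really has the alternating form claimed above, ruling out pathological configurations. This is the step I would expect to need the most attention, but it follows immediately from the degree-one conditions at the bridge edges $\langle v^{-},v^{+}\rangle$, so no serious difficulty arises.
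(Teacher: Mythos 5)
Your construction is the same in/out vertex-splitting gadget as the paper's (your $v^-, v^+$ are its $x_i, x_o$), with the same forward and backward maps, and your verification that edge-disjointness in $G'$ pulls back to vertex-disjointness via the forced edge $\langle v^-,v^+\rangle$ is exactly the intended argument. The only cosmetic difference is that you key $k$ on occurrences of $v^-$ while the paper keys on occurrences of the edge $\langle v^-,v^+\rangle$, which is equivalent since $v^-$ has a unique out-edge and rays never terminate.
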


\begin{proof}
We define a computable map $g$ from directed graphs $G = \langle V,E \rangle$
to directed graphs $G^{\prime}$ as follows. The set of vertices of $G^{\prime
}$ is $\{x_{i}, x_{o} \mid x \in V\}$, where $i$ and $o$ stand for incoming and
outgoing respectively. The set of edges of $G^{\prime}$ consists of $\langle
u_{o}, v_{i} \rangle$ for each $\langle u, v \rangle\in E$, and $\langle
x_{i}, x_{o} \rangle$ for each $x \in V$.

Next we define a computable map $h_{\mathrm{S}}$: given a ray $x^{0}%
,x^{1},\dots$ in $G$, $h_{\mathrm{S}}$ maps it to the ray $x_{i}^{0},x_{o}%
^{0},x_{i}^{1},x_{o}^{1},\dots$ in $G^{\prime}$. Conversely, we define a
computable map $k_{\mathrm{S}}$ from rays $R^{\prime}$ in $G^{\prime}$ to rays
$R$ in $G$ as follows. Given $R^{\prime}$, the ray $R$ visits the vertex $x$
in $G$ whenever $\langle x_{i},x_{o}\rangle$ appears in $R^{\prime}$. (For
example, we map $x_{i}^{0},x_{o}^{0},x_{i}^{1},x_{o}^{1},\dots$ to
$x^{0},x^{1},\dots$ and we map $x_{o}^{0},x_{i}^{1},x_{o}^{1},\dots$ to
$x^{1},x^{2},\dots$.)

Similarly, $h_{\mathrm{D}}$ maps a given double ray $\dots,x^{-1},x^{0}%
,x^{1},\dots$ in $G$ to the double ray
\[
\dots,x^{-1}_{i},x^{-1}_{o},x^{0}_{i},x^{0}_{o},x^{1}_{i},x^{1}_{o},\dots
\]
in $G^{\prime}$. Every double ray in $G^{\prime}$ has this form, so we may
define $k_{\mathrm{D}} = h_{\mathrm{D}}^{-1}$.

It is straightforward to check that the above mappings have the desired properties.
\end{proof}

Therefore we have

\begin{prop}
\label{prop:IRT_DEZ_implies_IRT_DVZ} The directed edge-disjoint variants of
$\mathsf{IRT}_{}$ imply their corresponding directed vertex-disjoint variants,
i.e., $\mathsf{IRT}_{\mathrm{DEZ}}$ implies $\mathsf{IRT}_{\mathrm{DVZ}}$ for
each value of $Z$.
\end{prop}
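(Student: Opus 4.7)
The plan is to reduce an instance of $\mathsf{IRT}_{\mathrm{DVZ}}$ to an instance of $\mathsf{IRT}_{\mathrm{DEZ}}$ using the computable transformations already supplied by Lemma \ref{lem:DEZ_implies_DVZ}, following the Weihrauch-style reduction template described just before Lemma \ref{lem:DYZ_implies_UYZ}. Concretely, suppose we are given a directed graph $G$ which contains, for each $n$, $n$ many vertex-disjoint $Z$-rays. Let $G' = g(G)$ be the directed graph produced by the lemma, and let $h = h_Z$ and $k = k_Z$ be the accompanying ray maps.

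First I would verify that $G'$ satisfies the hypothesis of $\mathsf{IRT}_{\mathrm{DEZ}}$. Given any $n$ and any sequence $\langle R_0, \dots, R_{n-1} \rangle$ of vertex-disjoint $Z$-rays in $G$, the sequence $\langle h(R_0), \dots, h(R_{n-1}) \rangle$ is a sequence of $Z$-rays in $G'$, and by the first clause of Lemma \ref{lem:DEZ_implies_DVZ} these are edge-disjoint. Since this transformation is computable (uniformly in $n$ and the input sequence), $\mathsf{RCA}_0$ suffices to conclude that $G'$ contains arbitrarily many edge-disjoint $Z$-rays. Next, apply $\mathsf{IRT}_{\mathrm{DEZ}}$ to $G'$ to obtain an infinite sequence $\langle R'_i \rangle_{i \in N}$ of pairwise edge-disjoint $Z$-rays in $G'$. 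Finally, form the sequence $\langle k(R'_i) \rangle_{i \in N}$ of $Z$-rays in $G$; by the second clause of Lemma \ref{lem:DEZ_implies_DVZ}, these rays are pairwise vertex-disjoint, witnessing the conclusion of $\mathsf{IRT}_{\mathrm{DVZ}}$ for $G$.

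There is essentially no obstacle here beyond bookkeeping: all three maps $g$, $h$, $k$ are computable, and the two preservation clauses of Lemma \ref{lem:DEZ_implies_DVZ} are exactly what is needed to transport the hypothesis forward and the conclusion backward. The only minor point worth flagging in the write-up is that the argument is genuinely uniform, so that $\mathsf{RCA}_0$ can carry out the forward transport of each finite witness and the backward transport of the infinite solution without invoking any stronger principle. This yields $\mathsf{IRT}_{\mathrm{DEZ}} \to \mathsf{IRT}_{\mathrm{DVZ}}$ for each $Z \in \{\mathrm{S}, \mathrm{D}\}$, as required.
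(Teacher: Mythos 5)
Your proof is correct and takes essentially the same route as the paper: it applies the reduction template described before Lemma \ref{lem:DYZ_implies_UYZ}, using the computable maps $g$, $h$, $k$ from Lemma \ref{lem:DEZ_implies_DVZ} to transport finite witnesses of vertex-disjointness forward into edge-disjoint rays in $G'$ and the infinite solution backward. The paper simply states ``Therefore we have'' the Proposition immediately after the Lemma, and your write-up spells out precisely the bookkeeping that this relies on.
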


\begin{lem}
\label{lem:DYD_implies_DYS} Given a directed graph $G$, we can uniformly
compute a directed graph $G^{\prime}$ and mappings between sets of
$Y$-disjoint rays in $G$ and sets of $Y$-disjoint double rays in $G^{\prime}$
which preserve cardinality.
\end{lem}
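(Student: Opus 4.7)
The plan is to build $G'$ by attaching to each vertex of $G$ infinitely many disjoint ``back tails'' so that any single ray in $G$ beginning at a vertex $v$ can be extended to a double ray using one of $v$'s back tails, while conversely every double ray in $G'$ yields a single ray in $G$ by taking its forward tail inside $V(G)$. Explicitly, let $V(G')$ consist of $V(G)$ together with fresh vertices $w^{n,v}_{j}$ for $v \in V(G)$, $n \in N$, and $j \geq 1$, and let $E(G')$ consist of $E(G)$ together with edges $\langle w^{n,v}_{j+1}, w^{n,v}_{j} \rangle$ for each $v,n,j$ and $\langle w^{n,v}_{1}, v \rangle$ for each $v,n$. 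For each pair $(n,v)$ the sequence $\ldots, w^{n,v}_{3}, w^{n,v}_{2}, w^{n,v}_{1}, v$ is an infinite directed path into $v$, and distinct pairs $(n,v)$ yield back tails on pairwise-disjoint fresh vertex sets.

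The forward map $h$ sends a sequence $\seq{R_{i}}_{i}$ of $Y$-disjoint single rays in $G$, with $R_{i}$ starting at $x_{i}$, to the sequence $\seq{R'_{i}}_{i}$ in which $R'_{i}$ is obtained by prepending the back tail indexed by $(i, x_{i})$ to $R_{i}$. Because the back tails are indexed by the sequence position $i$ \emph{in addition} to the starting vertex, any two $R'_{i}$ and $R'_{j}$ with $i \neq j$ use disjoint sets of fresh vertices; combined with the $Y$-disjointness of the $G$-parts, this yields $Y$-disjointness of $\seq{R'_{i}}_{i}$ uniformly in both the vertex- and edge-disjoint cases. The backward map $k$ sends a sequence $\seq{R'_{i}}_{i}$ of $Y$-disjoint double rays in $G'$ to $\seq{R_{i}}_{i}$, where $R_{i}$ is the forward tail of $R'_{i}$ beginning at the first position whose vertex lies in $V(G)$. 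This is well-defined because the only outgoing edges from a fresh vertex $w^{n,v}_{j}$ lead to $w^{n,v}_{j-1}$, or, when $j=1$, to $v$; hence the positive direction of any double ray enters $V(G)$ within finitely many steps (bounded by the back-tail depth at its starting position) and thereafter cannot leave, since no edges go from $V(G)$ back into any back tail. Since the vertices (resp.\ edges) of $R_{i}$ form a subset of those of $R'_{i}$, $Y$-disjointness transfers directly from $\seq{R'_{i}}_{i}$ to $\seq{R_{i}}_{i}$, and both maps preserve the length of the sequence, giving the required cardinality-preserving correspondence.

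The construction of $G'$ and the maps $h$ and $k$ are computable uniformly in their inputs, so everything is formalizable in $\mathsf{RCA}_{0}$; locating the first $V(G)$-position along the positive direction of $R'_{i}$ is a bounded $\Sigma^{0}_{1}$ search. No step presents a real obstacle. The only design choice worth highlighting is indexing the back tails by the sequence position $i$ in addition to the starting vertex, which is what uniformly handles the edge-disjoint case where several rays may share a starting vertex. The borderline case of a double ray $R'_{i}$ already lying entirely in $V(G)$ is subsumed by the same prescription for $k$, which then simply returns its forward half.
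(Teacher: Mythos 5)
Your proof is correct and follows essentially the same route as the paper's: attach infinite backward tails so that each single ray in $G$ can be prepended with a tail to form a double ray in $G'$, while the outgoing-edge structure of the tail vertices forces the positive direction of every double ray in $G'$ into $V(G)$, giving the inverse map. The one difference is cosmetic: the paper attaches just one back tail per vertex of $G$ and handles the edge-disjoint case (where two rays may share a starting vertex) by first replacing each ray with a tail so that the starting vertices become distinct, whereas you attach infinitely many back tails per vertex indexed by the sequence position $i$, which avoids the tail-replacement step while achieving the same $Y$-disjointness of the images.
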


\begin{proof}
We define a computable map $g$ from directed graphs $G$ to directed graphs
$G^{\prime}$ containing $G$ as follows. For each vertex $x$ of $G$ we add new
vertices $x_{n}$ for each $n<0$ and edges $\left\langle x_{-1},x\right\rangle
$ and $\langle x_{n-1},x_{n}\rangle$ for all $n<0$.

Next we define a computable map $h$ from sets of $Y$-disjoint rays in $G$ to
sets of $Y$-disjoint double rays in $G^{\prime}$ as follows. Given a set of
$Y$-disjoint rays in $G$, we first ensure that each ray begins at a different
vertex, by replacing it with a tail if necessary. (This is only relevant if
the rays are edge-disjoint rather than vertex-disjoint.) Then for each ray
$x^{0},x^{1},\dots$, we consider the double ray $\dots,x_{-2}^{0},x_{-1}%
^{0},x^{0},x^{1},\dots$ in $G^{\prime}$. This yields a set of $Y$-disjoint
double rays in $G^{\prime}$ of the same cardinality.

Finally we define a computable map $k$ from sets of $Y$-disjoint double rays
in $G^{\prime}$ to sets of $Y$-disjoint rays in $G$. Given a double ray
$\dots,x^{-1},x^{0},x^{1},\dots$ in $G^{\prime}$, we search for the least
$n\geq0$ such that $x^{n}$ is a vertex in $G$. (If none of these vertices were
in $G$ then as there are edges between them they would all have to be of the
form $y_{-n}$ for a single $y$ in $G$. The only edges between these $y_{-n}$
make them into a copy of the reverse order on $N$. This order cannot have any
subsequence of order type $N$.) Now we map the given double ray to the ray
$x^{n},x^{n+1},\dots$ in $G$. It is straight forward to check that this map
induces a cardinality-preserving map $k$ from sets of $Y$-disjoint double rays
in $G^{\prime}$ to sets of $Y$-disjoint rays in $G$.
\end{proof}

Therefore we have

\begin{prop}
\label{prop:IRT_DYD_implies_IRT_DYS} The directed double ray variants of
$\mathsf{IRT}_{}$ imply their corresponding directed single ray variants,
i.e., $\mathsf{IRT}_{\mathrm{DYD}}$ implies $\mathsf{IRT}_{\mathrm{DYS}}$ for
each value of $Y$.
\end{prop}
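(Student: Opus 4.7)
The plan is to apply the general reduction scheme laid out at the beginning of this section, using the computable maps $g$, $h$, $k$ supplied by Lemma \ref{lem:DYD_implies_DYS}. Working in $\RCA_0$, I would start with an arbitrary instance of $\IRT{DYS}$, namely a directed graph $G$ containing arbitrarily many $Y$-disjoint single rays, and form $G' = g(G)$, which exists uniformly by $\Delta^0_1$ comprehension.

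Next I would verify that $G'$ satisfies the hypothesis of $\IRT{DYD}$. Given any $n$, the hypothesis on $G$ supplies a sequence of $n$ many $Y$-disjoint rays in $G$; applying $h$ to this sequence yields, by the lemma, a sequence of $n$ many $Y$-disjoint double rays in $G'$. Hence $G'$ contains arbitrarily many $Y$-disjoint double rays. I would then apply $\IRT{DYD}$ to $G'$ to obtain an infinite sequence of $Y$-disjoint double rays in $G'$, and finally apply $k$ to recover an infinite sequence of $Y$-disjoint single rays in $G$, witnessing the conclusion of $\IRT{DYS}$.

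There is no serious obstacle: the combinatorial content has been packaged into Lemma \ref{lem:DYD_implies_DYS}, and the template is identical to that used in Propositions \ref{prop:IRT_DYZ_implies_IRT_UYZ} and \ref{prop:IRT_DEZ_implies_IRT_DVZ}. The one point worth a second glance is uniformity: the maps $h$ and $k$ must be applied not just to single ray-sequences but to indexed families of them (for each $n$ to witness the hypothesis of $\IRT{DYD}$ in $G'$, and to the infinite output of $\IRT{DYD}$ to witness the conclusion of $\IRT{DYS}$ in $G$). Since $h$ acts on each ray by prepending a fixed new tail $\dots, x^0_{-2}, x^0_{-1}$ (after a harmless initial shift in the edge-disjoint case to ensure distinct starting vertices), and $k$ acts on each double ray by locating the least $n \geq 0$ with $x^n$ lying in $G$, both extend to sequences by a $\Delta^0_1$-definable procedure, which is everything $\RCA_0$ needs to carry out the reduction.
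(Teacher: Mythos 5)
Your proposal is correct and follows exactly the paper's route: the paper derives Proposition \ref{prop:IRT_DYD_implies_IRT_DYS} directly from Lemma \ref{lem:DYD_implies_DYS} via the general reduction template announced at the start of \S\ref{section:variants_IRT}, just as you do. Your additional remarks on uniformity (the tail-shift in the edge-disjoint case, the $\Delta^0_1$ search for the least $n$ with $x^n\in G$) simply make explicit what is already contained in the proof of the lemma, so there is nothing to add.
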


\begin{figure}[ptb]
\centering
\begin{tikzpicture}[scale=1,every node/.style={inner sep=6pt}]
	\node (UVS) at (0,0) {UVS};
	\node (UVD) at (-2,2) {UVD};
	\node (UES) at (2,2) {UES};
	\node (UED) at (0,4) {UED};
	\node (DVS) at (0,2) {DVS};
	\node (DVD) at (-2,4) {DVD};
	\node (DES) at (2,4) {DES};
	\node (DED) at (0,6) {DED};
	\draw [->] (DVD) -- (DVS);
	\draw [->] (DVD) -- (UVD);
	\draw [->] (DES) -- (DVS);
	\draw [->] (DVS) -- (UVS);
	\draw [->] (DES) -- (UES);
	\draw [->] (DED) -- (DVD);
	\draw [->] (DED) -- (DES);
	\draw [->] (DED) -- (UED);
	\draw [->] (UVD) -- node[xshift=-12,yshift=-6] {$I\Sigma^1_1$} (UVS);
\end{tikzpicture}
\caption{Known implications between variants of $\mathsf{IRT}_{}$. All
implications are over $\mathsf{RCA}_{0}$, except for the implication from UVD
to UVS (Theorem \ref{thm:UVDmc_implies_UVS}).}%
\label{fig:variants_IRT}%
\end{figure}
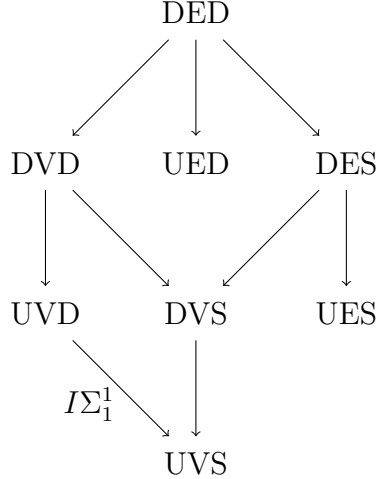

Figure \ref{fig:variants_IRT} summarizes the known implications between
variants of $\mathsf{IRT}_{}$ over $\mathsf{RCA}_{0}$. We will show that
$\mathsf{IRT}_{\mathrm{UVD}}$ implies $\mathsf{IRT}_{\mathrm{UVS}}$ over
$\mathsf{RCA}_{0} + \mathsf{I}\Sigma^{1}_{1}$ (Theorem \ref{thm:UVDmc_implies_UVS}).

\begin{rmk}
\label{rmk:BCP} Bowler, Carmesin, Pott \cite[pg.\ 2 l.\ 3--7]{bcp15} describe
an implication from $\mathsf{IRT}_{\mathrm{UVS}}$ to $\mathsf{IRT}%
_{\mathrm{UES}}$ which appears to use $\Sigma^{1}_{1}\text{-}\mathsf{AC}_{0}$.
It turns out that the graph-theoretic principle used to carry out the
implication does not imply even $\mathsf{ACA}_{0}$ over $\mathsf{RCA}_{0}$
(and is hence much weaker than $\Sigma^{1}_{1}\text{-}\mathsf{AC}_{0}$), but
when combined with $\mathsf{ACA}_{0}$, yields a THA. It and several other principles with the same property (almost
theorems/theories of hyperarithmetic analysis) are analyzed in Shore
\cite{shore_atha}.
\end{rmk}

We return to the goal of proving Theorem \ref{thm:variants_IRT_hyp_analysis}.

\begin{thm}
\label{thm:IRT_variants_ACA_0_and_omega_model_IRT_variants_hyp_closed} For
each choice of XYZ, $\mathsf{IRT}_{\mathrm{XYZ}}$ implies $\mathsf{ACA}_{0}$.
Furthermore, every standard model of $\mathsf{RCA}_{0}$ and $\mathsf{IRT}%
_{\mathrm{XYZ}}$ is closed under hyperarithmetic reduction.
\end{thm}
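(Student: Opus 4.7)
The plan is to adapt the proof of Theorem \ref{thm:omega_model_IRT_hyp_closed} to each of the eight variants $\mathsf{IRT}_{\mathrm{XYZ}}$. For each $n$ I would construct an $X$-graph $G_n^{\mathrm{XYZ}}$ containing a canonical computable $Z$-ray whose branch records $\emptyset' \upharpoonright n$, and then form the effective disjoint union $G^{\mathrm{XYZ}} = \bigsqcup_n G_n^{\mathrm{XYZ}}$. Since each $G_n^{\mathrm{XYZ}}$ individually contains a computable $Z$-ray, $G^{\mathrm{XYZ}}$ contains $k$ pairwise $Y$-disjoint $Z$-rays for every $k$, witnessing the hypothesis of $\mathsf{IRT}_{\mathrm{XYZ}}$. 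Any solution returned by $\mathsf{IRT}_{\mathrm{XYZ}}$ consists of infinitely many $Y$-disjoint $Z$-rays; because each is connected it lives inside a single component, so the solution meets infinitely many $G_n^{\mathrm{XYZ}}$'s and computes the associated branches $P_n$, hence $\emptyset'$, exactly as in Theorem \ref{thm:omega_model_IRT_hyp_closed}. Both the $\mathsf{ACA}_0$ implication and the hyperarithmetic closure of standard models follow, the latter by applying the same template to the trees $T_\beta$ of Proposition \ref{jumpstrees} indexed by a cofinal sequence $\langle \alpha_n \rangle$ in an ordinal $\lambda$ computable from $X$.

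For the single-ray variants ($Z = \mathrm{S}$) I would simply take $G_n^{\mathrm{XYS}}$ to be the tree $T_n$ of Theorem \ref{thm:omega_model_IRT_hyp_closed}: any ray in $T_n$ is eventually a tail of the unique infinite branch $P_n$, since the finite paths stemming from wrong stages $s$ have length at most $s$. For directed variants ($X = \mathrm{D}$) I would orient every edge from parent to child, which does not change the set of rays up to direction. Vertex- versus edge-disjointness is immaterial for the disjoint union construction, because rays lying in different connected components are automatically both vertex- and edge-disjoint.

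For the double-ray variants ($Z = \mathrm{D}$), trees contain no double rays, so I would modify $T_n$ by attaching a new infinite backward tail $\ldots, a_{-3}^n, a_{-2}^n, a_{-1}^n$ adjacent to the root, creating a second infinite end. Any double ray in the modified graph must exit the root into the unique child $\langle s^* \rangle$ given by the true stabilization stage -- no other child supports an infinite path -- and then run along $P_n$, while its other half traverses the backward tail. This yields a unique double ray, which again computes $P_n$. For directed double rays I would orient the new edges as $a_{-(i+1)}^n \to a_{-i}^n$ and $a_{-1}^n \to \emptyset$, so that the double ray is coherently directed.

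The main obstacle is the bookkeeping required to verify, for each of the eight variants, that no unintended $Y$-disjoint $Z$-rays appear in $G^{\mathrm{XYZ}}$ and that each solution computably recovers the branches $P_n$; the edge-disjoint and double-ray cases are the most delicate, since one must rule out pairs of rays that share vertices but no edges within a single component. Once these uniqueness checks are in place, both halves of the proof of Theorem \ref{thm:omega_model_IRT_hyp_closed} -- the successor stage recovering $\emptyset'$, and the limit stage using Proposition \ref{jumpstrees} along a cofinal sequence -- transfer verbatim, establishing $\mathsf{ACA}_0$ and hyperarithmetic closure simultaneously.
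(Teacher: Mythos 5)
Your proposal is correct and is essentially the paper's argument: effective disjoint unions of the unique-branch trees $T_n$ (and, at limits, the $T_{\alpha_n}$ of Proposition \ref{jumpstrees}), with a new computable branch of fresh vertices appended to each tree for the double-ray cases. The paper differs only cosmetically: it dispatches the directed cases by invoking Proposition \ref{prop:IRT_DYZ_implies_IRT_UYZ} rather than re-orienting the trees, and it settles the ``bookkeeping'' concern you flag by simply observing that a tree with a unique branch cannot contain two edge-disjoint rays, since any ray in such a tree eventually runs along that branch.
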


\begin{proof}
By Proposition \ref{prop:IRT_DYZ_implies_IRT_UYZ}, it suffices to prove the
desired result for the undirected variants of $\mathsf{IRT}$. Theorems
\ref{thm:omega_model_IRT_hyp_closed} and \ref{thm:Sigma11-AC_implies_IRT}(i) together assert the desired result for
$\mathsf{IRT}$. We describe how to modify the proofs of Theorems
\ref{thm:omega_model_IRT_hyp_closed} and \ref{thm:Sigma11-AC_implies_IRT}(i) to prove the desired result for the other variants
of $\mathsf{IRT}$.

Observe that in the aforementioned proofs, we only applied $\mathsf{IRT}_{}$
to forests such that each of the constituent trees has a unique branch. In
such graphs, none of the constituent trees can contain two rays which are
edge-disjoint. Hence the aforementioned proofs establish the desired result
for $\mathsf{IRT}_{\mathrm{UES}}$ as well.

In order to prove the desired result for $\mathsf{IRT}_{\mathrm{UVD}}$ and
$\mathsf{IRT}_{\mathrm{UED}}$, we modify the aforementioned proofs as follows.
For each tree, we relabel as needed to then add one computable branch of new
vertices (other than the root of the tree) such that the new branches are also
disjoint. The resulting trees satisfy the following properties:

\begin{itemize}
\item Each tree contains some double ray which is Turing equivalent to the
branch on the original tree.

\item No two double rays in the tree can be vertex-disjoint or edge-disjoint.

\item Given any double ray in the tree, we can uniformly compute the branch on
the original tree.
\end{itemize}

It is straightforward to check that the modified proofs establish the desired
result for $\mathsf{IRT}_{\mathrm{UVD}}$ and $\IRT{UED}$.
\end{proof}

Henceforth we will not explicitly mention uses of $\mathsf{ACA}_{0}$ whenever
we are assuming any $\mathsf{IRT}_{\mathrm{XYZ}}$ or $\Sigma_{1}^{1}%
$-$\mathsf{AC}_{0}$.

Next, we show that $\mathsf{IRT}_{\mathrm{UVD}}$ and $\mathsf{IRT}%
_{\mathrm{DES}}$ are provable in $\Sigma_{1}^{1}$-$\mathsf{AC}_{0}$ (Theorems
\ref{thm:Sigma11-AC_implies_UVD}, \ref{thm:Sigma11-AC_implies_XES}). It then
follows from Propositions \ref{prop:IRT_DYZ_implies_IRT_UYZ} and
\ref{prop:IRT_DEZ_implies_IRT_DVZ} that $\mathsf{IRT}_{\mathrm{UES}}$,
$\mathsf{IRT}_{\mathrm{DVS}}$ and $\mathsf{IRT}_{\mathrm{UVS}}$ are also
provable in $\Sigma_{1}^{1}\text{-}\mathsf{AC}_{0}$, completing the proof of
Theorem \ref{thm:variants_IRT_hyp_analysis}.

\begin{thm}
\label{thm:Sigma11-AC_implies_UVD} $\Sigma_{1}^{1}$-$\mathsf{AC}_{0}$ implies
$\mathsf{IRT}_{\mathrm{UVD}}$.
\end{thm}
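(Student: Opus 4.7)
The plan is to mirror the proof of $\Sigma_1^1\text{-}\mathsf{AC}_0 \vdash \mathsf{IRT}$ from Theorem \ref{thm:Sigma11-AC_implies_IRT}(ii), with double rays replacing single rays. Since ``$H$ contains $n$ pairwise vertex-disjoint double rays'' is a $\Sigma_1^1$ predicate in $n$ relative to $H$, $\Sigma_1^1\text{-}\mathsf{AC}_0$ delivers a sequence $S = \seq{\seq{S_j^n}_{j<n}}_{n>0}$ in which each $\seq{S_j^n}_{j<n}$ is a tuple of $n$ vertex-disjoint double rays in $H$.

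Next, I would build by recursion on $n$ a sequence $\seq{R_i^n}_{i<n}$ of vertex-disjoint double rays together with finite sub-paths $P_i^n \subseteq R_i^n$ arranged so that $P_i^{n+1}$ extends $P_i^n$ by one new vertex on each end, for $i<n$. The desired infinite family of vertex-disjoint double rays is then recovered as $R_i = \bigcup_n P_i^n$, each of which is a double ray because $P_i^n$ grows without bound in both directions. The crux of the recursion is a double-ray analog of Lemma \ref{lem:more_disjoint_rays}: given $n$ vertex-disjoint double rays $R_0,\dots,R_{n-1}$ with distinguished finite sub-paths $P_i \subseteq R_i$, together with polynomially-many-in-$n$ additional vertex-disjoint double rays that avoid $\bigcup_i P_i$, one can produce $n+1$ vertex-disjoint double rays $R_0',\dots,R_n'$ with $P_i \subseteq R_i'$ for $i<n$. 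My approach to this analog is to fix the two endpoints $x_i^{\pm}$ of $P_i$, split each $R_i$ there into its two single-ray tails, and run the Menger-based construction of Lemma \ref{lem:more_disjoint_rays} separately on each side of the picture, drawing the supply of disjoint single rays from the appropriate tails of the $S_j$'s and picking one untouched $S_j$ to serve as $R_n'$.

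Finally, the same computational bookkeeping used in Theorem \ref{thm:Sigma11-AC_implies_IRT}(ii) should apply: each $R_i^n$ can be taken to be a finite concatenation of segments of earlier $R_j^m$'s, finite paths in $H$, and tails of $S_j^m$'s, so the recursion runs computably in $(H \oplus S)'$ and the resulting sequence of double rays is itself computable from $(H \oplus S)'$, which exists in $\Sigma_1^1\text{-}\mathsf{AC}_0$. The main obstacle will be the bookkeeping in the double-ray analog of Lemma \ref{lem:more_disjoint_rays}: unlike the single-ray case, each $R_i'$ must be extended in both directions, so the two families of one-sided extensions must be pairwise disjoint as well as disjoint from every $P_j$, and a completely fresh double ray $R_n'$ must also be produced. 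This is handled by discarding, at each Menger application, any $S_j$ whose relevant half meets extensions already chosen on the other side, while arranging the initial supply of $S_j$'s to be large enough (on the order of $(2n)^2$) to absorb both side-applications and still leave at least one untouched $S_j$ available as $R_n'$.
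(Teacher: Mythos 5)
Your overall strategy matches the paper's: use $\Sigma_1^1\text{-}\mathsf{AC}_0$ to choose the sequence $\seq{\seq{S_j^n}_{j<n}}_{n>0}$, build nested subpaths $P_i^n$ of length $2n$ growing in both directions, recover the double rays as $\bigcup_n P_i^n$, and keep each recursion step computable in $(H\oplus S)'$. The difference — and the gap — is in how the double-ray analog of Lemma~\ref{lem:more_disjoint_rays} is carried out.

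You propose to run the Menger-based extension twice, once on the left tails $x_{i,b}R_{i,b}$ and once on the right tails $x_{i,f}R_{i,f}$, then glue. The paper instead treats all $2n$ one-sided tails $\mathcal{R}=\{x_{i,b}R_{i,b},x_{i,f}R_{i,f}\mid i<n\}$ as a \emph{single} collection of single rays and runs the procedure from Lemma~\ref{lem:more_disjoint_rays} once (drawing on $2(8n^2{+}1)$ halves of the surviving $S_j$'s). This matters. In your two-sided scheme, the left application produces paths through a finite graph $H_L$ containing the left pieces $x_{i,b}R_{i,b}z_{i,b}$ and some $c_{j,b}S_{j,b}y_{j,b}$; the right application produces paths through $H_R$ containing $x_{k,f}R_{k,f}z_{k,f}$ and some $c_{j',f}S_{j',f}y_{j',f}$. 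There is nothing preventing a left piece $x_{i,b}R_{i,b}z_{i,b}$ from sharing a vertex with a right piece $c_{j',f}S_{j',f}y_{j',f}$ (or $y_{j',f}S_{j',f}$): $R_i$ and $S_{j'}$ are unrelated double rays in the same graph and may well intersect. Your right-side $y_{j',f}$ is chosen only to avoid the right-side $R_k z_k$'s, so the resulting right-side extensions can collide with the left-side ones. Your proposed discarding (``discard any $S_j$ whose relevant half meets extensions already chosen on the other side'') does not catch this, because the collision comes from $R_i$-vs-$S_{j'}$ intersections, not from reuse of an $S_j$. In the paper's one-shot version this problem does not arise: the set $F$ is defined as $\bigcup_{\langle i,d\rangle\in I}x_{i,d}R_{i,d}z_{i,d}$ over \emph{both} values of $d$, and every $y_{j,d}$ is chosen so that $y_{j,d}S_{j,d}$ avoids all of $F$, so a single Menger application on $H=F\cup\bigcup c_{j,d}S_{j,d}y_{j,d}$ yields $m$ pairwise-disjoint paths that automatically avoid both sides.

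A secondary issue: your suggested supply of ``on the order of $(2n)^2$'' $S_j$'s is too small. Already $3n^2$ may be lost to the $P_i$'s, the Menger step with $m\leq 2n$ needs at least $m^2+1$ surviving single-ray halves, and a fresh untouched $S_j$ must remain for $R_n'$. The paper's accounting lands at $11n^2+1$. This could be absorbed by a larger polynomial on your side, but only after the cross-side disjointness problem above is actually repaired — most cleanly by collapsing your two Menger runs into the paper's single combined one.
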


\begin{proof}
The mathematical result is due to \cite{halin65} and is Exercise 42 of Chapter 8
in \cite{diestel_book}. Our proof is very similar to that of Theorem
\ref{thm:Sigma11-AC_implies_IRT}(ii) which follows \cite[Theorem 8.2.5(i)]%
{diestel_book} except we need to grow our family in \textquotedblleft two
directions\textquotedblright.

Our plan is to construct, by recursion on $n$, sequences $\left\langle
R_{i}^{n}\right\rangle _{i<n}$ and $\left\langle P_{i}^{n}\right\rangle
_{i<n}$ such that the $R_{i}^{n}$ are disjoint double rays with subpaths
$P_{i}^{n}$ of length $2n$ such that, for $i<n$, $P_{i}^{n+1}$ is $P_{i}^n
$ extended by a new vertex at each end. Our required sequence of disjoint
double rays is then $\left\langle \bigcup\left\{P_{i}^{n}\mid n\in N\right\}\right\rangle
_{i\in N}$.

As we want to reuse the proof of Lemma \ref{lem:more_disjoint_rays}, we want to
decompose double rays into single rays. To that end we introduce some
notation. If $R$ and $R_{i}$ are double rays we let $R_{f}=R$ and
$R_{i,f}=R_{i}$ while $R_{b}$ and $R_{i,b}$ are $R$ and $R_{i}$, respectively,
but with order reversed. We use $d$ or $d^{\prime}$ to stand for one of $f$ or $b$.
For single rays $R$ we use $^{\ast}\!R$ to denote the reverse sequence of
vertices. So, for example, if $(c,d)$ is an edge in a double ray $R$, we have $^{\ast
}(cR_{b})dR_{f}=R$.

We note the changes needed in the proof of Theorem
\ref{thm:Sigma11-AC_implies_IRT}. Now our sequence $S=\seq{\seq{S_{j}^{n}}_{j<n}}_{n>0}$ given by
$\Sigma_{1}^{1}$-$\mathsf{AC}_{0}$ consists of double rays and $G$ is the
corresponding graph. Our construction of the desired $\left\langle R_{i}%
^{n}\right\rangle _{i<n}$ and $\left\langle P_{i}^{n}\right\rangle _{i<n}$
again proceeds recursively in $(G\oplus S)^{\prime}$ and at the end uses one
more jump to get the required sequence of double rays. We begin our recursion
by setting $R_{0}^{1}=S_{0}^{1}$ and $P_{0}^{1}\ $as any subpath of length
$2$. Here we note that by construction every $R_{i}^{n}$ will be of the form
$QaPbR$ where $P$ is a finite path in $G$ while $Q$ and $R$ are each $S_f$ or $S_{b}$ for $S$ some $S_{j}^{n}$. Again, finding each $\seq{R_{i}^{n+1}}_{i<n}$ and $\seq{P_{i}^{n+1}}_{i<n}$ in turn is recursive in $(G\oplus S)^{\prime}$ once we prove in
$\mathsf{ACA}_{0}$ that it exists given $\seq{R_{i}^{n}}_{i<n}$ and $\seq{P_{i}^{n}}_{i<n}$.

Thus it suffices to prove an analog of Lemma \ref{lem:more_disjoint_rays} in
$\mathsf{ACA}_{0}$: Given $\seq{R_{i}^{n}}_{i<n}$ and
$\left\langle P_{i}^{n}\right\rangle _{i<n}$ and $\seq{S_{i}^{k}}_{i<k}$ for $k=11n^{2}+1$ we can construct $\seq{R_{i}^{n+1}}_{i<n+1}$ and $\seq{P_{i}^{n+1}}_{i<n+1}$ as required. To simplify the notation we omit the superscripts $n$
and $k$. For $i<n$, let $x_{i,b}$ be the first vertex in $R_{i}$ before
$P_{i}$ and $x_{i,f}$ be the first vertex in $R_{i}$ after $P_{i}$. We will
arrange for $P_{i}^{n+1}$ to be $P_{i}$ preceded by $x_{i,b}$ and followed by
$x_{i,f}$ which will then also be $x_{i,b}R_{i}^{n+1}x_{i,f}$.

We first discard from $\{S_{j}\mid j<11n^{2}+1\}$ any $S_{j}$ sharing a vertex
with any $P_{i}$. As there are $n(2n+1)\leq3n^{2}$ such vertices we have at
least $8n^{2}+1$ many $S_{j}$ remaining. We relabel these as $S_{j}$ for
$j<8n^{2}+1$ and choose an edge $(c_{j,b},c_{j,f})$ in each $S_{j}$. We now
essentially follow the proof of Lemma \ref{lem:more_disjoint_rays} but for the
sets of single rays $\mathcal{R}=\{x_{i,f}R_{i,f}, x_{i,b}R_{i,b}\mid i<n\}$
and $\mathcal{S}=\{c_{j,f}S_{j,f}, c_{j,b}S_{j,b}\mid j<8n^{2}+1\}$. Our goal
for each $x_{i,d}R_{i,d}\in\mathcal{R}$, $i<n$ and $d\in\{b,f\}$, is to find a
suitable replacement (beginning with the same vertex while maintaining the
required disjointness) so that we can assemble the $R_{i}^{n+1}$ from them for
$i<n$. We also want an $S_{h}$ disjoint from all the double rays $R^{n+1}_i$ with $i < n$ which will be $R_{n}^{n+1}$.

We follow the procedure described in the proof of Lemma \ref{lem:more_disjoint_rays} (using
$2n$ for $n$ as $\mathcal{R}$ has $2n$ elements) to define single ray
replacements $R_{\left\langle i,d\right\rangle }^{\prime}$ for the
$x_{i,d}R_{i,d}$ from which we will construct the $R_{i}^{n+1}$. When in that
procedure we would keep the old ray we do so here as well and let
$R_{\left\langle i,f\right\rangle }^{\prime}=$ $x_{i,f}R_{i,f}$. When the
procedure is completed we let $I=\{\left\langle i,d\right\rangle \mid i<n$,
$d\in\{b,f\}$ and we have not defined $R_{\left\langle i,d\right\rangle
}^{\prime}\}$ and let $m=|I|$. Clearly we have discarded at most $4n^{2}$ many
more of the rays in $\mathcal{S}$.

Now continue the construction for the $\left\langle i,d\right\rangle \in I$
and the first vertices $z_{i,d}$ on $x_{i,d}R_{i,d}$ such that $x_{i,d}%
R_{i,d}z_{i,d}$ meets exactly $m$ many rays in $\mathcal{S}$. The union $F$ of
these finite paths meets at most $m\cdot2n\leq4n^{2}$ many rays in
$\mathcal{S}$ and so at most $4n^{2}$ many double rays $S_{j}$, $j<8n^{2}+1$.
Thus there must be at least one $S_{j}$ such that it does not meet $F$ and no
$c_{j,d}S_{j,d}$ was discarded during the process. We let one such be
$R_{n}^{n+1}$ and take any subpath of length $2n+2$ as $P_{n}^{n+1}$.

Now discard all single rays $S$ in $\mathcal{S}$ not meeting $F$. For each
$c_{j,d}S_{j,d}$ remaining in $\mathcal{S}$ let $y_{j,d}$ be the first vertex
on $c_{j,d}S_{j,d}$ such that $y_{j,d}S_{j,d}$ is disjoint from $F$. As in the
proof of Lemma \ref{lem:more_disjoint_rays}, we apply Menger's theorem to
$X,Y$ and $H$ where%

\begin{align*}
X  &  =\{x_{i,d} \mid \left\langle i,d\right\rangle \in I\}\\
Y  &  =\{y_{j,d} \mid c_{j,d}S_{j,d}\in\mathcal{S}\}\\
H  &  = F\cup\bigcup_{c_{j,d}S_{j,d}\in\mathcal{S}%
}c_{j,d}S_{j,d}y_{j,d}.
\end{align*}

This produces a set of $m$ disjoint paths $P_{i,d}$ in $H$ from each $x_{i,d}
$, $\left\langle i,d\right\rangle \in I$, to a $y_{j,d^{\prime}}$ in
$c_{j,d^{\prime}}S_{j,d^{\prime}}\in\mathcal{S}$. We can now define
$R_{\left\langle i,d\right\rangle }^{\prime}$ for $\left\langle
i,d\right\rangle \in I$ as the single ray beginning with $P_{i,d}$ and then
continuing with $S_{j,d^{\prime}}$ after $y_{j,d^{\prime}}$.

We can now define the required double rays $R_{i}^{n+1}$ for $i<n$ as $^{\ast
}\!R_{\left\langle i,b\right\rangle }^{\prime}P_{i}^{n}R_{\left\langle
i,f\right\rangle }^{\prime}$ and check that these have all the desired properties.
\end{proof}

As for $\mathsf{IRT}_{\mathrm{DES}}$, instead of following the proof of
Theorem \ref{thm:Sigma11-AC_implies_IRT}, we will reduce $\mathsf{IRT}%
_{\mathrm{DES}}$ to the problem of finding an infinite sequence of
\emph{vertex-disjoint} rays in a certain locally finite graph (see
\cite[pg.\ 2 l.\ 3--7]{bcp15}). To carry out this reduction, we define the
line graph:

\begin{defn}
The \emph{line graph} $L(G)$ of an $X$-graph $G$ is the $X$-graph whose vertices
are the edges of $G$ and whose edges are the $((u,v),(v,w))$, where $(u,v)$
and $(v,w)$ are edges in $G$.
\end{defn}

\begin{lem}
\label{lem:E_ray_in_G_to_V_ray_in_L(G)} Let $G$ be an X-graph. There is a
computable mapping from rays in $G$ to rays in $L(G)$ such that if two rays in
$G$ are edge-disjoint, then their images are vertex-disjoint.
\end{lem}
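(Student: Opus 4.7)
The plan is to define the obvious edge-to-vertex translation: given a ray $R = x_0, x_1, x_2, \ldots$ in $G$, send it to the sequence $(x_0, x_1), (x_1, x_2), (x_2, x_3), \ldots$ in $L(G)$. This is computable uniformly in $G$ and in a (code for a) ray $R$, since producing the $i$th term requires only reading off the $i$th and $(i{+}1)$st vertices of $R$ and forming the corresponding edge of $G$, which is a vertex of $L(G)$.

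Next I would verify that the image is actually a ray in $L(G)$ in the sense of Definition \ref{rays}. For each $i$, the pair $\bigl((x_i,x_{i+1}),(x_{i+1},x_{i+2})\bigr)$ is an edge of $L(G)$ by definition of the line graph, so consecutive terms in the image are adjacent in $L(G)$. Distinctness of the vertices of the image amounts to distinctness of the edges $(x_i, x_{i+1})$ of $G$, which follows from the fact that the $x_i$ are all distinct (so no two consecutive pairs can coincide, and non-consecutive ones have disjoint index sets). Thus the map $i \mapsto (x_i, x_{i+1})$ is the required isomorphism from $\N$ (with its natural edge structure) onto a subgraph of $L(G)$.

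For the disjointness clause, suppose $R$ and $R'$ are edge-disjoint rays in $G$. Their images use, as vertices of $L(G)$, precisely the edges traversed by $R$ and $R'$ respectively; by hypothesis these edge sets are disjoint, so the images have disjoint vertex sets in $L(G)$, as required.

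There is essentially no obstacle here beyond bookkeeping: the definitions line up cleanly, and $\mathsf{RCA}_0$ suffices since the construction is a primitive recursive transformation of indices for $G$ and $R$. The only mild subtlety is being careful to match the formal definition of a ray (subgraph together with isomorphism from $\N$) rather than just writing down a vertex sequence, but this is handled uniformly by the formula above.
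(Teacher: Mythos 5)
Your proposal is correct and takes essentially the same approach as the paper, which simply states the map $x_0, x_1, x_2, \dots \mapsto (x_0,x_1), (x_1,x_2), \dots$ and leaves the verification to the reader. You have merely spelled out the routine checks that the paper omits.
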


\begin{proof}
Map $x_{0},x_{1},x_{2},\dots$ to $(x_{0}, x_{1}), (x_{1}, x_{2}),\dots$.
\end{proof}

However, vertex-disjoint rays in $L(G)$ do not always yield edge-disjoint rays
in $G$. An extreme counterexample is what is called the \emph{(undirected)
star graph} which consists of a single vertex with infinitely many neighbors:
It does not contain any rays yet its line graph is isomorphic to the complete graph on $N$ which contains infinitely many vertex-disjoint rays.
Nonetheless, if $G$ is locally finite, then vertex-disjoint rays in $L(G)$ do
correspond to edge-disjoint rays in $G$:

\begin{lem}
[$\mathsf{ACA}_{0}$]\label{lem:locally_finite_V_ray_in_L(G)_to_E_ray_in_G} Let
$G$ be a locally finite X-graph. There is a mapping from rays in $L(G)$ to
rays in $G$ such that if two rays in $L(G)$ are vertex-disjoint, then their
images are edge-disjoint rays in $G$.
\end{lem}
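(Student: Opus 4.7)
The plan is to map a ray $R' = e_0, e_1, e_2, \ldots$ in $L(G)$ to a ray in $G$ whose edges are drawn from the $e_i$, by first tracing the walk in $G$ they induce and then pruning it using last-visit times, which exist by local finiteness and can be extracted in $\mathsf{ACA}_0$.

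First, I define uniformly from $R'$ and $G$ a walk $v_0, v_1, v_2, \ldots$ in $G$ with $e_i = (v_i, v_{i+1})$ for each $i$. In the directed case the tails and heads of the $e_i$ determine the $v_i$ directly. In the undirected case, any two distinct edges sharing a vertex share exactly one vertex, so $e_i \cap e_{i+1}$ is a singleton which I take to be $v_{i+1}$, and $v_0$ is the other endpoint of $e_0$. This walk may revisit vertices, so it is not yet a ray.

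The key consequence of local finiteness is that each vertex $v$ appearing in the walk is visited only finitely often: each occurrence $v_s = v$ with $s > 0$ contributes two distinct edges $e_{s-1}$ and $e_s$ to the finite edge-neighborhood of $v$ in $G$, so $|\{s : v_s = v\}| \leq \deg_G(v) + 1$. Thus the last-visit time $L(v) = \max\{s : v_s = v\}$ is well defined; since it is the unique $t$ such that $v_t = v$ and $v_{s} \neq v$ for all $s > t$, it is arithmetic in $R'$ and $G$, and $\mathsf{ACA}_0$ supplies the function $v \mapsto L(v)$ on the visited vertices.

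I then define the target ray recursively by $x_0 = v_0$ and $x_{n+1} = v_{L(x_n)+1}$. By construction $(x_n, x_{n+1}) = e_{L(x_n)}$ is an edge of $G$. The sequence $\seq{L(x_n)}_n$ is strictly increasing, because $v_{L(x_n)+1} = x_{n+1}$ forces $L(x_{n+1}) \geq L(x_n) + 1$; hence $x_{n+1} \neq x_j$ for each $j \leq n$, since otherwise $L(x_{n+1}) = L(x_j) \leq L(x_n)$, contradicting strict increase. So $\seq{x_n}_{n \in N}$ is a ray in $G$ using only edges from $\{e_i : i \in N\}$. For the disjointness clause, if two rays in $L(G)$ are vertex-disjoint, then their edge sets in $G$ are disjoint, hence their extracted rays in $G$ are edge-disjoint. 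The main technical point is verifying that the finite-visit bound gives $L$ as an arithmetic function inside $\mathsf{ACA}_0$; the rest is a uniform arithmetic recursion.
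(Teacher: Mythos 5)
Your last-visit pruning is very close to the paper's proof, which recurses directly by setting $y_{n+1}$ to be the endpoint of $e_{k_n}$ other than $y_n$, where $k_n$ is the largest $k$ for which $y_n$ is an endpoint of $e_k$ (it is this $k_n$ that local finiteness and $\ACA_0$ provide). Your two-stage version (explicit walk, then prune using the arithmetic function $L$) is a clean reformulation of the same idea.

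One claim is false as written, however. In the undirected case the identity $e_i = (v_i, v_{i+1})$ need not hold for every $i$: a ray in $L(G)$ is merely a subgraph isomorphic to the $N$-path, not an induced one, so nothing prevents three consecutive ray-vertices $e_{i-1}, e_i, e_{i+1}$ from all containing a common vertex $v$ of $G$ (the $L(G)$-edge between $e_{i-1}$ and $e_{i+1}$ is simply not part of the ray). In that case $e_{i-1}\cap e_i = e_i\cap e_{i+1} = \{v\}$, so $v_i = v_{i+1} = v$ and $(v_i, v_{i+1})$ is a loop, not $e_i$. What holds in general is only the inclusion $\{v_i, v_{i+1}\}\subseteq e_i$, with equality exactly when $v_i \ne v_{i+1}$; so $v_0, v_1, \ldots$ is not literally a walk in $G$ until such stalls are collapsed. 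Consequently the sentence ``by construction $(x_n, x_{n+1}) = e_{L(x_n)}$'' derives a true conclusion from a false premise. The repair is short and should be made at the point of use: $v_{L(x_n)} = x_n$ and $v_{L(x_n)+1} = x_{n+1}$ both lie in $e_{L(x_n)}$, and since $L(x_n)$ is the \emph{last} position at which the walk equals $x_n$, we have $v_{L(x_n)+1}\ne x_n$; hence $e_{L(x_n)}$ really is the two-element set $\{x_n, x_{n+1}\}$. With that observation inserted (in place of the overstated global walk identity), the proof is correct.
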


\begin{proof}
Given a ray $R=e_{0},e_{1},\dots$ in $L(G)$, we construct a ray $S=y_{0}%
,y_{1},\dots$ in $G$ by recursion. Say that $e_{i}=(u_{i},v_{i})$. Start by
defining $y_{0}$ to be $u_{0}$. Having defined $y_{n}$, we define $y_{n+1}$ as
follows. Let $k_{n}$ be the largest $k$ such that $y_{n}$ is an endpoint of
$e_{k}$. Such $k$ exists because $G$ is locally finite and $R$ is a ray. We
can find $k_{n}$ by $\mathsf{ACA}_{0}$. Then define $y_{n+1}$ to be the
endpoint of $e_{k_{n}}$ other than $y_{n}$. This completes the recursion. Note
that the $k_{n}$ are strictly increasing: $e_{k_{n}}=(y_{n},y_{n+1})$ as
$y_{n}$ is not an endpoint of $e_{k_{n}+1}$ by the maximality of $k_{n}$. The
next edge then includes $y_{n+1}$ and so $k_{n+1}\geq k_{n}+1$. Also note that
if the graph is directed the $e_{i}$ must be of the form $\left\langle
x_{i},x_{i+1}\right\rangle $ and the last occurrence of any $x$ in an $e_{i}$
must be as its first element. By construction $S$ is infinite and contains no
repeated vertices because of the maximality requirement, hence it is a ray.
Observe that every edge in $S$ is a vertex in $R$, so the above mapping maps
vertex-disjoint rays in $L(G)$ to edge-disjoint rays in $G$.
\end{proof}

It remains to show that we can restrict our attention to locally finite
graphs. We accomplish this with the help of $\Sigma_{1}^{1}$-$\mathsf{AC}_{0}%
$. Given a directed graph $G$ with arbitrarily many edge-disjoint rays, we can
use $\Sigma_{1}^{1}$-$\mathsf{AC}_{0}$ to choose a family $\seq{\seq{R_{j}^{k}}_{j<k}}_{k>0}$ where, for each $k>0$, the rays $\seq{R_{j}^{k}}_{j<k}$ are edge-disjoint. From this family, we may construct an appropriate locally
finite subgraph of $G$:

\begin{lem}
\label{lem:locally_finite_subgraph} Suppose that $G$ is an X-graph and there
is some family $\seq{\seq{R_{j}^{k}}_{j<k}}_{k>0}$ of rays in $G$ such that for each $k>0$, the rays $\seq{R_{j}^{k}}_{j<k}$ are edge-disjoint. Then there is some locally
finite $X$-subgraph $G^{\prime}$ of $G$ and some family $\seq{\seq{S_{j}^{k}}_{j<k}}_{k>0}$ of rays in $G^{\prime}$ such that for each $k>0$, the rays $\seq{S_{j}^{k}}_{j<k}$ are edge-disjoint.
\end{lem}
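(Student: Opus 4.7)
The plan is to construct $G'$ by replacing each $R_j^k$ with a suitable tail and taking the union of all the resulting rays. The key elementary observation is that a ray has distinct vertices and therefore meets any given finite set of vertices only finitely often, so it has a tail avoiding that set entirely. I will exploit this to truncate the rays diagonally so that every vertex of $G'$ ends up on only boundedly many of the chosen tails, which will force local finiteness.

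Concretely, I enumerate the vertices of $G$ as $v_0, v_1, \ldots$ and the index pairs $(k,j)$ with $j < k$ as $(k_0, j_0), (k_1, j_1), \ldots$. For each $i$, set $V_i = \{v_0, \ldots, v_{i-1}\}$ and define $S_{j_i}^{k_i}$ to be the tail of $R_{j_i}^{k_i}$ beginning at the first position beyond every position at which $R_{j_i}^{k_i}$ visits a vertex of $V_i$. To locate that position I decide, for each $l < i$, the $\Sigma_1^0$ question of whether $R_{j_i}^{k_i}$ ever visits $v_l$ and, if so, find its unique position on the ray; one more than the maximum of these finitely many positions is the required truncation point. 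This search uses $\mathsf{ACA}_0$, which is available here since we are working under $\Sigma_1^1\text{-}\mathsf{AC}_0$. Let $G'$ be the subgraph of $G$ whose vertex and edge sets are the unions of the vertex and edge sets of the $S_{j_i}^{k_i}$. Each $S_{j_i}^{k_i}$ is then a ray in $G'$ by construction, and for each $k$ the rays $S_0^k, \ldots, S_{k-1}^k$ are tails of the edge-disjoint rays $R_0^k, \ldots, R_{k-1}^k$ and hence are themselves edge-disjoint.

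The heart of the argument is the verification that $G'$ is locally finite: if $v_l$ lies on $S_{j_i}^{k_i}$ then by the avoidance condition $v_l \notin V_i$, which forces $i \le l$; hence $v_l$ belongs to at most $l+1$ of the chosen tails and therefore has degree at most $2(l+1)$ in $G'$. The main obstacle is the tension between the two demands — forcing local finiteness requires shortening rays, yet the edge-disjointness within each family $\seq{R_j^k}_{j<k}$ must be preserved — and the diagonal truncation resolves both at once, since passing to tails leaves edge-disjointness within any fixed family intact while the avoidance condition caps the number of tails through any given vertex.
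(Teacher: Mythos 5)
Your proposal is correct and is essentially the same construction as the paper's: both diagonally truncate each $R_j^k$ to a tail that avoids a growing finite initial segment of an enumeration of the vertices, then observe that edge-disjointness is preserved by passing to tails while the avoidance schedule forces each vertex to lie on only finitely many of the chosen tails. The paper organizes the truncation by "stages" $k$ (every ray in the $k$th family avoids $\{v_0,\dots,v_{k-1}\}$), whereas you flatten the index pairs $(k,j)$ into a single sequence and truncate the $i$th ray to avoid $\{v_0,\dots,v_{i-1}\}$; this is a cosmetic reindexing that yields a slightly tighter (linear rather than quadratic) degree bound but is the same idea.
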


\begin{proof}
Define the vertices of $G^{\prime}$ to be the vertices of $G$, say
$\{v_{i} \mid i\in N\}$. We specify the set of edges $E^{\prime}$ of $G^{\prime}$
by providing a recursive construction of sets $E_{i}$ of edges putting in a
set of edges at each step. We guarantee that each $E_{i}$ is a union of
finitely many finite sets of edge-disjoint rays in $G$ and that after stage
$k$ no edge with a vertex $v_{i}$ for $i<k$ as an endpoint is ever put into
$E^{\prime}$.

Begin at stage $0$ by putting all the edges in $R_{0}^{1}$ into $E_{1}$.
Proceeding recursively at stage $k>0$ we have $E_{k}$ and consider the
edge-disjoint rays $R_{j}^{k}$, $j<k$. For each $j<k$, say $R_{j}^{k}%
=x_{j,0}^{k},x_{j,1}^{k},\dots$. Each $v_{i}$ for $i<k$ appears at most once
in $R_{j}^{k}$ as $R_{j}^{k}$ is a ray. For each $j<k$, since we have access
to the set of vertices of $R_{j}^{k}$, we can decide whether $R_{j}^{k}$
contains $v_{i}$ and, if so, find the index $n$ such that $v_{i}=x_{j,n}^{k}$.
Call it $n_{i,j}^{k}$. If there is no such $n$, set $n_{i,j}^{k}=0$. Define
$S_{j}^{k}$ to be the tail of $R_{j}^{k}$ after $x_{j,\max_{i<k}n_{i,j}^{k}%
}^{k}$. We put all the edges in $S_{j}^{k}$, $j<k$, into $E_{k+1}$. Let
$E^{\prime}=\bigcup_{k}E_{k}$.

It remains to show that $G^{\prime}$ is locally finite. Consider any vertex
$v_{k}$. No edge containing $v_{k}$ as an endpoint is put in after stage $k$.
On the other hand, $E_{k}$ is the union of finitely many finite sets of
edge-disjoint rays (all of which have been computed uniformly). Each set of
edge-disjoint rays in this union has $v_{k}$ appearing at most once in each of
its rays. Thus at most two edges containing $v_{k}$ appear in each of the
finitely many rays in this set. Therefore there are only finitely many edges
containing $v_{k}$ in each of the finite sets of edge-disjoint rays making up
$E_{k}$. All in all, only finitely many edges in $G^{\prime}$ contain $v_{k}$.
\end{proof}

We are ready to prove

\begin{thm}
\label{thm:Sigma11-AC_implies_XES} $\Sigma^{1}_{1}$-$\mathsf{AC}_{0}$ implies
$\mathsf{IRT}_{\mathrm{XES}}$ for each value of X.
\end{thm}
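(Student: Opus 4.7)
The plan is to reduce $\IRT{XES}$ (for both $X=U$ and $X=D$) to the vertex-disjoint single-ray theorem applied to the line graph of a locally finite subgraph. Given an X-graph $G$ satisfying the hypothesis of $\IRT{XES}$, the predicate ``there exist $k$ pairwise edge-disjoint rays in $G$'' is $\Sigma^1_1$ in $G$, so I would first apply $\SigmaAC$ to obtain a uniform family $\langle\langle R_j^k\rangle_{j<k}\rangle_{k>0}$ of edge-disjoint rays. I would then feed this family into Lemma \ref{lem:locally_finite_subgraph} to pass to a locally finite X-subgraph $G'\subseteq G$ equipped with a corresponding family $\langle\langle S_j^k\rangle_{j<k}\rangle_{k>0}$ of edge-disjoint rays in $G'$. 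Replacing $G$ by $G'$ sets the stage for applying Lemma \ref{lem:locally_finite_V_ray_in_L(G)_to_E_ray_in_G} at the end.

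Next I would form the line graph $L(G')$, which is again an X-graph and which inherits local finiteness from $G'$ (each directed or undirected edge has only finitely many neighbors in $L(G')$, since its two endpoints in $G'$ have finite degree). By Lemma \ref{lem:E_ray_in_G_to_V_ray_in_L(G)}, each $\langle S_j^k\rangle_{j<k}$ lifts to $k$ many vertex-disjoint rays in $L(G')$, so $L(G')$ has arbitrarily many vertex-disjoint rays. I would then apply the vertex-disjoint single-ray theorem $\IRT{XVS}$ to $L(G')$: for $X=U$ this is exactly Theorem \ref{thm:Sigma11-AC_implies_IRT}(ii), and for $X=D$ I would use the directed analogue proved by the same Andreae--Menger argument (with the directed form of Menger's theorem in place of the undirected form). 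This produces infinitely many vertex-disjoint rays in $L(G')$, which Lemma \ref{lem:locally_finite_V_ray_in_L(G)_to_E_ray_in_G} (applicable because $G'$ is locally finite) converts into infinitely many edge-disjoint rays in $G'$, and hence in $G$.

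The main obstacle is the directed case of the last application: in the paper's narrative $\IRT{DVS}$ is derived only later, as a consequence of $\IRT{DES}$ via Proposition \ref{prop:IRT_DEZ_implies_IRT_DVZ}, so I must avoid circularity. My proposed resolution is to observe that the proof of Theorem \ref{thm:Sigma11-AC_implies_IRT}(ii) is essentially indifferent to whether the ambient graph is directed or undirected, since Menger's theorem for finite graphs is provable in $\RCA_0$ in both variants and Andreae's construction uses only the combinatorial conclusion of Lemma \ref{lem:more_disjoint_rays}; this yields $\IRT{DVS}$ from $\SigmaAC$ without passing through $\IRT{DES}$. The remaining work is the routine bookkeeping to verify that the locally-finite-subgraph passage, the line graph construction, and the two ray translations can be stitched together into a single uniform reduction formalizable in $\RCA_0 + \SigmaAC$.
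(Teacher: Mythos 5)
Your proposal is correct and follows the paper's proof: $\Sigma^1_1$-$\mathsf{AC}_0$ to obtain the family of edge-disjoint rays, Lemma \ref{lem:locally_finite_subgraph} to pass to a locally finite subgraph, the line graph construction (Lemma \ref{lem:E_ray_in_G_to_V_ray_in_L(G)}) to convert edge-disjointness to vertex-disjointness, the Andreae--Menger recursion to produce infinitely many vertex-disjoint rays, and Lemma \ref{lem:locally_finite_V_ray_in_L(G)_to_E_ray_in_G} to translate back. The paper sidesteps the circularity you worry about by feeding the lifted family directly into the $\mathsf{ACA}_0$ portion of the recursion from the proof of Theorem \ref{thm:Sigma11-AC_implies_IRT}(ii) rather than invoking $\IRT{XVS}$ as a stated theorem, but the underlying justification is exactly your observation that the Andreae--Menger argument is indifferent to edge orientation once one uses the directed form of Menger's theorem.
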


\begin{proof}
Given an $X$-graph $G$ with arbitrarily many edge-disjoint rays, we can use
$\Sigma_{1}^{1}$-$\mathsf{AC}_{0}$ to choose a family $\seq{\seq{Q_{j}^{k}%
}_{j<k}}_{k>0}$ such that for each $k$, the rays $Q_{j}^{k}$, $j<k $, are
edge-disjoint. By Lemma \ref{lem:locally_finite_subgraph}, there is a locally
finite subgraph $H$ of $G$ and a family $\seq{\seq{R_{j}^{k}}_{j<k}}_{k>0}$ such that
for each $k$, the $R_{j}^{k}$, $j<k$, are edge-disjoint rays in $H$. By Lemma
\ref{lem:E_ray_in_G_to_V_ray_in_L(G)}, there is a family $\seq{\seq{S_{j}^{k}}_{j<k}}_{k>0}$ such that for each $k>0$, the $S_{j}^{k}$, $j<k$, are
vertex-disjoint rays in $L(H)$. By the second part of the proof of Theorem
\ref{thm:Sigma11-AC_implies_IRT}(ii) (which can be carried out in $\mathsf{ACA}%
_{0}$), $L(H)$ has infinitely many vertex-disjoint rays. Finally by Lemma
\ref{lem:locally_finite_V_ray_in_L(G)_to_E_ray_in_G}, $H$ has infinitely many
edge-disjoint rays. Hence $G$ has infinitely many edge-disjoint rays.
\end{proof}

By the discussion before Theorem \ref{thm:Sigma11-AC_implies_UVD}, this
completes the proof of Theorem \ref{thm:variants_IRT_hyp_analysis}.

Finally, we give a proof of $\mathsf{IRT}_{\mathrm{DED}}$ for directed forests
using $\Sigma_{1}^{1}\text{-}\mathsf{AC}_{0}$ (recall that $\mathsf{IRT}%
_{\mathrm{DED}}$ remains open). We will see that $\Sigma_{1}^{1}%
\text{-}\mathsf{AC}_{0}$ and $\mathsf{IRT}_{\mathrm{DED}}$ for directed
forests are equivalent over $\mathsf{RCA}_{0}+\mathsf{I}\Sigma_{1}^{1}$ but note that
$\Sigma_{1}^{1}$-$\mathsf{AC}_{0}$ does not imply $\mathsf{I}\Sigma_{1}^{1}$. (See
Theorem \ref{thm:DED_DVD_directed_forests_equivalence} and the comment
following it).

\begin{thm}
\label{thm:SigmaAC_implies_DED_directed_forests} $\Sigma_{1}^{1}%
\text{-}\mathsf{AC}_{0}$ implies $\mathsf{IRT}_{\mathrm{DED}}$ for directed
forests.
\end{thm}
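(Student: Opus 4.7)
The plan is to imitate the proof of Theorem \ref{thm:Sigma11-AC_implies_UVD} for $\IRT{UVD}$, with a forest-specific combinatorial lemma replacing the Menger-based Lemma \ref{lem:more_disjoint_rays}. Working in $\SigmaAC$, let $F$ be a directed forest with arbitrarily many pairwise edge-disjoint directed double rays. First, I would apply $\SigmaAC$ to the arithmetic predicate ``$\langle R_0,\dots,R_{k-1}\rangle$ is a $k$-tuple of pairwise edge-disjoint directed double rays in $F$'' to obtain a uniform sequence $S = \langle \langle R^k_j\rangle_{j<k}\rangle_{k\ge 1}$ witnessing the hypothesis. Let $G$ be the subgraph of $F$ consisting of all edges used by rays in $S$; then $G$ and $S$ are together computable from $F$ and the $\SigmaAC$ parameter.

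Next, I would recursively build, in $(G\oplus S)'$, sequences $\langle R^n_i\rangle_{i<n}$ of pairwise edge-disjoint directed double rays together with middle subpaths $P^n_i$ of length $2n$, so that $P^{n+1}_i$ is obtained from $P^n_i$ by adjoining the two vertices immediately before and after $P^n_i$ in $R^n_i$. As in the proof of $\IRT{UVD}$, the required infinite family of pairwise edge-disjoint directed double rays is then $\left\langle \bigcup_{n>i} P^n_i \right\rangle_i$, obtainable with one further Turing jump.

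The main technical step, and the principal obstacle, is a forest analog of Lemma \ref{lem:more_disjoint_rays}: given $n$ pairwise edge-disjoint directed double rays with fixed extended middles $P^{n+1}_i$, together with $m = f(n)$ additional pairwise edge-disjoint directed double rays $S_0,\dots,S_{m-1}$, produce $n+1$ pairwise edge-disjoint directed double rays whose first $n$ contain the respective $P^{n+1}_i$. Since $\IRT{DED}$ is open for general directed graphs, the absence of cycles in a forest must be decisive. I would exploit the following structural facts: each directed double ray lies entirely in one tree component and has a unique highest vertex in the underlying rooted tree; the forward tail from this turning vertex is a downward-directed ray and the backward tail is the reverse of a child-to-parent directed ray; hence two edge-disjoint directed double rays in the same tree use four distinct ends. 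Given the fixed $P^{n+1}_i$, I would discard those $S_j$ sharing an edge with $\bigcup_i P^{n+1}_i$ (at most polynomially many, since the $S_j$ are pairwise edge-disjoint), and use the remaining $S_j$ to supply fresh downward- and upward-directed tails at the endpoints of the $P^{n+1}_i$, together with one extra double ray for $R^{n+1}_n$. Because the forest has no cycles, no nontrivial rerouting is needed: edge-disjointness of the new tails with one another and with the $P^{n+1}_i$ follows automatically once they are chosen to go into distinct ends of the relevant trees.

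Once this combinatorial lemma is in hand, the construction concludes as in Theorem \ref{thm:Sigma11-AC_implies_UVD}: by induction, each $R^n_i$ has the form (tail of a ray in $S$) followed by $P^n_i$ followed by (tail of a ray in $S$), so that the inductive step is performable computably in $(G\oplus S)'$, and a final Turing jump yields the formal presentation of the required infinite sequence of pairwise edge-disjoint directed double rays.
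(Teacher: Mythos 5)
Your overall framework --- use $\Sigma_1^1\text{-}\mathsf{AC}_0$ to obtain a uniform sequence $S$ of $k$-tuples of edge-disjoint directed double rays, reduce to a single directed tree, and then grow middle subpaths $P_i^n$ of length $2n$ by a recursion computable in $(G\oplus S)'$, finishing with one more jump --- matches the paper's proof exactly. The gap is in the combinatorial inductive step, which you assert is essentially automatic because there are no cycles. It is not, and the paper devotes two genuine lemmas (Lemmas \ref{lem:DED_directed_forests_int} and \ref{lem:DED_directed_forests_ext}) to it.

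First, your structural claim that ``two edge-disjoint directed double rays in the same tree use four distinct ends'' is false. Definition \ref{treedef} only requires the underlying undirected graph $\hat{T}$ to be a tree; both $\langle u,v\rangle$ and $\langle v,u\rangle$ may be present as directed edges. Take $\hat{T}$ to be an undirected double ray $\ldots w_{-1}w_0w_1\ldots$ with both orientations of every edge in $E$: then $R_0 = \ldots\to w_{-1}\to w_0\to w_1\to\ldots$ and $R_1 = \ldots\to w_1\to w_0\to w_{-1}\to\ldots$ are edge-disjoint directed double rays occupying exactly the same two ends of $\hat{T}$. So edge-disjointness in a directed forest does not give you distinct ends.

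Second, and more fundamentally, the new double rays supplied by $\Sigma_1^1\text{-}\mathsf{AC}_0$ need not pass through the endpoints of the $P_i^{n+1}$, and the old tails of the $R_i^n$ need not avoid the new rays; moreover the old and new families are not edge-disjoint across families, so your appeal to edge-disjointness controlling ends does not even apply to the pairs that matter. The real difficulty is that every surviving fresh ray may meet some old tail, so there need not be a fresh ray available ``as is'' to serve as $R_n^{n+1}$, and some rerouting is forced. The paper handles this in Lemma \ref{lem:DED_directed_forests_ext}: by Lemma \ref{lem:DED_directed_forests_int} each nonempty intersection of a fresh ray with an old one is a single interval $Q_{j,i}$; each fresh ray is labeled by which old rays it meets, in what order along itself, and on which side of $P_i$ each interval lies; with $f(n)=2n^2+2^{2n}n!+1$ fresh rays available, after discarding those meeting some $P_i$, the pigeonhole principle yields two surviving fresh rays with identical labels; acyclicity of $\hat{T}$ (producing two distinct walks in a tree between the same two vertices) forces the common label to reference at most one old ray; and finally a careful splice replaces one tail of that old ray and frees one of the two fresh rays to be the new $(n{+}1)$st double ray. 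That pigeonhole-and-splice argument is where the forest hypothesis actually earns its keep; it is not automatic, and without it your inductive step does not close.
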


We first prove two lemmas.

\begin{lem}[$\ACA_0$]
\label{lem:DED_directed_forests_int} Let $G$ be a directed forest and let
$R_{0}=\left\langle x_{0,i}\mid i\in\mathcal{Z}\right\rangle$, $R_{1}=\left\langle x_{1,i}\mid i\in\mathcal{Z}\right\rangle $ be directed double
rays in $G$. Suppose $R_{0}$ and $R_{1}$ have an edge $\left\langle
u,v\right\rangle $ in common. Then there are vertices $t$ and $w$ such $tR_{0}%
w=tR_{1}w$ and $R_{0}$, $R_{1}$ have no vertices in common outside of those in
$tR_{0}w=tR_{1}w$. Note that we allow for the possibility that $t=-\infty$
and/or $w=+\infty$ in the sense that $(-\infty)R=R=R(+\infty)$ for any double ray
$R$. We call $tR_{0}w=tR_{1}w$ the intersection of $R_{0}$ and $R_{1}$.
\end{lem}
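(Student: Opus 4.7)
The plan is to reduce the lemma to the unique-path property of the underlying forest $\hat{G}$ (from Definition \ref{treedef}), combined with the fact that a double ray is a parametrization by \emph{distinct} vertices. Since $R_0$ and $R_1$ share the directed edge $\langle u,v\rangle$, there are unique indices $i_0,j_0$ with $u = x_{0,i_0} = x_{1,j_0}$ and $v = x_{0,i_0+1} = x_{1,j_0+1}$ (uniqueness coming from distinctness of vertices along each ray). Shifting the $R_1$-parametrization by $j_0 - i_0$, I may assume $u = x_{0,0} = x_{1,0}$ and $v = x_{0,1} = x_{1,1}$.

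Next, I produce candidates for $t$ and $w$ by locating the first index of disagreement in each direction. The sets $F^+ = \{k \geq 1 : x_{0,k} \neq x_{1,k}\}$ and $F^- = \{k \leq -1 : x_{0,k} \neq x_{1,k}\}$ are $\Delta^0_0$ from $R_0\oplus R_1$, so in $\RCA_0$ I split on whether each is empty. If $F^+ \neq \emptyset$, let $b^* = \min F^+ - 1$ and set $w := x_{0,b^*}$; otherwise $w := +\infty$. Produce $t$ symmetrically as $x_{0,a^*}$ (or $-\infty$) with $a^* = \max F^- + 1$. By construction $x_{0,k} = x_{1,k}$ throughout $[a^*,b^*]$ (with the obvious reading at infinite endpoints), so immediately $tR_0w = tR_1w$.

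The heart of the argument is showing $R_0$ and $R_1$ share no vertex outside this segment: if $x_{0,i} = x_{1,j}$, then $i=j \in [a^*,b^*]$. I split on the signs of $i,j$. When $i,j \geq 0$, the sequences $x_{0,0},\ldots,x_{0,i}$ and $x_{1,0},\ldots,x_{1,j}$ are simple paths in $\hat{G}$ from $u$ to the common vertex, so uniqueness of paths in a forest forces them to coincide, giving $i=j$ and $x_{0,k}=x_{1,k}$ for $0 \le k \le i$ and hence $i \le b^*$. The case $i,j \leq 0$ is symmetric. The essential case is mixed signs, say $i>0>j$: the same uniqueness argument applied to $x_{0,0},\ldots,x_{0,i}$ and the reverse $x_{1,0},x_{1,-1},\ldots,x_{1,j}$ forces $x_{0,1} = x_{1,-1}$; but the shared edge already gives $x_{0,1} = x_{1,1}$, so $x_{1,-1} = x_{1,1}$, contradicting the distinctness of vertices along the double ray $R_1$. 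Hence mixed signs never occur.

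The main obstacle I expect is verifying cleanly in $\RCA_0$ that, in any forest, two finite simple paths with common endpoints must coincide. Definition \ref{treedef} gives this only for root-to-vertex paths, so one has to derive the two-vertex version by a short splicing argument: any divergence of two paths between $a$ and $b$ would, at its first reunion point $c$, produce two distinct paths from $a$ to $c$ and hence two distinct paths from the root of the containing tree to $c$, contradicting the unique-root-path requirement. Once this is in hand, everything else is finite combinatorics on codes of finite subpaths together with the $\Delta^0_0$ case splits above, all well within $\RCA_0$.
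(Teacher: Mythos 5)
Your proof is correct and takes essentially the same approach as the paper's: both identify the maximal interval of agreement through the shared edge $\langle u,v\rangle$ and then argue that any further shared vertex would yield two distinct paths between a pair of vertices in the underlying tree $\hat T$, contradicting unique paths. You phrase it constructively via first-disagreement indices with an explicit sign-based case split (the mixed-sign case being ruled out via the shared edge and distinctness along $R_1$), while the paper argues by contradiction from an extremal point of the agreement interval; your observation that the two-endpoint uniqueness of paths in a forest must be derived from the root-path formulation of Definition~\ref{treedef} is a detail the paper leaves implicit, and your splicing argument fills it correctly.
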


\begin{proof}
Suppose $R_{0}$, $R_{1}$ provide a counterexample. As they have an edge
$\left\langle u,v\right\rangle $ in common, they lie in the same directed tree
$T$ in $G$ and can be viewed as (undirected) double rays in $\hat{T}$ (the
underlying graph for $T$). As they form a counterexample to the lemma, there
must be either a first $t\in R_{0}$ (i.e.\ earliest in the double ray $R_{0}$)
such that $tR_{0}v=tR_{1}v$ or a last $w\in R_{0}$ such that $uR_{0}w=uR_{1}w$
but $R_{0}$ and $R_{1}$ have a vertex $z$ in common outside the common
interval. The situations are symmetric and we consider the second. The
immediate successors $x$ and $y$ of $w$ in $R_{0}$ and $R_{1}$, respectively,
must be different by our choice of $w$. Consider now the location of $z$ in
$R_{0}$. If it is after $x$ then the paths from $w$ to $z$ in $R_{0}$ and $w$
to $z$ in $R_{1}$ (both considered now as undirected graphs within $\hat{T}$)
are different as the immediate successor of $w$ in $R_{0}$ is $x$ while in
$R_{1}$ it is either $y$ or a vertex in $v R_1 w$. Thus there are two different paths in $\hat{T}$ from $w$ to $z$
contradicting $\hat{T}$'s being a tree. If, on the other hand $z$ is before
$u$ in $R_{0}$, it must be before $t\in R_{0}$ and a similar argument provides
different paths from $t$ to $z$ in $R_{0}$ and $R_{1}$.
\end{proof}

\begin{lem}
[$\mathsf{ACA}_{0}$]\label{lem:DED_directed_forests_ext} \label{dedext} There
is a computable function $f$ such that given any sequence $\left\langle
S_{i}\right\rangle _{i<n}$ of DED rays with subpaths $\left\langle
P_{i}\right\rangle _{i<n}$ of length $2n$ in a directed tree $T$ and sequence
$\left\langle R_{j}\right\rangle _{j<f(n)}$ of DED rays in $T$, we can
construct a sequence $\left\langle S_{i}^{\prime}\right\rangle _{i\leq n}$ of
DED rays with subpaths $\left\langle P_{i}^{\prime}\right\rangle _{i\leq n}$
of length $2n+2$ in $T$ such that $P_{i}^{\prime}$ extends $P_{i}$ at each end
for $i<n$. Indeed, we may take $f(n)=2n^{2}+2^{2n}n!+1$.
\end{lem}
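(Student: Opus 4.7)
My proof plan would follow the Menger-style blueprint of Lemma \ref{lem:more_disjoint_rays}, but with the key structural tool coming from Lemma \ref{lem:DED_directed_forests_int} in place of Menger's theorem, since in a directed tree the overlap structure of DED double rays is extremely rigid.

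First I would prune the list $\langle R_j\rangle_{j<f(n)}$ by discarding any $R_j$ which shares an edge with $\bigcup_{i<n} P_i$. Since the $R_j$ are pairwise edge-disjoint and $\bigcup_{i<n} P_i$ has at most $n\cdot 2n = 2n^2$ edges, at most $2n^2$ of the $R_j$ are eliminated, leaving at least $2^{2n}n!+1$ remaining. Next, I would invoke Lemma \ref{lem:DED_directed_forests_int} to describe the remaining interaction: for each surviving $R_j$ and each $i<n$, either $R_j$ is edge-disjoint from $S_i$, or $R_j\cap S_i$ is a single contiguous subpath of both, necessarily contained entirely in one of the two tails of $S_i$ outside $P_i$ (since no edge of $P_i$ is shared). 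Consequently each remaining $R_j$ meets $\bigcup_{i<n} S_i$ in an ordered sequence of at most $n$ disjoint intervals along $R_j$, each labeled by a distinct $i$ together with a choice of which tail of $S_i$.

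Third, I would attempt the natural construction: take $S'_i = S_i$ for $i<n$ with $P'_i = \tilde a_i P_i \tilde b_i$, where $\tilde a_i$ and $\tilde b_i$ are the immediate predecessor of $a_i$ and successor of $b_i$ within $S_i$ (the required edges belong to $S_i$, so the $S'_i$ are automatically DED), and extract $S'_n$ from the surviving $R_j$ as one that is edge-disjoint from $\bigcup_{i<n} S_i$, taking $P'_n$ to be any length-$(2n+2)$ subpath. The number $2^{2n}n!$ should be interpreted as a bound on the "overlap types" of the $R_j$: each bad $R_j$ is classified by a permutation of the at most $n$ intersection intervals (yielding $n!$), together with a choice at each of the two "sides" of each index of which tail of $S_i$ is used (yielding a factor bounded by $2^{2n}$). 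A pigeonhole-plus-exchange argument then shows that $2^{2n}n!+1$ surviving candidates force the existence of a clean $R_j$, or allow one to trade out a segment of some $S_i$ for a portion of $R_j$ to realign into the desired configuration.

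The main obstacle is verifying this combinatorial step cleanly. In a directed tree one cannot reroute as in the Menger setting, so the flexibility must come entirely from the rigid coincidence enforced by Lemma \ref{lem:DED_directed_forests_int}: any two DED double rays sharing an edge coincide over a full subpath. The technical heart is to turn this rigidity into a bound on how many "genuinely distinct" bad patterns there can be, and to formalize the counting and the extraction procedure in $\ACA_0$. I expect this to be the longest and most delicate part of the proof, and the explicit form $f(n) = 2n^2 + 2^{2n}n! + 1$ to be dictated by that case analysis rather than by any tighter asymptotic.
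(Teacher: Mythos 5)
Your blueprint matches the paper's proof in outline: prune the $R_j$ meeting any $P_i$ (cost $\leq 2n^2$), invoke Lemma \ref{lem:DED_directed_forests_int} to get that each nonempty $R_j\cap S_i$ is a single interval of edges disjoint from $P_i$, assign to each surviving $R_j$ a label recording which $S_i$ it hits, in what order, and on which side of $P_i$ (giving at most $2^n\cdot n!\cdot 2^n=2^{2n}n!$ labels), and pigeonhole to get two rays $R_a,R_e$ with the same label. This is exactly what the paper does.

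However, the step you flag as ``the technical heart'' — turning the pigeonhole collision into an actual construction of $\langle S'_i\rangle_{i\leq n}$ — is the part your proposal leaves as a gap, and it is genuinely nontrivial. The paper resolves it via a specific structural claim: if $R_a$ and $R_e$ share a label $C$, then $|C|<2$. The proof of this claim (the bulk of the paper's argument, with two cases and two figures) takes the hypothetical $k\neq l$ in $C$ and exhibits two distinct edge-sequences in the underlying tree $\hat{T}$ joining the same pair of endpoints, using that consecutive intervals $Q_{j,i}$ along $R_j$ and $S_i$ must thread through each other's rays. Without this claim, the ``pigeonhole-plus-exchange'' step you describe has no teeth. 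Moreover, your ``natural construction'' of setting $S'_i=S_i$ for all $i<n$ and extracting a clean $R_j$ for $S'_n$ is too optimistic: even after pigeonholing, no surviving $R_j$ need be disjoint from all $S_i$. When $|C|=1$, say $C=\{i\}$, the paper genuinely rebuilds $S_i$: it replaces the tail of $S_i$ past the closer interval $Q_{c,i}$ with the corresponding tail of $R_c$ (this keeps it disjoint from all other $S_j$ because new edges come only from $R_c$, and disjoint from $R_d$ because $Q_{c,i}$ is closer to $P_i$ than $Q_{d,i}$), and then takes $S'_n=R_d$. You gesture at such a trade but do not see why the one-interval case is all that can arise, which is precisely what makes the exchange tractable. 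So the gap is concrete: the claim $|C|<2$ and the $|C|=1$ splicing construction are missing, and they are the substance of the lemma.
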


\begin{proof}
First we remove all the $R_{j}$ that contain an edge in any $P_{i}\ $at cost
of at most $2n^{2}$ many $j$. Consider any remaining $R_{j}$ in the second
sequence. By Lemma \ref{lem:DED_directed_forests_int}, its intersections with
the $S_{i}$ are intervals $Q_{j,i}$ of edges in $R_{j}$ which are disjoint as
the $S_{i}$ are. By our first thinning of the $R_{j}$ list, none of the
$Q_{j,i}$ intersect any of the $P_{i}$ so each $Q_{j,i}$ must lie entirely
above or entirely below$\;P_{i}$. We associate to each $R_{j}$ a label
consisting of the set $C_{j}=\{i<n\mid Q_{j,i}\neq\emptyset\}$; the elements
$i$ of $C_{j}$ in the order in which the $Q_{j,i}$ (for $i\in C_{j}$) appear
in $R_{j}$ (in terms of the ordering of $\mathcal{Z}$) along with a $+$ or $-$
depending on which side of $P_{i}$ it falls in $S_{i}$. We write $Q_{j,i}^{s}$
for the starting vertex of $Q_{j,i}$ and $Q_{j,i}^{e}$ for the ending one. As
above, we allow the values $\pm\infty$ for these endpoints if the intervals
are infinite. Now there are, of course, at most finitely many such labels. In
particular, there are at most $2^{n}n!2^{n}$ such labels. Thus if we have
$2^{2n}n!+1$ many $R_{j}$ left at least two of them, say $R_{a}$ and $R_{e}$
have the same label say with set $C$.\smallskip\newline\textbf{Claim:}
$|C|<2$.\smallskip\newline For the sake of a contradiction, assume we have
$k\neq l$ in $C$ with $k$ preceding $l$ in the ordering of $C$ in the label.
Say $R_{a}$ is the ray such that $Q_{a,k}$ is before $Q_{e,k}$ in $S_{k}$. Note that $Q_{a,k}$ and $Q_{e,k}$ are edge disjoint as $R_a$ and $R_e$ are. We consider two cases: (1) $Q_{a,l}$ is before $Q_{e,l}$ in $S_{l}$ and (2)
$Q_{e,l}$ is before $Q_{a,l}$ in $S_{l}$. We now produce, for each case, two
vertices with two distinct sequences (i) and (ii) of adjacent edges in $T$
connecting these two vertices. These sequences are illustrated in Figures
\ref{fig:DED_directed_forests_case_1} and
\ref{fig:DED_directed_forests_case_2}. Note that by our assumptions on the
orderings of the intervals $Q_{c,d}$ (for $c\in\{a,e\}$ and $d\in\{k,l\}$) as
displayed, all of the starting or ending points of $Q_{c,d}$ that appear in
our sequences are vertices in one of the rays (i.e.\ none are $\pm\infty$):

\setlength\itemsep{5pt}

\begin{itemize}
\item[(1i)] Start at $Q_{a,k}^{e}$ in $R_{a}$ and go to $Q_{a,l}^{e}$ then in
$S_{l}$ go to $Q_{e,l}^{s}$.

\item[(1ii)] Start at $Q_{a,k}^{e}$ in $S_{k}$ and go to $Q_{e,k}^{s}$ then go
in $R_{e}$ to $Q_{e,l}^{s}$.

\item[(2i)] Start at $Q_{a,k}^{s}$ in $S_{k}$ and go to $Q_{e,k}^{e}$ then in
$R_{e}$ go to $Q_{e,l}^{e}$ then in $S_{l}$ go to $Q_{a,l}^{e}$.

\item[(2ii)] Start at $Q_{a,k}^{s}$ in $R_{a}$ and go to $Q_{a,l}^{e}$.
\end{itemize}

\begin{figure}
\centering
\begin{tikzpicture}
	\node (Sk) at (0,0) {$S_k$};
	\node (Sk-) at (0,0.65) {$\vdots$};
	\node[label=left:{$Q_{a,k}$}] (Sk_Qak) [draw, shape=rectangle, minimum width=10,minimum height=25] at (0,1.9) {};
	\node[label=left:{$Q_{e,k}$}] (Sk_Qbk) [draw, shape=rectangle, minimum width=10, minimum height=25] at (0,3.6) {};
	\node (Sk+) at (0,5) {$\vdots$};
	\filldraw (Sk_Qak.north) node [right=5] {\tiny $Q^e_{a,k}$} circle (1.5pt);
	\draw [->] (0,0.85) -- (Sk+);
	\draw [line width=1.5,->] (Sk_Qak.north) -- (Sk_Qbk.south);
	\node (Sl) at (-2.5,0) {$S_l$};
	\node (Sl-) at (-2.5,0.65) {$\vdots$};
	\node[label=left:{$Q_{a,l}$}] (Sl_Qal) [draw, shape=rectangle, minimum width=10,minimum height=25] at (-2.5,1.9) {};
	\node[label=left:{$Q_{e,l}$}] (Sl_Qbl) [draw, shape=rectangle, minimum width=10, minimum height=25] at (-2.5,3.6) {};
	\node (Sl+) at (-2.5,5) {$\vdots$};
	\filldraw (Sl_Qbl.south) node [right=5] {\tiny $Q^s_{e,l}$} circle (1.5pt);
	\draw [->] (-2.5,0.85) -- (Sl+);
	\draw [line width=1.5,->] (Sl_Qal.north) -- (Sl_Qbl.south);
	\node (Ra) at (-5,0) {$R_a$};
	\node (Ra-) at (-5,0.65) {$\vdots$};
	\node[label=left:{$Q_{a,k}$}] (Ra_Qak) [draw, shape=rectangle, minimum width=10,minimum height=25] at (-5,1.9) {};
	\node[label=left:{$Q_{a,l}$}] (Ra_Qal) [draw, shape=rectangle, minimum width=10, minimum height=25] at (-5,3.6) {};
	\node (Ra+) at (-5,5) {$\vdots$};
	\filldraw (Ra_Qak.north) node [right=5] {\tiny $Q^e_{a,k}$} circle (1.5pt);
	\draw [->] (-5,0.85) -- (Ra+);
	\draw [line width=1.5,->] (Ra_Qak.north) -- (Ra_Qal.north);
	\draw [dashed] (Ra_Qal.north) -- (Sl_Qal.north);
	\node (Rb) at (2.5,0) {$R_e$};
	\node (Rb-) at (2.5,0.65) {$\vdots$};
	\node[label=right:{$Q_{e,k}$}] (Rb_Qbk) [draw, shape=rectangle, minimum width=10,minimum height=25] at (2.5,1.9) {};
	\node[label=left:{$Q_{e,l}$}] (Rb_Qbl) [draw, shape=rectangle, minimum width=10, minimum height=25] at (2.5,3.6) {};
	\node (Rb+) at (2.5,5) {$\vdots$};
	\filldraw (Rb_Qbl.south) node [right=5] {\tiny $Q^s_{e,l}$} circle (1.5pt);
	\draw [->] (2.5,0.85) -- (Rb+);
	\draw [line width=1.5,->] (Rb_Qbk.south) -- (Rb_Qbl.south);
	\draw [dashed] (Sk_Qbk.south) -- (Rb_Qbk.south);
\end{tikzpicture}
\caption{(1i) follows the thick arrows in $R_a$ and $S_l$. (1ii) follows the thick arrows in $S_k$ and $R_e$.}
\label{fig:DED_directed_forests_case_1}
\end{figure}
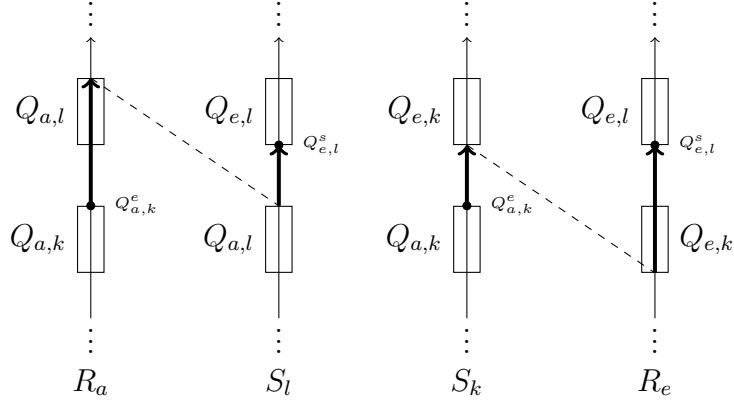

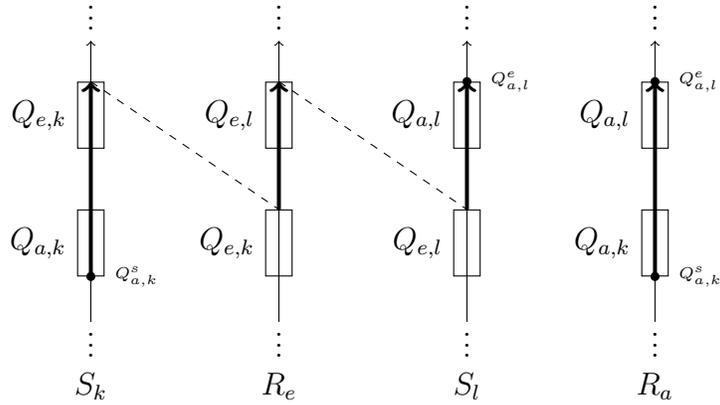
\begin{figure}
\centering
\begin{tikzpicture}
	\node (Sk) at (-5,0) {$S_k$};
	\node (Sk-) at (-5,0.65) {$\vdots$};
	\node[label=left:{$Q_{a,k}$}] (Sk_Qak) [draw, shape=rectangle, minimum width=10,minimum height=25] at (-5,1.9) {};
	\node[label=left:{$Q_{e,k}$}] (Sk_Qbk) [draw, shape=rectangle, minimum width=10, minimum height=25] at (-5,3.6) {};
	\node (Sk+) at (-5,5) {$\vdots$};
	\filldraw (Sk_Qak.south) node [right=5] {\tiny $Q^s_{a,k}$} circle (1.5pt);
	\draw [->] (-5,0.85) -- (Sk+);
	\draw [line width=1.5,->] (Sk_Qak.south) -- (Sk_Qbk.north);
	\node (Sl) at (0,0) {$S_l$};
	\node (Sl-) at (0,0.65) {$\vdots$};
	\node[label=left:{$Q_{e,l}$}] (Sl_Qbl) [draw, shape=rectangle, minimum width=10,minimum height=25] at (0,1.9) {};
	\node[label=left:{$Q_{a,l}$}] (Sl_Qal) [draw, shape=rectangle, minimum width=10, minimum height=25] at (0,3.6) {};
	\node (Sl+) at (0,5) {$\vdots$};
	\filldraw (Sl_Qal.north) node [right=5] {\tiny $Q^e_{a,l}$} circle (1.5pt);
	\draw [->] (0,0.85) -- (Sl+);
	\draw [line width=1.5,->] (Sl_Qbl.north) -- (Sl_Qal.north);
	\node (Ra) at (2.5,0) {$R_a$};
	\node (Ra-) at (2.5,0.65) {$\vdots$};
	\node[label=left:{$Q_{a,k}$}] (Ra_Qak) [draw, shape=rectangle, minimum width=10,minimum height=25] at (2.5,1.9) {};
	\node[label=left:{$Q_{a,l}$}] (Ra_Qal) [draw, shape=rectangle, minimum width=10, minimum height=25] at (2.5,3.6) {};
	\node (Ra+) at (2.5,5) {$\vdots$};
	\filldraw (Ra_Qak.south) node [right=5] {\tiny $Q^s_{a,k}$} circle (1.5pt);
	\filldraw (Ra_Qal.north) node [right=5] {\tiny $Q^e_{a,l}$} circle (1.5pt);
	\draw [->] (2.5,0.85) -- (Ra+);
	\draw [line width=1.5,->] (Ra_Qak.south) -- (Ra_Qal.north);
	\node (Rb) at (-2.5,0) {$R_e$};
	\node (Rb-) at (-2.5,0.65) {$\vdots$};
	\node[label=left:{$Q_{e,k}$}] (Rb_Qbk) [draw, shape=rectangle, minimum width=10,minimum height=25] at (-2.5,1.9) {};
	\node[label=left:{$Q_{e,l}$}] (Rb_Qbl) [draw, shape=rectangle, minimum width=10, minimum height=25] at (-2.5,3.6) {};
	\node (Rb+) at (-2.5,5) {$\vdots$};
	\draw [->] (-2.5,0.85) -- (Rb+);
	\draw [line width=1.5,->] (Rb_Qbk.north) -- (Rb_Qbl.north);
	\draw [dashed] (Sk_Qbk.north) -- (Rb_Qbk.north);
	\draw [dashed] (Rb_Qbl.north) -- (Sl_Qbl.north);
\end{tikzpicture}
\caption{(2i) follows the thick arrows in $S_k$, $R_e$ and $S_l$. (2ii) follows the thick arrow in $R_a$.}
\label{fig:DED_directed_forests_case_2}
\end{figure}

To see that the two sequences of vertices are different, note for (1) that
(1i) contains an edge in $Q_{a,l}$ but (1ii) does not. For (2) note that (2i)
contains an edge in $Q_{e,k}$ but (2ii) does not. We now, in each case, view
the associated two distinct sequences of vertices with the same endpoints in
the underlying (undirected) tree $\hat{T}$. The only way one can have such
sequences in a tree is for one of the sequences to contain some vertices $uvu$
in order. However, any three successive vertices in any of these sequences lie
within one of the $R_{j}$ or $S_{i}$ or both and so cannot have two instances
of the same vertex. The crucial point is that each $Q_{j,i}$ is in both
$R_{j}$ and $S_{i}$ and has at least two vertices. Any transition along the
sequence between an $R_{j}$ and an $S_{i}$ (in either order) goes through
$Q_{j,i}$ and so any three consecutive vertices are all contained in one
$R_{j}$ or one $S_{i}$ (or both).

Knowing now that $|C|$ is $0$ or $1$, we complete the proof of the Lemma. If
$|C|=0$, then both $R_{a}$ and $R_{e}$ are disjoint from all the $S_{i}$ and
so we may add on either one of them as $S_{n}^{\prime}$ with $P_{n}^{\prime}$
an arbitrary subpath of length $2n+2$ while keeping $S_{i}=S_{i}^{\prime}$ and
extending the $P_{i}^{\prime}$ appropriately for all $i<n$. Otherwise say
$C=\{i\}$. Let $c$ be the one of $a$ or $e$ such that $Q_{c,i}$ is closer to
$P_{i}$. (Remember that they are both on the same side of this interval in
$S_{i}$ by our fixing the label.) Now replace the tail of $S_{i}$ starting
with $Q_{c,i}$ and going away from $P_{i}$ by the tail of $R_{c}$ starting
with $Q_{c,i}$ and going in the same direction. Let this ray be $S_{i}%
^{\prime}$. Note that it is disjoint from all the $S_{j}$, $j\neq i$ as it
contains only edges that are in $S_{i}$ or $R_{c}$ neither of which share any
edges with such $S_{j}$. It is also disjoint from $R_{d}$ where $d$ is the one
of $a$, $e$ which is not $c$ since all the edges of $S_{i}^{\prime}$ are
either in $R_{c}$ or in $S_{i}$ outside of $Q_{d,i}$ by our choice of
$Q_{c,i}$ as closer to $P_{i}$. As $R_{d}$ is also disjoint from all the
$S_{j}$ for $j\neq i$ by our fixing the label, we may define $S_{j}^{^{\prime
}}=S_{j}$ and $P_{j}$ appropriately for $j<n$, $j\neq i$ and $S_{n}^{\prime
}=R_{d}$ and choosing $P_{n}^{\prime}$ of length $2n+2$ arbitrarily so as to
get the sequence required in the Lemma.
\end{proof}

Lemma \ref{lem:DED_directed_forests_ext} provides the inductive step for the
following proof:

\begin{proof}
[Proof of Theorem \ref{thm:SigmaAC_implies_DED_directed_forests}]Assume we are
given a directed forest $G$ with arbitrarily many DED rays. By $\Sigma_{1}%
^{1}\text{-}\mathsf{AC}_{0}$ we may take a sequence $\seq{\seq{R_{k,i}}_{i<k}}_{k \in N}$ such that, for each $k$, $\seq{R_{k,i}}_{i<k}$ is a sequence of $k$ many DED rays in $G$. If there are
infinitely many of the trees making up $G$ each of which contains some
$R_{k,i}$ then we are done. So we may assume that all of them are in one
directed tree $T$. We now wish to define $\seq{\seq{S_{k,s}}_{k<s}}_{s \in N}$ by recursion such that, for each $s$, $\seq{S_{k,s}}_{k<s}$ is a sequence of $s$ many DED rays with
subpaths $P_{k,s}$ of length $2s+1$ such that for each $s\in N$, $P_{k,s+1}$
extends $P_{k,s}\ $at each end so that the $\lim_{s}P_{k,s}$ form an infinite
sequence of DED rays in $T$ as required. Lemma
\ref{lem:DED_directed_forests_ext} provides precisely the required inductive
step for the construction since we have the required sequences of DED rays
$\left\langle R_{f(n),i}\mid i<f(n)\right\rangle $ at each step $n$ of the
construction. Once again we just have to note that Lemma
\ref{lem:DED_directed_forests_ext} provides witnesses for the double rays that
are composed of finite paths in $T$ and final segments in one direction or the
other of some of the $R_{k,i}$ and so they can be found recursively in $(T\oplus \seq{\seq{R_{k,i}}_{i<k}}_{k \in N})^{\prime}$.
\end{proof}

\section{Variations on Maximality}

\label{section:maximality}

In this section, we consider variants of $\mathsf{IRT}_{}$ whose solutions are
required to be maximal in terms of cardinality or maximal in terms of set inclusion.

\subsection{Maximum Cardinality Variants of $\mathsf{IRT}_{}$}

\label{section:max_cardinality}

\begin{defn}
\label{defn:IRTmc} Let $\mathsf{IRT}_{\mathrm{XYZ}}^{\ast}$ be the statement
that every $X$-graph $G$ has a set of $Y$-disjoint $Z$-rays of maximum cardinality,
or more formally, the statement that for every $X$-graph $G$:
\begin{itemize}
	\item there is no $Z$-ray in $G$, or
	\item there is some $n > 0$ and some $R$ such that $\seq{R^{[i]} \mid i<n}$ is a sequence of $Y$-disjoint $Z$-rays in $G$, and there is no $R$ such that $\seq{R^{[i]} \mid i<n+1}$ is a sequence of $Y$-disjoint $Z$-rays in $G$, or
	\item there is some $R$ such that $\seq{R^{[i]} \mid i \in N}$ is a sequence of $Y$-disjoint $Z$-rays in $G$.
\end{itemize}
When we talk about the cardinality of a
(possibly empty) finite sequence $\seq{R^{[i]}\mid i<n}$
we mean the number $n$ (which may be $0$). Of course a sequence $\seq{R^{[i]}\mid i\in N}$ is said to have infinite cardinality.
\end{defn}

$\mathsf{IRT}^{\ast}_{\mathrm{UVS}}$ was proved by Halin \cite{halin65}, who
also proved the corresponding statement for uncountable graphs.

\begin{rmk}
\label{rmk:ACA0_ast} The notation in Definition \ref{defn:IRTmc} is inspired
by the well known version $\mathsf{ACA}_{0}^{\ast}$ of $\mathsf{ACA}_{0}$
which, for every $A$, asserts the existence of $A^{(n)}$, the $n$th jump of $A$, for every $n
$:
\[
\mathsf{ACA}_{0}^{\ast}: \quad(\forall A)(\forall n)(\exists W)(W^{[0]}%
=A\wedge(\forall i<n)(W^{[i+1]}=W^{[i]^{\prime}})).
\]
This asserts (in addition to $\mathsf{ACA}_{0}$) particular instances of
$\mathsf{I}\Sigma_{1}^{1}$. So too (in addition to $\mathsf{IRT}_{\mathrm{XYZ}}$) do
the $\mathsf{IRT}^{\ast}_{\mathrm{XYZ}}$ as we are about to see.
\end{rmk}

\begin{prop}
\label{prop:IRT_IRTmc_rshp} For each choice of XYZ, $\mathsf{IRT}^{\ast
}_{\mathrm{XYZ}}$ implies $\mathsf{IRT}_{\mathrm{XYZ}}$ over $\RCA_0$ and $\mathsf{IRT}_{\mathrm{XYZ}}$ implies $\mathsf{IRT}^{\ast
}_{\mathrm{XYZ}}$ over $\RCA_0 + \mathsf{I}\Sigma^{1}_{1}$. Therefore
$\mathsf{IRT}_{\mathrm{XYZ}}$ and $\mathsf{IRT}^{\ast}_{\mathrm{XYZ}}$ are
equivalent over $\mathsf{RCA}_{0} + \mathsf{I}\Sigma^{1}_{1}$. In particular, they have
the same standard models.
\end{prop}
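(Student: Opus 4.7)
The plan is to prove each direction separately, with the forward direction being immediate in $\RCA_0$ and the reverse needing $\mathsf{I}\Sigma^1_1$ in an essential way.

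For $\IRTmc{XYZ} \Rightarrow \IRT{XYZ}$ I would simply unpack Definition \ref{defn:IRTmc}: assuming $G$ contains arbitrarily many $Y$-disjoint $Z$-rays, apply $\IRTmc{XYZ}$ to $G$ and dispose of the first two clauses. The first is ruled out because $G$ has at least one ray, and the second clause's claimed finite maximum $n$ is contradicted by extracting $n+1$ disjoint rays from the hypothesis. Hence the third clause holds and supplies the infinite sequence. This uses only case analysis and lives comfortably in $\RCA_0$.

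For the reverse direction, I would work in $\RCA_0 + \mathsf{I}\Sigma^1_1$ and, given an $X$-graph $G$, introduce the $\Sigma^1_1$ predicate
\[
\phi(n) \equiv (\exists R)\bigl(\seq{R^{[i]} \mid i < n} \text{ is a sequence of } Y\text{-disjoint } Z\text{-rays in } G\bigr).
\]
The plan is to split into three exhaustive cases: (i) $\neg\phi(1)$, in which case $G$ contains no $Z$-ray and the first clause of Definition \ref{defn:IRTmc} holds; (ii) $\forall n\,\phi(n)$, in which case $\IRT{XYZ}$ applies and yields the third clause; and (iii) $\phi(1)$ holds but $\exists n\,\neg\phi(n)$. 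In case (iii) I invoke the $\Pi^1_1$ least-number principle, which is equivalent to $\mathsf{I}\Sigma^1_1$ over $\RCA_0$, applied to the $\Pi^1_1$ predicate $\neg\phi$ to extract the least $n_0 \geq 2$ with $\neg\phi(n_0)$; then $\phi(n_0-1) \wedge \neg\phi(n_0)$ witnesses the second clause with $n = n_0 - 1$.

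The main obstacle to watch for is the temptation to form the set $\{n : \phi(n)\}$ and minimize over its complement; this is blocked because $\phi$ is $\Sigma^1_1$ and so this set need not exist in $\RCA_0$, which is exactly the gap that $\mathsf{I}\Sigma^1_1$ closes via the $\Pi^1_1$ least-number principle. Once the reverse direction is in hand, the final claim about standard models is automatic, since $\mathsf{I}\Sigma^1_1$ holds in every standard model and so the equivalence transfers there.
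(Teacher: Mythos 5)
Your argument is correct, and the forward direction matches the paper's exactly. For the reverse direction, though, your route differs from the paper's in a way worth noting. The paper does not pass through the $\Pi^1_1$ least-number principle at all: it simply splits on whether $\forall n(\Phi(n)\rightarrow\Phi(n+1))$ holds. If it does, then since $\Phi(0)$ is trivially true, $\mathsf{I}\Sigma^1_1$ gives $\forall n\,\Phi(n)$ and $\IRT{XYZ}$ supplies the infinite sequence; if it fails, the witnessing $n$ with $\Phi(n)\wedge\neg\Phi(n+1)$ already satisfies the first or second clause of Definition \ref{defn:IRTmc} directly (no minimality needed, since the definition only asks for \emph{some} $n$, not the least). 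Your case (iii) instead splits off merely $\exists n\,\neg\Phi(n)$, which is strictly weaker information, and you then invoke $\Pi^1_1$-$\mathsf{LNP}$ to recover a breaking point. That works, but it leans on the equivalence of $\mathsf{I}\Sigma^1_1$ with the $\Pi^1_1$ least-number principle over $\RCA_0$, which is true but not entirely routine: the standard derivation of $\mathsf{L}\Pi^1_1$ from $\mathsf{I}\Sigma^1_1$ must cope with the fact that $\forall m<k\,\neg\Psi(m)$ (a bounded universal over a $\Sigma^1_1$ matrix) is not literally $\Sigma^1_1$, and one must instead run induction on a $\Sigma^1_1$ formula that accumulates a coded sequence of witnesses $\langle X_m\rangle_{m<k}$ for $\neg\Psi(m)$. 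The paper's dichotomy on the inductive step sidesteps this extra layer entirely, which is why it is the more economical proof; your version is sound but imports a lemma that itself deserves a few lines of justification.
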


\begin{proof}
The first implication holds because if an $X$-graph has arbitrarily many
$Y$-disjoint $Z$-rays, then any sequence of $Y$-disjoint $Z$-rays in the graph of
maximum cardinality must be infinite. To prove the second implication, let $G$
be an $X$-graph and $\Phi(n)$ be the $\Sigma_{1}^{1}$ formula which says that there is a sequence of length $n$ of $Y$-disjoint $Z$-rays in $G$.
If $\forall n(\Phi(n)\rightarrow\Phi(n+1))$, then by $\mathsf{I}\Sigma_{1}^{1}$,
$\forall n\Phi(n)$ holds and so by $\mathsf{IRT}_{\mathrm{XYZ}}$ there is a
sequence $\seq{R_{i}}_{i\in N}$ of $Y$-disjoint $Z$-rays in
$G$ as required. On the other hand, if there is an $n$ such that $\Phi(n)$
holds but $\Phi(n+1)$ fails, then this $n$ witnesses $\mathsf{IRT}%
_{\mathrm{XYZ}}^{\ast}$.
\end{proof}

It follows from Proposition \ref{prop:IRT_IRTmc_rshp} and Theorem
\ref{thm:variants_IRT_hyp_analysis} that

\begin{cor}
\label{cor:variants_IRTmc_hyp_analysis} $\mathsf{IRT}_{\mathrm{XYS}}^{\ast}$
and $\mathsf{IRT}_{\mathrm{UVD}}^{\ast}$ are theorems of hyperarithmetic analysis.
\end{cor}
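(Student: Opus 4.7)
The plan is to observe that being a THA is a property of the class of standard models of a sentence (Definition \ref{thadef} only quantifies over structures of the form $\seq{\mathbb{N},S}$), so it suffices to show that the classes of standard models of $\mathsf{IRT}_{\mathrm{XYZ}}$ and of $\mathsf{IRT}_{\mathrm{XYZ}}^{\ast}$ coincide, for the relevant choices of XYZ. Proposition \ref{prop:IRT_IRTmc_rshp} gives exactly this: the two principles are equivalent over $\mathsf{RCA}_0 + \mathsf{I}\Sigma_1^1$, and full induction (in particular $\mathsf{I}\Sigma_1^1$) holds in every standard model. Hence the classes of standard models of $\mathsf{IRT}_{\mathrm{XYZ}}$ and $\mathsf{IRT}_{\mathrm{XYZ}}^{\ast}$ are identical.

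Once this observation is in hand, the corollary falls out immediately. First I would fix $XYZ$ among the cases XYS (any $X$, any $Y$, single rays) and UVD (undirected, vertex-disjoint, double rays), since these are precisely the variants covered by Theorem \ref{thm:variants_IRT_hyp_analysis}. Let $\mathcal{M} = \seq{\mathbb{N},S}$ be an arbitrary standard model. By the equivalence of standard models noted above, $\mathcal{M} \vDash \mathsf{IRT}_{\mathrm{XYZ}}^{\ast}$ if and only if $\mathcal{M} \vDash \mathsf{IRT}_{\mathrm{XYZ}}$.

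To verify clause (1) of Definition \ref{thadef} for $\mathsf{IRT}_{\mathrm{XYZ}}^{\ast}$, apply clause (1) for $\mathsf{IRT}_{\mathrm{XYZ}}$ (which holds by Theorem \ref{thm:variants_IRT_hyp_analysis}) to conclude $\seq{\mathbb{N},\mathrm{HYP}(X)} \vDash \mathsf{IRT}_{\mathrm{XYZ}}$, and then use the equivalence on standard models to upgrade this to $\seq{\mathbb{N},\mathrm{HYP}(X)} \vDash \mathsf{IRT}_{\mathrm{XYZ}}^{\ast}$. For clause (2), suppose $\seq{\mathbb{N},S} \vDash \mathsf{IRT}_{\mathrm{XYZ}}^{\ast}$ and $X \in S$; by the equivalence on standard models $\seq{\mathbb{N},S} \vDash \mathsf{IRT}_{\mathrm{XYZ}}$, and clause (2) for $\mathsf{IRT}_{\mathrm{XYZ}}$ then yields $\mathrm{HYP}(X) \subseteq S$.

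There is no real obstacle here; the only thing to be careful about is not to confuse provable equivalence over $\mathsf{RCA}_0$ (which fails in general) with equivalence on standard models (which holds, because standard models satisfy $\mathsf{I}\Sigma_1^1$ automatically, so Proposition \ref{prop:IRT_IRTmc_rshp} applies in full). Once this is stated explicitly, the argument is essentially a one-line deduction from Theorem \ref{thm:variants_IRT_hyp_analysis} and Proposition \ref{prop:IRT_IRTmc_rshp}, and no new combinatorial or recursion-theoretic work is required.
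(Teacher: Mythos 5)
Your proof is correct and takes exactly the paper's approach: the corollary is derived from Proposition~\ref{prop:IRT_IRTmc_rshp} (which explicitly notes that $\IRT{XYZ}$ and $\IRTmc{XYZ}$ have the same standard models, since $\mathsf{I}\Sigma_1^1$ holds automatically there) together with Theorem~\ref{thm:variants_IRT_hyp_analysis}, using that Definition~\ref{thadef} is a condition purely on standard models.
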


It follows from Proposition \ref{prop:IRT_IRTmc_rshp} and Theorem
\ref{thm:IRT_variants_ACA_0_and_omega_model_IRT_variants_hyp_closed} that
$\mathsf{IRT}^{\ast}_{\mathrm{XYZ}}$ implies $\mathsf{ACA}_{0}$, so we will
not explicitly mention uses of $\mathsf{ACA}_{0}$ whenever we are assuming any
$\mathsf{IRT}^{\ast}_{\mathrm{XYZ}}$.

Using Lemmas \ref{lem:DYZ_implies_UYZ}, \ref{lem:DEZ_implies_DVZ} and
\ref{lem:DYD_implies_DYS}, we can prove

\begin{prop}
\label{prop:IRTmc_implications_variants} $\mathsf{IRT}^{\ast}_{\mathrm{DYZ}}$
implies $\mathsf{IRT}^{\ast}_{\mathrm{UYZ}}$, $\mathsf{IRT}^{\ast
}_{\mathrm{DEZ}} $ implies $\mathsf{IRT}^{\ast}_{\mathrm{DVZ}}$ and
$\mathsf{IRT}^{\ast}_{\mathrm{DYD}}$ implies $\mathsf{IRT}^{\ast
}_{\mathrm{DYS}}$.
\end{prop}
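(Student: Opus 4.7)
The plan is to reuse the computable constructions from Lemmas \ref{lem:DYZ_implies_UYZ}, \ref{lem:DEZ_implies_DVZ} and \ref{lem:DYD_implies_DYS} essentially verbatim, and observe that the maps $h$ and $k$ provided there preserve not only disjointness but also cardinality of families of disjoint rays. Once this is in hand, all three implications proceed by the same template: given a source graph $G$, form the target graph $G' = g(G)$; apply the appropriate hypothesis (one of $\mathsf{IRT}^{\ast}_{\mathrm{DYZ}}$, $\mathsf{IRT}^{\ast}_{\mathrm{DEZ}}$ or $\mathsf{IRT}^{\ast}_{\mathrm{DYD}}$) to $G'$ to obtain a witness to exactly one of the three clauses of Definition \ref{defn:IRTmc} for $G'$; and translate that witness back to a witness of the corresponding clause for $G$.

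Concretely, for the first implication $\mathsf{IRT}^{\ast}_{\mathrm{DYZ}} \Rightarrow \mathsf{IRT}^{\ast}_{\mathrm{UYZ}}$: if $G'$ has no $Z$-ray, then neither does $G$, since $h$ would send any such ray to one in $G'$. If $G'$ has a maximum finite family of $n$ disjoint $Z$-rays, then applying $k$ componentwise yields $n$ disjoint $Z$-rays in $G$, and no family of $n+1$ disjoint $Z$-rays can exist in $G$, because $h$ would then produce such a family in $G'$ contradicting maximality. If $G'$ has an infinite family, then $k$ produces one in $G$. The arguments for $\mathsf{IRT}^{\ast}_{\mathrm{DEZ}} \Rightarrow \mathsf{IRT}^{\ast}_{\mathrm{DVZ}}$ and $\mathsf{IRT}^{\ast}_{\mathrm{DYD}} \Rightarrow \mathsf{IRT}^{\ast}_{\mathrm{DYS}}$ are structurally identical; one simply invokes the appropriate lemma to translate between the relevant disjointness conventions and, where needed, between single and double rays.

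The one minor point to confirm — there is no real obstacle here — is cardinality preservation for $h$ and $k$ in Lemmas \ref{lem:DYZ_implies_UYZ} and \ref{lem:DEZ_implies_DVZ}, which were stated for individual rays; but this is immediate from the explicit constructions, since pairwise disjoint source rays have pairwise disjoint images and vice versa, so applying the maps componentwise to an $n$-sequence of disjoint rays yields an $n$-sequence of disjoint rays. Lemma \ref{lem:DYD_implies_DYS} already states cardinality preservation on sets of disjoint rays. Since the whole argument consists of composing computable maps and invoking the starred hypothesis once on a single explicitly constructed graph, and the case split in Definition \ref{defn:IRTmc} is handed to us by that hypothesis applied to $G'$, the proof goes through in $\mathsf{RCA}_0$ with no need for additional induction or choice.
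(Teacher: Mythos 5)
Your proposal is correct and follows exactly the approach the paper intends: the paper's entire proof is the one-line remark that Lemmas \ref{lem:DYZ_implies_UYZ}, \ref{lem:DEZ_implies_DVZ} and \ref{lem:DYD_implies_DYS} suffice, and you have supplied precisely the case-by-case translation through the three clauses of Definition \ref{defn:IRTmc} that the paper leaves implicit. The cardinality-preservation observation you make for the first two lemmas (disjointness preservation in both directions plus the fact that disjoint rays cannot coincide forces injectivity on disjoint families) is the right bookkeeping and is indeed immediate from the explicit constructions.
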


Next, we show that $\mathsf{IRT}^{\ast}_{\mathrm{XYZ}}$ proves sufficient
induction in order to transcend $\Sigma^{1}_{1}$-$\mathsf{AC}_{0}$. This
implies that $\mathsf{IRT}^{\ast}_{\mathrm{XYZ}}$ is strictly stronger than
$\mathsf{IRT}_{\mathrm{XYZ}}$ for certain choices of $XYZ$ (Corollary
\ref{cor:IRTmc_stronger_IRT}). The connection between $\Sigma^{1}_{1}%
\text{-}\mathsf{AC}_{0}$ and graphs is obtained by viewing the set of
solutions of an arithmetic predicate as the set of (projections of) branches
on a subtree of $N^{<N}$. In detail:

\begin{lem}
[{\cite[V.5.4]{sim_book}}]\label{lem:arith_formula_paths_on_tree} If
$A(X)$ is an arithmetic formula, $\mathsf{ACA}_{0}$ proves that there is a
tree $T\subseteq N^{<N}$ such that
\begin{align*}
&  \forall X(A(X)\leftrightarrow\exists f(\langle X,f\rangle\in\lbrack T]))\\
\text{and}\; &  \forall X(\exists\text{ at most one }f)(\langle X,f\rangle
\in\lbrack T])\text{.}%
\end{align*}
\newline In fact, as the proof in \cite[V.5.4]{sim_book} shows, the required
functions $f$ are what are called the minimal Skolem functions and are
arithmetically defined uniformly in $X$ and the formula $A$.
\end{lem}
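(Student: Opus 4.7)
My plan is to reduce $A(X)$ to prenex normal form, introduce minimal Skolem functions for its existential quantifiers together with counter-Skolem functions certifying their minimality, and then let $T$ be the tree that enforces, at each finite node, both that $f$ encodes valid Skolem witnesses and that no smaller witness would have worked. The unique branch through $T$ above $X$ will then be the tuple of minimal Skolem functions for $A(X)$, which is arithmetically definable in $X$ uniformly in the G\"odel number of $A$.

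Concretely, I would put $A(X)$ in prenex form $Q_{1} x_{1} \cdots Q_{k} x_{k}\,\theta(X, x_{1}, \ldots, x_{k})$ with $\theta$ quantifier-free, and for each $\exists x_{i}$ introduce a Skolem function $f_{i}$ depending on the preceding universally quantified variables. When $A(X)$ holds, the \emph{minimal} Skolem functions, where $f_{i}(\vec{u})$ is the least witness making the tail of the formula true, are uniquely determined. To certify minimality internally on the tree, I would also record, for each $\vec{u}$ and each $m' < f_{i}(\vec{u})$, a refutation of the tail formula at $m'$; these refutations are themselves Skolem functions for the negation of the tail with $x_{i} = m'$ substituted. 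Packaging all of these positive and negative Skolem values into a single function $f$ via a fixed effective pairing, the statement ``$f$ is the combined minimal-Skolem data for $X$'' reduces to a universally quantified conjunction of quantifier-free instances of $\theta$ and $\neg \theta$, i.e.\ a $\Pi^{0}_{1}$ condition on $\langle X, f \rangle$.

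I would then let $T \subseteq N^{<N}$, with pairs $\langle \sigma, \tau \rangle$ coded into $N^{<N}$ by the standard bijection, be the set of finite approximations that do not yet violate any checkable instance of this $\Pi^{0}_{1}$ condition. Membership in $T$ is then quantifier-free and hence decidable, so $T$ exists outright. The verification is symmetric: if $A(X)$ is true, the minimal Skolem functions together with their refutations yield the unique infinite branch $\langle X, f \rangle$ through $T$; conversely, any $\langle X, f \rangle \in [T]$ forces $f$ to contain valid Skolem witnesses for $A$, so $A(X)$ holds, and uniqueness follows from the minimality clauses built into $T$.

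The main obstacle is the bookkeeping of the combined Skolem and counter-Skolem data, ensuring that the pairing is uniform in the G\"odel number of $A$ and that the reduction to prenex form plus the nested Skolemization can be carried out by an induction on formula complexity inside $\mathsf{ACA}_{0}$. Once that coding is fixed, $T$ and the minimal Skolem functions are both primitive recursive in $A$ and $X$, and the uniform arithmetic definability of $f$ in $X$ (as asserted at the end of the lemma) follows by unwinding the iterated bounded minimization used to define the $f_{i}$.
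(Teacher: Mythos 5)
Your proposal is correct and matches the approach the paper attributes to Simpson's Lemma V.5.4: Skolemize the prenex form, take least witnesses to get minimal Skolem functions, adjoin counter-Skolem data (recursively minimal) to certify minimality, and let $T$ be the tree of finite approximations to the resulting $\Pi^0_1$ condition on $\langle X,f\rangle$.

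One small correction to your closing remark: the minimal Skolem functions $f_i(\vec u) = \mu m\,[\text{tail holds at }m]$ are not primitive recursive in $X$, and the minimization is not bounded. The tail is a genuine arithmetic formula, so each $f_i$ is defined by \emph{unbounded} minimization over an arithmetic predicate and is therefore arithmetic in $X$ (which is exactly what the lemma asserts and exactly why $\mathsf{ACA}_0$, rather than $\mathsf{RCA}_0$, is needed to prove that the required branch exists when $A(X)$ holds). The tree $T$ itself, by contrast, is indeed outright computable once the syntactic coding is fixed, since membership of a finite string is decided by a bounded check of the $\Pi^0_1$ clauses.
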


The following easy corollary will be useful.

\begin{lem}
\label{lem:arith_formula_paths_on_tree_seq} If $A(n,X)$ is an arithmetic
formula, $\mathsf{ACA}_{0}$ proves that there is a sequence of subtrees
$\seq{T_{n}}_{n}$ of $N^{<N}$ such that for each $n\in N$,
\begin{align*}
&  \forall X(A(n,X)\leftrightarrow\exists f(\langle X,f\rangle\in\lbrack
T_{n}]))\\
\text{and}\;  &  \forall X(\exists\text{ at most one }f)(\langle X,f\rangle
\in\lbrack T_{n}]).
\end{align*}

\end{lem}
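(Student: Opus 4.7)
The plan is to reduce directly to Lemma \ref{lem:arith_formula_paths_on_tree} by exploiting the uniformity already built into its conclusion. The remark following the statement of that lemma emphasizes that the minimal Skolem functions witnessing $A(X)$, and hence the tree $T$ that encodes them, are arithmetically defined uniformly in $X$ and in (the Gödel number of) the formula $A$. Inspecting the proof of \cite[V.5.4]{sim_book}, one sees that free \emph{number} parameters in $A$ present no additional difficulty: the construction of $T$ from $A$ treats every free variable of $A$ of either sort as a parameter, and the same arithmetic definition of membership in $T$ works when some of those parameters are numbers rather than sets.

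Thus the strategy is to treat the number variable $n$ in $A(n,X)$ as such a parameter. For each fixed $n$, applying (the proof of) Lemma \ref{lem:arith_formula_paths_on_tree} to the formula $A(n,\cdot)$ yields a tree $T_n\subseteq N^{<N}$ satisfying the two displayed conditions. Moreover, since the definition of membership in this $T_n$ is arithmetic in $n$ and $\sigma$, a single arithmetic formula $\Psi(n,\sigma)$ defines the set
\[
\{\langle n,\sigma\rangle \mid \sigma\in T_n\},
\]
so by $\mathsf{ACA}_{0}$ this set exists and supplies the desired coded sequence $\seq{T_n}_n$. The equivalences $\forall X(A(n,X)\leftrightarrow\exists f(\langle X,f\rangle\in [T_n]))$ and the uniqueness-of-$f$ clause then hold for every $n$ because they hold for each individual $T_n$ by Lemma \ref{lem:arith_formula_paths_on_tree}.

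There is no real obstacle here beyond verifying that the tree construction of Lemma \ref{lem:arith_formula_paths_on_tree} genuinely depends only arithmetically on number parameters; this is immediate from the description of the minimal Skolem-function construction already cited, so the proof amounts to little more than invoking the uniformity and then one application of arithmetic comprehension to bundle the resulting trees into a sequence.
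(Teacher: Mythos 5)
Your proof is correct, but it takes a genuinely different route from the paper's. The paper reduces to Lemma \ref{lem:arith_formula_paths_on_tree} as a black box by a coding trick: it defines $B(Y)$ to hold iff $A(Y(0),X)$ holds where $Y=Y(0)\concat X$, applies the one-parameter lemma once to $B$ to obtain a single tree $T$, and then slices $T_n=\{\sigma \mid n\concat\sigma\in T\}$. No inspection of the internal construction of \cite[V.5.4]{sim_book} is needed; the only work is checking that slicing preserves the two displayed properties. You instead open the black box: you observe that the proof of \cite[V.5.4]{sim_book} defines membership in $T$ by an arithmetic formula uniformly in all parameters of $A$ including the number parameter $n$, and then apply arithmetic comprehension once to the predicate $\Psi(n,\sigma)$ to assemble the coded sequence $\seq{T_n}_n$. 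Both arguments are sound and about equally short. The paper's version has the virtue of not depending on the shape of Simpson's construction at all, only on the statement of the lemma, while yours is the more generic pattern for sequencing a uniform construction and makes the role of $\mathsf{ACA}_0$ (as the single comprehension step that forms the sequence) explicit rather than implicit in the slicing.
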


\begin{proof}
Say that $B(Y)$ holds if and only if $A(Y(0),X)$ holds, where $X$ is such that
$Y=Y(0)\mathord{
\mathchoice
{\raisebox{1ex}{\scalebox{.7}{$\frown$}}}
{\raisebox{1ex}{\scalebox{.7}{$\frown$}}}
{\raisebox{.7ex}{\scalebox{.5}{$\frown$}}}
{\raisebox{.7ex}{\scalebox{.5}{$\frown$}}}
}X$. Apply Lemma \ref{lem:arith_formula_paths_on_tree} to the arithmetic
formula $B(Y)$ to obtain a tree $T\subseteq N^{<N}$. For each $n\in N$, define
$T_{n}$ to be the set of all $\sigma$ such that $n\mathord{
\mathchoice
{\raisebox{1ex}{\scalebox{.7}{$\frown$}}}
{\raisebox{1ex}{\scalebox{.7}{$\frown$}}}
{\raisebox{.7ex}{\scalebox{.5}{$\frown$}}}
{\raisebox{.7ex}{\scalebox{.5}{$\frown$}}}
}\sigma\in T$. It is straightforward to check that $\seq{T_{n}}_{n}$ satisfies the
desired properties.
\end{proof}

\begin{thm}
\label{thm:IRTmc_ACA0_ast} $\mathsf{IRT}_{\mathrm{XYZ}}^{\ast}$ proves
$\mathsf{ACA}_{0}^{\ast}$.
\end{thm}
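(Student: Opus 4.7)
The plan is to adapt the forest construction used to show $\mathsf{IRT}_{\mathrm{XYZ}} \vdash \mathsf{ACA}_0$ (cf.\ Theorem \ref{thm:IRT_variants_ACA_0_and_omega_model_IRT_variants_hyp_closed}) so that the underlying graph contains exactly $n$ many disjoint rays rather than arbitrarily many. The "maximum cardinality" clause of $\mathsf{IRT}^{\ast}_{\mathrm{XYZ}}$ will then deliver a sequence of $n$ disjoint rays out of which we can read off $A, A', \ldots, A^{(n-1)}$, which is exactly what $\mathsf{ACA}_0^{\ast}$ demands.

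In detail, given $A$ and $n$ (the case $n=0$ being trivial via $W=A$), I would invoke Remark \ref{jumpstrees} with the finite ordinal $n$ to obtain, uniformly computably in $A$, a sequence $\seq{T_i}_{i<n}$ of subtrees of $N^{<N}$ such that each $T_i$ has a unique branch $P_i$ with $P_i \equiv_T A^{(i)}$ via uniform reductions. Form $G_0 = \bigsqcup_{i<n} T_i$, an undirected forest computable from $A$. For the variant $\mathsf{IRT}^{\ast}_{\mathrm{XYZ}}$ at hand I then modify $G_0$ exactly as in the proof of Theorem \ref{thm:IRT_variants_ACA_0_and_omega_model_IRT_variants_hyp_closed}: for directed variants, orient each edge of $T_i$ away from its root; for double-ray variants ($Z = D$), adjoin to each $T_i$ a computable chain of fresh vertices $v_{i,-1}, v_{i,-2}, \ldots$ with edges to the root (directed toward the root in the directed case), thereby creating in each modified tree exactly one (directed) double ray, namely $\ldots v_{i,-2}, v_{i,-1}, r_i, P_i(1), P_i(2), \ldots$. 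Let $G$ be the resulting $X$-graph.

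The key structural point is that in any of these trees, the unique branch (or, after modification, the unique double ray) is the only $Z$-$X$-ray up to tails, so two $Y$-disjoint $Z$-$X$-rays must lie in distinct trees of $G$. Hence $G$ admits exactly $n$ many $Y$-disjoint $Z$-$X$-rays. Applying $\mathsf{IRT}^{\ast}_{\mathrm{XYZ}}$ to $G$ rules out the "no $Z$-ray" and "infinitely many" clauses of Definition \ref{defn:IRTmc} and yields a sequence $\seq{R^{[i]}}_{i<n}$ of $n$ many $Y$-disjoint $Z$-$X$-rays in $G$. The assignment sending $i$ to the index of the tree containing $R^{[i]}$ is then forced to be a permutation of $\{0, \ldots, n-1\}$. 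Using $\mathsf{ACA}_0$ (available since $\mathsf{IRT}^{\ast}_{\mathrm{XYZ}}$ implies $\mathsf{ACA}_0$), each $R^{[i]}$ may be extended to the full unique branch/double ray of its tree; the uniform reductions from Remark \ref{jumpstrees} then compute $A^{(j)}$ for every $j < n$, and a further application of $\mathsf{ACA}_0$ packages these into a single $W$ with $W^{[0]} = A$ and $W^{[i+1]} = (W^{[i]})'$ for all $i < n$.

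The main obstacle is the case analysis verifying, uniformly across the eight choices of $\mathrm{XYZ}$, that the constructed $G$ genuinely has maximum cardinality exactly $n$ and that each ray in a given sequence $\seq{R^{[i]}}_{i<n}$ can be traced back to its tree uniformly. This rests on the fact that in a tree with a unique branch every ray is eventually a tail of that branch, so even edge-disjoint rays must lie in different trees (sharing a tail of $P_i$ shares infinitely many edges), and the leftward-extension trick imposes the same rigidity on double rays. Once this structural lemma is in hand for each $\mathrm{XYZ}$, which is routine following the template of Theorem \ref{thm:IRT_variants_ACA_0_and_omega_model_IRT_variants_hyp_closed}, the rest of the argument is essentially bookkeeping with uniform indices.
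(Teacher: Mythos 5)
Your overall setup — a forest whose constituent trees have unique branches that code the iterated jumps, apply $\mathsf{IRT}^{\ast}_{\mathrm{XYZ}}$, and read off $A,A',\ldots$ — is the right one, and the modifications for the eight choices of $\mathrm{XYZ}$ are handled the same way the paper handles them. But there is a genuine gap at the sentence ``Hence $G$ admits exactly $n$ many $Y$-disjoint $Z$-$X$-rays.'' Your structural observations (each tree has a unique branch, so any two disjoint $Z$-rays must lie in distinct trees) only give the \emph{upper} bound that $G$ admits at most $n$ disjoint rays. The \emph{lower} bound — that $G$ admits a sequence of $n$ disjoint rays — amounts to asserting that each of $T_0,\ldots,T_{n-1}$ has a branch in the model, i.e.\ that $A,A',\ldots,A^{(n-1)}$ all exist and can be collected into a single sequence. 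That is precisely the instance of $\mathsf{ACA}_0^\ast$ you are trying to prove, so simply asserting it is circular. Over $\mathsf{RCA}_0$ (or even $\mathsf{ACA}_0$) you cannot conclude, for a possibly nonstandard $n$, that all $n$ trees are ill-founded; the assertion ``$T_i$ is ill-founded'' is $\Sigma^1_1$, and running through $i<n$ would be an instance of $\mathsf{I}\Sigma^1_1$, which you do not have. Consequently you also cannot conclude that the map from $i$ to the tree containing $R^{[i]}$ is a permutation of $\{0,\ldots,n-1\}$.

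The argument must instead go by contradiction \emph{after} invoking $\mathsf{IRT}^\ast_{\mathrm{XYZ}}$: apply the principle to $G$ to obtain a maximum-cardinality sequence of some length $n'\le n$ (the ``infinitely many'' clause is ruled out by the at-most-$n$ bound, and the ``no ray'' clause is ruled out since $T_0$ has an $A$-computable branch). Then suppose $n'<n$. From the solution one can compute, for each $j<n$, whether $T_j$ contains a solution ray, so one can find the least $j^\ast$ whose tree is missed. All of $T_0,\ldots,T_{j^\ast-1}$ are then witnessed ill-founded by solution rays, whose extensions to full branches compute $A^{(j^\ast-1)}$ (or one starts from $A$ if $j^\ast=0$); one application of $\mathsf{ACA}_0$ then yields $A^{(j^\ast)}$ and hence a branch of $T_{j^\ast}$, which can be adjoined to the solution to contradict maximality. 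This is exactly the shape of the paper's argument (which uses infinitely many trees $T_n$, each coding the first $n+1$ jumps simultaneously via Lemma \ref{lem:arith_formula_paths_on_tree_seq}, and argues the solution must be infinite), and without that contradiction step your proof does not go through. Once the contradiction step is added, and provided you carry out the Remark \ref{jumpstrees} construction formally enough that ``ill-founded at level $m$ implies ill-founded at level $m+1$'' is available in $\mathsf{ACA}_0$, the rest of your bookkeeping is fine.
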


Before proving the above theorem, we derive some corollaries:

\begin{cor}
\label{cor:IRTmc_not_provable_in_Sigma11-AC} $\mathsf{IRT}^{\ast
}_{\mathrm{XYZ}}$ proves the consistency of $\Sigma^{1}_{1}\text{-}%
\mathsf{AC}_{0}$. Therefore it is not provable in $\Sigma^{1}_{1}%
\text{-}\mathsf{AC}_{0}$.
\end{cor}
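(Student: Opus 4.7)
The plan is to combine Theorem \ref{thm:IRTmc_ACA0_ast}---which yields $\mathsf{ACA}_0^\ast$ and, per Remark \ref{rmk:ACA0_ast}, the instances of $\mathsf{I}\Sigma^1_1$ needed to iterate the Turing jump to any arithmetically describable height---with the hyperarithmetic-closure apparatus for $\mathsf{IRT}_{\mathrm{XYZ}}$ developed in Section \ref{section:IRT_hyp_analysis}. The aim is to formalize inside $\mathsf{IRT}^\ast_{\mathrm{XYZ}}$ the construction of (a code for) $\mathrm{HYP}(X)$ for every set $X$, and to verify that this coded structure serves as an $\omega$-model of $\Sigma^1_1$-$\mathsf{AC}_0$. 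Since the internal existence of such a coded model entails the arithmetic (indeed $\Pi^0_1$) statement $\mathrm{Con}(\Sigma^1_1\text{-}\mathsf{AC}_0)$, this settles the first sentence of the corollary.

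More concretely, I would first fix a set $X$ and, using the $\mathsf{ACA}_0^\ast$-style induction supplied by Theorem \ref{thm:IRTmc_ACA0_ast}, prove that for every $X$-recursive ordinal notation $\alpha$ the iterate $X^{(\alpha)}$ exists uniformly in the notation (via the trees $T_\beta$ of Remark \ref{jumpstrees}). I would then apply Lemma \ref{lem:arith_formula_paths_on_tree_seq} to rewrite each instance of the $\Sigma^1_1$-choice scheme as an assertion about sequences of branches through uniformly given trees, and verify that the branches demanded by $\Sigma^1_1$-$\mathsf{AC}_0$ are already visible inside $\mathrm{HYP}(X)$---an assertion whose standard-model content is recorded in Theorem \ref{thm:Sigma11-AC_implies_IRT}(ii) and the closure Theorem \ref{thm:IRT_variants_ACA_0_and_omega_model_IRT_variants_hyp_closed}, and whose internal version is precisely what the ``apparatus of the basic $\mathsf{IRT}_{\mathrm{XYZ}}$'' is being used to deliver. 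Once this internal soundness is in hand, the second sentence of the corollary is immediate from G\"odel's second incompleteness theorem: $\Sigma^1_1$-$\mathsf{AC}_0$ is itself consistent (witnessed externally by $\seq{\mathbb{N}, \mathrm{HYP}}$), so any proof of $\mathsf{IRT}^\ast_{\mathrm{XYZ}}$ in $\Sigma^1_1$-$\mathsf{AC}_0$ would produce a proof of $\mathrm{Con}(\Sigma^1_1\text{-}\mathsf{AC}_0)$ inside that theory, contradicting G\"odel 2.

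The main technical obstacle is calibrating the fragment of $\mathsf{I}\Sigma^1_1$ that $\mathsf{IRT}^\ast_{\mathrm{XYZ}}$ actually supplies---the ``weak'' or ``finite'' variants flagged in the discussion preceding this corollary---so that it genuinely suffices to drive the model construction. Full $\mathsf{I}\Sigma^1_1$ would easily carry the argument above; the issue is whether the bounded induction provided by $\mathsf{ACA}_0^\ast$ can be leveraged, perhaps by replacing the slogan ``construct all of $\mathrm{HYP}(X)$ at once'' with the more modest ``for each axiom $\varphi$ of $\Sigma^1_1$-$\mathsf{AC}_0$, produce a code verifying $\mathrm{HYP}(X) \vDash \varphi$,'' thereby handling each choice instance separately at a bounded level of the hyperarithmetic hierarchy (where the relevant Skolem functions are accessible by Lemma \ref{lem:arith_formula_paths_on_tree}). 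This partial-truth--definition style bookkeeping---matching the available induction to the syntactic shape of each axiom instance---is, I expect, where the bulk of the work lies.
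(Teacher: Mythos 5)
Your approach diverges significantly from the paper's, and it contains a concrete error. You propose to formalize the construction of a coded $\omega$-model of $\Sigma^1_1$-$\mathsf{AC}_0$ inside $\mathsf{IRT}^*_{\mathrm{XYZ}}$, starting from the claim that the $\mathsf{ACA}_0^*$-style induction of Theorem \ref{thm:IRTmc_ACA0_ast} lets you ``prove that for every $X$-recursive ordinal notation $\alpha$ the iterate $X^{(\alpha)}$ exists uniformly in the notation.'' This is false. $\mathsf{ACA}_0^*$ yields $X^{(n)}$ for all $n \in N$ (including nonstandard $n$), but it does not yield transfinite iterates $X^{(\alpha)}$ along arbitrary $X$-recursive well-orderings --- that is precisely the content of $\mathsf{ATR}_0$, which is strictly stronger than anything at hand here. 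Indeed $\mathsf{IRT}^*_{\mathrm{XYZ}}$ cannot prove this: for the variants provable in $\Sigma^1_1\text{-}\mathsf{AC}_0$, $\mathsf{IRT}^*_{\mathrm{XYZ}}$ holds in $\mathrm{HYP}$ (a standard model where full induction is automatic), while $\mathsf{ATR}_0$ fails there. Your subsequent step --- assembling a code for $\mathrm{HYP}(X)$ and verifying it is an internal $\omega$-model of $\Sigma^1_1\text{-}\mathsf{AC}_0$ --- therefore does not get off the ground: even recognizing which notations are ordinals is a $\Pi^1_1$-complete problem, and the bounded fragment of induction you actually have from Theorem \ref{thm:IRTmc_ACA0_ast} is much weaker than what this construction demands. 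You flag the calibration of the induction as the ``main technical obstacle'' and say you expect the bulk of the work lies there, which is a frank admission that the argument is incomplete; the obstacle is in fact fatal to the route as sketched.

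The paper's proof sidesteps all of this model-building. It cites Simpson \cite[IX.4.6]{sim_book}, which shows $\mathsf{ACA}_0 + \mathsf{I}\Sigma^1_1 \vdash \mathrm{Con}(\Sigma^1_1\text{-}\mathsf{AC}_0)$, and then makes the crucial observation that the only use of $\mathsf{I}\Sigma^1_1$ in Simpson's argument is to establish $\mathsf{ACA}_0^*$. Since Theorem \ref{thm:IRTmc_ACA0_ast} gives $\mathsf{ACA}_0^*$ directly from $\mathsf{IRT}^*_{\mathrm{XYZ}}$, the consistency statement follows immediately, and then G\"odel's second incompleteness theorem gives the second sentence (your handling of that final step is correct). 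The lesson is that the result hinges on a close reading of exactly which fragment of induction an existing consistency proof uses, not on rebuilding the internal model theory from scratch. If you wanted to pursue a self-contained proof you would need to locate and redo Simpson's argument, not attempt to code up $\mathrm{HYP}(X)$.
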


\begin{proof}
Simpson \cite[IX.4.6]{sim_book} proves that $\mathsf{ACA}_{0} + \mathsf{I}\Sigma
^{1}_{1}$ implies the consistency of $\Sigma^{1}_{1}\text{-}\mathsf{AC}_{0}$.
The only use of $\mathsf{I}\Sigma^{1}_{1}$ in Simpson's proof is to establish
$\mathsf{ACA}_{0}^{\ast}$, so Simpson's proof shows that $\mathsf{ACA}%
_{0}^{\ast}$ implies the consistency of $\Sigma^{1}_{1}\text{-}\mathsf{AC}%
_{0}$. The desired result then follows from Theorem \ref{thm:IRTmc_ACA0_ast}
and G\"odel's second incompleteness theorem.
\end{proof}

\begin{cor}
\label{cor:IRTmc_stronger_IRT} $\mathsf{IRT}^{\ast}_{\mathrm{XYZ}}$ is
strictly stronger than $\mathsf{IRT}_{\mathrm{XYZ}}$ for the following choices
of XYZ: XYS and UVD.
\end{cor}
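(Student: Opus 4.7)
The forward direction is already in hand: by Proposition \ref{prop:IRT_IRTmc_rshp}, $\mathsf{IRT}^{\ast}_{\mathrm{XYZ}}$ implies $\mathsf{IRT}_{\mathrm{XYZ}}$ over $\mathsf{RCA}_0$ for every choice of XYZ, so the task reduces to showing that over $\mathsf{RCA}_0$ the reverse implication fails for $\mathrm{XYZ}\in\{\mathrm{UVS},\mathrm{DVS},\mathrm{UES},\mathrm{DES},\mathrm{UVD}\}$. The plan is to exhibit a theory $T$ extending $\mathsf{RCA}_0$ which proves each of the relevant $\mathsf{IRT}_{\mathrm{XYZ}}$ but does not prove the corresponding $\mathsf{IRT}^{\ast}_{\mathrm{XYZ}}$. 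The natural candidate is $T = \Sigma^1_1\text{-}\mathsf{AC}_0$.

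For the positive half, I would cite Theorem \ref{thm:Sigma11-AC_implies_IRT}(ii) to handle $\mathsf{IRT}_{\mathrm{UVS}} = \mathsf{IRT}$, Theorem \ref{thm:Sigma11-AC_implies_UVD} for $\mathsf{IRT}_{\mathrm{UVD}}$, and Theorem \ref{thm:Sigma11-AC_implies_XES} for $\mathsf{IRT}_{\mathrm{UES}}$ and $\mathsf{IRT}_{\mathrm{DES}}$; then apply Proposition \ref{prop:IRT_DEZ_implies_IRT_DVZ} (with $Z = \mathrm{S}$) to also obtain $\mathsf{IRT}_{\mathrm{DVS}}$ from $\mathsf{IRT}_{\mathrm{DES}}$. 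So $\Sigma^1_1\text{-}\mathsf{AC}_0$ proves $\mathsf{IRT}_{\mathrm{XYZ}}$ for each of the five listed $\mathrm{XYZ}$.

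For the negative half, Corollary \ref{cor:IRTmc_not_provable_in_Sigma11-AC} states exactly that $\mathsf{IRT}^{\ast}_{\mathrm{XYZ}}$ proves the consistency of $\Sigma^1_1\text{-}\mathsf{AC}_0$ (via Theorem \ref{thm:IRTmc_ACA0_ast} and Simpson's observation that $\mathsf{ACA}_0^{\ast}$ suffices to formalise the consistency of $\Sigma^1_1\text{-}\mathsf{AC}_0$), and hence by G\"odel's second incompleteness theorem it is not a theorem of $\Sigma^1_1\text{-}\mathsf{AC}_0$. Combining the two halves: if $\mathsf{IRT}_{\mathrm{XYZ}}$ proved $\mathsf{IRT}^{\ast}_{\mathrm{XYZ}}$ over $\mathsf{RCA}_0$ for one of the five XYZ, then $\Sigma^1_1\text{-}\mathsf{AC}_0$ would prove $\mathsf{IRT}^{\ast}_{\mathrm{XYZ}}$, contradicting Corollary \ref{cor:IRTmc_not_provable_in_Sigma11-AC}. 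So the implication from $\mathsf{IRT}_{\mathrm{XYZ}}$ to $\mathsf{IRT}^{\ast}_{\mathrm{XYZ}}$ must be strict in each case.

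There is no genuine obstacle here; the corollary is a pure bookkeeping assembly of Proposition \ref{prop:IRT_IRTmc_rshp}, the upper-bound theorems from \S\ref{section:IRT_hyp_analysis}--\ref{section:variants_IRT}, and Corollary \ref{cor:IRTmc_not_provable_in_Sigma11-AC}. The only mildly subtle point is making sure that for every XYZ on the stated list one has a proof of $\mathsf{IRT}_{\mathrm{XYZ}}$ in $\Sigma^1_1\text{-}\mathsf{AC}_0$ already recorded in the paper; this is handled by the bridging Proposition \ref{prop:IRT_DEZ_implies_IRT_DVZ} for the DVS case, and directly by the cited theorems for the others.
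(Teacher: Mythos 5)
Your proof is correct and follows exactly the same route as the paper: combine Proposition \ref{prop:IRT_IRTmc_rshp} for the forward implication, the provability of $\mathsf{IRT}_{\mathrm{XYZ}}$ (for the five listed choices) in $\Sigma^1_1\text{-}\mathsf{AC}_0$ established in \S\ref{section:IRT_hyp_analysis}--\ref{section:variants_IRT}, and the unprovability of $\mathsf{IRT}^{\ast}_{\mathrm{XYZ}}$ in $\Sigma^1_1\text{-}\mathsf{AC}_0$ from Corollary \ref{cor:IRTmc_not_provable_in_Sigma11-AC}. Your spelling-out of the bookkeeping for the DVS case via Proposition \ref{prop:IRT_DEZ_implies_IRT_DVZ} matches the discussion preceding Theorem \ref{thm:Sigma11-AC_implies_UVD}.
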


\begin{proof}
We showed in \S \ref{section:variants_IRT} that the specified variants of
$\mathsf{IRT}_{{}}$ are provable in $\Sigma_{1}^{1}$-$\mathsf{AC}_{0}$. On the
other hand, none of the $\mathsf{IRT}_{{}}^{\ast}$ are provable in $\Sigma
_{1}^{1}$-$\mathsf{AC}_{0}$ (Corollary
\ref{cor:IRTmc_not_provable_in_Sigma11-AC}).
\end{proof}

We now prove Theorem \ref{thm:IRTmc_ACA0_ast}:

\begin{proof}
[Proof that $\mathsf{IRT}_{\mathrm{XYZ}}^{\ast}$ implies $\mathsf{ACA}%
_{0}^{\ast}$]By Proposition \ref{prop:IRTmc_implications_variants}, it
suffices to prove the desired result for $\mathsf{IRT}_{\mathrm{UYZ}}^{\ast}$.
To prove $\mathsf{ACA}_{0}^{\ast}$ from $\mathsf{IRT}_{\mathrm{UYS}}^{\ast}$,
begin by using Lemma \ref{lem:arith_formula_paths_on_tree_seq} to define for
each $A$ a sequence of trees $\seq{T_{n}}_{n}$ such that for each $n$ and $W$
there is at most one $f$ such that $\langle W,f\rangle\in\lbrack T_{n}]$ and%
\begin{align*}
&(\exists f)(\langle W,f\rangle \in\lbrack T_{n}]) \\
\leftrightarrow \; &W^{[0]}  =A \; \land \; (\forall i\leq n)((W^{[i]})^{\prime}=W^{[i+1]}%
)\; \land \; (\forall i>n)(W^{[i]}=\emptyset).
\end{align*}

We want to show that each $T_{n}$ is ill-founded. Note that if $m < n$ and
$T_{n} $ is ill-founded, then so is $T_{m}$. Therefore it suffices to show
that for cofinally many $n$, $T_{n}$ is ill-founded.

Apply $\mathsf{IRT}^{\ast}_{\mathrm{UYS}}$ to the disjoint union
$\bigsqcup_{n} T_{n}$ to obtain a collection $C$ of $Y$-disjoint rays of maximum
cardinality. We prove that $C$ is infinite. Suppose not. Then there is some
maximum $m$ such that $C$ contains a ray in $T_{m}$. A ray in $T_{m}$ can be
computably truncated or extended to a branch on $T_{m}$, so $T_{m}$ is
ill-founded. Hence $T_{m+1}$ is ill-founded as well (by $\mathsf{ACA}_{0}$).
But then there is a collection of $Y$-disjoint rays in $\bigsqcup_{n} T_{n}$
which has cardinality greater than that of $C$, contradiction.

We have proved that $C$ is infinite. Next we prove that each $T_{n}$ has at
most one branch. That would imply that each $T_{n}$ contains at most one ray
in $C$, so $C$ contains rays in cofinally many $T_{n}$, as desired.

If $T_{n}$ has two distinct branches $\langle W_{0},f_{0}\rangle$ and $\langle
W_{1},f_{1}\rangle$, then $W_{0}\neq W_{1}$ by the \textquotedblleft at most
one\textquotedblright\ condition in the definition of the $T_{n}$. Consider
the least $i$ such that $W_{0}^{[i]}\neq W_{1}^{[i]}$. Such $i$ exists by
$\mathsf{ACA}_{0}$. Note that $0<i\leq n$ because $W_{0}^{[0]}=A=W_{1}^{[0]}$
and $W_{0}^{[i]}=\emptyset=W_{1}^{[i]}$ for $i>n$. But then $W_{0}%
^{[i-1]}=W_{1}^{[i-1]}$ and $(W_{0}^{[i-1]})^{\prime}\neq(W_{1}^{[i-1]}%
)^{\prime}$, contradiction.

This proves that $\mathsf{IRT}_{\mathrm{UYS}}^{\ast}$ implies $\mathsf{ACA}%
_{0}^{\ast}$. In order to prove that $\mathsf{IRT}_{\mathrm{UYD}}^{\ast}$
implies $\mathsf{ACA}_{0}^{\ast}$, we modify the above proof by adding to each
$T_{n}$ a computable branch consisting of new vertices to form a tree $S_{n}$.
Apply $\mathsf{IRT}_{\mathrm{UYD}}^{\ast}$ to $\bigsqcup_{n}S_{n}$ to obtain a
collection $C$ of $Y$-disjoint double rays of maximum cardinality. Following the
above proof, we may prove that $C$ is infinite and each $S_{n}$ contains at
most one double ray in $C$. So $C$ contains double rays in cofinally many
$S_{n}$, as desired.
\end{proof}

\begin{rmk}
The same proof shows that $\mathsf{IRT}_{\mathrm{XYZ}}^{\ast}$ implies the
following induction scheme: Suppose $\seq{T_{n}}_{n}$ is a sequence of trees such that
\begin{enumerate}
	\item $T_{0}$ has a unique branch;
	\item for all $n$, the number of branches on $T_{n+1}$ is the same as the
number of branches on $T_{n}$.
\end{enumerate}
Then for all $n$, there is a sequence $\seq{P_{m}}_{m<n}$ such that for each
$m<n$, $P_{m}$ is the unique branch on $T_{m}$. It also shows that
$\mathsf{IRT}_{\mathrm{XYZ}}^{\ast}$ implies $\mathsf{ACA}_{0}^{+}$ (i.e.\ closure under the $\omega$-jump) and much more. Indeed, similar ideas prove in
Theorem \ref{thm:XYZmc_implies_finite_choice} that $\mathsf{IRT}%
_{\mathrm{XYZ}}^{\ast}$ implies unique-$\Sigma_{1}^{1}\text{-}\mathsf{AC}_{0}$
(Definition \ref{defn:unique_choice}). $\mathsf{IRT}_{\mathrm{XYZ}}^{\ast}$
also implies a similar induction scheme analogous to finite-$\Sigma_{1}%
^{1}\text{-}\mathsf{AC}_{0}$ (Definition \ref{defn:finite_choice}).
\end{rmk}

We can prove that even fragments of $\mathsf{IRT}_{\mathrm{DVD}}^{\ast}$ give
more induction than the specific instances derived in Theorem
\ref{thm:IRTmc_ACA0_ast}.

\begin{thm}
\label{thm:DVD-IRTmc_implies_ISigma11} $\mathsf{IRT}_{\mathrm{DVD}}^{\ast}$
(even for directed forests) implies $\mathsf{I}\Sigma_{1}^{1}$ over $\mathsf{RCA}_{0}$.
\end{thm}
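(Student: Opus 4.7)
The plan is to adapt the argument from Theorem \ref{thm:IRTmc_ACA0_ast}, but applied to a single $\Sigma_1^1$ formula, and extract ordinary ($\Sigma_0^0$) bounded induction on a computable set derived from the $\mathsf{IRT}_{\mathrm{DVD}}^{\ast}$-solution. Let $\Phi(n)$ be a $\Sigma_1^1$ formula (with set parameters) satisfying $\Phi(0)$ and $\forall n(\Phi(n)\to \Phi(n+1))$, and suppose for contradiction that $\neg\Phi(m)$ for some $m$. Since $\mathsf{IRT}_{\mathrm{DVD}}^\ast$ for directed forests implies $\mathsf{IRT}_{\mathrm{DVD}}$ for directed forests (by Proposition \ref{prop:IRT_IRTmc_rshp}), which implies $\mathsf{ACA}_0$ (by Theorem \ref{thm:IRT_variants_ACA_0_and_omega_model_IRT_variants_hyp_closed}), we may freely use $\mathsf{ACA}_0$.

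First, I would apply Lemma \ref{lem:arith_formula_paths_on_tree_seq} to the formula $\Phi$ to obtain a sequence $\seq{T_n}_{n}$ of subtrees of $N^{<N}$ such that each $T_n$ has at most one branch, and $T_n$ is ill-founded iff $\Phi(n)$. Then I would convert each $T_n$ into a directed tree $S_n$ by orienting all edges away from the root and prepending a fresh backward directed ray of new vertices leading into the root. By construction, $S_n$ is a directed tree, and it has a directed double ray iff $T_n$ has a branch (iff $\Phi(n)$), and at most one such double ray. Taking the effective disjoint union $G=\bigsqcup_{n\leq m} S_n$ (a finite operation, hence available in $\mathsf{RCA}_0$) gives a directed forest.

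Next, I would apply $\mathsf{IRT}_{\mathrm{DVD}}^{\ast}$ for directed forests to $G$ to produce a vertex-disjoint collection $C$ of directed double rays of maximum cardinality. Since each $S_n$ contains at most one double ray and there are only $m+1$ of them, $C$ cannot be infinite (an infinite $C$ would give, by pigeonhole, two rays in the same $S_n$, contradicting uniqueness). So $C$ has some finite cardinality $k\geq 1$ (nonzero because $\Phi(0)$ guarantees a double ray in $S_0$). Using $\Delta_1^0$-comprehension, form the set $A=\{n\leq m \mid \text{some ray of }C\text{ lies in }S_n\}$, so $|A|=k$. Clearly $n\in A$ implies $\Phi(n)$. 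For the reverse inclusion, if some $n_0\leq m$ satisfies $\Phi(n_0)$ but $n_0\notin A$, then $S_{n_0}$ contains a double ray $R$ disjoint from all of $C$, and $C\cup\{R\}$ witnesses $k+1$ many disjoint directed double rays, contradicting maximality of $C$. Hence $A=\{n\leq m\mid \Phi(n)\}$.

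Finally, with $A$ an actual set, we have $0\in A$ (from $\Phi(0)$) and $\forall n<m\,(n\in A\to n+1\in A)$ (from the inductive hypothesis on $\Phi$). By ordinary bounded ($\Sigma_0^0$) induction in $\mathsf{RCA}_0$, $m\in A$, so $\Phi(m)$ holds, contradiction. The main technical obstacle is step three—establishing that the maximum-cardinality clause forces the reverse inclusion $\{n\leq m\mid\Phi(n)\}\subseteq A$—which is where the maximality strength of $\mathsf{IRT}_{\mathrm{DVD}}^{\ast}$ (beyond $\mathsf{IRT}_{\mathrm{DVD}}$) is essential: one only needs the \emph{existence} of the missing double ray $R$ (supplied by $\Phi(n_0)$ via the unique-branch property of $T_{n_0}$), not a way to compute it, in order to contradict maximality.
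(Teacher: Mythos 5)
Your argument is correct but takes a genuinely different route from the paper's. The paper works with the \emph{infinite} forest $\bigsqcup_{n}G_{n}$, where $G_{n}$ comes from a ``conjunction'' tree that is ill-founded iff $(\forall i\leq n)\Phi(i)$; after showing the maximum-cardinality family cannot be finite, it uses cofinality together with the conjunction condition to obtain $\forall n\,\Phi(n)$. You instead fix $m$, apply $\mathsf{IRT}^{\ast}_{\mathrm{DVD}}$ to the \emph{finite} forest $\bigsqcup_{n\leq m}S_{n}$ (with $S_{n}$ witnessing $\Phi(n)$ directly, not a conjunction), extract from the necessarily finite solution the genuine set $A=\{n\leq m\mid\Phi(n)\}$ by $\Delta^{0}_{1}$-comprehension, and then close with ordinary set induction. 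Both proofs use the maximality clause in the same essential spot (to show a tree with a double ray cannot be skipped). Your route buys simplicity: once $A$ is an actual set, $\mathsf{I}_{0}$ finishes the job and no conjunction trees are needed. The paper's route works uniformly in a single graph and needs the conjunction trees precisely because ``cofinally many $n$ satisfy $\Phi$'' does not give $\forall n\,\Phi(n)$ in a nonstandard model.

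One inaccuracy to repair: Lemma \ref{lem:arith_formula_paths_on_tree_seq} applies to the \emph{arithmetic matrix} of $\Phi$, not to $\Phi$ itself, and yields only ``at most one $f$ per $X$''; it does \emph{not} make each $T_{n}$ have at most one branch, since a $\Sigma^{1}_{1}$ formula may have many witnesses $X$. Fortunately your argument never needs unique branches. What it actually needs is that no two vertex-disjoint directed double rays lie in the same $S_{n}$, and that holds because every directed double ray in $S_{n}$ must pass through the root and contain the entire attached ray of fresh vertices, so any two intersect. Accordingly, also drop the phrase ``via the unique-branch property of $T_{n_{0}}$'' at the end: the existence of the missing double ray follows simply from $T_{n_{0}}$ being ill-founded, and for producing the $T_n$ you only need the standard Kleene normal form (a tree $T_{n}$, uniformly in $n$, that is ill-founded iff $\Phi(n)$), which $\mathsf{ACA}_{0}$ supplies without any uniqueness clause.
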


\begin{proof}
Suppose $\Psi(n)$ is a $\Sigma_{1}^{1}$ formula such that $\Psi(0)$ and $\forall n(\Psi(n)\rightarrow\Psi(n+1))$ hold. Let $\seq{S_{i}}_{i\in N}$ be a sequence of subtrees of $N^{<N}$ such that $S_{i}$ is ill-founded if and only if $\Psi(i)$ holds. Let $\seq{T_{n}}_{n\in N}$ be a sequence of subtrees of $N^{<N}$ such that for each $n$, $T_n$ consists of all sequences $\seq{\sigma_i: i \leq n}$ where for each $i \leq n$, $\sigma_i$ is (a code for) a string in $S_i$. We order these sequences by component-wise extension. It is clear that $T_{n}$ is ill-founded if and only if $(\forall i \leq n)\Psi(i)$ holds.

For each $n$, orient each edge in $T_{n}$ towards its root and add a computable $D$-ray of new vertices which starts at its root. This forms a directed tree $G_{n}$. If $G_{n}$ contains a double ray, $T_{n}$ is ill-founded and $(\forall i \leq n)\Psi(i)$ holds. Furthermore, no two disjoint double rays can lie in the same $G_{n}$.

Let $G$ be the directed forest $\bigsqcup_{n}G_{n}$. By $\mathsf{IRT}_{\mathrm{DVD}}^{\ast}$, there is a sequence $\seq{R_{i}}_{i}$ of disjoint double rays in $G$ of maximum cardinality, so the sequence may be for $i<k$ for some $k$ or $i\in N$. Since $\Psi(0)$ holds, $\seq{R_{i}}_{i}$ is nonempty. If $\seq{R_{i}}_{i}$ is finite, let $n$ be maximal such that $G_{n}$ contains some $R_{i}$. Then $\Psi(n)$ holds, so $\Psi(n+1)$ holds as well. It follows that $G_{n+1}$ contains some double ray, which we can then add to $\seq{R_{i}}_{i}$ to obtain a larger sequence of disjoint double rays in $G$ for the desired contradiction. Therefore $\seq{R_{i}}_{i}$ is infinite. Since each $G_{n}$ contains at most one $R_{i}$, infinitely many $G_{n}$ contain some $R_{i}$. Therefore $\Psi(n)$ holds for all $n$.
\end{proof}

In fact, we have the following equivalences:

\begin{thm}
\label{thm:DED_DVD_directed_forests_equivalence} The following are equivalent
(over $\mathsf{RCA}_{0}$):

\begin{enumerate}
\item $\Sigma^{1}_{1}\text{-}\mathsf{AC}_{0} + \mathsf{I}\Sigma^{1}_{1}$;

\item $\mathsf{IRT}_{\mathrm{DED}}$ for directed forests$\;+\;\mathsf{I}\Sigma^{1}_{1}$;

\item $\mathsf{IRT}^{\ast}_{\mathrm{DED}}$ for directed forests;

\item $\mathsf{IRT}^{\ast}_{\mathrm{DVD}}$ for directed forests;

\item $\mathsf{IRT}_{\mathrm{DVD}}$ for directed forests$\;+\;\mathsf{I}\Sigma^{1}_{1}$.
\end{enumerate}
\end{thm}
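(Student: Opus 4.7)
The plan is to establish the five statements by the cycle $(1) \Rightarrow (2) \Rightarrow (3) \Rightarrow (4) \Rightarrow (5) \Rightarrow (1)$. Four of the arrows are short consequences of work already in the paper: $(1) \Rightarrow (2)$ is immediate from Theorem \ref{thm:SigmaAC_implies_DED_directed_forests}; $(2) \Rightarrow (3)$ is Proposition \ref{prop:IRT_IRTmc_rshp}, whose proof is insensitive to the subclass of graphs under discussion; $(3) \Rightarrow (4)$ is Proposition \ref{prop:IRTmc_implications_variants} once one verifies that the vertex-splitting reduction of Lemma \ref{lem:DEZ_implies_DVZ} sends directed forests to directed forests, which is a direct check on the underlying graph; and $(4) \Rightarrow (5)$ combines the trivial implication $\mathsf{IRT}^*_{\mathrm{DVD}} \Rightarrow \mathsf{IRT}_{\mathrm{DVD}}$ with Theorem \ref{thm:DVD-IRTmc_implies_ISigma11}, which supplies $\mathsf{I}\Sigma^1_1$.

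The real content lies in $(5) \Rightarrow (1)$: given $\mathsf{IRT}_{\mathrm{DVD}}$ for directed forests together with $\mathsf{I}\Sigma^1_1$, derive $\Sigma^1_1\text{-}\mathsf{AC}_0$. Fix an instance $\forall n \exists X\,\Phi(n,X)$ with $\Phi$ arithmetic. Apply Lemma \ref{lem:arith_formula_paths_on_tree_seq} to obtain a sequence of trees $\seq{T_n}_n \subseteq N^{<N}$ whose branches $\langle X, f\rangle$ project onto the witnesses $X$ of $\Phi(n,\cdot)$, with $f$ the unique Skolem function for $X$. Define the ``finite product'' trees $T'_n$ whose nodes are tuples $\langle \sigma_0, \ldots, \sigma_n\rangle$ with $\sigma_i \in T_i$ ordered by componentwise extension; branches of $T'_n$ are $(n+1)$-tuples $\langle P_0, \ldots, P_n\rangle$ of branches of $T_0, \ldots, T_n$. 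By $\mathsf{I}\Sigma^1_1$ applied to the $\Sigma^1_1$ predicate ``$T'_n$ is ill-founded'' (base: $T_0$ ill-founded; step: append any branch of $T_{n+1}$), each $T'_n$ is ill-founded.

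Form $G'_n$ from $T'_n$ exactly as in the proof of Theorem \ref{thm:DVD-IRTmc_implies_ISigma11}: orient all edges of $T'_n$ toward the root and append a disjoint directed single ray of new vertices starting at the root. Then $G'_n$ is a directed tree; any directed double ray in $G'_n$ consists of a branch of $T'_n$ descending to the root followed by the appended $D$-ray, so no two disjoint double rays coexist in $G'_n$, and $G'_n$ contains at least one. Let $G' = \bigsqcup_n G'_n$, a directed forest. Using $\mathsf{I}\Sigma^1_1$ on the $\Sigma^1_1$ statement ``$G'$ contains $k$ many disjoint double rays'' (combine one from each of $G'_0, \ldots, G'_{k-1}$), $G'$ has arbitrarily many disjoint double rays. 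Applying $\mathsf{IRT}_{\mathrm{DVD}}$ for directed forests yields an infinite sequence $\seq{R_j}_j$ of disjoint directed double rays; each $R_j$ lies in a unique $G'_{n_j}$, the map $j \mapsto n_j$ is injective, and its range $A \subseteq N$ is therefore infinite.

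To produce the required choice set $X$, proceed as follows. For each $k \in N$, search for the least $j$ with $n_j \geq k$ (which exists since $A$ is infinite), extract from $R_j$ the branch of $T'_{n_j}$ that it encodes, and read off the $k$-th component of the associated $(n_j+1)$-tuple to obtain a pair $\langle X_k, f_k\rangle$ that is a branch of $T_k$; set $X^{[k]} = X_k$. Then $\Phi(k, X^{[k]})$ holds for every $k$, so $X = \bigoplus_k X^{[k]}$ witnesses this instance of $\Sigma^1_1\text{-}\mathsf{AC}_0$; the construction of $X$ from $\seq{R_j}_j$ is computable, so $X$ exists in the ambient model. The main obstacle overcome by this construction is the usual gap between $\mathsf{IRT}$ and $\Sigma^1_1\text{-}\mathsf{AC}$: a single application of $\mathsf{IRT}$ to the naive disjoint union $\bigsqcup_n (T_n + D\text{-ray})$ only produces witnesses for an infinite-but-possibly-proper subset of $N$; packaging \emph{all} choices up to level $n$ into the single tree $T'_n$ and then using a least-index search across the $\mathsf{IRT}$-family converts any cofinal sequence of partial choice functions into a total one.
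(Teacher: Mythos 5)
Your proposal is correct and follows essentially the same route as the paper: the cycle $(1)\Rightarrow(2)\Rightarrow(3)\Rightarrow(4)\Rightarrow(5)\Rightarrow(1)$ with the first four arrows cited from the same earlier results, and $(5)\Rightarrow(1)$ via the finite-product trees of the proof of Theorem \ref{thm:DVD-IRTmc_implies_ISigma11}, using $\mathsf{I}\Sigma^1_1$ to get arbitrarily many disjoint double rays in $\bigsqcup_n G'_n$ and then applying $\mathsf{IRT}_{\mathrm{DVD}}$ for directed forests and decoding the rays. The only cosmetic difference is that you invoke $\mathsf{I}\Sigma^1_1$ a second time (for ``$G'$ has $k$ disjoint double rays''), which is redundant: once all $T'_n$ are ill-founded, a single branch of $T'_{k-1}$ already projects to branches of $T'_0,\dots,T'_{k-1}$ and hence computably yields a sequence of $k$ disjoint double rays, so one application of $\mathsf{I}\Sigma^1_1$ (as in the paper) suffices.
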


\begin{proof}
(1) $\to$ (2) follows from Theorem
\ref{thm:SigmaAC_implies_DED_directed_forests}. (2) $\to$ (3) follows from the
proof of Proposition \ref{prop:IRT_IRTmc_rshp}. (3) $\to$ (4) follows from the
observation that the mapping of graphs defined in Lemma
\ref{lem:DEZ_implies_DVZ} sends a directed forest to a directed forest. (4)
$\to$ (5) follows from Theorem \ref{thm:DVD-IRTmc_implies_ISigma11} and the
proof of Proposition \ref{prop:IRT_IRTmc_rshp}.

To prove (5) $\rightarrow$ (1), suppose $A(n,X)$ is an arithmetic formula such
that $\forall n\exists XA(n,X)$. By Lemma
\ref{lem:arith_formula_paths_on_tree_seq}, there is a sequence $\seq{T_{n}}_{n}$
of subtrees of $N^{<N}$ such that
\begin{align*}
&  \forall n\forall X(A(n,X)\leftrightarrow\exists f(\langle X,f\rangle
\in\lbrack T_{n}]))\\
\text{and}\;  &  \forall X(\exists\text{ at most one }f)(\langle X,f\rangle
\in\lbrack T_{n}]).
\end{align*}
By assumption on $A(n,X)$, each $T_{n}$ is ill-founded. We use $\seq{T_{n}}_{n}$
to construct a sequence $\seq{G_{n}}_{n}$ of directed trees as we did in the proof
of Theorem \ref{thm:DVD-IRTmc_implies_ISigma11} to construct $T_{n}$ from the
$S_{i}$.

By $I\Sigma_{1}^{1}$, the directed forest $\bigsqcup_{n}G_{n}$ contains
arbitrarily many disjoint double rays. Therefore $\bigsqcup_{n}G_{n}$ contains
infinitely many disjoint double rays $\seq{R_{k}}_{k}$, by $\IRT{DVD}$. Note
that any double ray in any $G_{n}$ must contain the computable ray we added,
so any two double rays in the same $G_{n}$ must intersect. This implies that
each $R_{k}$ belongs to some distinct $G_{n}$. Therefore for every $m$, there
is some $k$ and some $n>m$ such that $R_{k}$ is a double ray in $G_{n}$. When we remove the added computable ray from $R_k$ we are left with a branch in $T_n$ which is of the form $\langle X,f \rangle$ where $X$ consists of witnesses $X_i$ for $i < n$.
\end{proof}

Since $\Sigma^{1}_{1}\text{-}\mathsf{AC}_{0}$ ($\mathsf{ATR}_{0}$, even) does
not prove $\mathsf{I}\Sigma^{1}_{1}$ \cite[IX.4.7]{sim_book}, it follows that

\begin{cor}
\label{cor:IRTmc_DVD_strictly_implies_SigmaAC} $\mathsf{IRT}^{\ast
}_{\mathrm{DYD}}$ (even for directed forests) is not provable in
$\mathsf{ATR}_{0}$, and strictly implies $\Sigma^{1}_{1}\text{-}%
\mathsf{AC}_{0}$ over $\mathsf{RCA}_{0}$.
\end{cor}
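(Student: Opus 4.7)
The proof is essentially a direct reading of the equivalences established in Theorem \ref{thm:DED_DVD_directed_forests_equivalence}, combined with the cited fact \cite[IX.4.7]{sim_book} that $\mathsf{ATR}_0 \nvdash \mathsf{I}\Sigma^1_1$. The plan has two parts, one for each clause of the corollary.

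First, for \textbf{not provable in $\mathsf{ATR}_0$}: by clauses (3) and (4) of Theorem \ref{thm:DED_DVD_directed_forests_equivalence}, $\mathsf{IRT}^\ast_{\mathrm{DYD}}$ for directed forests (for both $Y=V$ and $Y=E$) is equivalent over $\mathsf{RCA}_0$ to $\Sigma^1_1\text{-}\mathsf{AC}_0 + \mathsf{I}\Sigma^1_1$; in particular, $\mathsf{IRT}^\ast_{\mathrm{DYD}}$ (over $\mathsf{RCA}_0$) implies $\mathsf{I}\Sigma^1_1$. Since $\mathsf{ATR}_0$ does not prove $\mathsf{I}\Sigma^1_1$ by the cited result, $\mathsf{ATR}_0$ cannot prove $\mathsf{IRT}^\ast_{\mathrm{DYD}}$ for directed forests.

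Second, for \textbf{strictly implies $\Sigma^1_1\text{-}\mathsf{AC}_0$}: the implication $\mathsf{IRT}^\ast_{\mathrm{DYD}} \text{ for directed forests} \Rightarrow \Sigma^1_1\text{-}\mathsf{AC}_0$ is immediate from clauses (1), (3), (4) of Theorem \ref{thm:DED_DVD_directed_forests_equivalence}. Strictness follows because $\Sigma^1_1\text{-}\mathsf{AC}_0$ does not prove $\mathsf{IRT}^\ast_{\mathrm{DYD}}$ for directed forests: indeed $\mathsf{ATR}_0 \vdash \Sigma^1_1\text{-}\mathsf{AC}_0$ (as noted in \S \ref{axiomh} following the definition of $\Sigma^1_1\text{-}\mathsf{AC}$), so if $\Sigma^1_1\text{-}\mathsf{AC}_0$ proved $\mathsf{IRT}^\ast_{\mathrm{DYD}}$ then $\mathsf{ATR}_0$ would prove $\mathsf{I}\Sigma^1_1$, contradicting \cite[IX.4.7]{sim_book}.

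There is no serious obstacle here; the corollary is a straightforward bookkeeping consequence of the equivalences in Theorem \ref{thm:DED_DVD_directed_forests_equivalence} together with the externally cited separation $\mathsf{ATR}_0 \nvdash \mathsf{I}\Sigma^1_1$. The only mild point to watch is to invoke $\mathsf{ATR}_0 \vdash \Sigma^1_1\text{-}\mathsf{AC}_0$ when deducing strictness, so that the non-implication for $\Sigma^1_1\text{-}\mathsf{AC}_0$ can be bootstrapped from the stronger non-implication for $\mathsf{ATR}_0$ given in the reference.
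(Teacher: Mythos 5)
Your proof is correct and follows essentially the same route as the paper, which presents the corollary as an immediate consequence of Theorem~\ref{thm:DED_DVD_directed_forests_equivalence} together with the cited fact that $\mathsf{ATR}_0$ (and hence $\Sigma^1_1\text{-}\mathsf{AC}_0$) does not prove $\mathsf{I}\Sigma^1_1$. Your additional step of explicitly deducing $\Sigma^1_1\text{-}\mathsf{AC}_0 \nvdash \mathsf{I}\Sigma^1_1$ from $\mathsf{ATR}_0 \nvdash \mathsf{I}\Sigma^1_1$ via $\mathsf{ATR}_0 \vdash \Sigma^1_1\text{-}\mathsf{AC}_0$ is just spelling out what the paper's parenthetical ``($\mathsf{ATR}_0$, even)'' indicates.
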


Next, we show that $\mathsf{IRT}^{\ast}_{\mathrm{UVD}}$ implies $\mathsf{IRT}%
_{\mathrm{UVS}}$ over $\mathsf{RCA}_{0}$ (see Figure \ref{fig:variants_IRT}).

\begin{thm}
\label{thm:UVDmc_implies_UVS} $\mathsf{IRT}^{\ast}_{\mathrm{UVD}}$ implies
$\mathsf{IRT}_{\mathrm{UVS}}$ over $\mathsf{RCA}_{0}$. Therefore (1)
$\mathsf{IRT}_{\mathrm{UVD}}$ implies $\mathsf{IRT}_{\mathrm{UVS}}$ over
$\mathsf{RCA}_{0} + \mathsf{I}\Sigma^{1}_{1}$; (2) if any standard model of
$\mathsf{RCA}_{0}$ satisfies $\mathsf{IRT}_{\mathrm{UVD}}$, then it satisfies
$\mathsf{IRT}_{\mathrm{UVS}}$ as well.
\end{thm}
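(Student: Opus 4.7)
Given a graph $G$ satisfying the hypothesis of $\IRT{UVS}$, the plan is to construct an auxiliary graph $G^*$, apply $\IRTmc{UVD}$ to it, and convert the resulting infinite family of disjoint double rays in $G^*$ back into an infinite family of disjoint single rays in $G$. The first construction I would try is $G^* := G$ together with a private infinite backward ray $B_v = (v_{-1}, v_{-2}, \ldots)$ attached at each vertex $v \in V(G)$ via the edge $\{v, v_{-1}\}$ and the obvious edges within $B_v$, using fresh vertices for each $v$.

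For each $k$, the hypothesis on $G$ furnishes $k$ pairwise disjoint single rays $R_0, \ldots, R_{k-1}$ in $G$ with necessarily distinct starting vertices $x_0, \ldots, x_{k-1}$, and prepending $B_{x_i}$ to each $R_i$ yields $k$ pairwise disjoint double rays in $G^*$. Hence $G^*$ contains arbitrarily many disjoint double rays, which refutes the first two clauses of the trichotomy in Definition \ref{defn:IRTmc}; so $\IRTmc{UVD}$ applied to $G^*$ must produce an infinite sequence $\seq{D_i}_{i \in \mathbb{N}}$ of pairwise vertex-disjoint double rays in $G^*$. Each $D_i$ falls into one of three types: (a) $D_i \subseteq V(G)$ is a double ray in $G$; (b) $D_i$ uses exactly one added backward ray $B_v$ with the rest a single ray in $G$ starting at $v$; or (c) $D_i$ uses two backward rays $B_v$ and $B_w$ joined by a finite path in $G$. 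In cases (a) and (b) a single ray in $G$ can be extracted from $D_i$ computably — a tail of $D_i$ in case (a), the $G$-part of $D_i$ in case (b) — and these extracted rays are pairwise vertex-disjoint in $G$ because the $D_i$'s are pairwise vertex-disjoint in $G^*$ and the extracted vertices all lie in $V(G) \subseteq V(G^*)$.

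The principal obstacle is case (c), which contributes only a finite $G$-path and yields no single ray in $G$ directly. To circumvent it I would refine the construction of $G^*$ so as to rule out case (c). The cleanest refinement is to replace $G^*$ by a disjoint union $\bigsqcup_n G^*_n$, where $G^*_n$ is a labelled copy of $G$ equipped with a single backward ray attached only at the vertex corresponding to $v_n$ in a fixed enumeration $V(G) = \{v_0, v_1, \ldots\}$. Each $G^*_n$ then contains only one added backward ray, so case (c) cannot arise within any single component, and each $D_i$ furnished by $\IRTmc{UVD}$ projects to a single ray in $G$. The delicate remaining point, where I expect the main technical work to lie, is ensuring that the extracted single rays (living a priori in different copies of $G$) can be arranged or thinned into a pairwise disjoint family when projected back to $G$ itself; I would approach this by carefully designing the components $G^*_n$ so that the subgraphs of $G$ that they effectively exhibit are pairwise disjoint, and by exploiting the precise form of the trichotomy in $\IRTmc{UVD}$ together with the hypothesis on $G$.
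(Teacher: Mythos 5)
Your approach — construct an auxiliary graph, apply $\IRTmc{UVD}$ once, and project the resulting double rays back to single rays in $G$ — is genuinely different from the paper's, but the final step that you flag as delicate is in fact a real gap, and neither of your two remedies closes it. In the single-$G^*$ version, consider $G$ equal to a disjoint union of infinitely many single rays $L_0, L_1, \dots$. Then $G^*$ contains infinitely many pairwise vertex-disjoint double rays \emph{all of type (c)}: take $B_u$, the edge $\{u,w\}$, and $B_w$ for each of the infinitely many disjoint edges $\{u,w\}$ scattered across the $L_n$. Since $\IRTmc{UVD}$ only promises \emph{some} maximum-cardinality family, it may return exactly this family, and you extract no ray in $G$ at all. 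In the $\bigsqcup_n G^*_n$ version you do eliminate type (c), but the disjointness you get is only in the disjoint union, not in $G$. In the same example, $\IRTmc{UVD}$ may return, for each $n$, a double ray in $G^*_n$ whose $G$-part is a tail of $L_0$; these double rays are disjoint across components but project to heavily overlapping rays in $G$. Your closing suggestion — design the $G^*_n$ so that the subgraphs of $G$ they "exhibit" are pairwise disjoint — is circular: each $G^*_n$ is a full copy of $G$, and to restrict them to disjoint ray-containing pieces of $G$ you would already need the conclusion of $\IRT{UVS}$.

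The paper sidesteps this by applying $\IRTmc{UVD}$ directly to $G$ rather than to an auxiliary graph. If the maximum-cardinality family of disjoint double rays in $G$ is infinite, single rays are extracted immediately. Otherwise it is finite, say of size $j$; one then forms $\mathcal{R}$ (the union of those $j$ double rays) and the induced complement $H$, observes that any two single rays in the same component of $H$ must meet (else they would join into a double ray disjoint from $\mathcal{R}$, contradicting maximality), adds a tail to each component to form $H'$, and applies $\IRTmc{UVD}$ a second time to $H'$. If that yields infinitely many double rays, done; if not, $H$ has only boundedly many disjoint single rays, and a careful pigeonhole and colouring argument over a large enough collection of disjoint single rays in $G$ produces $j+1$ disjoint double rays in $G$, contradicting maximality. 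The paper's argument exploits the trichotomy of $\IRTmc{UVD}$ in an essential, non-circular way, whereas your single-pass projection cannot control which maximum family the principle returns.
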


\begin{proof}
Let $G$ be a graph which contains arbitrarily many disjoint single rays. By
$\mathsf{IRT}^{\ast}_{\mathrm{UVD}}$, there is a sequence of disjoint double
rays in $G$ of maximum cardinality. If this sequence is infinite, then there
are infinitely many disjoint single rays in $G$ as desired. Otherwise, suppose
that $\seq{R_{i}}_{i<j}$ is a sequence of disjoint double rays in $G$ of maximum
cardinality $j$. Let $\mathcal{R}$ be the subgraph of $G$ consisting of the
union of all $R_{i}$. Let $H$ be the induced subgraph of $G$ consisting of all
vertices which do not lie in $\mathcal{R}$. Note that $H$ does not contain any
double ray, otherwise $G$ would contain $j+1$ many disjoint double rays. Next,
we expand $H$ to the graph $H^{\prime}$, defined below.

Decompose $H$ into its connected components $\seq{H_{i}}_{i}$ (there may only be
finitely many). Any two single rays in the same $H_{i}$ must intersect,
because if $S_{0}$ and $S_{1}$ are disjoint single rays in the same $H_{i}$,
then we can construct a double ray in $H_{i}$ by connecting them (start with a
path between $S_{0}$ and $S_{1}$ of minimum length, then connect it to the
tails of $S_{0}$ and $S_{1}$ which begin at the endpoints of the path).

For each $i$, define $H_{i}^{\prime}$ by adding a computable ray of new
vertices to $H_{i}$, which begins at the $<_{N}$-least vertex in $H_{i}$.
Define $H^{\prime}$ to be the disjoint union $\bigsqcup_{i}H_{i}^{\prime}$.

By $\mathsf{IRT}^{\ast}_{\mathrm{UVD}}$, there is a sequence of disjoint
double rays in $H^{\prime}$ of maximum cardinality.

\underline{Case 1.} If this sequence is infinite, then $G$ contains infinitely
many disjoint single rays because each double ray in the sequence has a tail
which lies in $H$. In this case we are done.

\underline{Case 2.} Otherwise, $H^{\prime}$ does not contain arbitrarily many
disjoint double rays. Since any two single rays in the same $H_{i}$ must
intersect, we can transform any collection of disjoint single rays in $H$ into
a collection of disjoint double rays in $H^{\prime}$ of equal cardinality by
connecting each single ray to the $<_{N}$-least vertex in its connected
component $H_{i}$ and then following the computable ray we added. It follows
that $H$ does not contain arbitrarily many disjoint single rays. Fix $l$ such
that $H$ does not contain $l+1$ many disjoint single rays.

Towards a contradiction, we construct a collection of $(j+1)$-many disjoint
double rays in $G$ as follows. Fix a collection $\mathcal{S}$ of
$l+2j+4j(j+1)$ many disjoint single rays in $G$. First, at most $2j$ of these
single rays lie in $\mathcal{R}$. In fact at most $2j$ of these single rays
can have finite intersection with $H$, because given a collection of disjoint
single rays each of which has finite intersection with $H$, we can obtain a
collection of disjoint single rays in $\mathcal{R}$ of the same cardinality by
replacing each ray with an appropriate tail. Second, by reasoning analogous to
the above, at most $l$ of these single rays can have finite intersection with
$\mathcal{R}$. Therefore, there are at least $4j(j+1)$ many disjoint single
rays in $\mathcal{S}$ each of which have infinite intersection with both
$\mathcal{R}$ and $H$.

Next, choose an edge $(u_{i},v_{i})$ in each $R_{i}$ and split $R_{i}$ into
two single rays $u_{i}R_{i,b}$ and $v_{i}R_{i,f}$. By the pigeonhole
principle, there is some single ray $R$ of the form $u_{i}R_{i,b}$ or
$v_{i}R_{i,f}$, and at least $2(j+1)$ many disjoint single rays in
$\mathcal{S}$, each of which have infinite intersection with both $R$ and $H$.
Call these rays $S_{0},S_{1},\dots,S_{2(j+1)-1}$. Discard all the other rays
in $\mathcal{S}$. Below we describe how to connect pairs of single rays
$S_{k}$ using segments of $R$ to form a collection of $(j+1)$-many disjoint
double rays in $G$.

Let $x_{0},x_{1},\dots$ denote the vertices of $R$. Since each single ray
$S_{k}$ has infinite intersection with $R$, by the pigeonhole principle, there
is a pair of disjoint rays $S_{k_{0}}$ and $S_{l_{0}}$ such that for each tail
$R^{\prime}$ of $R$, there is a vertex in $S_{k_{0}}\cap R^{\prime}$ and a
vertex in $S_{l_{0}}\cap R^{\prime}$ such that no $S_{k}$ intersects $R$
between these two vertices. (Formally, we justify this by defining the
following coloring recursively. Start from the first vertex in $R$ which is
also in some $S_{k}$. Search for the next vertex on $R$ which intersects some
$S_{l}$, $l\neq k$. Then we color $0$ with the unordered pair $\{k,l\}$. Then
we search for the next vertex on $R$ which intersects some $S_{m}$, $m\neq l$
and color $1$ with $\{l,m\}$, and so on. Some color $\{k_{0},l_{0}\}$ must
appear infinitely often.) Then we commit to connecting $S_{k_{0}}$ and
$S_{l_{0}}$ (but we do not do so just yet). Applying the pigeonhole principle
again, there is a pair of disjoint rays $S_{k_{1}}$ and $S_{l_{1}}$ (with
$k_{1},l_{1},k_{0},l_{0}$ all distinct) such that for each tail $R^{\prime}$
of $R$, there is a vertex $x$ in $S_{k_{1}}\cap R^{\prime}$ and a first vertex
$y$ in $S_{l_{1}}\cap R^{\prime}$ (after $x$ in $R$) such that no $S_{k}$,
except perhaps $S_{k_{1}}$, $S_{k_{0}}$ or $S_{l_{0}}$, intersects $R$ between
these two vertices. We may eliminate any elements of $S_{k_{1}}$ by changing
$x$ (if necessary) to the last element of $Ry$ in $S_{k_{1}}$. Again we commit
to connecting $S_{k_{1}}$ and $S_{l_{1}}$. Repeat this process until we have
obtained $j+1$ pairs of single rays. That is, when we have $S_{k_{i}}$ and
$S_{l_{i}}$ for an $i<j$, we find $S_{k_{i+1}}$ and $S_{l_{i+1}}$ with
$k_{i+1}$ and $l_{i+1}$ distinct from all previous $k_{m}$ and $l_{m}$ such
that for each tail $R^{\prime}$ of $R$ there is a vertex $x\in S_{k_{i+1}}\cap
R$ and a first $y\in S_{l_{i+1}}\cap R$ after $x$ in $R$ such that no $S_{k}$,
except perhaps $S_{k_{m}}$ or $S_{l_{m}}$ for $m\leq i$, intersects $R$
between these two vertices. This process stops when we define $k_{j}$ and
$l_{j}$.

Finally, we connect these pairs of single rays in the opposite order in which
we defined them: Start by picking some $x^{j}\in S_{k_{j}}\cap R$ and some
$y^{j}\in S_{l_{j}}\cap R$. Then we define a double ray $D_{j}$ by following
$^{\ast}\!S_{k_{j}}$ until $x^{j}$, then following $R$ until $y^{j}$, and
finally following $S_{l_{j}}$, i.e., $D_{j}:=~^{\ast}\!(x^jS_{k_{j}}%
)Ry^{j}S_{l_{j}}$. Having defined $D_{j},D_{j-1},\dots,D_{i+1}$, define
$D_{i}:=~^{\ast}\!(x^{i}S_{k_{i}})Ry^{i}S_{l_{i}}$, where $x^{i}\in S_{k_{i}%
}\cap R$ and $y^{i}\in S_{l_{i}}\cap R$ are chosen as follows: Consider a tail
$R^{\prime}$ of $R$ such that the union of $x^{j}Ry^{j},\dots,x^{i+1}Ry^{i+1}$
is disjoint from (1) $R^{\prime}$; (2) $xS_{k_{i,}}$ for each $x\in S_{k_{i}%
}\cap R^{\prime}$; (3) $yS_{l_{i}}$ for each $y\in S_{l_{i}}\cap R^{\prime}$.
By choice of $k_{i}$ and $l_{i}$, there are vertices $x^{i}\in S_{k_{i}}\cap
R^{\prime}$ and $y^{i}\in S_{l_{i}}\cap R^{\prime}$ such that none of
$S_{k_{j}},\dots,S_{k_{i+1}}$ or $S_{l_{j}},\dots,S_{l_{i+1}}$ intersect
$x^{i}Ry^{i}$.

It is straightforward to check that each of $D_{j},D_{j-1},\dots,D_{i+1}$ is
disjoint from $D_{i}$. This process yields disjoint double rays $D_{j}%
,D_{j-1},\dots,D_{0}$ in $G$, contradicting the maximality of $j$.
\end{proof}

Using some of the ideas in the previous proof, we can prove

\begin{thm}
\label{thm:UYDmc_implies_UVSmc_forests} $\mathsf{IRT}_{\mathrm{UYD}}^{\ast}$
for forests implies $\mathsf{IRT}_{\mathrm{UYS}}^{\ast}$ for forests over
$\mathsf{RCA}_{0}$. Therefore $\mathsf{IRT}_{\mathrm{UYD}}$ for forests
implies $\mathsf{IRT}_{\mathrm{UYS}}$ for forests over $\mathsf{RCA}%
_{0}+\mathsf{I}\Sigma_{1}^{1}$.
\end{thm}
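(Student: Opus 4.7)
The plan is to follow the proof of Theorem~\ref{thm:UVDmc_implies_UVS} while observing that every graph construction used there preserves the forest property, so only $\mathsf{IRT}_{\mathrm{UYD}}^{\ast}$ for forests is needed; at the same time, the forest structure will let us output an exact maximum cardinality sequence rather than merely an infinite one, and will allow a uniform treatment of $Y=\mathrm{V}$ and $Y=\mathrm{E}$.

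Given a forest $G$, apply $\mathsf{IRT}_{\mathrm{UYD}}^{\ast}$ for forests to obtain a maximum cardinality sequence $\seq{R_i}_{i<j}$ of $Y$-disjoint double rays in $G$. If $j=\infty$, output the forward halves of the $R_i$. Otherwise let $\mathcal{R}$ denote the union of the $R_i$ and let $H$ be the appropriate (vertex- or edge-) complement of $\mathcal{R}$ in $G$; then $H$ is a forest containing no $Y$-disjoint double ray, for any such double ray would be $Y$-disjoint from every $R_i$ and violate the maximality of $j$. Decompose $H$ into components $\seq{H_k}_k$. As in the proof of Theorem~\ref{thm:UVDmc_implies_UVS}, any two $Y$-disjoint single rays in the same $H_k$ could be connected along the unique path joining them to form a double ray in $H_k$, which is impossible; hence each $H_k$ has at most one $Y$-disjoint single ray. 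Attach a computable single ray of new vertices at the $<_N$-least vertex of each $H_k$ to form $H_k'$, and set $H'=\bigsqcup_k H_k'$. Then $H'$ is a forest in which each $H_k'$ admits at most one $Y$-disjoint double ray, since any such double ray must use the entire attached ray together with a single ray in $H_k$ starting at the attachment point.

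Apply $\mathsf{IRT}_{\mathrm{UYD}}^{\ast}$ for forests to $H'$ to obtain a maximum cardinality sequence $\seq{D_k}_{k<l}$, and strip off the attached computable ray from each $D_k$ to obtain a single ray $T_k$ in some component of $H$. If $l=\infty$, the $T_k$ already form an infinite $Y$-disjoint family of single rays in $G$. Otherwise, output the family consisting of the two halves of each $R_i$ together with the $T_k$, yielding $2j+l$ many $Y$-disjoint single rays in $G$.

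The key remaining task is to show that this family has maximum cardinality, i.e.\ that no $Y$-disjoint family of single rays in $G$ has more than $2j+l$ members. This is where the forest hypothesis is essential: in a forest, a single ray cannot leave any $R_i$ and then return to it without creating a second path between two vertices, so every single ray in $G$ visits each $R_i$ in one contiguous interval, and its tail therefore lies either in one half of some $R_i$ or in some component of $H$. Two $Y$-disjoint single rays cannot share such a tail, so there are at most $2j$ of the former kind and at most $l$ of the latter, bounding the total at $2j+l$. The main obstacle is making this no-return dichotomy precise and carrying it out uniformly in both $Y=\mathrm{V}$ and $Y=\mathrm{E}$, where the path-uniqueness argument must be phrased in terms of vertices in the first case and edges in the second. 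The second assertion of the theorem then follows from the first by Proposition~\ref{prop:IRT_IRTmc_rshp}.
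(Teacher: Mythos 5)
Your construction is the same as the paper's: apply $\mathsf{IRT}^{\ast}_{\mathrm{UYD}}$ first to $G$ to get a maximum family $\seq{R_i}_{i<j}$ of $Y$-disjoint double rays, then to the augmented complement $H'$ to get $\seq{D_k}_{k<l}$, and output the $2j$ halves of the $R_i$ together with the $l$ rays $T_k = D_k\cap H$. Where you genuinely diverge is the maximality argument. The paper argues by contradiction with the maximality of $l$: given a supposedly larger $Y$-disjoint family of single rays, pigeonhole (using that in a forest two rays sharing infinitely many vertices share a tail) produces one ray $Q$ with a tail $xQ$ vertex-disjoint from all $2j+l$ output rays, hence lying in some $H_i$; extending $xQ$ to a double ray of $H_i'$ then gives an $(l{+}1)$st double ray disjoint from the $S_k$, a contradiction. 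You instead give a direct count: a single ray in a forest meets each $R_i$ in a contiguous interval, so its tail eventually settles in one of $2j+l$ ``slots'' (a half of some $R_i$, or one of the at most $l$ components of $H$ that can carry a single ray), and $Y$-disjoint rays cannot share a slot. Both use the forest hypothesis in essentially the same spot; your version makes the bound $2j+l$ transparent rather than producing it indirectly.

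Two small points to tighten. First, you should simply take $H$ to be the induced subgraph on vertices outside $\mathcal{R}$ for both $Y=\mathrm V$ and $Y=\mathrm E$, as the paper does; the hedging about a ``vertex- or edge-complement'' is not needed, and an edge-based $H$ would complicate the tail-classification. Second, the ``main obstacle'' you flag for $Y=\mathrm E$ is smaller than you fear: the contiguity-of-visits argument is purely about vertices and works verbatim. The only place that needs an extra sentence is the claim that two $Y$-disjoint single rays in the same $H_i$ yield a double ray in $H_i$: for $Y=\mathrm E$ they may share a vertex $v$, and one simply splices the two tails $vS_0$ and $vS_1$ at $v$ (these are vertex-disjoint past $v$ since $H_i$ is a tree) instead of using a connecting path. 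This is the same point the paper elides when it cites the $Y=\mathrm V$ argument from Theorem~\ref{thm:UVDmc_implies_UVS}.
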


This result will be used in the proofs of Theorems
\ref{thm:XYZmc_implies_finite_choice}, \ref{thm:IRTmc_implies_ABW} and
\ref{thm:ABW_not_imply_IRT}.

\begin{proof}
Let $G$ be a forest. If $G$ happens to have arbitrarily many disjoint double
rays, then by $\mathsf{IRT}^{\ast}_{\mathrm{UYD}}$, $G$ has infinitely many
disjoint double rays. Therefore there is an infinite sequence of disjoint
single rays in $G$. Such a sequence has maximum cardinality, so we are done in
this case.

Suppose $G$ does not have arbitrarily many disjoint double rays. By
$\mathsf{IRT}^{\ast}_{\mathrm{UYD}}$ for forests, there is a sequence
$\seq{R_{i}}_{i<j}$ of disjoint double rays in $G$ of maximum cardinality.
Following the proof of Theorem \ref{thm:UVDmc_implies_UVS}, define the forests
$\mathcal{R}$, $H$, and $H^{\prime}$. There, we proved that no two single rays
in the same connected component $H_{i}$ of $H$ can be disjoint.

By $\mathsf{IRT}^{\ast}_{\mathrm{UYD}}$ for forests, there is a sequence of
disjoint double rays in $H^{\prime}$ of maximum cardinality. If this sequence
is infinite, then there is an infinite sequence of disjoint single rays in $H$
because each double ray in the sequence has a tail which lies in $H$. This is
a sequence of disjoint single rays of maximum cardinality in $G$, so we are
done in this case.

Otherwise, suppose $\seq{S_{k}}_{k<l}$ is a disjoint sequence of double rays in
$H^{\prime}$ of maximum cardinality. Consider the following disjoint sequence
of single rays in $G$. First, for each $k<l$, consider the single ray formed
by intersecting $H$ and the double ray $S_{k}$. Second, for each $i<j$, we can
split the double ray $R_{i}$ into a pair of disjoint single rays in $G$. This
yields a finite sequence $\seq{Q_{m}}_{m<n}$ of disjoint single rays in $G$.

We claim that $\seq{Q_{m}}_{m<n}$ is a sequence of disjoint single rays in $G$ of
maximum cardinality. Suppose there is a larger sequence of disjoint single
rays in $G$. Since $G$ is a forest, any two single rays in $G$ which share
infinitely many edges or vertices must share a tail. Therefore there is a
single ray $Q$ in this larger sequence which only shares finitely many edges
and vertices with each $Q_{m}$. Then some tail of $Q$, say $xQ$, is
vertex-disjoint from each $Q_{m}$. In particular, $xQ$ is vertex-disjoint from
each $R_{i}$, i.e.\ $xQ$ lies in $H$. Extend $xQ$ to a double ray in
$H^{\prime}$ by first connecting $x$ to the $<_{N}$-least vertex in its
connected component $H_{i}$, then following the computable ray which we added.
The resulting double ray is disjoint from every $S_{k}$, because no $S_{k}$
can lie in the same $H_{i}^{\prime}$ as $xQ$ (for $xQ$ is vertex-disjoint from
$S_{k}\cap H$ by construction). This contradicts the maximality of $l$.
\end{proof}

\subsection{Maximal Variants of $\mathsf{IRT}_{}$}

\label{section:maximal}

Instead of sets of disjoint rays of maximum cardinality, we could consider
sets of disjoint rays which are maximal with respect to set inclusion. For
uncountable graphs, Halin \cite{halin65} observed that any uncountable maximal
set of disjoint rays is in fact of maximum cardinality (because rays are
countable). This suggests another variant of $\mathsf{IRT}_{{}}$, which we
call \emph{maximal $\mathsf{IRT}_{{}}$}:

\begin{defn}
\label{defn:MIRT} Let $\mathsf{MIRT}_{\mathrm{XYZ}}$ be the statement that
every $X$-graph $G$ has a (possibly finite) sequence $(R_{i})_{i}$ of $Y$-disjoint
$Z$-rays which is maximal, i.e., for any $Z$-ray $R$ in $G$, there is some $i$
such that $R$ and $R_{i}$ are not $Y$-disjoint.
\end{defn}

$\mathsf{MIRT}_{\mathrm{XYZ}}$ immediately follows from Zorn's Lemma. It is
straightforward to show that $\mathsf{MIRT}_{\mathrm{XYZ}}$ implies $\Pi
_{1}^{1}$-$\mathsf{CA}_{0}$ (see the proof of Theorem
\ref{thm:MIRT_equiv_Pi11-CA} below), hence $\mathsf{MIRT}_{\mathrm{XYZ}}$ is
much stronger than $\mathsf{IRT}_{\mathrm{XYZ}}$ or even $\mathsf{IRT}%
_{\mathrm{XYZ}}^{\ast}$. We show below that $\mathsf{MIRT}_{\mathrm{XYZ}}$ is
equivalent to $\Pi_{1}^{1}$-$\mathsf{CA}_{0}$. This situation is reminiscent
of K\"{o}nig's duality theorem for countable graphs. Aharoni, Magidor, Shore
\cite{ams92} proved that the theorem implies $\mathsf{ATR}_{0}$ and that
$\Pi_{1}^{1}$-$\mathsf{CA}_{0}$ suffices to prove the required existence of a
K\"{o}nig cover. Simpson \cite{sim94} later proved that $\mathsf{ATR}_{0}$
actually suffices. The covers produced in \cite{ams92}, and indeed in all then
known proofs of this duality theorem actually had various maximality
properties. Aharoni, Magidor, Shore proved that the existence of covers with
any of a variety of maximality properties actually implies $\Pi_{1}^{1}%
$-$\mathsf{CA}_{0}$.

\begin{thm}
\label{thm:MIRT_equiv_Pi11-CA} $\Pi_{1}^{1}$-$\mathsf{CA}_{0}$ is equivalent
to $\mathsf{MIRT}_{\mathrm{XYZ}}$.
\end{thm}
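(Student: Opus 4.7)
The plan is to prove the two directions separately. For the harder direction, that $\mathsf{MIRT}_{\mathrm{XYZ}} \vdash \Pi^{1}_{1}\text{-}\mathsf{CA}_{0}$, I would use the characterization of $\Pi^{1}_{1}\text{-}\mathsf{CA}_{0}$ as closure under the hyperjump noted in Remark~\ref{pi11O}: it suffices to show that from any sequence $\seq{T_{n}}_{n}$ of subtrees of $N^{<N}$ (as a parameter) any solution of a suitably chosen instance of $\mathsf{MIRT}_{\mathrm{XYZ}}$ lets us compute the set $\{n : T_{n}\text{ is ill-founded}\}$. Form the $X$-graph $G := \bigsqcup_{n} G_{n}$, where each $G_{n}$ is built from $T_{n}$ using the orient-and-attach-a-computable-companion recipe already employed in the proof of Theorem~\ref{thm:IRT_variants_ACA_0_and_omega_model_IRT_variants_hyp_closed}, tailored to $\mathrm{XYZ}$ so that $G_{n}$ contains a $Z$-$X$-ray precisely when $T_{n}$ is ill-founded. (For $\mathrm{XYZ}=\mathrm{UVS}$ or $\mathrm{UES}$ one may take $G_{n} = T_{n}$ directly; the directed cases require orienting edges towards the root, and the double-ray cases require adjoining a computable branch of new vertices at the root.) Applying $\mathsf{MIRT}_{\mathrm{XYZ}}$ to $G$ yields a maximal $Y$-disjoint sequence $\seq{R_{i}}$. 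By maximality, if $T_{n}$ is ill-founded then some $R_{i}$ must sit in $G_{n}$: otherwise any $Z$-$X$-ray from $G_{n}$ (which exists by ill-foundedness) would automatically be $Y$-disjoint from every $R_{i}$, since rays in distinct components of an effective disjoint union share no vertex or edge, and could thus be adjoined, violating maximality. Conversely, if $T_{n}$ is well-founded then $G_{n}$ contains no $Z$-$X$-ray at all. Reading off the set of $n$ such that some $R_{i}$ lies in $G_{n}$ is arithmetic in $G \oplus \seq{R_{i}}$, yielding the desired $\Pi^{1}_{1}$ set.

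For the converse, $\Pi^{1}_{1}\text{-}\mathsf{CA}_{0} \vdash \mathsf{MIRT}_{\mathrm{XYZ}}$, I would construct a maximal $Y$-disjoint sequence by a stagewise greedy recursion. Fix an enumeration of appropriate anchors: vertices for the vertex-disjoint single-ray variants, and edges for all other variants. At stage $n$, given the finite sequence $\seq{R_{i} : i < k_{n}}$ of rays selected so far, ask the $\Sigma^{1}_{1}$ question ``does $G$ contain a $Z$-$X$-ray passing through the $n$th anchor and $Y$-disjoint from $R_{0},\dots,R_{k_{n}-1}$?''. Since $\Pi^{1}_{1}\text{-}\mathsf{CA}_{0}$ proves $\mathsf{ATR}_{0}$ and hence $\Sigma^{1}_{1}\text{-}\mathsf{AC}_{0}$ \cite[V.8.3]{sim_book}, I can both decide the question and, when the answer is positive, explicitly pick such a ray and append it; otherwise I skip. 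Maximality is then forced: any $Z$-$X$-ray $R$ in $G$ supposed $Y$-disjoint from the entire constructed sequence would pass through some anchor (its first vertex in the vertex-disjoint single-ray case, or any one of its edges otherwise), enumerated as the $n$th anchor for some $n$; the stage-$n$ question would then have had a positive answer witnessed by $R$ itself, so some ray through that anchor would have been added, and that ray would share the anchor with $R$, violating $Y$-disjointness.

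The main technical obstacle will be the uniform realization of the forward reduction across all eight variants, particularly for the double-ray cases: the attached computable companion must be designed so that $Z$-$X$-rays in $G_{n}$ correspond exactly to branches through $T_{n}$, with no spurious rays arising entirely within the companion, and detection of which component a given $R_{i}$ lives in must stay arithmetic. The appropriate tricks are already packaged in the constructions from Sections~\ref{section:variants_IRT} and~\ref{section:max_cardinality}, so this should reduce to a case-by-case verification rather than demanding any new combinatorial idea.
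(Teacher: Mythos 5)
Your forward direction ($\mathsf{MIRT}_{\mathrm{XYZ}} \vdash \Pi^{1}_{1}\text{-}\CA_0$) follows the paper's plan: form a disjoint union of $X$-graphs $G_n$ each coding whether $T_n$ is ill-founded, apply $\mathsf{MIRT}$, and read the answer off the maximal family by component membership. The paper packages the parameter differently (the disjoint union of all $A$-computable trees rather than an arbitrary given sequence $\seq{T_n}$), but that is cosmetic. One detail you should make explicit: reading off the set $\{n : \exists i\,(R_i \text{ lies in } G_n)\}$ from the solution requires arithmetic comprehension, so you need to establish $\mathsf{MIRT}_{\mathrm{XYZ}} \vdash \ACA_0$ first, as the paper does. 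This is easy (apply $\mathsf{MIRT}$ to the forests from Theorem~\ref{thm:IRT_variants_ACA_0_and_omega_model_IRT_variants_hyp_closed}), but it is not automatic.

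The backward direction has a genuine gap. You propose the same greedy stage-by-stage recursion the paper uses, but you want to carry it out by combining ``$\Pi^{1}_{1}$-$\CA_0$ to decide the $\Sigma^{1}_{1}$ question'' with ``$\Sigma^{1}_{1}$-$\AC_0$ to pick a witness.'' This does not work: the $\Sigma^{1}_{1}$-choice you make at stage $n$ depends on the rays chosen at stages $0,\dots,n-1$, so the successive choices are \emph{dependent}, not simultaneous. $\Sigma^{1}_{1}$-$\AC_0$ only gives a sequence $\seq{X_n}$ witnessing a fixed arithmetic predicate $\Phi(n,X)$ whose instances are independent of one another; it does not let you thread one choice into the hypothesis of the next, and there is no canonical way to turn the family of choices it returns into a single coherent run of the recursion. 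The paper calls this out explicitly (``As we only get a yes answer some of the time, $\Sigma^{1}_{1}$-$\AC_0$ does not seem sufficient. Also later choices depend on previous ones.'') and instead uses strong $\Sigma^{1}_{1}$-$\DC_0$, which is provably equivalent to $\Pi^{1}_{1}$-$\CA_0$ by \cite[VII.6.9]{sim_book} and is tailored exactly to iterated $\Sigma^{1}_{1}$ choices that may fail at some stages. (The paper also sketches an alternative route via a countable coded $\beta$-model containing $G$, inside which the $\Sigma^{1}_{1}$ questions become arithmetic and the recursion can be run at a fixed arithmetic level.) To repair your argument, replace the appeal to $\Sigma^{1}_{1}$-$\AC_0$ with strong $\Sigma^{1}_{1}$-$\DC_0$ and carefully set up the $\Sigma^{1}_{1}$ formula $\Phi(n,X,Y)$ that extracts from the columns of $X$ the rays already selected and asks whether $Y$ is a $Z$-ray through the $n$th anchor $Y$-disjoint from those.
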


\begin{proof}
[Proof that $\mathsf{MIRT}_{\mathrm{XYZ}}$ implies $\Pi_{1}^{1}$%
-$\mathsf{CA}_{0}$]We first prove that $\mathsf{MIRT}_{\mathrm{XYZ}}$ implies
$\mathsf{ACA}_{0}$ by adapting the proof of Theorem
\ref{thm:IRT_variants_ACA_0_and_omega_model_IRT_variants_hyp_closed}: If we
apply $\mathsf{MIRT}_{\mathrm{XYZ}}$ instead of $\mathsf{IRT}_{\mathrm{XYZ}}$
to any of the forests constructed in that proof, we obtain a sequence
containing a $Z$-ray in each tree which constitutes the forest. This is more
than sufficient for carrying out the remainder of the proof of Theorem
\ref{thm:IRT_variants_ACA_0_and_omega_model_IRT_variants_hyp_closed}.

To prove that $\mathsf{MIRT}_{\mathrm{UVS}}$ implies $\Pi_{1}^{1}%
$-$\mathsf{CA}_{0}$, suppose we are given a set $A$. Consider the disjoint
union of all $A$-computable trees (this exists, by $\mathsf{ACA}_{0}$). Any
maximal sequence of $Y$-disjoint rays in this forest must contain a ray in each
ill-founded $A$-computable tree. Hence its jump computes the hyperjump $T^{A}$. This shows
that $\mathsf{MIRT}_{\mathrm{UYS}}$ implies $\Pi_{1}^{1}$-$\mathsf{CA}_{0}$.
To prove that the other $\mathsf{MIRT}_{\mathrm{XYZ}}$ imply $\Pi_{1}^{1}%
$-$\mathsf{CA}_{0}$, it suffices to exhibit a computable procedure which takes
trees $T\subseteq N^{<N}$ to $X$-graphs $T^{\prime}$ such that $T$ is
ill-founded if and only if $T^{\prime}$ contains a $Z$-ray. For $\mathsf{MIRT}%
_{\mathrm{UYD}}$, it suffices to modify each tree by adding a computable
branch which is not already on the tree (as we did in the proof of Theorem
\ref{thm:IRT_variants_ACA_0_and_omega_model_IRT_variants_hyp_closed}). For
$\mathsf{MIRT}_{\mathrm{DYZ}}$, it suffices to orient each of the graphs we
constructed above in the obvious way.
\end{proof}

\begin{proof}
[Proof that $\Pi_{1}^{1}$-$\mathsf{CA}_{0}$ implies $\mathsf{MIRT}%
_{\mathrm{XYZ}}$]First, we give a mathematical proof for $\mathsf{MIRT}%
_{\mathrm{XVZ}}$ that is a direct construction not relying on Zorn's Lemma or
the like. We will then explain how to modify it to apply to the other cases
and then how to get it to work in $\Pi_{1}^{1}$-$\mathsf{CA}_{0}$.

Suppose we are given an $X$-graph $G$ whose vertices are elements of $N$. We
build a sequence of disjoint $Z$-rays in $G$ by recursion. If there are none we
are done. Otherwise start with $R_{0}$ as any $Z$-ray in $G$. Suppose at stage
$n$ we have constructed disjoint $Z$-rays $\seq{R_{i}}_{i<m}$ for some $m\leq n$
such that for each $i$, $R_{i}$ begins at $x_{i}$. If there is a $Z$-ray
beginning at $n$ which is disjoint from the $R_{i}$ for $i<m$, choose one as $R_{m+1}$, if not move on to stage $n+1$. This construction produces a
(possibly finite) sequence $R_{i},R_{i},\dots$ of disjoint $Z$-rays in $G$. We
show that this sequence is maximal. If $R$ is a $Z$-ray which is disjoint from
every $R_{i}$, then go to stage $n$ of the construction, where $n$ is the
first vertex of $R$. If we did insert some $R_{m}$ during stage $n$, then
$R_{m}$ would not be disjoint from $R$. Hence we did not insert any $Z$-ray
during stage $n$. But $R$ is a $Z$-ray that begins at $n$ and is disjoint from
$\seq{R_{i_{j}}}_{i_{j}<n}$, contradiction.

To prove $\mathsf{MIRT}_{\mathrm{XEZ}}$, we modify the above construction as
follows. At stage $n=(u,v)$, we search for a $Z$-ray $R$ which is disjoint from
the previous rays and has $(u,v)$ as its first edge. The rest of the proof
proceeds as above.

The only real obstacle in formalizing the above proofs even in $\mathsf{ACA}_{0}$ is
being able to find out at stage $n$ if there is an $R$ as requested and, if
so, choosing one. The question is $\Sigma_{1}^{1}$ and so in $\Pi_{1}^{1}%
$-$\mathsf{CA}_{0}$ we can answer it and then perhaps use some construction or
choice principle to produce it. As we only get a yes answer some of the time,
$\Sigma_{1}^{1}$-$\mathsf{AC}_{0}$ does not seem sufficient. Also later
choices depend on previous ones. A computability argument using the Gandy
basis theorem and its uniformities works but requires more background
development. A choice principle that returns an element if there is one
satisfying a $\Sigma_{1}^{1}$ property (but may act arbitrarily otherwise) and
that can be iterated in a recursion is \emph{strong $\Sigma_{1}^{1}%
$-$\mathsf{DC}_{0}$} which consists of the scheme
\[
(\exists W)(\forall n)(\forall Y)\left(  \Phi\left(  n,\bigoplus_{i<n}%
W^{[i]},Y\right)  \rightarrow\Phi\left(  n,\bigoplus_{i<n}W^{[i]}%
,W^{[n]}\right)  \right)  ,
\]
for any $\Sigma_{1}^{1}$ formula $\Phi(n,X,Y)$. It is known that strong
$\Sigma_{1}^{1}$-$\mathsf{DC}_{0}$ and $\Pi_{1}^{1}$-$\mathsf{CA}_{0}$ are
equivalent \cite[VII.6.9]{sim_book}. This clearly has the right
flavor and the only issue is defining the required $\Phi(n,X,Y)$ with
parameter $G$. This is slightly fussy but not problematic. We provide the
details: To define $\Phi$, first recursively define a finite sequence
$i_{0},\dots,i_{k}<n$. If $i_{0},\dots,i_{j-1}$ have been defined, define
$i_{j}$ to be the least number (if any) above $i_{j-1}$ and below $n$ such
that $X^{[i_{j}]}$ is a $Z$-ray in $G$ which is disjoint from $X^{[i_{0}]}%
,\dots,X^{[i_{j-1}]}$. It is clear that there is an arithmetic formula with
parameter $G$ which defines $i_{0},\dots,i_{k}$ from $n$ and $X$. Next, we say
that $\Phi(n,X,Y)$ holds if $Y$ is a $Z$-ray in $G$ which begins with $n$, and
$Y$ is disjoint from $X^{[i_{0}]},\dots,X^{[i_{k}]}$.

Apply strong $\Sigma_{1}^{1}$-$\mathsf{DC}_{0}$ for the formula $\Phi$ to
obtain some set $W$. By $\Sigma_{1}^{0}$-comprehension with parameter $G\oplus
W$, we may inductively define a (possibly finite) sequence $i_{0},i_{1},\dots
$, just as we did in the definition of $\Phi$. Clearly $\seq{W^{[i_{j}]}}_{j}$ is
a sequence of disjoint $Z$-rays in $G$. We claim that it is maximal.

Suppose that $R$ is a $Z$-ray in $G$ which is disjoint from every $Z^{[i_{j}]}$.
Suppose that $R$ begins with vertex $n$. Then $R$ is disjoint from
$W^{[i_{0}]},\dots,W^{[i_{k}]}$, where $i_{k}$ is the largest $i_{j}$ below
$n$. It follows that $\Phi(n,\bigoplus_{i<n}W^{[i]},R)$ holds. So
$\Phi(n,\bigoplus_{i<n}W^{[i]},W^{[n]})$ holds, i.e., $W^{[n]}$ is a Z-ray
which begins with $n$ and $W^{[n]}$ is disjoint from $W^{[i_{0}]}%
,\dots,W^{[i_{k}]}$. By definition of $i_{k+1}$, that means that $n=i_{k+1}$.
But then $R$ and $W^{[i_{k+1}]}$ are not disjoint, contradiction.

A slicker proof suggested by the referee requires perhaps more background in
the metamathematics of Reverse Mathematics. It uses a \textquotedblleft
countable coded $\beta$-model\textquotedblright\ with the given graph $G$ as
an element. By $\Pi_{1}^{1}$-$\mathsf{CA}_{0}$, for any set $G$ there is a set $X$ such that $X^{[0]}=G$ and
$B(X)=\seq{N,\{X^{[i]}\mid i\in N\}}$ is a $\beta$-model, i.e.\ any $\Sigma_{1}%
^{1}$ formula $\Phi$ with parameters from among the $X^{[i]}$ is true if and
only if it is true in $B(X)$ \cite[VII.2.10]{sim_book}. Now one carries out the mathematical proof above
but whenever one asks if there is a ray with some property one asks if there
is one such among the $X^{[i]}$. This converts the $\Sigma_{1}^{1}$ questions
to ones of fixed arithmetic complexity in $X$. Then, if the answer is yes,
finding an appropriate $i$ is also arithmetic in $X$ with fixed complexity.
This converts the entire construction to one arithmetic in $X$ (and so
certainly in $\Pi_{1}^{1}$-$\mathsf{CA}_{0}$). Consider now the claim that the
sequence of rays given by this construction over $B(X)$ is actually maximal.
The existence of a counter-example $R$ with first element $n$ gives the same
contradiction as before. The point is that, as $B(X)$ is a $\beta$-model, the
answer to the question asked at stage $n$ of the construction of whether there
is a ray in $B(X)$ with first vertex $n$ disjoint from the finite sequence is the same as the question as to whether there exists one at all. The only
other point to note is that even though the sequence of rays $R_{i}$ is
constructed outside of $B(X)$ each finite initial segment is in $B(X)$ by an
external induction and the fact that $B(X)$ is obviously a model of
$\mathsf{ACA}_{0}$. The argument with the adjustments for $\mathsf{MIRT}%
_{\mathrm{XEZ}}$ is then the same.
\end{proof}

\section{Relationships Between $\mathsf{IRT}$ and Other Theories of
Hyperarithmetic Analysis}

\label{section:IRT_vs_other_theories}

In this section, we establish implications and nonimplications between
variants of $\mathsf{IRT}$ and THAs other than
$\Sigma_{1}^{1}\text{-}\mathsf{AC}_{0}$. One such standard theory is as follows:

\begin{defn}
\label{defn:unique_choice} The theory \emph{unique-$\Sigma_{1}^{1}%
\text{-}\mathsf{AC}_{0}$} consists of $\mathsf{RCA}_{0}$ and the principle
\[
(\forall n)(\exists!X)A(n,X)\rightarrow(\exists Y)(\forall
n)A(n,Y^{[n]})
\]
for each arithmetic formula $A(n,X)$.
\end{defn}

The above theory is typically known as weak-$\Sigma_{1}^{1}\text{-}%
\mathsf{AC}_{0}$ (e.g., \cite[VIII.4.12]{sim_book}). We deviate from this
terminology to introduce a new choice principle where the requirement for
unique solutions is replaced by one for finitely many solutions.

\begin{defn}
\label{defn:finite_choice} The theory \emph{finite-$\Sigma_{1}^{1}%
\text{-}\mathsf{AC}_{0}$} consists of $\mathsf{RCA}_{0}$ and the principle
\[
(\forall n)(\exists\text{ nonzero finitely many }X)A(n,X)\rightarrow
(\exists Y)(\forall n)A(n,Y^{[n]})
\]
for each arithmetic formula $A(n,X)$. Formally, \textquotedblleft$(\exists$
nonzero finitely many $X)A(n,X)$\textquotedblright\ means that there is a
nonempty sequence $\seq{X_{i}}_{i<j}$ such that for each $X$, $A(n,X)$ holds if
and only if $X=X_{i}$ for some $i<j$.
\end{defn}

Similarly to $\Sigma_{1}^{1}$-$\AC$, each of these two choice principles are
equivalent to ones where $A$ is allowed to be of the form $(\exists
!Y)B(n,X,Y)$ or $(\exists$ nonzero finitely many $Y)B(n,X,Y)$, respectively.
However, unlike $\Sigma_{1}^{1}$-$\AC$ neither of these two principle is
equivalent to the version where $A$ is allowed to be $\Sigma_{1}^{1}$. Not
only would those versions fail to capture the idea that we are dealing with
unique or finitely many witnesses and paths through trees but they should be
stronger than the stated principles. It is easy to see, for example, that even
the unique version with $A$ $\Sigma_{1}^{1}$ implies $\Delta_{1}^{1}$-$\CA$ (Definition \ref{defn:Delta11-CA}) which is stronger than unique-$\Sigma
_{1}^{1}$-$\AC$ by Van Wesep \cite{vw_thesis}.

Since the THA $\Sigma_{1}^{1}\text{-}\mathsf{AC}_{0}$ implies finite-$\Sigma
_{1}^{1}\text{-}\mathsf{AC}_{0}$ which in turn implies unique-$\Sigma_{1}%
^{1}\text{-}\mathsf{AC}_{0}$ whose models are closed under hyperarithmetic
reducibility by Proposition \ref{jumpstrees}, it follows that finite-$\Sigma
_{1}^{1}\text{-}\mathsf{AC}_{0}$ is a THA (as is unique-$\Sigma_{1}%
^{1}\text{-}\mathsf{AC}_{0}$). Goh \cite{goh_finite_choice} shows that
finite-$\Sigma_{1}^{1}\text{-}\mathsf{AC}_{0}$ is strictly stronger than
unique-$\Sigma_{1}^{1}\text{-}\mathsf{AC}_{0}$. We were led to study this
version of choice by realizing that a variant of our original proof that
$\mathsf{IRT}_{\mathrm{UVS}}^{\ast}$ implies unique-$\Sigma_{1}^{1}%
\text{-}\mathsf{AC}_{0}$ worked for the finite version.

\begin{thm}
\label{thm:XYZmc_implies_finite_choice} $\mathsf{IRT}^{\ast}_{\mathrm{XYZ}}$
implies finite-$\Sigma^{1}_{1}\text{-}\mathsf{AC}_{0}$ over $\mathsf{RCA}_{0}%
$. (It follows that $\mathsf{IRT}_{\mathrm{XYZ}}$ implies finite-$\Sigma
^{1}_{1}\text{-}\mathsf{AC}_{0}$ over $\mathsf{RCA}_{0} + \mathsf{I}\Sigma^{1}_{1}$,
but this is superseded by Theorem \ref{thm:IRTmc_implies_ABW} below.)
\end{thm}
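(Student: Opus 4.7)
The plan is to adapt the proof strategy of Theorem~\ref{thm:IRTmc_ACA0_ast}, but to encode witnesses for $A(0,-),\,A(1,-),\ldots$ into successively longer branches of a single family of trees, so that the finite-witness hypothesis translates directly into a finite branch count. Given an arithmetic formula $A(n,X)$ satisfying $(\forall n)(\exists\text{ nonzero finitely many }X)\,A(n,X)$, the goal is to produce $Y$ with $A(n,Y^{[n]})$ for every $n$.

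First I would apply Lemma~\ref{lem:arith_formula_paths_on_tree_seq} to obtain a sequence of subtrees $\seq{T_n}_n$ of $N^{<N}$ whose branches $\langle X,f\rangle$ are in uniform bijection with the witnesses of $A(n,-)$, via uniquely determined minimal Skolem functions $f$. Then I would form the ``zipped'' trees $U_n$: a node of $U_n$ of level $l$ is a tuple $(\sigma_0,\ldots,\sigma_n)$ with each $\sigma_i$ a length-$l$ string in $T_i$, extended componentwise. Each $U_n$ is arithmetic in $\seq{T_i}_i$, has exactly $\prod_{i\le n}k_i$ branches (where $k_i$ is the finite nonzero number of witnesses of $A(i,-)$), and, crucially, any single branch of $U_n$ already codes a complete tuple $\langle X_0,\ldots,X_n\rangle$ of simultaneous witnesses for $A(0,-),\ldots,A(n,-)$.

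Next I would form the disjoint union $G=\bigsqcup_n U_n$ with the standard modifications for variants XYZ from the proof of Theorem~\ref{thm:IRTmc_ACA0_ast}: orient every edge away from the root for the directed variants, and attach a fresh computable ``downward'' tail at the root of each $U_n$ (oriented toward the root in the directed case) for the double-ray variants. I would then apply $\IRTmc{XYZ}$ to $G$. The key quantitative point is that the finite branch count of $U_n$, after these modifications, bounds the number $K_n$ of pairwise $Y$-disjoint $Z$-rays in the corresponding component: every ray in a tree is eventually a tail of a branch, so $k$ branches cap both the vertex- and edge-disjoint counts in the single-ray case, while for the double-ray variants the attached tail paired with any branch yields at least one double ray and the branch count still caps the total. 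Since each $U_n$-component contributes at least $1$ but at most $K_n<\infty$ such rays and $G$ has infinitely many components, $G$ has arbitrarily many $Y$-disjoint $Z$-rays, so $\IRTmc{XYZ}$ hands back a maximum-cardinality solution $C$ that must be infinite. For every $n$ there is then a least $m\ge n$ such that $C$ contains a ray in the $U_m$-component; arithmetically in $C$ we can truncate or extend that ray to the unique branch of $U_m$ containing it, read off the coded tuple $\langle X_0,\ldots,X_m\rangle$, and set $Y^{[n]}=X_n$. The resulting $Y$ witnesses finite-$\Sigma^{1}_{1}\text{-}\mathsf{AC}_{0}$ for the formula $A$.

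The main obstacle I anticipate is the case-by-case verification that ``finite branch count'' really does bound the number of $Y$-disjoint $Z$-rays under each combination of directedness, disjointness and ray-type after the auxiliary tails are attached; this is most delicate in the double-ray variants, where two branches of a tree can merge into a single double ray and one must separately check that the attached computable tail still guarantees at least one $Y$-disjoint $Z$-double-ray per component. Once that book-keeping is done, the extraction step uses only arithmetic comprehension applied to $C$ and the $U_n$, so it lives inside $\IRTmc{XYZ}$, which already contains $\mathsf{ACA}_{0}^{\ast}$ by Theorem~\ref{thm:IRTmc_ACA0_ast}; no induction beyond $\mathsf{I}\Sigma_{1}^{0}$ is needed for the final extraction.
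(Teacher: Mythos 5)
Your construction of the ``zipped'' trees $U_n$ is exactly the paper's $S_n$ (from the cited Lemma and the proof of Theorem~\ref{thm:DVD-IRTmc_implies_ISigma11}), and the overall plan (apply $\IRTmc{XYZ}$ to a forest built from them, argue the maximum-cardinality solution is infinite, then extract a choice sequence) matches the paper. Two substantive remarks.

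First, the paper does not do the case-by-case verification you anticipate. After proving the claim for $\IRTmc{UYS}$ restricted to forests, it invokes Theorem~\ref{thm:UYDmc_implies_UVSmc_forests} (which reduces $\IRTmc{UYD}$ for forests to $\IRTmc{UYS}$ for forests) and Proposition~\ref{prop:IRTmc_implications_variants} ($\IRTmc{DYZ}\Rightarrow\IRTmc{UYZ}$), so the double-ray and directed variants come for free. Your direct approach --- orienting away from the root and attaching tails --- is workable, but it saddles you with exactly the delicate bookkeeping you flag (e.g.\ bounding disjoint undirected double rays in a tree with $k$ branches plus a tail), all of which the paper avoids by reusing its earlier reductions. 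That is a legitimate stylistic divergence, not an error.

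Second, and this is a genuine gap: your step ``each $U_n$-component contributes at least $1$ \dots{} so $G$ has arbitrarily many $Y$-disjoint $Z$-rays, so $C$ must be infinite'' is not available over $\RCA_0$. To know that $U_n$ is ill-founded you need a simultaneous tuple $\langle P_0,\dots,P_n\rangle$ of branches of $T_0,\dots,T_n$, and for a (possibly nonstandard) $n$ producing such a tuple from the hypothesis ``each $T_i$ has a branch'' is precisely a finite $\Sigma^1_1$-choice, i.e.\ the thing being proved; $\mathsf{I}\Sigma^1_1$ would also do it, but that is not available. The paper sidesteps this by arguing ``$C$ is infinite'' directly by contradiction: if $C$ is finite, take the least $m$ with no ray of $C$ in $S_m$. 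If $m>0$, some $R_k\in C$ lies in $S_{m-1}$; extending $R_k$ to a branch of $S_{m-1}$ hands you the tuple $\langle P_0,\dots,P_{m-1}\rangle$ for free, and the hypothesis applied at $n=m$ gives one branch $P_m$ of $T_m$, so $\langle P_0,\dots,P_m\rangle$ is a branch of $S_m$ yielding a new disjoint ray --- contradiction. (The case $m=0$ uses only the hypothesis at $n=0$.) This ``bootstrap'' uses the rays already present to produce the finite tuple, rather than proving ill-foundedness of every $U_n$ up front. Your proof needs to be reorganized along these lines; the claim that $G$ has arbitrarily many disjoint rays should be dropped, not proved.
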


\begin{proof}
We first prove that $\mathsf{IRT}_{\mathrm{UYS}}^{\ast}$ for forests implies
finite-$\Sigma_{1}^{1}\text{-}\mathsf{AC}_{0}$. By Lemma
\ref{lem:arith_formula_paths_on_tree_seq}, it suffices to prove that for any
sequence $\seq{T_{n}}_{n}$ of subtrees of $N^{<N}$ such that each $T_{n}$ has
finitely many branches, a sequence $\seq{P_{n}}_{n}$ exists with each $P_{n}%
\in\lbrack T_{n}]$. As in the proof of Theorem
\ref{thm:DVD-IRTmc_implies_ISigma11}, we construct a sequence of trees
$\seq{S_{n}}_{n}$ such that for each $n$, the branches on $S_{n}$ are precisely
those of the form $P_{0}\oplus\dots\oplus P_{n}$ where $P_{i}$ is a branch on
$T_{i}$ for $i\leq n$.

By $\mathsf{IRT}_{\mathrm{UYS}}^{\ast}$ (for forests) there is a sequence
$\seq{R_{k}}_{k}$ of $Y$-disjoint rays in $\bigsqcup_{n}S_{n}$ of maximum
cardinality. We claim that $\seq{R_{k}}_{k}$ is infinite. If not, let $m$ be least
such that there is no $R_{k}$ in $S_{m}$. Then we can increase the cardinality
of $\seq{R_{k}}_{k}$ by adding any ray $R$ from $S_{m}$ while maintaining
disjointness by our choice of $n$. The point here is that if $R\cap R_{k}%
\neq\emptyset$ for any $k$ then they are both in $S_{m}$ as the trees $S_{n}$
are disjoint and there are no edges between them. Therefore $\seq{R_{k}}_{k}$ has
a ray in infinitely many $S_{n}$. Thus we may construct the desired sequence $\seq{P_{n}}_{n}$ recursively by searching at stage $n$ for an $R_{k}$ in $S_{m}$
for some $m>n$ and take $P_{n}$ to be the branch in $T_{n}$ which shares a
tail with the $n$th coordinate of $R_{k}$.

Now, by Theorem \ref{thm:UYDmc_implies_UVSmc_forests}, it follows that
$\mathsf{IRT}_{\mathrm{UYD}}^{\ast}$ for forests implies finite-$\Sigma
_{1}^{1}\text{-}\mathsf{AC}_{0}$. By Proposition
\ref{prop:IRTmc_implications_variants}, it follows that $\mathsf{IRT}%
_{\mathrm{DYZ}}^{\ast}$ implies finite-$\Sigma_{1}^{1}\text{-}\mathsf{AC}_{0}$
as well and so we are done.
\end{proof}

Another theory of hyperarithmetic analysis which follows from $\mathsf{IRT}%
_{\mathrm{XYZ}}^{\ast}$ is \emph{arithmetic Bolzano-Weierstrass}
($\mathsf{ABW}_{0}$):

\begin{defn}
\label{defn:ABW} The theory \emph{$\mathsf{ABW}_{0}$} consists of
$\mathsf{RCA}_{0}$ and the following principle: If $A(X)$ is an arithmetic
predicate on $2^{N}$, either there is a finite sequence $\seq{X_{i}}_i$ which
contains every $X$ such that $A(X)$ holds or there is an $X$ such that every
one of its neighborhoods has two $Y$ such that $A(Y)$ holds. Such an $X$ is
called an accumulation point of the class $\{X\mid A(X)\}$.
\end{defn}

Friedman \cite{friedman_icm} introduced $\mathsf{ABW}_{0}$ and asserted that
it follows from $\Sigma_{1}^{1}\text{-}\mathsf{AC}_{0}$ (with unrestricted
induction). Conidis \cite{conidis12} proved Friedman's assertion and
established relationships between $\mathsf{ABW}_{0}$ and most then known
theories of hyperarithmetic analysis. Goh \cite{goh_finite_choice} shows that
$\mathsf{ABW}_{0}+\mathsf{I}\Sigma_{1}^{1}$ implies finite-$\Sigma_{1}^{1}%
\text{-}\mathsf{AC}_{0}$. We do not know if $\mathsf{ABW}_{0}$ is strictly
stronger than finite-$\Sigma_{1}^{1}\text{-}\mathsf{AC}_{0}$.

The following two lemmas will be useful in deriving $\mathsf{ABW}_{0}$ from
$\mathsf{IRT}^{\ast}_{\mathrm{XYZ}}$. The first lemma describes a connection
between sets of solutions of arithmetic predicates and disjoint rays in trees.

\begin{lem}
[$\mathsf{ACA}_{0}$]\label{lem:arith_predicate_disjoint_rays_in_tree} Suppose
$A(X)$ is an arithmetic predicate. Then there is a tree $T \subseteq N^{<N}$
such that if there is a sequence of distinct solutions of $A(X)$, then there
is a sequence of Y-disjoint single rays in $T$ of the same cardinality, and
vice versa.
\end{lem}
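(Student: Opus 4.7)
The plan is to take $T$ to be the tree supplied by Lemma \ref{lem:arith_formula_paths_on_tree} applied to $A$, with no further modification: recall it gives $T\subseteq N^{<N}$ with $A(X)\leftrightarrow(\exists f)(\langle X,f\rangle\in[T])$ and with at most one such $f$ for each $X$. I claim that branches of $T$ from the root correspond bijectively to solutions of $A$ and that distinct branches can be trimmed to pairwise Y-disjoint tails, so no ``bushifying'' of $T$ is needed.

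For the forward direction, given a sequence $\seq{X_i}_i$ of distinct solutions, each $X_i$ determines, by the uniqueness clause, a single branch $P_i=\langle X_i,f_i\rangle\in[T]$, and these $P_i$ are pairwise distinct. For each $i$ I would let $R_i$ be the tail of $P_i$ starting immediately past every divergence of $P_i$ with an earlier $P_j$ ($j<i$). Since any two distinct branches from the root share only a finite common initial segment, the maximum over $j<i$ of these divergence depths is finite, so $R_i$ is a well-defined infinite ray; by construction the $R_i$ are pairwise vertex-disjoint (hence also edge-disjoint). The divergence depths are arithmetic in the data, so $\mathsf{ACA}_0$ suffices to assemble $\seq{R_i}_i$.

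For the backward direction, given Y-disjoint rays $\seq{R_i}_i$ in $T$, I will use the observation that any ray in a subtree of $N^{<N}$ must have a tail that proceeds monotonically up the tree: every node of $T$ has finite depth and the ray contains infinitely many distinct vertices, forcing the depths along the ray to be unbounded; once the ray first reaches a depth strictly exceeding that of all previously visited vertices it can only continue by extending further upward, since backtracking would revisit a vertex. This upward tail then lies along a unique branch $P_i=\langle X_i,f_i\rangle\in[T]$. If $P_i=P_j$ for some $i\neq j$, the two upward tails would be cofinal initial segments of the same branch and would share infinitely many vertices and edges, contradicting Y-disjointness. Hence the $P_i$ are distinct, and by the ``at most one $f$'' clause of Lemma \ref{lem:arith_formula_paths_on_tree} the $X_i$ are distinct solutions of $A$.

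The only mildly delicate point is formalizing the monotonic-tail claim inside $\mathsf{ACA}_0$; this is arithmetic in the ray, since one can locate, by arithmetic comprehension, the first stage past which the depths along the ray are strictly increasing, and then read off the branch $P_i$ as the $\subseteq$-chain of those tail vertices closed downward to the root.
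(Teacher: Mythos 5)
Your proof is correct and takes essentially the same approach as the paper: use the tree from Lemma \ref{lem:arith_formula_paths_on_tree}, trim distinct branches to pairwise-disjoint tails for the forward direction, and for the converse identify the eventually-monotone upward tail of each ray and close it down to a branch. The only stylistic difference is that you spell out the monotone-tail observation in more detail, whereas the paper simply takes the vertex of $R_i$ nearest the root and extends $xR_i$ to the root; both amount to the same thing.
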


\begin{proof}
By Lemma \ref{lem:arith_formula_paths_on_tree}, there is a tree $T\subseteq
N^{<N}$ such that
\begin{align*}
&  \forall X(A(X)\leftrightarrow\exists f(\langle X,f\rangle\in\lbrack T])\\
\text{and}\;  &  \forall X(\exists\text{ at most one }f)(\langle X,f\rangle
\in\lbrack T])\text{.}%
\end{align*}
If $\seq{X_{i}}_{i}$ is a sequence of distinct solutions of $A(X)$, then, as the
required $f_{i}$ are arithmetic uniformly in the $X_{i}$, there is a sequence
of distinct branches $\seq{\seq{X_{i},f_{i}}}_{i}$ on $T$ of the same cardinality.

By taking an appropriate tail of each branch, we obtain a sequence
$\seq{R_{i}}_{i}$ of vertex-disjoint (hence edge-disjoint) single rays in $T$ of
the same cardinality with each one being a tail of $\langle X_{i},f_{i}%
\rangle$: As no two distinct branches in a tree can have infinitely many
vertices in common, simply take $R_{n}$ to be the tail of $\left\langle
X_{n},f_{n}\right\rangle $ starting after all vertices it has in common with
any $\langle X_{i},f_{i}\rangle$, $i<n$.

Conversely, suppose there is a sequence $\seq{R_{i}}_{i}$ of $Y$-disjoint single
rays in $T$. For each $R_{i}$, we define a branch on $T$ which corresponds to
it as follows. Let $x$ be the vertex in $R_{i}$ which is closest to the root
of $T$. Then we can extend $xR_{i}$ to the root to obtain a branch $\langle
X_{i},f_{i}\rangle$ on $T$. We claim that $\seq{X_{i}}_{i}$ is a sequence of
distinct solutions of $A(X)$. For each $i\neq j$, since $R_{i}$ and $R_{j}$
are $Y$-disjoint, they cannot share a tail. So $\langle X_{i},f_{i}\rangle$ and
$\langle X_{j},f_{j}\rangle$ must be distinct. Since for each $X$, there is at
most one $f$ such that $\langle X,f\rangle$ is a branch on $T$, it follows
that $X_{i}\neq X_{j}$ as desired.
\end{proof}

The second lemma is essentially the well-known fact that the
Bolzano-Weierstrass theorem is provable in $\mathsf{ACA}_{0}$:

\begin{lem}
[{\cite[III.2.7]{sim_book}}]\label{lem:seq_Cantor_accumulation} $\mathsf{ACA}%
_{0}$ proves that if $\seq{X_{n}}_{n}$ is a sequence of distinct elements of
$2^{N}$, then there is some $Z$ which is an accumulation point of
$\{X_{n} \mid n\in N\}$.
\end{lem}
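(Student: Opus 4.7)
The plan is to carry out the standard ``pigeonhole bit-by-bit'' construction of an accumulation point, and verify that each step can be done in $\mathsf{ACA}_0$. First, I would consider the set
\[
T = \{\sigma \in 2^{<N} \mid \text{there are infinitely many } n \text{ with } \sigma \subset X_n\}.
\]
Membership in $T$ is expressible as $\forall k \, \exists n > k \, (\sigma \subset X_n)$, which is arithmetic in the parameter $\seq{X_n}_n$, so $T$ exists as a set by the arithmetic comprehension axiom. Since the $X_n$ are distinct, there are infinitely many of them, so the empty string is in $T$.

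Next I would verify two closure properties of $T$. It is closed under initial segments by definition. For extension: if $\sigma \in T$, then infinitely many $X_n$ extend $\sigma$, and each such $X_n$ extends either $\sigma \concat 0$ or $\sigma \concat 1$; by the infinite pigeonhole principle (which is provable in $\mathsf{ACA}_0$, and indeed in $\mathsf{RCA}_0 + \mathsf{I}\Sigma^0_2 \subseteq \mathsf{ACA}_0$), at least one of $\sigma \concat 0, \sigma \concat 1$ is in $T$. Having established these properties, I would define $Z \in 2^N$ by primitive recursion with $T$ as a parameter: set $Z(n) = 0$ if $(Z \upharpoonright n) \concat 0 \in T$, and $Z(n) = 1$ otherwise. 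Since $T$ is available as a set, the function $Z$ is $\Delta^0_1$ in $T$ and so exists in any model of $\mathsf{ACA}_0$. An easy induction shows $Z \upharpoonright n \in T$ for all $n$.

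Finally, I would argue that $Z$ is an accumulation point of $\{X_n \mid n \in N\}$ in the sense of Definition \ref{defn:ABW}. A basic neighborhood of $Z$ is determined by some initial segment $Z \upharpoonright n$; since $Z \upharpoonright n \in T$, infinitely many $X_m$ extend $Z \upharpoonright n$, so in particular the neighborhood contains at least two distinct $X_m$.

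I do not expect any serious obstacle. The one subtlety to keep an eye on is that the definition of $T$ has an unbounded existential quantifier over $n$ and a universal quantifier over $k$, so forming $T$ really does require arithmetic comprehension rather than just $\mathsf{WKL}_0$; thus the argument does not go through in $\mathsf{RCA}_0$ or $\mathsf{WKL}_0$ alone, and using $\mathsf{ACA}_0$ is essential. Once $T$ exists, everything else is a recursion, so no further choice or collection principle is needed.
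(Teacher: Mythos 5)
Your proof is correct and is the standard argument, matching Simpson's treatment (the paper itself only cites \cite[III.2.7]{sim_book} without giving a proof): you form the tree $T$ of finite binary strings with infinitely many of the $X_n$ as extensions, which exists by arithmetic comprehension since membership is $\Pi^0_2$ in the sequence parameter, and descend through it by pigeonhole to build $Z$. As a minor note, the pigeonhole step is already available in $\mathsf{RCA}_0$ because the sets $\{n : \sigma\concat i \subset X_n\}$ are $\Delta^0_1$ in $\seq{X_n}_n$ and a union of two bounded sets is bounded, so invoking $\mathsf{I}\Sigma^0_2$ is a small overshoot, though of course harmless inside $\mathsf{ACA}_0$.
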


\begin{thm}
\label{thm:IRTmc_implies_ABW} $\mathsf{IRT}_{\mathrm{XYZ}}^{\ast}$ implies
$\mathsf{ABW}_{0}$ over $\mathsf{RCA}_{0}$. Therefore $\mathsf{IRT}%
_{\mathrm{XYZ}}$ implies $\mathsf{ABW}_{0}$ over $\mathsf{RCA}_{0}+\mathsf{I}\Sigma
_{1}^{1}$.
\end{thm}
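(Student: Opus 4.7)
The plan is to translate the Bolzano--Weierstrass dichotomy into a dichotomy about maximum-cardinality sequences of disjoint rays, using the tree encoding of Lemma \ref{lem:arith_predicate_disjoint_rays_in_tree} together with the classical fact (Lemma \ref{lem:seq_Cantor_accumulation}) that any sequence of distinct reals has an accumulation point already in $\mathsf{ACA}_0$. The reduction among variants follows the pattern established in Theorems \ref{thm:IRTmc_ACA0_ast} and \ref{thm:XYZmc_implies_finite_choice}.

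First I would reduce to a single case. By Proposition \ref{prop:IRTmc_implications_variants}, it suffices to prove the result for the undirected variants $\mathsf{IRT}^{\ast}_{\mathrm{UYZ}}$. By Theorem \ref{thm:UYDmc_implies_UVSmc_forests}, $\mathsf{IRT}^{\ast}_{\mathrm{UYD}}$ implies $\mathsf{IRT}^{\ast}_{\mathrm{UYS}}$ for forests, and since the tree produced by Lemma \ref{lem:arith_predicate_disjoint_rays_in_tree} is in particular a forest, it suffices to derive $\mathsf{ABW}_0$ from $\mathsf{IRT}^{\ast}_{\mathrm{UYS}}$ restricted to forests.

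Now, given an arithmetic predicate $A(X)$, apply Lemma \ref{lem:arith_predicate_disjoint_rays_in_tree} to obtain a tree $T \subseteq N^{<N}$ whose Y-disjoint single rays correspond in cardinality to sequences of distinct solutions of $A(X)$. Apply $\mathsf{IRT}^{\ast}_{\mathrm{UYS}}$ to $T$, obtaining either a finite or infinite sequence $\langle R_k \rangle_k$ of Y-disjoint single rays of maximum cardinality. For each $R_k$, extend it back to the root of $T$ to get a branch $\langle X_k, f_k \rangle$; the uniqueness clause of Lemma \ref{lem:arith_formula_paths_on_tree} together with the fact that disjoint rays in a tree cannot share tails ensures the $X_k$ are distinct solutions of $A(X)$.

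The case split then gives the two alternatives of $\mathsf{ABW}_0$. In the finite case, $\langle X_k \rangle_{k<n}$ must contain \emph{every} solution: any further solution $X$ would (via Lemma \ref{lem:arith_predicate_disjoint_rays_in_tree}) yield an $(n+1)$-term Y-disjoint sequence of rays, contradicting maximality. In the infinite case, $\langle X_k \rangle_k$ is an infinite sequence of distinct solutions, and Lemma \ref{lem:seq_Cantor_accumulation} (provable in $\mathsf{ACA}_0$, which we have by Theorem \ref{thm:IRT_variants_ACA_0_and_omega_model_IRT_variants_hyp_closed}) supplies an accumulation point $Z$; every neighborhood of $Z$ meets infinitely many $X_k$ and so contains two witnesses for $A$. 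The second clause of the theorem then follows immediately from Proposition \ref{prop:IRT_IRTmc_rshp}. I do not expect serious obstacles here: the one delicate point is verifying in the finite case that maximality of the ray sequence forces maximality of the extracted solution sequence, but this is an immediate consequence of the cardinality correspondence in Lemma \ref{lem:arith_predicate_disjoint_rays_in_tree}.
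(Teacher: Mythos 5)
Your proposal is correct and follows essentially the same route as the paper: reduce to the undirected/forest cases via Proposition \ref{prop:IRTmc_implications_variants} and Theorem \ref{thm:UYDmc_implies_UVSmc_forests}, encode solutions of $A$ as disjoint rays in a tree via Lemma \ref{lem:arith_predicate_disjoint_rays_in_tree}, apply $\mathsf{IRT}^{\ast}_{\mathrm{UYS}}$, and close the infinite case with Lemma \ref{lem:seq_Cantor_accumulation}. The only cosmetic difference is that you argue both disjuncts of $\mathsf{ABW}_0$ directly from the finite/infinite case split, whereas the paper assumes the failure of the first disjunct and derives a contradiction from a finite maximum-cardinality ray sequence; the content is the same.
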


\begin{proof}
By Proposition \ref{prop:IRTmc_implications_variants}, it suffices to show
that the undirected variants of $\mathsf{IRT}_{{}}^{\ast}$ imply
$\mathsf{ABW}_{0}$.

Suppose $A(X)$ is an arithmetic predicate on $2^{N}$ such that no finite
sequence $\seq{X_i}_i$ contains every $X$ such that
$A(X)$ holds. By Lemma \ref{lem:arith_predicate_disjoint_rays_in_tree}, there
is a tree $T\subseteq N^{<N}$ such that for any sequence of distinct solutions
of $A(X)$, there is a sequence of $Y$-disjoint single rays in $T$ of the same
cardinality, and vice versa.

By $\mathsf{IRT}_{\mathrm{UYS}}^{\ast}$, or by $\mathsf{IRT}_{\mathrm{UYD}%
}^{\ast}$ and Theorem \ref{thm:UYDmc_implies_UVSmc_forests}, there is a
sequence of $Y$-disjoint single rays in $T$ of maximum cardinality. This yields
a sequence of distinct solutions of $A(X)$ of the same cardinality.

If this sequence is finite, then there is a solution $Y$ of $A(X)$ not in the
sequence by our assumption. Hence there is a sequence of distinct solutions of
$A(X)$ of larger cardinality, which yields a sequence of $Y$-disjoint single
rays in $T$ of larger cardinality for the desired contradiction.

Thus there is an infinite sequence $\seq{X_{n}}_{n}$ of distinct solutions of $A$.
By Lemma \ref{lem:seq_Cantor_accumulation}, there is an accumulation point of
$\{X_{n} \mid n\in N\}$, which is of course an accumulation point of $\{X\mid A(X)\}$,
as desired.
\end{proof}

We now turn our attention to nonimplications. One prominent theory of
hyperarithmetic analysis is the scheme of \emph{$\Delta_{1}^{1}$%
-comprehension} (studied by Kreisel \cite{kreisel}):

\begin{defn}
\label{defn:Delta11-CA} The theory $\Delta_{1}^{1}$-$\mathsf{CA}_{0}$ consists
of $\mathsf{RCA}_{0}$ and the principle
\[
(\forall n)(\Phi(n)\leftrightarrow\lnot\Psi(n))\rightarrow\exists X(n\in
X\leftrightarrow\Phi(n))
\]
for all $\Sigma_{1}^{1}$ formulas $\Phi$ and $\Psi$.
\end{defn}

\begin{thm}
\label{thm:Delta11-CA_not_imply_IRT} $\Delta^{1}_{1}$-$\mathsf{CA}_{0} \nvdash\mathsf{IRT}_{\mathrm{XYZ}}, \mathsf{IRT}^{\ast}_{\mathrm{XYZ}}$.
\end{thm}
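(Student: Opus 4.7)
The plan is to refute $\mathsf{IRT}_{\mathrm{XYZ}}$ and $\mathsf{IRT}^{\ast}_{\mathrm{XYZ}}$ semantically, by exhibiting a standard ($\omega$-)model of $\Delta_1^1\text{-}\mathsf{CA}_0$ in which they fail. Working with $\omega$-models is enough to establish nonprovability, and it simplifies matters: $\mathsf{I}\Sigma_1^1$ holds automatically in any $\omega$-model, so by Proposition \ref{prop:IRT_IRTmc_rshp} the principles $\mathsf{IRT}_{\mathrm{XYZ}}$ and $\mathsf{IRT}^{\ast}_{\mathrm{XYZ}}$ are equivalent there. Thus it suffices to find a single $\omega$-model $\mathcal{M} \vDash \Delta_1^1\text{-}\mathsf{CA}_0$ that refutes $\mathsf{IRT}^{\ast}_{\mathrm{XYZ}}$ for every choice of $\mathrm{XYZ}$.

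The route I would take is via the arithmetic Bolzano--Weierstrass principle. By Theorem \ref{thm:IRTmc_implies_ABW}, every $\mathsf{IRT}^{\ast}_{\mathrm{XYZ}}$ implies $\mathsf{ABW}_0$ over $\mathsf{RCA}_0$, so any $\omega$-model of any variant of $\mathsf{IRT}^{\ast}$ must model $\mathsf{ABW}_0$. Consequently, any $\omega$-model of $\Delta_1^1\text{-}\mathsf{CA}_0$ that fails $\mathsf{ABW}_0$ automatically refutes all eight variants of $\mathsf{IRT}^{\ast}_{\mathrm{XYZ}}$ (and, by the observation above, all eight of the $\mathsf{IRT}_{\mathrm{XYZ}}$ as well). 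The existence of such a model is established by Conidis \cite{conidis12}, whose analysis of $\mathsf{ABW}_0$ in relation to the standard theories of hyperarithmetic analysis explicitly shows that $\Delta_1^1\text{-}\mathsf{CA}_0 \nvdash \mathsf{ABW}_0$ by producing an appropriate $\omega$-model. Invoking his model as a black box, the remainder of the argument is just the chain of implications already in place.

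An alternative version of the same proof, which I would mention for completeness, bypasses $\mathsf{ABW}_0$ altogether: by Theorem \ref{thm:IRT_variants_ACA_0_and_omega_model_IRT_variants_hyp_closed}, every $\omega$-model of any $\mathsf{IRT}_{\mathrm{XYZ}}$ is closed under hyperarithmetic reducibility, and classical forcing constructions (Steel \cite{steel78}, or Van Wesep \cite{vw_thesis}) yield $\omega$-models of $\Delta_1^1\text{-}\mathsf{CA}_0$ that are \emph{not} closed under $\leq_h$. Either such a model directly witnesses the desired nonimplication.

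The only genuine obstacle is the existence of the separating $\omega$-model of $\Delta_1^1\text{-}\mathsf{CA}_0$, and this is delegated to the cited literature; all the remaining links in the argument are results proved earlier in the present paper, and no further graph-theoretic or computability-theoretic work is required from us.
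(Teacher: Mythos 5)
Your primary argument is precisely the paper's proof: cite Conidis's $\omega$-model of $\Delta_1^1\text{-}\mathsf{CA}_0$ that fails $\mathsf{ABW}_0$, apply Theorem \ref{thm:IRTmc_implies_ABW} to rule out each $\mathsf{IRT}^\ast_{\mathrm{XYZ}}$ in that model, and use full induction in $\omega$-models together with Proposition \ref{prop:IRT_IRTmc_rshp} to transfer the failure to the $\mathsf{IRT}_{\mathrm{XYZ}}$. This is exactly the route taken in the paper.

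The ``alternative version'' you sketch, however, does not work. $\Delta_1^1\text{-}\mathsf{CA}_0$ is itself a theory of hyperarithmetic analysis (it is listed as such after Definition \ref{defn:Delta11-CA} and appears in Figure \ref{fig:hyp_analysis_zoo_IRT}), so by clause (2) of Definition \ref{thadef} every $\omega$-model of $\Delta_1^1\text{-}\mathsf{CA}_0$ \emph{is} closed under hyperarithmetic reducibility. This is nearly a tautology: by Proposition \ref{jumpsformulas}, $Y \leq_h X$ iff $Y$ is $\Delta_1^1$ in $X$, which is exactly the comprehension $\Delta_1^1\text{-}\mathsf{CA}_0$ asserts. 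The Steel and Van Wesep models you invoke separate unique-$\Sigma_1^1\text{-}\mathsf{AC}_0$ from $\Delta_1^1\text{-}\mathsf{CA}_0$; they are $\omega$-models that \emph{fail} $\Delta_1^1\text{-}\mathsf{CA}_0$, not models of it lacking hyperarithmetic closure. Since all the THAs in Figure \ref{fig:hyp_analysis_zoo_IRT} share the closure property, closure under $\leq_h$ alone cannot separate one from another; a finer invariant is required, and $\mathsf{ABW}_0$ plays exactly that role in your main argument.
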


\begin{proof}
Conidis \cite[Theorem 3.1]{conidis12} constructed a standard model which
satisfies $\Delta^{1}_{1}$-$\mathsf{CA}_{0}$ but not $\mathsf{ABW}_{0}$. By
Theorem \ref{thm:IRTmc_implies_ABW}, this model does not satisfy
$\mathsf{IRT}^{\ast}_{\mathrm{XYZ}}$. Since standard models satisfy full
induction, this model does not satisfy $\mathsf{IRT}_{\mathrm{XYZ}}$ either
(by Proposition \ref{prop:IRT_IRTmc_rshp}).
\end{proof}

\begin{thm}
\label{thm:ABW_not_imply_IRT} $\mathsf{ABW}_{0} \nvdash\mathsf{IRT}%
_{\mathrm{XYZ}},\mathsf{IRT}^{\ast}_{\mathrm{XYZ}}$.
\end{thm}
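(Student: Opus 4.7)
The plan is to separate $\ABW_0$ from $\IRT{XYZ}$ (and thus from $\IRTmc{XYZ}$, by Proposition \ref{prop:IRT_IRTmc_rshp} together with the fact that $\mathsf{I}\Sigma_1^1$ is automatic in standard models) by exhibiting a standard $\omega$-model $M$ of $\RCA_0 + \ABW_0$ in which some instance of $\IRT{XYZ}$ fails. Showing this for a single choice of XYZ (say UVS) suffices, since the non-implication then propagates to every variant: if $\ABW_0 \vdash \IRT{XYZ}$ held for any XYZ, Propositions \ref{prop:IRT_DYZ_implies_IRT_UYZ}--\ref{prop:IRT_DYD_implies_IRT_DYS} together with the standard-model argument would force $\ABW_0 \vdash \IRT{UVS}$ as well.

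The strategy mirrors Conidis's construction \cite{conidis12} used in Theorem \ref{thm:Delta11-CA_not_imply_IRT}, but is run in reverse: instead of building a model of $\Delta_1^1$-$\CA_0$ avoiding an $\ABW$-witness, one builds a model saturated with accumulation points yet lacking an $\IRT$-solution. Fix the computable forest $G = \bigsqcup_n T_n$ from the proof of Theorem \ref{thm:omega_model_IRT_hyp_closed}, where each $T_n$ is a computable subtree of $N^{<N}$ with a unique branch $P_n$ whose Turing degree grows along a computable sequence of ordinal iterates. Any infinite sequence of disjoint rays in $G$, once each ray is extended or truncated to a full branch, must select branches $P_{n_i}$ from infinitely many distinct $T_n$'s, and hence encodes a uniform selector whose join computes unbounded hyperarithmetic information. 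The point is that this kind of ``global'' cohesive selection is qualitatively different from the ``local'' data packaged in an accumulation point.

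I would construct $M$ by an iterated forcing of length $\omega_1^{CK}$ in the style of Conidis. At stage $\alpha$ one enumerates the next arithmetic predicate $A(X)$ with parameters from the current approximation $M_\alpha$, and either (a) observes that the solution set of $A$ in $M_\alpha$ is already finite and enumerable by a sequence in $M_\alpha$, or (b) adjoins a generic accumulation point $Y_\alpha$ of the solution set by a forcing whose conditions are finite partial approximations compatible with existing solutions. At limit stages one takes unions and closes under Turing reducibility (in fact under HYP, which is forced by $\ABW_0$ via the THA property and Theorem \ref{thm:IRTmc_implies_ABW}'s converse reading). The main invariant, maintained at every stage, is that no set added so far, together with the other generics, computes an infinite sequence of pairwise disjoint rays in $G$ of the shape required by $\IRT{UVS}$.

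The hard part will be the preservation lemma: showing that forcing a generic accumulation point $Y_\alpha$ for each arithmetic $A$ never violates the invariant, even cumulatively across the $\omega_1^{CK}$-many stages. The key combinatorial observation is that a generic accumulation point is indistinguishable from many existing solutions on any fixed finite initial segment, so it cannot be used to make a \emph{uniform} choice among the $T_n$'s as $n$ varies; any name for an $\IRT$-solution in $G$ would therefore be decided incorrectly on some dense set of conditions. Making this precise requires a density/genericity argument showing that the forcing conditions which would commit to a selection of branches from cofinally many $T_n$'s are nowhere dense, so the generic avoids them. Granting this preservation lemma, $M = \bigcup_{\alpha < \omega_1^{CK}} M_\alpha$ is a standard model of $\ABW_0$ in which $G$ witnesses the failure of $\IRT{UVS}$, completing the separation.
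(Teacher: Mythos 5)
Your proposal has a genuine flaw in the choice of graph, and this is fatal to the plan rather than merely a gap. The forest $G = \bigsqcup_n T_n$ from Theorem \ref{thm:omega_model_IRT_hyp_closed} is \emph{computable}, and each constituent $T_n$ has a unique branch $P_n$ that is (at worst) hyperarithmetic. Since $\ABW_0$ is a THA, every standard model of $\ABW_0$ is closed under hyperarithmetic reduction; in particular such a model contains $\emptyset'$ (for the first family of trees in that proof) or $\emptyset^{(\lambda)}$ (for the limit-ordinal family), and from that set one uniformly computes the entire sequence $\seq{P_n}_n$ and hence a solution to $\IRT{}$ for $G$. So the invariant you want to preserve throughout the iteration — ``no set so far, together with the generics, computes an infinite sequence of disjoint rays in $G$'' — is already false the moment the model is closed under HYP, which is forced by $\ABW_0$ itself. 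No preservation lemma can save this; the witnessing graph must be one that is \emph{not} hyperarithmetic, and whose branches are not hyperarithmetic relative to it. The paper sidesteps all of this by reusing van Wesep's model $\mathcal{N}$, already shown by Conidis to satisfy $\ABW_0$. Van Wesep's forcing construction produces, inside $\mathcal{N}$, a generic tree $T^G$ with infinitely many mutually generic branches $f_i^G$, together with the key lemma (from Steel) that no infinite sequence of distinct branches of $T^G$ is in $\mathcal{N}$. That tree is the counterexample: it has arbitrarily many disjoint rays in $\mathcal{N}$ (taking tails of finitely many $f_i^G$), but any infinite sequence of $Y$-disjoint rays in $T^G$ would yield an infinite sequence of distinct branches, which is not available. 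No fresh forcing or preservation argument is required.

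There is also a smaller problem in your reduction step. Showing $\ABW_0 \nvdash \IRT{UVS}$ does not automatically propagate to all eight variants: by Figure \ref{fig:variants_IRT}, $\IRT{UES}$ is not known to imply $\IRT{UVS}$, so a model refuting UVS need not refute UES. The paper instead shows $\mathcal{N}$ fails $\IRT{UYS}$ for both $Y=\mathrm{V}$ and $Y=\mathrm{E}$ directly (the tree counterexample works in both cases, since in a tree distinct branches give rays that are both vertex- and edge-disjoint after truncation) and then invokes Theorem \ref{thm:UYDmc_implies_UVSmc_forests} for the double-ray versions; the directed variants are then handled via Proposition \ref{prop:IRT_DYZ_implies_IRT_UYZ}.
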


\begin{proof}
By Propositions \ref{prop:IRT_DYZ_implies_IRT_UYZ} and
\ref{prop:IRT_IRTmc_rshp}, it suffices to show that $\mathsf{ABW}_{0}%
\nvdash\mathsf{IRT}_{\mathrm{UYZ}}$. Van Wesep \cite[I.1]{vw_thesis}
constructed a standard model $\mathcal{N}$ which satisfies unique-$\Sigma
_{1}^{1}$-$\mathsf{AC}_{0}$ but not $\Delta_{1}^{1}$-$\mathsf{CA}_{0}$.
Conidis \cite[Theorem 4.1]{conidis12}, using the approach of \cite{neeman08},
showed that $\mathcal{N}$ satisfies $\mathsf{ABW}_{0}$. We show below that
$\mathcal{N}$ does not satisfy $\mathsf{IRT}_{\mathrm{UYZ}}$.

In order to define $\mathcal{N}$, van Wesep constructed a tree $T^{G}$ and
branches $\seq{f_{i}^{G}}_{i\in\mathbb{N}}$ of $T^{G}$ such that (1) $\mathcal{N}$
contains $T^{G}$ and infinitely many (distinct) $f_{i}^{G}$ (see \cite[pg.\ 13
l.\ 1--11]{vw_thesis}); (2) $\mathcal{N}$ does not contain any infinite
sequence of distinct branches of $T^{G}$ (see \cite[pg.\ 12 l.\ 7--9]%
{vw_thesis} and Steel \cite[Lemma 7]{steel78}.) Then $T^{G}$ is an instance of
$\mathsf{IRT}_{\mathrm{UYS}}$ in $\mathcal{N}$ which has no solution in
$\mathcal{N}$. This shows that $\mathcal{N}$ does not satisfy $\mathsf{IRT}%
_{\mathrm{UYS}}$ for trees. (The reader who wants to follow the details of the
proofs in \cite{conidis12} and \cite[I.1]{vw_thesis} should look at the
presentation of the basic methods in \cite{neeman08}.)

Since $\mathcal{N}$ is a standard model, it satisfies full induction. By
Theorem \ref{thm:UYDmc_implies_UVSmc_forests}, it follows that $\mathcal{N}$
does not satisfy $\mathsf{IRT}_{\mathrm{UYD}}$ for forests.
\end{proof}

Figure \ref{fig:hyp_analysis_zoo_IRT} illustrates some of our results. In
order to simplify the diagram, we have omitted all variants of $\mathsf{IRT}%
_{}$ except $\mathsf{IRT}_{\mathrm{UVS}}$.

\begin{figure}
\centering
\begin{tikzpicture}[scale=1,every node/.style={inner sep=6pt}]
	\node (Sigma11AC) at (0,6) {$\Sigma^1_1$-$\AC_0$};
	\node (Pi11SEP) at (0,4.5) {$\Pi^1_1$-$\SEP_0$};
	\node (Delta11CA) at (0,3) {$\Delta^1_1$-$\CA_0$};
	\node (INDEC) at (0,1.5) {$\INDEC_0$};
	\node (uniqueAC) at (0,0) {unique-$\Sigma^1_1$-$\AC_0$};
	\node (UVS) at (-4,5.5) {$\IRT{UVS}$};
	\node (ABW) at (-4,3) {$\ABW_0$};
	\node (finiteAC) at (-4,1.5) {finite-$\Sigma^1_1$-$\AC_0$};
	\draw [-implies,double,double distance=2pt] (Sigma11AC) -- node[right] {\tiny (1)} (Pi11SEP);
	\draw [-implies,double,double distance=2pt] (Pi11SEP) -- node[right] {\tiny (2)} (Delta11CA);
	\draw [-implies,double,double distance=2pt] (Delta11CA) -- node[right] {\tiny (3)} (INDEC);
	\draw [-implies,double,double distance=2pt] (INDEC) -- node[left] {$\mathsf{I}\Sigma^1_1$} node[right] {\tiny (4)} (uniqueAC);
	\draw [->] (Sigma11AC) -- node[above=-2pt] {\tiny (5)} (UVS);
	\draw [-implies,double,double distance=2pt] (UVS) -- node[right] {\tiny (6)} node[left] {$\mathsf{I}\Sigma^1_1$} (ABW);
	\draw [->] (ABW) -- node[left] {$\mathsf{I}\Sigma^1_1$} (finiteAC);
	\draw [-implies,double,double distance=2pt] (finiteAC) -- (uniqueAC.north west);
	\draw [->] (Delta11CA) -- (finiteAC) node [anchor=center, pos=0.3, rotate = -20] {\tiny $|$};
	\draw [->] (ABW) -- node[above = -3pt, pos = 0.2] {\tiny (7)} (INDEC) node [anchor=center, pos =0.3, rotate = 20] {\tiny $|$};
\end{tikzpicture}
\caption{Partial zoo of theories of hyperarithmetic analysis. Single arrows
indicate implication while double arrows indicate strict implication. The
references for the above results are as follows: (1, 2) Montalb\'an
\cite[Theorems 2.1, 3.1]{montalban_pi11sep}; (3, 4) Montalb\'an \cite[Theorem
2.2]{montalban_jullien}, Neeman \cite[Theorems 1.2, 1.3, 1.4]{neeman08}, see
also Neeman \cite[Theorem 1.1]{neeman11}; (5) Theorem
\ref{thm:Sigma11-AC_implies_IRT}; (6) Theorems \ref{thm:IRTmc_implies_ABW},
\ref{thm:ABW_not_imply_IRT}; (7) Conidis \cite[Theorem 4.1]{conidis12}. All
results concerning finite-$\Sigma^{1}_{1}\text{-}\mathsf{AC}_{0}$ are in Goh
\cite{goh_finite_choice}.}%
\label{fig:hyp_analysis_zoo_IRT}%
\end{figure}

\section{Isolating the Use of $\Sigma^{1}_{1}$-$\mathsf{AC}_{0}$ in Proving
$\mathsf{IRT}_{}$}

\label{section:WIRT}

We isolate the use of $\Sigma^{1}_{1}$-$\mathsf{AC}_{0}$ in our proofs of
$\mathsf{IRT}_{\mathrm{XYS}}$ and $\mathsf{IRT}_{\mathrm{UVD}}$ (Theorems
\ref{thm:Sigma11-AC_implies_IRT}, \ref{thm:Sigma11-AC_implies_UVD},
\ref{thm:Sigma11-AC_implies_XES}) by identifying the following principles:

\begin{defn}
Let $\mathsf{SCR}_{\mathrm{XYZ}}$ be the assertion that if $G$ is an $X$-graph
with arbitrarily many $Y$-disjoint $Z$-rays, then there is a sequence of sets
$\seq{X_{k}}_{k}$ such that for each $k \in N$, $X_{k}$ is a set of $k$ $Y$-disjoint $Z$-rays in $G$.

Let $\mathsf{WIRT}_{\mathrm{XYZ}}$ be the assertion that if $G$ is an $X$-graph and there is a sequence of sets $\seq{X_{k}}_{k}$ such that for each $k \in N$, $X_{k}$ is a set of $k$ $Y$-disjoint $Z$-rays in $G$, then $G$ has infinitely many $Y$-disjoint $Z$-rays.
\end{defn}

$\mathsf{SCR}_{}$ stands for Strongly Collecting Rays. $\mathsf{WIRT}_{}$
stands for Weak Infinite Ray Theorem.

It is clear that $\Sigma^{1}_{1}$-$\mathsf{AC}_{0}$ implies $\mathsf{SCR}%
_{\mathrm{XYZ}}$ and $\mathsf{SCR}_{\mathrm{XYZ}} + \mathsf{WIRT}%
_{\mathrm{XYZ}}$ implies $\mathsf{IRT}_{\mathrm{XYZ}}$. The only use of
$\Sigma^{1}_{1}$-$\mathsf{AC}_{0}$ in our proofs of $\mathsf{IRT}%
_{\mathrm{XYS}}$ and $\mathsf{IRT}_{\mathrm{UVD}}$ is to prove $\mathsf{SCR}%
_{\mathrm{XYS}}$ and $\mathsf{SCR}_{\mathrm{UVD}}$ respectively:

\begin{thm}
\label{thm:ACA0_implies_WIRT} $\mathsf{ACA}_{0}$ proves $\mathsf{WIRT}%
_{\mathrm{XYS}}$ and $\mathsf{WIRT}_{\mathrm{UVD}}$.
\end{thm}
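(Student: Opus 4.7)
The proofs of $\mathsf{IRT}_{\mathrm{XYS}}$ and $\mathsf{IRT}_{\mathrm{UVD}}$ from $\Sigma_{1}^{1}$-$\mathsf{AC}_{0}$ given in Theorems \ref{thm:Sigma11-AC_implies_IRT}(ii), \ref{thm:Sigma11-AC_implies_UVD} and \ref{thm:Sigma11-AC_implies_XES} invoke choice in exactly one place: to convert the hypothesis that the graph has arbitrarily many disjoint rays into a single sequence $S=\langle X_{k}\rangle_{k}$ with each $X_{k}$ a set of $k$ disjoint rays. But the hypothesis of $\mathsf{WIRT}_{\mathrm{XYZ}}$ supplies precisely such a sequence. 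So the plan is to re-run each of those proofs with the given sequence in place of the one produced by $\Sigma_{1}^{1}$-$\mathsf{AC}_{0}$, and to check that what remains goes through in $\mathsf{ACA}_{0}$.

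For $\mathsf{WIRT}_{\mathrm{UVS}}$ I would perform the Andreae recursion of Theorem \ref{thm:Sigma11-AC_implies_IRT}(ii) with input $G\oplus S$. The analysis there explicitly verifies that the sequences $\langle\langle R_{i}^{n}\rangle_{i<n}\rangle_{n}$ and $\langle\langle P_{i}^{n}\rangle_{i<n}\rangle_{n}$ are computable from $(G\oplus S)'$ (each step being a computable application of Menger's theorem for finite graphs, which is provable in $\mathsf{RCA}_{0}$), and that forming the resulting infinite sequence of disjoint rays costs one further jump. Since $\mathsf{ACA}_{0}$ is closed under Turing jump, this is a proof in $\mathsf{ACA}_{0}$. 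The same remark applies word-for-word to the double-ray recursion in the proof of Theorem \ref{thm:Sigma11-AC_implies_UVD}, yielding $\mathsf{WIRT}_{\mathrm{UVD}}$.

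The remaining single-ray variants I would reduce to the UVS case via the constructions of Section \ref{section:variants_IRT}. For $\mathsf{WIRT}_{\mathrm{XES}}$ with $X\in\{\mathrm{U},\mathrm{D}\}$ I would follow Theorem \ref{thm:Sigma11-AC_implies_XES}: Lemma \ref{lem:locally_finite_subgraph} (proved in $\mathsf{RCA}_{0}$) turns the given sequence into a corresponding sequence of $k$-tuples of edge-disjoint rays in a locally finite subgraph $H$; Lemma \ref{lem:E_ray_in_G_to_V_ray_in_L(G)} converts it into a sequence of $k$-tuples of vertex-disjoint rays in the line graph $L(H)$; $\mathsf{WIRT}_{\mathrm{UVS}}$ (or its verbatim directed analogue) then supplies infinitely many vertex-disjoint rays in $L(H)$; and Lemma \ref{lem:locally_finite_V_ray_in_L(G)_to_E_ray_in_G} (in $\mathsf{ACA}_{0}$, by local finiteness) pulls these back to infinitely many edge-disjoint rays in $H\subseteq G$. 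Finally, $\mathsf{WIRT}_{\mathrm{DVS}}$ follows from $\mathsf{WIRT}_{\mathrm{DES}}$ over $\mathsf{RCA}_{0}$ by the computable instance-and-solution transformation of Lemma \ref{lem:DEZ_implies_DVZ}, which in particular maps any $k$-tuple of vertex-disjoint rays in $G$ to a $k$-tuple of edge-disjoint rays in the auxiliary graph $G'$.

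There is no real obstacle, because the delicate work — showing that each iteration of the Andreae/Menger step is computable in $(G\oplus S)'$ uniformly — has already been carried out in Theorems \ref{thm:Sigma11-AC_implies_IRT}(ii) and \ref{thm:Sigma11-AC_implies_UVD}. The only task is to note that none of the auxiliary lemmas \ref{lem:DEZ_implies_DVZ}, \ref{lem:E_ray_in_G_to_V_ray_in_L(G)}, \ref{lem:locally_finite_V_ray_in_L(G)_to_E_ray_in_G}, \ref{lem:locally_finite_subgraph} uses more than $\mathsf{ACA}_{0}$, so that once the choice step is stripped out of the $\Sigma_{1}^{1}$-$\mathsf{AC}_{0}$ proofs, everything remaining lives comfortably inside $\mathsf{ACA}_{0}$.
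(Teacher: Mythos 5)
Your proposal is correct and takes essentially the same approach as the paper's proof, which likewise cites the proofs of Theorems \ref{thm:Sigma11-AC_implies_IRT}(ii), \ref{thm:Sigma11-AC_implies_UVD}, \ref{thm:Sigma11-AC_implies_XES} and Lemma \ref{lem:DEZ_implies_DVZ} with exactly the observation you make: the $\mathsf{WIRT}$ hypothesis is precisely the data that $\Sigma^1_1$-$\mathsf{AC}_0$ was used to produce, and everything downstream was already verified to run in $\mathsf{ACA}_0$. The only phrasing worth sharpening is in your $\mathsf{WIRT}_{\mathrm{DES}}$ step: state that you re-run the Andreae recursion directly on the directed line graph $L(H)$ (using the directed finite Menger theorem, as the paper implicitly does by appealing to the ``second part'' of the proof of Theorem \ref{thm:Sigma11-AC_implies_IRT}(ii)), rather than ``$\mathsf{WIRT}_{\mathrm{UVS}}$ or its verbatim directed analogue,'' since the latter could be misread as invoking $\mathsf{WIRT}_{\mathrm{DVS}}$ as a theorem --- which would be circular, given that you only derive $\mathsf{WIRT}_{\mathrm{DVS}}$ afterward from $\mathsf{WIRT}_{\mathrm{DES}}$.
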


\begin{proof}
For $\mathsf{WIRT}_{\mathrm{UVS}}$, see the proof of Theorem
\ref{thm:Sigma11-AC_implies_IRT}(ii). In particular, note that the hypothesis
of $\mathsf{WIRT}$ is exactly the instance of $\Sigma_{1}^{1}$-$\AC$ needed in that proof (and the one referenced there). From then on, the argument proceeds in $\ACA_0$ to give the conclusion of $\IRT{}$ and so of $\WIRT{}$ as desired. Similarly,
for $\mathsf{WIRT}_{\mathrm{UVD}}$, see the proof of Theorem
\ref{thm:Sigma11-AC_implies_UVD} and for $\mathsf{WIRT}_{\mathrm{XES}}$, see
the proof of Theorem \ref{thm:Sigma11-AC_implies_XES}. The desired result for
$\mathsf{WIRT}_{\mathrm{DVS}}$ then follows from Lemma
\ref{lem:DEZ_implies_DVZ}.
\end{proof}

Next we will use the above result to show that $\mathsf{SCR}_{\mathrm{XYS}}$
and $\mathsf{SCR}_{\mathrm{UVD}}$ are equivalent over $\mathsf{RCA}_{0}$ to
$\mathsf{IRT}_{\mathrm{XYS}}$ and $\mathsf{IRT}_{\mathrm{UVD}}$ respectively.
First, observe that $\mathsf{IRT}_{\mathrm{XYZ}}$ implies $\mathsf{SCR}%
_{\mathrm{XYZ}}$ for each choice of $XYZ$. Second, Lemmas
\ref{lem:DYZ_implies_UYZ}, \ref{lem:DEZ_implies_DVZ} and
\ref{lem:DYD_implies_DYS} imply

\begin{prop}
\label{prop:SCR_implications_variants} $\mathsf{SCR}_{\mathrm{DYZ}}$ implies
$\mathsf{SCR}_{\mathrm{UYZ}}$, $\mathsf{SCR}_{\mathrm{DEZ}}$ implies
$\mathsf{SCR}_{\mathrm{DVZ}}$ and $\mathsf{SCR}_{\mathrm{DYD}}$ implies
$\mathsf{SCR}_{\mathrm{DYS}}$.
\end{prop}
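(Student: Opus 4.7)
The plan is to deduce each of the three implications uniformly from the corresponding reduction lemma already proved in Section \ref{section:variants_IRT}, in the same way that Propositions \ref{prop:IRT_DYZ_implies_IRT_UYZ}, \ref{prop:IRT_DEZ_implies_IRT_DVZ} and \ref{prop:IRT_DYD_implies_IRT_DYS} were obtained. Fix one of the three implications and let $G$ be an $X$-graph witnessing the hypothesis of the weaker $\mathsf{SCR}_{\mathrm{XYZ}}$, i.e.\ $G$ contains arbitrarily many $Y$-disjoint $Z$-rays. Apply the computable map $g$ from the relevant lemma (Lemma \ref{lem:DYZ_implies_UYZ}, \ref{lem:DEZ_implies_DVZ} or \ref{lem:DYD_implies_DYS}) to form $G^{\prime}$. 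Because the ray-level maps $h,k$ (respectively $h_{\mathrm{S}},h_{\mathrm{D}}$ etc.) preserve the relevant disjointness and cardinality, $G^{\prime}$ satisfies the hypothesis of $\mathsf{SCR}_{\mathrm{X^{\prime}Y^{\prime}Z^{\prime}}}$: for each $k$, push forward any $k$ many witnessing rays in $G$ through $h$ to obtain $k$ many witnessing objects in $G^{\prime}$.

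Now apply $\mathsf{SCR}_{\mathrm{X^{\prime}Y^{\prime}Z^{\prime}}}$ to $G^{\prime}$ to obtain a sequence $\langle X_{k}^{\prime}\rangle_{k}$ such that, for each $k$, $X_{k}^{\prime}$ is a set of $k$ many $Y^{\prime}$-disjoint $Z^{\prime}$-rays in $G^{\prime}$. Since each pullback map $k$ provided by the lemmas is computable (and in the case of Lemma \ref{lem:DYD_implies_DYS} is explicitly a cardinality-preserving map on sets of rays), we may, uniformly in $k$ and in the members of $X_{k}^{\prime}$, apply $k$ to every ray in $X_{k}^{\prime}$ to produce a set $X_{k}$ of $k$ many $Y$-disjoint $Z$-rays in $G$. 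The resulting sequence $\langle X_{k}\rangle_{k}$ exists in $\mathsf{RCA}_{0}$ by $\Delta_{1}^{0}$-comprehension applied to the uniform computable procedure defining it from $\langle X_{k}^{\prime}\rangle_{k}$, $G$ and an index for $g,h,k$. This furnishes the conclusion of $\mathsf{SCR}_{\mathrm{XYZ}}$ on $G$.

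There is no genuine obstacle here: the content was already distilled into Lemmas \ref{lem:DYZ_implies_UYZ}--\ref{lem:DYD_implies_DYS}, and the only thing to check is that the uniformity of $h$ and $k$ (which was used in the $\mathsf{IRT}$ versions at the level of a single ray or a single infinite family of rays) also produces a sequence at the level required by $\mathsf{SCR}$. The only small point worth flagging is the case of $\mathsf{SCR}_{\mathrm{DYD}}\Rightarrow\mathsf{SCR}_{\mathrm{DYS}}$: there the map $h$ of Lemma \ref{lem:DYD_implies_DYS} first replaces each ray by a tail so as to ensure that the prepended back-tails in $G^{\prime}$ remain $Y$-disjoint, but this is a uniform computable operation and therefore causes no difficulty when applied coordinate-wise to $\langle X_{k}\rangle_{k}$.
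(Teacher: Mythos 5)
Your proof is correct and takes exactly the approach the paper intends: the paper states the proposition as an immediate consequence of Lemmas \ref{lem:DYZ_implies_UYZ}, \ref{lem:DEZ_implies_DVZ} and \ref{lem:DYD_implies_DYS} without elaboration, and you have simply supplied the routine details, including the correct observation that the third lemma's maps already act at the level of sets of rays.
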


\begin{prop}
\label{prop:SCR_implies_ACA0} $\mathsf{SCR}_{\mathrm{XYZ}}$ implies
$\mathsf{ACA}_{0}$.
\end{prop}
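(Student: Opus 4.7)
The plan is to adapt the first half of the proof of Theorem \ref{thm:omega_model_IRT_hyp_closed} (which showed $\IRT{}$ implies $\ACA_0$) to the weaker principle $\SCR{XYZ}$. The key observation is that the forest built there to witness the hypothesis of $\IRT{}$ already satisfies the hypothesis of $\SCR{}$, and the sequence $\seq{X_k}_k$ produced by $\SCR{}$ already carries enough information to compute $\emptyset'$.

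Concretely, for the base case $\SCR{UVS}$ I would reuse the trees $T_n \subseteq N^{<N}$ from the proof of Theorem \ref{thm:omega_model_IRT_hyp_closed}: each $T_n$ has a unique computable branch whose underlying parameter $s$ witnesses $\emptyset' \upharpoonright n = \emptyset'_s \upharpoonright n$, so from any branch of $T_n$ one uniformly computes $\emptyset' \upharpoonright n$. The forest $\bigsqcup_n T_n$ has arbitrarily many disjoint rays (each $T_n$ supplies a computable one), so applying $\SCR{UVS}$ yields a sequence $\seq{X_k}_k$ with $X_k$ a set of $k$ pairwise vertex-disjoint rays in $\bigsqcup_n T_n$. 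Because any two rays lying in the same $T_n$ must share a tail (since $T_n$ has only one branch), the $k$ rays in $X_k$ lie in $k$ distinct trees $T_{n_0}, \dots, T_{n_{k-1}}$. Computably extending each such ray to the unique branch of its tree and decoding, I would read off $\emptyset' \upharpoonright m$ for arbitrarily large $m$ directly from $\seq{X_k}_k$, so $\seq{X_k}_k$ computes $\emptyset'$. Relativizing gives closure under the Turing jump and hence $\ACA_0$.

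To handle the remaining variants, I would invoke Proposition \ref{prop:SCR_implications_variants} to reduce to the undirected cases and borrow the small modifications already used in the proof of Theorem \ref{thm:IRT_variants_ACA_0_and_omega_model_IRT_variants_hyp_closed}. Since the unique branch of each $T_n$ is the only ray in $T_n$, no two rays in the same $T_n$ can even be edge-disjoint, so the same forest works for $\SCR{UES}$. For the undirected double-ray variants, I would attach to each $T_n$ a fresh computable ray of new vertices meeting the original branch at its root, so that each modified tree contains a distinguished double ray Turing equivalent to the original branch while no two double rays in the same tree are vertex- or edge-disjoint; the decoding argument then proceeds as before.

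The only real content is the construction of the trees $T_n$, and that has already been carried out in the proof of Theorem \ref{thm:omega_model_IRT_hyp_closed}, so I do not anticipate a genuine obstacle. In effect, $\SCR{XYZ}$ delivers as its conclusion precisely the sequence of finite $Y$-disjoint families that the earlier proofs of $\IRT{} \vdash \ACA_0$ had to produce by separate means, and once that sequence is in hand the derivation of $\ACA_0$ is essentially unchanged.
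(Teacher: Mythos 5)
Your proposal is correct and follows essentially the same route as the paper: reuse the forest $\bigsqcup_n T_n$ from the proof that $\IRT{}$ implies $\ACA_0$, observe that a ray lies in each $T_n$ so the $\SCR{}$ hypothesis is met, and then note that since the $k$ rays in $X_k$ must lie in $k$ distinct trees, pigeonhole guarantees rays in cofinally many $T_n$, from which $\emptyset'$ is computed exactly as before. The paper makes this pigeonhole step explicit by noting $X_{k+1}$ must contain a ray in some $T_n$ with $n \ge k$, but this is precisely what you use, and the reductions to the undirected cases and the double-ray modifications match the paper's treatment.
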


\begin{proof}
By Proposition \ref{prop:SCR_implications_variants}, it suffices to establish
the desired result for the undirected variants of $\mathsf{SCR}_{}$. The proofs are almost identical to those of Theorems \ref{thm:Sigma11-AC_implies_IRT}(i) and \ref{thm:IRT_variants_ACA_0_and_omega_model_IRT_variants_hyp_closed}. 
There, we applied $\mathsf{IRT}_{\mathrm{UYZ}}$ to forests $G = \bigsqcup_{n} T_{n}%
$, where each $T_{n}$ contains a $Z$-ray, and no two $Z$-rays in $T_{n}$ can be
$Y$-disjoint. Any infinite sequence of $Y$-disjoint $Z$-rays in $G$ must contain a
$Z$-ray in cofinally many graphs $T_{n}$. Therefore from such a sequence we can
uniformly compute $Z$-rays in cofinally many graphs $T_{n}$, which establishes
$\mathsf{ACA}_{0}$ by the construction of $\bigsqcup_{n} T_{n}$. If we assume
$\mathsf{SCR}_{\mathrm{UYZ}}$ instead of $\mathsf{IRT}_{\mathrm{UYZ}}$, we
only have access to a sequence $\seq{X_{k}}_{k \in N}$ such that for each $k$, $X_{k}$
is a set of $k$ $Y$-disjoint $Z$-rays in $G$. From such a sequence we can still
uniformly compute $Z$-rays in cofinally many graphs $T_{n}$, because for any
$k$, $X_{k+1}$ must contain a $Z$-ray in some $T_{n}$, $n \geq k$.
\end{proof}

By Theorem \ref{thm:ACA0_implies_WIRT}, Proposition
\ref{prop:SCR_implies_ACA0}, and the observation that $\mathsf{SCR}%
_{\mathrm{XYZ}} + \WIRT{\mathrm{XYZ}} \vdash\mathsf{IRT}%
_{\mathrm{XYZ}}$, we obtain

\begin{cor}
\label{cor:SCR_IRT_equiv} $\mathsf{SCR}_{\mathrm{XYZ}}$ and $\mathsf{IRT}%
_{\mathrm{XYZ}}$ are equivalent over $\mathsf{RCA}_{0}$ for the following choices of XYZ: XYS and UVD.
\end{cor}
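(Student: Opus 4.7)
The plan is to observe that the Corollary is really just the assembly of three ingredients already in hand: the trivial forward implication $\IRT{XYZ}\to\SCR{XYZ}$, Proposition \ref{prop:SCR_implies_ACA0} giving $\SCR{XYZ}\to\ACA_0$, Theorem \ref{thm:ACA0_implies_WIRT} giving $\ACA_0\to\WIRT{XYZ}$ for the restricted choices $XYZ\in\{\text{XYS},\text{UVD}\}$, together with the immediate fact $\SCR{XYZ}+\WIRT{XYZ}\vdash\IRT{XYZ}$ that was recorded just before Theorem \ref{thm:ACA0_implies_WIRT}. So there is no real obstacle; the whole argument should be three short lines.

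For the forward direction $\IRT{XYZ}\to\SCR{XYZ}$, I would fix an instance of $\SCR{XYZ}$, namely an $X$-graph $G$ containing arbitrarily many $Y$-disjoint $Z$-rays. Since this is precisely the hypothesis of $\IRT{XYZ}$, applying $\IRT{XYZ}$ yields an infinite sequence $\seq{R_{i}}_{i\in N}$ of $Y$-disjoint $Z$-rays in $G$. Then, working in $\RCA_0$, I define $X_k:=\seq{R_{i}}_{i<k}$ for each $k\in N$; the sequence $\seq{X_k}_{k\in N}$ exists by $\Delta^0_1$-comprehension (each $X_k$ is just the initial segment of length $k$ of the given infinite sequence), and by construction each $X_k$ is a set of $k$ many $Y$-disjoint $Z$-rays in $G$, which is the conclusion of $\SCR{XYZ}$.

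For the reverse direction $\SCR{XYZ}\to\IRT{XYZ}$ (restricted to $XYZ\in\{\text{XYS},\text{UVD}\}$), I would fix an instance of $\IRT{XYZ}$, namely an $X$-graph $G$ containing arbitrarily many $Y$-disjoint $Z$-rays. By $\SCR{XYZ}$, there exists a sequence $\seq{X_k}_{k\in N}$ such that each $X_k$ is a set of $k$ many $Y$-disjoint $Z$-rays in $G$. Now by Proposition \ref{prop:SCR_implies_ACA0}, $\SCR{XYZ}$ implies $\ACA_0$, and so by Theorem \ref{thm:ACA0_implies_WIRT}, $\WIRT{XYZ}$ holds for the specified choices of $XYZ$. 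Applying $\WIRT{XYZ}$ to $G$ together with the sequence $\seq{X_k}_{k\in N}$ provided by $\SCR{XYZ}$ produces infinitely many $Y$-disjoint $Z$-rays in $G$, which is the conclusion of $\IRT{XYZ}$.

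The restriction to $XYZ\in\{\text{XYS},\text{UVD}\}$ is exactly the restriction already present in Theorem \ref{thm:ACA0_implies_WIRT}; for the other choices of $XYZ$ we do not currently know that $\ACA_0$ proves $\WIRT{XYZ}$, so the reverse direction cannot be extracted in the same way. Since both implications go through in $\RCA_0$ for the listed cases, this completes the equivalence and no separate work beyond citing the three earlier results is needed.
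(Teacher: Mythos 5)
Your proposal is correct and follows precisely the same route the paper takes: the trivial implication $\IRT{XYZ}\to\SCR{XYZ}$, the chain $\SCR{XYZ}\to\ACA_0\to\WIRT{XYZ}$ via Proposition \ref{prop:SCR_implies_ACA0} and Theorem \ref{thm:ACA0_implies_WIRT}, and the closing observation $\SCR{XYZ}+\WIRT{XYZ}\vdash\IRT{XYZ}$. The paper states the forward direction as a bare observation; your explicit extraction of the initial segments $X_k$ via $\Delta^0_1$-comprehension is the natural way to flesh it out.
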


We now turn our attention to $\mathsf{WIRT}_{\mathrm{XYZ}}$. As usual, Lemmas
\ref{lem:DYZ_implies_UYZ}, \ref{lem:DEZ_implies_DVZ} and
\ref{lem:DYD_implies_DYS} imply

\begin{prop}
\label{prop:WIRT_implications_variants} $\mathsf{WIRT}_{\mathrm{DYZ}}$ implies
$\mathsf{WIRT}_{\mathrm{UYZ}}$, $\mathsf{WIRT}_{\mathrm{DEZ}}$ implies
$\mathsf{WIRT}_{\mathrm{DVZ}}$ and $\mathsf{WIRT}_{\mathrm{DYD}}$ implies
$\mathsf{WIRT}_{\mathrm{DYS}}$.
\end{prop}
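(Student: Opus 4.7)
The plan is to reuse the three computable graph translations provided by Lemmas \ref{lem:DYZ_implies_UYZ}, \ref{lem:DEZ_implies_DVZ} and \ref{lem:DYD_implies_DYS} exactly as they were used in Propositions \ref{prop:IRT_DYZ_implies_IRT_UYZ}, \ref{prop:IRT_DEZ_implies_IRT_DVZ} and \ref{prop:IRT_DYD_implies_IRT_DYS}. The only thing to verify is that these maps transport the hypothesis of $\mathsf{WIRT}$ as well as its conclusion, which amounts to checking that they respect $Y$-disjointness for \emph{sequences} of rays uniformly in $k$.

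For $\mathsf{WIRT}_{\mathrm{DYZ}}\to\mathsf{WIRT}_{\mathrm{UYZ}}$, I would proceed as follows. Given an instance of $\mathsf{WIRT}_{\mathrm{UYZ}}$, that is, an undirected graph $G$ together with a sequence $\seq{X_k}_k$ of sets $X_k$ of $k$ many $Y$-disjoint $Z$-rays in $G$, apply the computable map $g$ of Lemma \ref{lem:DYZ_implies_UYZ} to form the directed graph $G'=g(G)$, and apply the ray map $h_Z$ coordinatewise to each $X_k$ to produce a sequence $\seq{X'_k}_k$ where $X'_k$ consists of $k$ many $Y$-disjoint $Z$-rays in $G'$ (the map preserves $Y$-disjointness and cardinality by the lemma). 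Then $(G',\seq{X'_k}_k)$ is an instance of the hypothesis of $\mathsf{WIRT}_{\mathrm{DYZ}}$, which yields an infinite sequence of $Y$-disjoint $Z$-rays in $G'$. Applying $k_Z$ coordinatewise returns an infinite sequence of $Y$-disjoint $Z$-rays in $G$, as required.

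The other two reductions follow the same template. For $\mathsf{WIRT}_{\mathrm{DEZ}}\to\mathsf{WIRT}_{\mathrm{DVZ}}$, start from a directed $G$ with a sequence $\seq{X_k}_k$ of $k$ many vertex-disjoint $Z$-rays, form $G'$ by Lemma \ref{lem:DEZ_implies_DVZ}, and use $h_Z$ to turn each $X_k$ into a set of $k$ many edge-disjoint $Z$-rays in $G'$ (vertex-disjointness in $G$ implies edge-disjointness in $G'$ by that lemma). Apply $\mathsf{WIRT}_{\mathrm{DEZ}}$ in $G'$ and pull the resulting infinite family back through $k_Z$, which sends edge-disjoint $Z$-rays in $G'$ to vertex-disjoint $Z$-rays in $G$. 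For $\mathsf{WIRT}_{\mathrm{DYD}}\to\mathsf{WIRT}_{\mathrm{DYS}}$, apply $g$ and $h$ from Lemma \ref{lem:DYD_implies_DYS} coordinatewise to turn each $X_k$ (a set of $k$ many $Y$-disjoint single rays in $G$) into a set of $k$ many $Y$-disjoint double rays in $G'$, and pull back with $k$.

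There is no real obstacle: each of Lemmas \ref{lem:DYZ_implies_UYZ}, \ref{lem:DEZ_implies_DVZ} and \ref{lem:DYD_implies_DYS} already provides computable ray maps that preserve (or correctly translate) $Y$-disjointness, so their coordinatewise application to a sequence $\seq{X_k}_k$ is computable uniformly in $k$ and, provably in $\mathsf{RCA}_0$, produces a witnessing sequence of the same shape in the target graph. Thus the entire argument is carried out in $\mathsf{RCA}_0$ by the same Weihrauch-style reduction pattern described in \S \ref{section:variants_IRT} just before Lemma \ref{lem:DYZ_implies_UYZ}.
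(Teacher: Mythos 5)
Your proposal is correct and takes essentially the same approach as the paper, which simply observes that the three lemmas ``as usual'' give the desired reductions. You have correctly identified the one point worth spelling out — that the computable graph-and-ray translations from Lemmas \ref{lem:DYZ_implies_UYZ}, \ref{lem:DEZ_implies_DVZ} and \ref{lem:DYD_implies_DYS} must be applied uniformly in $k$ to transport the $\mathsf{WIRT}$ hypothesis $\seq{X_k}_k$ (and not just the conclusion), and that this is unproblematic in $\RCA_0$ since the maps are computable.
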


Recall that $\mathsf{WIRT}_{\mathrm{XYS}}$ and $\mathsf{WIRT}_{\mathrm{UVD}}$
are provable in $\mathsf{ACA}_{0}$ (Theorem \ref{thm:ACA0_implies_WIRT}).
$\mathsf{WIRT}_{\mathrm{DVD}}$ and $\mathsf{WIRT}_{\mathrm{DED}}$ are open,
because $\Sigma^{1}_{1}\text{-}\mathsf{AC}_{0}+\mathsf{WIRT}_{\mathrm{XYZ}}$
implies $\mathsf{IRT}_{\mathrm{XYZ}}$, and $\mathsf{IRT}_{\mathrm{DVD}}$ and
$\mathsf{IRT}_{\mathrm{DED}}$ are open (see comments after Theorem
\ref{thm:variants_IRT_hyp_analysis}). We do not have an upper bound on the
proof-theoretic strength of $\mathsf{WIRT}_{\mathrm{UED}}$ (an upper bound on
$\mathsf{WIRT}_{\mathrm{UED}}$ would yield an upper bound on $\mathsf{IRT}%
_{\mathrm{UED}}$, which we do not currently have).

We do not know if any $\mathsf{WIRT}_{\mathrm{XYZ}}$ is equivalent to
$\mathsf{ACA}_{0}$. In an effort to clarify the situation, we define an
apparent strengthening of $\mathsf{WIRT}_{\mathrm{XYZ}}$ and show that it
implies $\mathsf{ACA}_{0}$:

\begin{defn}
Let nonuniform-$\mathsf{WIRT}_{\mathrm{XYZ}}$ be the assertion that if $G$ is
an $X$-graph and there is a sequence of $Z$-rays $R_{0},R_{1},\dots$ in $G$ such
that for each $k$, there are $i_{0},\dots,i_{k}$ such that $R_{i_{0}}%
,\dots,R_{i_{k}}$ are $Y$-disjoint, then $G$ has infinitely many
$Y$-disjoint $Z$-rays.
\end{defn}

Every instance of $\mathsf{WIRT}_{\mathrm{XYZ}}$ is also an instance of
nonuniform-$\mathsf{WIRT}_{\mathrm{XYZ}}$, so nonuniform-$\mathsf{WIRT}%
_{\mathrm{XYZ}}$ implies $\mathsf{WIRT}_{\mathrm{XYZ}}$. Conversely, we have

\begin{prop}
\label{prop:ACA0_WIRT_implies_nonuniform_WIRT} $\mathsf{ACA}_{0} +
\mathsf{WIRT}_{\mathrm{XYZ}}$ implies nonuniform-$\mathsf{WIRT}_{\mathrm{XYZ}%
}$.
\end{prop}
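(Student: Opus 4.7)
The plan is to convert the nonuniform hypothesis into the uniform hypothesis of $\mathsf{WIRT}_{\mathrm{XYZ}}$ using a single arithmetic search.

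Suppose we are given an $X$-graph $G$ and a sequence $\seq{R_n}_n$ of $Z$-rays in $G$ as in the hypothesis of nonuniform-$\mathsf{WIRT}_{\mathrm{XYZ}}$: for each $k$ there exist $i_0 < \cdots < i_k$ such that $R_{i_0}, \ldots, R_{i_k}$ are pairwise $Y$-disjoint. First I would observe that the predicate ``$R_{j_0}, \ldots, R_{j_{k-1}}$ are pairwise $Y$-disjoint'' is arithmetic (in fact $\Pi^0_1$) in the sequence $\seq{R_n}_n$, since $Y$-disjointness of two rays is the universal statement that no number codes a shared vertex (or edge) between them. Thus in $\mathsf{ACA}_0$ we can form, uniformly in $k$, the set of codes of strictly increasing tuples $\langle j_0, \ldots, j_{k-1}\rangle$ for which the corresponding rays are pairwise $Y$-disjoint.

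Next, by the nonuniform hypothesis this set is nonempty for every $k \geq 1$, so in $\mathsf{ACA}_0$ (using bounded minimization within a $\Sigma^0_1$-defined subset of $\mathbb{N}$) we may define $s_k$ to be the least such code and produce the sequence $\seq{s_k}_k$ as a single set via arithmetic comprehension. Let $X_k$ be the finite set of $k$ rays from $\seq{R_n}_n$ indexed by the entries of $s_k$. Then $\seq{X_k}_k$ is precisely the kind of uniform witness sequence demanded by the hypothesis of $\mathsf{WIRT}_{\mathrm{XYZ}}$, so applying $\mathsf{WIRT}_{\mathrm{XYZ}}$ to $G$ with this witness sequence yields infinitely many $Y$-disjoint $Z$-rays in $G$, which is exactly the conclusion of nonuniform-$\mathsf{WIRT}_{\mathrm{XYZ}}$.

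There is no serious obstacle here: the only nontrivial point is that passage from the nonuniform to the uniform hypothesis requires selecting, uniformly in $k$, a specific $k$-element pairwise-$Y$-disjoint subfamily. This selection is arithmetic in the given data (the ray sequence together with $G$), so $\mathsf{ACA}_0$ suffices to produce the sequence $\seq{X_k}_k$ without invoking any form of $\Sigma^1_1$-choice.
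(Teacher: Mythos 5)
Your proof is correct and takes essentially the same approach as the paper's, which is the one-line observation that $\mathsf{ACA}_0$ can uniformly select, for each $k$, a $k$-element pairwise-$Y$-disjoint subfamily of the given rays (since the disjointness predicate is arithmetic in $\seq{R_n}_n$), thereby converting the nonuniform hypothesis into the uniform one required by $\mathsf{WIRT}_{\mathrm{XYZ}}$. One minor imprecision: the set of codes of pairwise-$Y$-disjoint $k$-tuples is $\Pi^0_1$ rather than $\Sigma^0_1$, and the search for its least element is not literally bounded minimization, but this is harmless since arithmetic comprehension trivially produces the sequence $k\mapsto s_k$.
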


\begin{proof}
Suppose $G$ is an instance of nonuniform-$\mathsf{WIRT}_{\mathrm{XYZ}}$, i.e.,
$G$ is an $X$-graph and $\seq{R_{n}}_{n\in N}$ is a sequence of $Z$-rays in $G$ such
that for each $k$, there are $i_{0},\dots,i_{k}$ such that $R_{i_{0}}%
,\dots,R_{i_{k}}$ are $Y$-disjoint. Then $\mathsf{ACA}_{0}$ can find
such $i_{0},\dots,i_{k}$ uniformly in $k$. Therefore by $\mathsf{ACA}_{0}$,
$G$ is an instance of $\mathsf{WIRT}_{\mathrm{XYZ}}$. By $\mathsf{WIRT}%
_{\mathrm{XYZ}}$, $G$ has infinitely many $Y$-disjoint $Z$-rays as desired.
\end{proof}

\begin{thm}
\label{thm:nonuniform_WIRT} Nonuniform-$\mathsf{WIRT}_{\mathrm{XYZ}}$ implies
$\mathsf{ACA}_{0}$ over $\mathsf{RCA}_{0}$. It follows that
nonuniform-$\mathsf{WIRT}_{\mathrm{XYS}}$ and nonuniform-$\mathsf{WIRT}%
_{\mathrm{UVD}}$ are both equivalent to $\mathsf{ACA}_{0}$ over $\mathsf{RCA}%
_{0}$.
\end{thm}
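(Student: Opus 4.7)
The plan is to establish the main implication first and then deduce the claimed equivalences. By reductions analogous to Proposition~\ref{prop:WIRT_implications_variants}, which transfer hypotheses and conclusions of nonuniform-$\mathsf{WIRT}$ across variants just as Lemmas~\ref{lem:DYZ_implies_UYZ}, \ref{lem:DEZ_implies_DVZ}, and~\ref{lem:DYD_implies_DYS} do for $\mathsf{WIRT}$, it suffices to treat the weakest cases, namely the undirected vertex-disjoint ones (UVS for single and UVD for double). Focusing on UVS, given any set $A$, I would construct a computable-in-$A$ graph $G$ together with a computable-in-$A$ sequence of rays $\langle R_i\rangle_{i\in N}$ such that $\mathsf{RCA}_0$ proves the $k$-disjointness hypothesis of nonuniform-$\mathsf{WIRT}$, while the conclusion of the principle — the existence of infinitely many pairwise disjoint rays in $G$ — forces $A'$ to be computable from any such witness. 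Relativization then gives $\mathsf{ACA}_0$; the UVD case is handled similarly with a double-ray variant.

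The construction would build on the trees $T_n^A$ from the proof of Theorem~\ref{thm:omega_model_IRT_hyp_closed}, whose unique branches encode $A'\upharpoonright n$. The forest $\bigsqcup_n T_n^A$ already has the property that any infinite pairwise disjoint sequence of rays meets cofinally many $T_n^A$, and so computes $A'$ (as in Proposition~\ref{prop:SCR_implies_ACA0}); but it has no computable sequence of rays at all, so nonuniform-$\mathsf{WIRT}$'s hypothesis is unavailable in $\mathsf{RCA}_0$. The task is therefore to augment the forest with auxiliary rays that (i) furnish a computable sequence $\langle R_i\rangle$ with arbitrarily large pairwise vertex-disjoint finite subsets (so that the hypothesis is $\mathsf{RCA}_0$-provable), yet (ii) themselves admit no infinite pairwise disjoint subsequence, so that any witness to the conclusion of nonuniform-$\mathsf{WIRT}$ must use cofinally many branches of the $T_n^A$.

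The main obstacle is precisely this balancing act: naive augmentations (for instance, attaching a separate pendant ray to each $T_n^A$) succeed at (i) but fail (ii), because the pendants themselves form a computable infinite disjoint sequence. I would design the auxiliary rays to share vertices in a controlled way — for example by making their pendant tails pass through common vertices governed by a bookkeeping scheme that grows the size of the maximum pairwise disjoint subfamily without bound, yet prevents any infinite pairwise disjoint selection — a combinatorial pattern analogous in spirit to computable instances of $\mathsf{RT}^2_2$ with arbitrarily large but no infinite homogeneous set. Verifying (i) would proceed by exhibiting explicit $k$-disjoint subsets uniformly in $k$; verifying (ii) would follow from the sharing pattern by a pigeonhole argument, so that the nonuniform-$\mathsf{WIRT}$ output is forced to draw rays from the trees $T_n^A$.

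Once the main implication is in hand, the equivalences claimed in the second sentence follow by combining it with two earlier results: Theorem~\ref{thm:ACA0_implies_WIRT} gives $\mathsf{ACA}_0\vdash \mathsf{WIRT}_{\mathrm{XYS}}$ and $\mathsf{ACA}_0\vdash \mathsf{WIRT}_{\mathrm{UVD}}$, and Proposition~\ref{prop:ACA0_WIRT_implies_nonuniform_WIRT} gives $\mathsf{ACA}_0+\mathsf{WIRT}_{\mathrm{XYZ}}\vdash$ nonuniform-$\mathsf{WIRT}_{\mathrm{XYZ}}$. Thus $\mathsf{ACA}_0$ implies nonuniform-$\mathsf{WIRT}_{\mathrm{XYS}}$ and nonuniform-$\mathsf{WIRT}_{\mathrm{UVD}}$, and the first part of the theorem supplies the reverse directions.
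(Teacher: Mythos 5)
The overall frame you set up (reduce to undirected variants, build a computable instance whose solutions compute the jump, relativize, then get the two equivalences from Theorem \ref{thm:ACA0_implies_WIRT} and Proposition \ref{prop:ACA0_WIRT_implies_nonuniform_WIRT}) matches the paper, and your final paragraph on the equivalences is correct. But the heart of the theorem is the concrete construction, and your specification of it has a gap that cannot be repaired as stated. Your desiderata (i)/(ii) control only subsequences of the given list $\langle R_i\rangle$, whereas the conclusion of nonuniform-$\mathsf{WIRT}$ asserts the existence of infinitely many disjoint rays \emph{in $G$}, which need not come from that list. If the auxiliary gadget is an $A$-independent sharing pattern (attached via new vertices, in the spirit of your $\mathsf{RT}^2_2$ analogy) whose rays contain arbitrarily large pairwise disjoint subfamilies, then the subgraph spanned by those rays has arbitrarily many disjoint rays, so by Halin's theorem --- true in the standard model --- it already contains an infinite disjoint family of rays; that family witnesses the conclusion for $G$ while never entering the trees $T_n^A$, and nothing forces it to compute $A'$. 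So blocking infinite disjoint \emph{subsequences} of the listed rays does not force solutions into the trees: the sharing pattern must itself encode $A'$. This is exactly what the paper's construction does. Its vertices are one spine $\{0^n\}$ together with columns $n\concat s\concat 0^t$ which are infinite iff $\emptyset'_s\upharpoonright n=\emptyset'\upharpoonright n$, the finite (wrong-guess) columns feeding into the spine; the computable list of rays $R_{\langle n,s\rangle}$ exists, and for each $k$ a $k$-element vertex-disjoint subfamily is given by the true stages, whose existence $\RCA_0$ proves by bounded $\Sigma^0_1$ comprehension. Note these witnesses are \emph{not} computable uniformly in $k$, contrary to your plan of ``exhibiting explicit $k$-disjoint subsets uniformly in $k$'': such uniformity would already compute $\emptyset'$. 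Finally, any infinite family of even edge-disjoint rays must, after discarding at most two rays (one through the edge $(0,00)$, one running along the spine), consist of correct-column tails and hence computes $\emptyset'$.

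A second, smaller gap is your opening reduction: it does not suffice to treat UVS and UVD. The available reductions (Proposition \ref{prop:WIRT_implications_variants}) give D$\,\to\,$U and, only for directed graphs, E$\,\to\,$V; thus UES and UED are minimal in the implication order and are not implied by UVS or UVD, yet the theorem covers them. The paper handles this by arranging that the single construction above works for edge-disjoint solutions (every nonuniform-$\WIRT{UES}$ solution of a nonuniform-$\WIRT{UVS}$ instance computes $\emptyset'$), and for the double-ray cases by attaching pendant rays at the columns' starting vertices so that UVD hypotheses and UED solutions are handled simultaneously. Your UVD sketch would need the same care.
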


\begin{proof}
By Proposition \ref{prop:ACA0_WIRT_implies_nonuniform_WIRT} and Theorem
\ref{thm:ACA0_implies_WIRT}, $\mathsf{ACA}_{0}$ implies
nonuniform-$\mathsf{WIRT}_{\mathrm{XYS}}$ and nonuniform-$\mathsf{WIRT}%
_{\mathrm{UVD}}$.

Next, we show that nonuniform-$\mathsf{WIRT}_{\mathrm{XYZ}}$ implies
$\mathsf{ACA}_{0}$. By Lemma \ref{lem:DYZ_implies_UYZ}, it suffices to
consider the undirected versions of nonuniform-$\mathsf{WIRT}$. First, we
prove that nonuniform-$\mathsf{WIRT}_{\mathrm{UYS}}$ implies $\mathsf{ACA}%
_{0}$ by constructing a computable instance of nonuniform-$\mathsf{WIRT}%
_{\mathrm{UVS}}$ such that every nonuniform-$\mathsf{WIRT}_{\mathrm{UES}}$
solution computes $\emptyset^{\prime}$. (The desired result follows by
relativization.) We use a variation of the graph used in the analogous result
in Theorem \ref{thm:omega_model_IRT_hyp_closed}.

\textit{Construction of $G=(V,E)$}: $V=\{0^{n}\mid n>0\}\cup\{n\concat 
s \concat 0^{t}\mid n>0$ and some number below $n$ is enumerated into
$\emptyset^{\prime}$ at stage $s$, and either $t\leq s$ or $\emptyset
_{s}^{\prime}\upharpoonright n=\emptyset_{t}^{\prime}\upharpoonright n\}$.
$E=\{(0^{n},0^{n+1})\mid n>0\}\cup\{(n\concat s\concat 0^{t}%
,n\concat s\concat 0^{t+1})\mid n\concat s\concat 0^{t}%
,n\concat s\concat 0^{t+1}\in V\}\cup\allowbreak\{(n\concat  s\concat 0^{t},0)\mid n\concat s\concat 0^{t}\in V$ and
$n\concat s\concat 0^{t+1}\notin V\}$. $G$ is clearly computable.

\textit{Verification:} It is clear that there is exactly one ray
$R_{\left\langle n,s\right\rangle }$ in $G$ beginning with $n\concat s$ for
$n\concat s\in V$ and the sequence $\seq{R_{\seq{n,s}}} _{n\concat s\in V}$ is also computable.
Note that if $\emptyset^{\prime}\upharpoonright n=\emptyset_{s}^{\prime}\upharpoonright n$
then this ray is $\left\langle n\concat s\concat 0^{t}\right\rangle
_{t\in N}$. Otherwise, it is $\left\langle n\concat s\concat %
0^{t}\right\rangle _{n\concat s\concat 0^{t}\in V}\concat \left\langle 0^{n}\right\rangle _{n>0}$. Next observe that for each
$k\in N$, $\{\left\langle n,i\right\rangle \mid i<n\leq k$ and $i\in \emptyset^{\prime
}\}$ is $\Sigma_{1}^{0}$ and contained in $k\times k$ and so is a set by
bounded $\Sigma_{1}^{0}$ comprehension \cite[II.3.9]{sim_book}. Thus the
finite function taking $n>l$ (the least number in $\emptyset^{\prime}$) to the last
stage $s_{n}$ at which an $i<n$ is enumerated in $\emptyset^{\prime}$ is also (coded
by) a finite set. So we have, for each $k$, a sequence of V-disjoint rays
$\seq{R_{n,s_{n}}}_{l<n\leq l+k}$ of length $k$ as required for the hypothesis of
nonuniform-$\mathsf{WIRT}_{\mathrm{UVS}}$.

Suppose then that $S_{i}$ is the sequence of rays in a solution for
nonuniform-$\mathsf{WIRT}_{\mathrm{UES}}$. We wish to compute $\emptyset^{\prime}$
from this solution. As the $S_{i}$ are E-disjoint at most one of them contains
the edge $(0,00)$. So by eliminating that one, we can assume none of the
$S_{i}$ contain $(0,00)$. If any of the remaining rays contain some edge of
the form $(0^{j},0^{j+1})$ for $j>0$ then (as it does not contain $(0,00)$) it
must contain $(0^{k},0^{k+1})$ for every $k\geq j$. Thus there can be at most
one such ray among the remaining $S_{i}$ and so we can discard it and assume
there are no such rays in our list. No remaining ray can have $0$ as its first
vertex as if it did its second vertex would have to be of the form
$n\concat s\concat 0^{t}$ with $n\concat s\concat 0^{t+1}\notin
V$. Any continuation of this sequence would have to follow the $n\concat s\concat 0^{r}$ with $r$ descending from $t$ and so would have to terminate
at $n\concat s$ and not be a ray. Thus all the remaining $S_{i}$ are of the
form $\left\langle n_{i}\concat s_{i}\concat 0^{t+j}\right\rangle _{j\in
N}$ for some $t$ with $n_{i}\neq n_{k}$ for $i\neq k$. So the remaining
$S_{i}$ witness the conclusion of nonuniform-$\mathsf{WIRT}_{\mathrm{UVS}}$ as desired.

As we can replace the first vertex of $S_{i}$ by the sequence beginning with
$n\concat s$ and ending with its second vertex, we know that $\emptyset^{\prime
}\upharpoonright n_{i}=\emptyset_{s}^{\prime}\upharpoonright n_{i}$. Since the
sequence $S_{i}$ and so that of the $n_{i}$ is infinite, given any $m$ we can
find an $n_{i}>m$ and so compute $\emptyset^{\prime}\upharpoonright m$ as $\emptyset_{n_{i}%
}^{\prime}\upharpoonright m$ as required.

To show that nonuniform-$\mathsf{WIRT}_{\mathrm{UYD}}$ implies $\mathsf{ACA}%
_{0}$, define $G$ as above. Consider the graph $G^{\prime}$ gotten by adding
on for each $n\concat s\in V$ new vertices $x_{n,s,k}$ for $k>0$ and edges
$(n\concat s,x_{n,s,1})$ and $(x_{n,s,k},x_{n,s,k+1})$ for $k>0$. The
witnesses for the hypothesis of nonuniform-$\mathsf{WIRT}_{\mathrm{UVS}}$ in
$G$ supply ones for nonuniform-$\mathsf{WIRT}_{\mathrm{UVD}}$ by tacking on
the $x_{n,s,k}$ before $n\concat s$ in reverse order. The witnesses for the
conclusion of nonuniform-$\mathsf{WIRT}_{\mathrm{UED}}$ can be converted into
ones for the conclusion of nonuniform-$\mathsf{WIRT}_{\mathrm{UES}}$ in $G$ by
removing the new vertices. So once again we can compute $\emptyset^{\prime}$.
\end{proof}

We are unable to show that $\mathsf{WIRT}_{\mathrm{XYZ}}$ implies
$\mathsf{ACA}_{0}$, but we can prove the following:

\begin{thm}
\label{thm:WIRT_RCA0} $\mathsf{WIRT}_{\mathrm{XYZ}}$ is not provable in
$\mathsf{RCA}_{0}$.
\end{thm}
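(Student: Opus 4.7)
The plan is to exhibit a standard $\omega$-model of $\RCA_{0}$ in which $\WIRT{XYZ}$ fails. The canonical choice is $\REC$, the $\omega$-model whose second order part consists of all computable sets. Because Proposition~\ref{prop:WIRT_implications_variants} reduces the directed variants to the undirected ones, it suffices, for each of the four undirected variants $\mathrm{UYZ}$, to construct a computable undirected graph $G$ together with a computable sequence $\seq{X_k}_k$ with $X_k$ a set of $k$ $Y$-disjoint $Z$-rays in $G$, such that no infinite computable sequence of $Y$-disjoint $Z$-rays in $G$ exists.

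The construction I would use exploits the standard computable witness to the failure of Ramsey's theorem for pairs in $\RCA_{0}$ (Jockusch): a computable colouring $c\colon[\mathbb{N}]^{2}\to\{0,1\}$ with no computable infinite homogeneous set, which after swapping colours we may take to admit an infinite $0$-homogeneous set. The graph $G$ houses, for each $n\in\mathbb{N}$, a computable $Z$-ray $R_n$ realized as a parallel disjoint copy of $\mathbb{N}$ (or $\mathbb{Z}$ for $Z=\mathrm{D}$); for each pair $\{m,n\}$ with $c(\{m,n\})=1$, $R_m$ and $R_n$ are glued by identifying a distinguished vertex (for $Y=\mathrm{V}$) or edge (for $Y=\mathrm{E}$) at positions that are injective in the pair so that no two pairs share a junction. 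Then $R_m$ and $R_n$ are $Y$-disjoint iff $c(\{m,n\})=0$. The hypothesis of $\WIRT{UYZ}$ is witnessed computably: for each $k$, a bounded computable search yields a $k$-element $0$-homogeneous set $H_k$ (one exists by Ramsey applied to the infinite $0$-homogeneous set of $c$), and $X_k:=\{R_n : n\in H_k\}$ is a set of $k$ $Y$-disjoint $Z$-rays. Conversely, from any computable infinite sequence $\seq{S_i}_i$ of pairwise $Y$-disjoint $Z$-rays in $G$ one canonically assigns to each $S_i$ an index $n_i\in\mathbb{N}$; pairwise disjointness then forces $c(\{n_i,n_j\})=0$ for $i\neq j$, so $\{n_i : i\in\mathbb{N}\}$ is a computable infinite $0$-homogeneous set, contradicting the choice of $c$.

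The main obstacle is the canonical assignment of indices to rays. At each gluing point of $R_m$ and $R_n$ one can ``switch'' between the two strands, so $G$ contains $Z$-rays outside the family $\{R_n\}$; if such switching rays could be arranged into a computable infinite $Y$-disjoint family, the argument above would break down. The construction must therefore be engineered so that every $Z$-ray in $G$ is canonically associated with a single index $n$ and so that two rays with different canonical indices are $Y$-disjoint iff $c$ takes value $0$ on the pair of indices. One resolution is to replace bare vertex/edge identifications by carefully designed ``bridge'' gadgets whose internal structure forces any $Z$-ray crossing the gluing to commit to a single $R_n$ strand. The exact shape of the gadget varies across the four undirected variants (with the double-ray cases requiring more care, since both infinite ends of a double ray must commit to the same index), but in each case the resulting $G$ provides the required computable counterexample in $\REC$, and then Proposition~\ref{prop:WIRT_implications_variants} delivers the directed cases.
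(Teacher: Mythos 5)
The strategy of reducing to Jockusch's theorem (a computable $2$-colouring $c$ of pairs with no computable infinite homogeneous set) is genuinely different from the paper's argument, which instead builds the counterexample computable graph $G$ directly by a finite injury priority construction, merging rays to defeat each potential computable solution $\Phi_e$. If your reduction worked it would be pleasantly modular, but the ``main obstacle'' you flag is a real gap, and the sketched remedy does not close it. If $R_m$ and $R_n$ are glued at only finitely many designated vertices (or edges) when $c(\{m,n\})=1$, then two computable rays $S_i,S_j$ with canonical indices $n_i,n_j$ whose tails lie beyond all gluing points are $Y$-disjoint even though $c(\{n_i,n_j\})=1$; so pairwise $Y$-disjointness of a computable solution sequence does \emph{not} force $0$-homogeneity of the associated index set, and the contradiction with Jockusch evaporates. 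The natural fix --- make the gluings cofinal in both rays --- creates the opposite problem: in an undirected graph every crossing can be traversed in either direction, so a ray may alternate between the $R_m$ and $R_n$ strands at each crossing, and no local gadget can bound the number of switches. Such a ray has no canonical index, the assignment $S_i\mapsto n_i$ is no longer even defined, and it is far from clear that one cannot computably assemble infinitely many pairwise disjoint switching rays, which would make $G$ fail to be a counterexample at all. You would need a gadget that is simultaneously (a) cofinal enough that disjointness of tails really reflects $c$, (b) directional enough to force commitment in an undirected graph, (c) compatible with the double-ray cases where both ends must commit consistently, and (d) such that the canonical index is computable from a ray; none of this is routine, and the weight of the proof sits exactly in the part that is waved through.

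By contrast, the paper's proof avoids Ramsey entirely. A finite injury argument grows computable paths $P^k_{i,s}$ into double rays $X^k_i$ that remain pairwise vertex-disjoint for each fixed $k$ (so the hypothesis of $\WIRT{XYZ}$ is realized in the strongest sense, with vertex-disjoint double rays), while each requirement $Q_e$ is met by a controlled merger that ensures any sequence of rays computed by $\Phi_e$ fails even edge-disjointness. A single computable graph thus handles all four undirected variants uniformly, and the directed cases then follow from Proposition~\ref{prop:WIRT_implications_variants}, exactly as you intend.
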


\begin{proof}
By Proposition \ref{prop:WIRT_implications_variants}, it suffices to consider
the undirected variants of $\mathsf{WIRT}_{{}}$. For all these variants it
suffices to construct a computable graph $G$ on $\mathbb{N}$ and a computable
sequence $\left\langle \left\langle X_{i}^{k}\right\rangle _{i<k}\right\rangle
_{k\in\mathbb{N}}$ such that (1) for each $k\in\mathbb{N}$, the $X_{i}^{k}$
for $i<k$ are pairwise vertex-disjoint double rays in $G$ and (2) there is no
computable sequence $\left\langle R_{j}\right\rangle _{j\in\mathbb{N}}$ of
edge-disjoint single rays in $G$. It is clear that the $G$ constructed for
this $C$ is a counterexample to each $\mathsf{WIRT}_{\mathrm{UYZ}}$ in the
standard model of $\mathsf{RCA}_{0}$ with second order part the recursive
sets. Of course, as this model is standard, $\mathsf{WIRT}_{\mathrm{XYZ}}$ is
not provable in $\mathsf{RCA}$, $\mathsf{RCA}_{0}$ plus induction for all formulas.

The computable construction will be a finite injury priority argument. At the
end of stage $s$ of our construction, for each $i<k\leq s$, we will have
defined a path $P_{i,s+1}^{k}$ with lengths strictly increasing with $s$ which
is intended to be a segment of the double ray $X_{i}^{k}$. We think of these
paths $P_{s}$ as having domain a segment $[u,v]$ of $\mathbb{Z}$ containing
$[-s,s]$. Its \emph{endpoints} are $P_{s}(u)$ and $P_{s}(v)$. The intention is
that the $X_{i}^{k}=\bigcup_{s}P_{i,s}^{k}$ will be the desired double rays such
that, for each $k$, the $X_{i}^{k}$ for $i<k$ will be vertex-disjoint. We will
also have put all numbers less than $s$ in as vertices in at least one of
these $P_{i,s}^{k}$. In future stages, we will not add any edges between
vertices which are currently in any $P_{i,s}^{k}\ $for $i<k$. Thus $G$ will be
a computable graph given by the union of the double rays $X_{i}^{k}=\bigcup
_{s}P_{i,s}^{k}$. We let $G_{s}$ be the graph defined so far, i.e.\
$\bigcup\{P_{i,s}^{k} \mid i<k<s\}.$ We let $G_{s}^{>t}$ be its subgraph defined as
the union of the $P_{0,s}^{k},\dots,P_{k-1,s}^{k}$ for $k>t$ and similarly for
$k<t$ and other interval notations.

We say that the \emph{disjointness condition, d.c., holds at stage }$s$ if for
every $m<s$ and distinct $n$ and $n^{\prime}<m$, $P_{n,s}^{m}$ and
$P_{n^{\prime},s}^{m}$ are vertex-disjoint. Otherwise we say we have
\emph{violated the d.c.} Clearly, if we never violate the d.c.\ the $X_{n}^{m}$ (for fixed $m$) are pairwise vertex-disjoint. We arrange the construction so that we obviously
never violate the d.c.

So it suffices to also meet the following requirements:
\begin{align*}
Q_{e}:  &  \text{ If }R_{0},R_{1},\dots\text{ is a computable sequence of
single rays in }G\text{ defined by $\Phi_{e}$, }\\
\text{i.e. }\Phi_{e}(i,n)  &  =R_{i}(n)\text{ then the }R_{i}\text{ are not
edge-disjoint.}%
\end{align*}
The requirements $Q_{e}$ are listed in order of priority. During our
construction, if all else fails, we will attempt to satisfy each $Q_{e}$ at
some stage $s$ by \emph{merging} certain rays $X_{i}^{k}$ and $X_{j}^{l}$
\emph{using vertices} $x$ and $y\neq x$ which are endpoints of $P_{i,s}^{k}$
and $P_{j,s}^{l}$, respectively. We do this by adding the least new number $r$
(\emph{the merge point of this merger}) as a vertex of $G$ as well as edges
$(x,r)$ and $(y,r)$ which are appended to each $P_{r,s}^{q}$ with $x$ or $y$,
respectively, as an endpoint. We also ensure that $P_{i,t}^{k}$ and
$P_{j,t}^{l}$ henceforth agree after the vertex $r$ as they grow in the
corresponding directions.

Without loss of generality, and to simplify notation later, we make the
assumption that if $\Phi_{e,s}(i,u)$ is convergent for any $e$, $i$ and $u$
then so is $\Phi_{e,s}(i,u^{\prime})$ for every $u^{\prime}<u$.

\smallskip

\textit{Construction.} At stage $s$ of the construction, we are given a finite
graph $G_{s}$ consisting of, for each $k<s$, finite vertex-disjoint paths
$P_{0,s}^{k},\dots,P_{k-1,s}^{k}$ as described above. We let $f_{s}(e)$ be the
final stage before $s$ at which $Q_{e}$ was initialized. For notational
convenience when $s$ and $e$ are specified we simply write $f$ for $f_{s}(e)$.

First, we act for the requirement $Q_{e}$ of highest priority with $e<s$ which
requires attention as described below. All requirements $Q_{e}$ are
initialized and unsatisfied at stage 0 and are initialized and declared to be
unsatisfied whenever we act for a $Q_{e^{\prime}}$ with $e^{\prime}<e$.

We say that \emph{$Q_{e}$ requires attention at stage $s$} if $Q_{e}$ is not
satisfied and there are $a,u,x,b,v,y<s$ such that $u>0$, $\Phi_{e,s}%
(a,u)\downarrow$, $\left\langle \Phi_{e,s}(a,n)\right\rangle _{n\leq u}$ is
a path in $G_{s}$ disjoint from $G_{s}^{<f}$ which can be extended in only one
way to a maximal path in $G_{s}$ and this extension eventually reaches an
$x\notin G_{s}^{<f}$ which is an endpoint of some $P_{i,s}^{k}$ for $k\geq f$
and similarly for $b,v$ and $y$ for some $P_{j,s}^{l}$ such that $x$ and $y$
are not both endpoints of the same $P_{r}^{q}$. We also require that the
merger using $x$ and $y$ would not violate the d.c. We then let $a(e,s)$ etc.\ be the associated witnesses for the least such computation. In this case, the
actions for $Q_{e}$ is to perform the merger using $x(e,s)$ and $y(e,s)$ as
defined above and declare $Q_{e}$ to be satisfied.

Finally, for each $w$, in turn, which is an endpoint of any the paths
$P_{i}^{k}$ as now defined we extend those paths by taking the least new
number $z$ which we append after $w$ in each of these paths (and so add
$(w,z)$ as a new edge). For each $i<s$, in turn, we also take the next $2s+1$
least new numbers and let the $P_{i}^{s}(n)\ $be these numbers in order for
$n\in\lbrack-s,s]$. This defines the $P_{i,s+1}^{k}$ for $i<k<s+1$ and
completes stage $s$ of the construction. As promised we let $X_{i}^{k}%
=\bigcup_{s}P_{i,s}^{k}$.

\textit{Verification.} It is clear that, for each $i<k\in\mathbb{N}$,
$X_{i}^{k}$ is a double ray and that $G$ is a computable graph consisting of
the union of these rays. It is also clear that by construction we never
violate the d.c.\ and so for each $k$ the $X_{i}^{k}$ for $i<k$ are pairwise
vertex disjoint as required. Thus we only need to prove that we meet each
$Q_{e}$.

We now state a series of facts about $G$ each of which follows immediately (or
by simple inductions) from the construction and previous facts on the list.

\begin{lem}
\label{obs}Every vertex $r$ which is a merge point has exactly three neighbors
and they are the $x$ and $y$ used in the merger and the $z$ added on after $r$
in the final part of the action at the merger stage. In addition, no endpoint
of any $P_{i,s}^{k}$ is a merge point and every vertex which is not a merge
point has exactly two neighbors.

If $u\in G_{s}$ and so $u\in P_{i,s}^{k}$ for some $i<k<s$, then for any
$j<l$, $u\in X_{j}^{l}\Leftrightarrow u\in P_{j,s}^{l}$ and if so $l<s$.

If $(u,v)\in G_{s}$ then not both $u$ and $v$ are merge points. If neither are
merge points then $\forall k\forall i<k(v\in P_{i,s}^{k}\Leftrightarrow u\in
P_{i,s}^{k})$. If one, say $u$, is a merge point $r$ for a merger at some
stage $t$ necessarily less than $s$ using some $x$ and $y$ with $(r,z)$ the
edge added on at the end of stage $t$, then the other ($v$) is $x$, $y$ or
$z$; $\forall k\forall i<k(r\in P_{i,t+1}^{k}\Leftrightarrow x\in P_{i,t}%
^{k}\vee y\in P_{i,t}^{k})$; $\forall k\forall i<k(r\in P_{i,t+1}%
^{k}\Leftrightarrow r\in P_{i,s}^{k}\Leftrightarrow z\in P_{i,t+1}%
^{k}\Leftrightarrow z\in P_{i,s}^{k})$.

Any ray in $G$ which begins in $G_{s}$ remains in $G_{s}$ until it reaches an
endpoint of some $P_{i,s}^{k}$.

Each requirement acts at most once after the last time it is initialized. So,
by induction, each requirement acts and is initialized only finitely often.
Thus there is an infinite sequence $w_{i}$ such that at each $w_{i}$ we act
for some $Q_{e}$ and we never act for any $Q_{e^{\prime}}$ with $e^{\prime
}\leq e$ afterwards. The observation to make here is that if there were a last
stage $w$ at which we act for any $Q_{e}$ then there would be no mergers of
any $X_{i}^{k}$ and $X_{j}^{l}$ for $k,l>w$. In this case all the $X_{i}^{k}$, $i < k$ for $k > w$ would be disjoint. It is then easy to see from the observations
above that at some stage after $w$ we would act for a $Q_{e}$ with $e>w$ such
that $\Phi_{e}(0,n)=X_{i}^{k}(n)$ and $\Phi_{e}(1,n)=X_{j}^{l}(n)$ for
$k,l>w$. 

If $w$ is one of the $w_{i}$ just defined, $z\in G^{<w}$ and $(z,z^{\prime
})\in G$ then $z^{\prime}\notin G^{\geq w}$. The point to notice here is that
by construction no merger at a stage $s>w$ can use any $x\in G^{<w}$ as every
$Q_{e}$ which can act after $w$ is initialized at $w$ by the choice of the
$w_{i}$. Thus any path in $G$ which starts with a $z\in G^{<w}$ never
enters $G^{\geq w}$. $\square$
\end{lem}

Suppose now that $\Phi_{e}$ is total and defines a sequence of edge-disjoint single rays $\left\langle R_{i}\right\rangle _{i\in\mathbb{N}}$ in $G$. By Lemma
\ref{obs}, we may choose a $w=w_{c}$ for some $c$ after which no
$Q_{e^{\prime}}$ for $e^{\prime}\leq e$ ever acts again.

If $Q_{e}$ is not satisfied at the end of stage $w$, we argue that we act for
it later to get a contradiction. Once a ray begins in $G^{<w}$, it must stay
there by Lemma \ref{obs} and once beyond all the (finitely many) merge points
in $G^{<w}$ (none can be put in after stage $w$ and no ray has repeated
vertices) it remains in some $X_{i}^{k}$ for $k<w$ with which it shares a tail
(as each vertex which is not a merge point has exactly two neighbors). So by the edge-disjointness of the $R_{i}$ there is an $a$ such that $\Phi_{e}(a,0)=R_{a}(0)$
is not in $G^{<w}$. It starts in $G^{<w^{\prime}}$ with $w^{\prime
}=w_{c^{\prime}}$ for some $c^{\prime}>c$ and so shares a tail with some
$P_{j}^{k}$ with $w_{c}\leq k<w_{c^{\prime}}$. Similarly there is $b$ such
that $R_{b}$ begins in some $G^{\geq w_{d}}$ with $d>c^{\prime}$ and shares a
tail with some $X_{j}^{l}$ with $w_{d}\leq l$. So eventually we have a stage
$s$ such that $\Phi_{e,s}(a,u)\downarrow$ and $\Phi_{e,s}(b,v)\downarrow$
define paths in $G_{s}$ which go beyond the points by which $R_{a}$ and
$R_{b}$ share tails with $X_{i}^{k}$ and $X_{j}^{l}$, respectively, and after
which neither tail contains a merge point and so these paths have unique
extensions to paths in $G_{s}$ (determined by the appropriate tails of
$X_{i}^{k}$ and $X_{j}^{l}$) ending with the endpoints of $P_{i,s}^{k}$ and
$P_{j,s}^{l}$, respectively. Finally, note that the merger of $X_{i}^{k}$ and
$X_{j}^{l}$ at $s$ would not violate d.c.\ as by Lemma \ref{obs} $P_{i,s}^{k}$
can share vertices only with $P_{n,s}^{m}$ with $w_{c}\leq m<w_{c^{\prime}}$
and $P_{j,s}^{l}$ can share vertices only with $P_{n^{\prime},s}^{m}$ with
$m\geq w_{d}>w_{c^{\prime}}$. Thus at stage $s$ we would act to satisfy
$Q_{e}$ for the desired contradiction.

So $Q_{e}$ is satisfied at $w$ and was satisfied at some $s\leq w$. With the
notations as at $s$, $\Phi_{e}(a,u)$ and $\Phi_{e}(b,v)$ define initial
segments of $R_{a}$ and $R_{b}$ which can each be extended in only one way to
maximal paths in $G_{s}$ eventually reaching the vertices $x$ and $y$,
respectively, as described at $s$. The merger performed at $s$ puts the merge
point $r$ in both $P_{i,s+1}^{k}$ and $P_{l,s+1}^{j}$ and so in $R_{a}$ and
$R_{b}$ (as the successor of $x$ and $y$, respectively, as by Lemma \ref{obs}
$r$ is the only neighbor in $G$ of $x$ other than its predecessor in $R_{a}$
and similarly for $y$ and $R_{b}$). We then add in $(r,z)$ to $G$ at stage $s$. By Lemma \ref{obs}, the only neighbors of $r$ are $x$, $y$ and $z$ so
any continuation of $R_{a}$ and $R_{b}$ after $r$ must produce a shared edge, as $R_{a}$ can continue only with either $(y,r)$ which is an edge of $R_{b}$
or with $(r,z)$ and $R_{b}$ can continue only with $(x,r)$ which is an edge of
$R_{a}$ or $(r,z)$. This yields the final contradiction.
\end{proof}

\section{Open Questions}

\label{section:open}

In addition to the variations of the Halin type theorems investigated here
that remain open problems of graph theory ($\mathsf{IRT}_{\mathrm{DVD}}$ and
$\mathsf{IRT}_{\mathrm{DED}}$) the most intriguing computational and reverse
mathematical questions are about either separating the variants or providing
additional reductions or equivalences among the $\mathsf{IRT}_{\mathrm{XYZ}}$
and $\Sigma_{1}^{1}$-$\AC_{0}$. Clearly the most important issue is deciding if
any (or even all) the $\mathsf{IRT}_{\mathrm{XYZ}}$ which are known to be
provable in $\Sigma_{1}^{1}$-$\AC_{0}$ are actually equivalent to it. We extend
this problem to include the $\mathsf{IRT}_{\mathrm{XYZ}}^{\ast}$ and
$\mathsf{I}\Sigma_{1}^{1}$.

\begin{question}
\label{q1}Can one show that any of the $\mathsf{IRT}_{\mathrm{XYZ}}$ which are
provable in $\Sigma_{1}^{1}\text{-}\mathsf{AC}_{0}$ ($\mathsf{IRT}%
_{\mathrm{XYS}}$ and $\mathsf{IRT}_{\mathrm{UVD}}$) do not imply $\Sigma
_{1}^{1}\text{-}\mathsf{AC}_{0}$ over $\mathsf{RCA}_{0}$ or even over
$\mathsf{RCA}_{0}+\mathsf{I}\Sigma_{1}^{1}$? An intermediate result might be that
$\mathsf{IRT}_{\mathrm{XYZ}}^{\ast}$ (for one of these versions) does not
imply $\Sigma_{1}^{1}\text{-}\mathsf{AC}_{0}$ over $\mathsf{RCA}_{0}$.
\end{question}

Should any of these $\mathsf{IRT}_{\mathrm{XYZ}}$ be strictly weaker than
$\Sigma_{1}^{1}$-$\mathsf{AC}_{0}$, the question would then be to determine the
relations among the $\mathsf{IRT}_{\mathrm{XYZ}}$ and analogously the
$\mathsf{IRT}_{\mathrm{XYZ}}^{\ast}$.

\begin{question}
Can any additional arrows be added to Figure \ref{fig:variants_IRT} over
$\mathsf{RCA}_{0}$ or $\mathsf{RCA}_{0}+\mathsf{I}\Sigma_{1}^{1}$? (This includes the
question of whether $\mathsf{RCA}_{0}\vdash\mathsf{IRT}_{\mathrm{UVD}%
}\rightarrow\mathsf{IRT}_{\mathrm{UVS}}$.)
\end{question}

As we noted in Remark \ref{rmk:BCP} there is an apparent additional reduction
in Bowler, Carmesin, Pott \cite[pg.\ 2 l.\ 3--7]{bcp15}. They use an
intermediate reduction to locally finite graphs in the sense of relying on the
fact that if a graph has arbitrarily many disjoint rays it has a locally
finite subgraph with arbitrarily many disjoint rays. This is the principle to
which that Remark refers. It plus $\mathsf{ACA}_{0}$ is a THA but over
$\mathsf{RCA}_{0}$ it does not imply $\mathsf{ACA}_{0}$ and is provably very
weak (in the sense of being highly conservative over $\mathsf{RCA}_{0}$).
Shore \cite{shore_atha} proves these results and further analyzes this and
many similar principles some related to the $\mathsf{IRT}_{\mathrm{XYZ}}$ and
others to an array of classical logical principles.

Any reductions in $\mathsf{RCA}_{0}$ as requested in the Question above would,
of course, provide the analogous ones for the $\mathsf{IRT}^{\ast
}_{\mathrm{XYZ}}$. However, it is possible that other implications can be
proven for the $\mathsf{IRT}^{\ast}_{\mathrm{XYZ}}$:

\begin{question}
Can any implications of the form $\mathsf{IRT}^{\ast}_{\mathrm{XYZ}%
}\rightarrow~\mathsf{IRT}^{\ast}_{\mathrm{X^{\prime}Y^{\prime}Z^{\prime}}}$ be
proven in $\mathsf{RCA}_{0}$ other than the ones known to hold for the
$\mathsf{IRT}_{}$ versions?
\end{question}

Probably more challenging is the problem of separating the principles.

\begin{question}
Can one prove any nonimplication over $\mathsf{RCA}_{0}$ or over
$\mathsf{RCA}_{0} + \mathsf{I}\Sigma^{1}_{1}$ for any pair of the $\mathsf{IRT}%
_{\mathrm{XYZ}}$?
\end{question}

Of course, any such separation for the $\mathsf{IRT}_{\mathrm{XYZ}}$ of
Question \ref{q1} would answer a case of that question by proving that at
least one of these principles is strictly weaker than $\Sigma_{1}^{1}$%
-$\mathsf{AC}_{0}$. In addition, a separation by standard models or even ones over
$\mathsf{I}\Sigma_{1}^{1}$ for the $\mathsf{IRT}_{\mathrm{XYZ}}$ would give
nonimplication for the corresponding $\mathsf{IRT}_{\mathrm{XYZ}}^{\ast}$ but
it might be that nonstandard models could be used to separate one pair of
versions but not the other.

The next natural question looks below $\mathsf{ABW}_{0}$ in Figure
\ref{fig:hyp_analysis_zoo_IRT}.

\begin{question}
Can one prove that finite-$\Sigma^{1}_{1}\text{-}\mathsf{AC}_{0}$ does not
imply $\mathsf{ABW}_{0}$ over $\mathsf{RCA}_{0}$ or $\mathsf{RCA}_{0} + \mathsf{I}\Sigma^{1}_{1}$?
\end{question}

The weaker versions, $\mathsf{WIRT}_{\mathrm{XYZ}}$, of the $\mathsf{IRT}%
_{\mathrm{XYZ}}$, prompt a question about $\mathsf{ACA}_{0}$.

\begin{question}
Do any of the $\mathsf{WIRT}_{\mathrm{XYZ}}$ (especially the ones provable
from $\mathsf{ACA}_{0}$) imply $\mathsf{ACA}_{0}$? An easier question might be
whether they imply $\mathsf{WKL}_{0}$?
\end{question}

\bibliographystyle{plain}
\bibliography{irt_references}

\end{document}